\documentclass[11pt,leqno]{amsart}

\usepackage{amsmath}
\usepackage{amssymb}
\usepackage{a4wide}
\usepackage{bbm}
\usepackage{graphics}
\usepackage{epsfig}
\usepackage{color}
\usepackage{mathrsfs}
\usepackage[sans]{dsfont}

\parskip = 0.05 in
%\parskip = 0.007 in
%\parskip = 0.026 in
%\parskip .4em

%----------------------------------

% Theorems

\newtheorem{theorem}{Theorem}[section]
\newtheorem{lemma}[theorem]{Lemma}
\newtheorem{proposition}[theorem]{Proposition}
\newtheorem{definition}[theorem]{Definition}
\newtheorem{remark}[theorem]{Remark}
\newtheorem{corollary}[theorem]{Corollary}
\newtheorem{example}[theorem]{Example}

\newcounter{hypo}
\newcounter{hypoa}

\newcounter{hypoaa}
\newcounter{hypobb}

%\makeatletter
%\@addtoreset{hypo}{section}
%\makeatother
%
%\newenvironment{hyp}{\renewcommand{\theenumi}{{\rm (H\thesection.\arabic{enumi})}} \renewcommand{\labelenumi}{{\rm (H\thesection.\arabic{enumi})}} \begin{enumerate} \setcounter{enumi}{\value{hypo}} \item}{\stepcounter{hypo} \end{enumerate}}

%\newenvironment{hyp}{\renewcommand{\theenumi}{{\bf (A\arabic{enumi})}} \renewcommand{\labelenumi}{\theenumi} \begin{enumerate} \setcounter{enumi}{\value{hypo}} \item}{\stepcounter{hypo} \end{enumerate}}

%\newenvironment{hyp}{\renewcommand{\theenumi}{{\rm (A\arabic{enumi})}} \renewcommand{\labelenumi}{{\bf (A\arabic{enumi})}} \begin{enumerate} \setcounter{enumi}{\value{hypo}} \item}{\stepcounter{hypo} \end{enumerate}}

%\newenvironment{hyp}{\renewcommand{\theenumi}{{\rm (H\arabic{enumi})}} \renewcommand{\labelenumi}{{\rm (H\arabic{enumi})}} \begin{enumerate} \setcounter{enumi}{\value{hypo}} \item}{\stepcounter{hypo} \end{enumerate}}

\newenvironment{hypaa}{  \begin{enumerate} \setcounter{enumi}{\value{hypoaa}} \item}{\stepcounter{hypoaa} \end{enumerate}}

%\makeatletter
% \@addtoreset{hypo}{section}
% \makeatother
% \renewcommand{\thehypo}{\thesection.\arabic{hypo}}

% Sets of Numbers

\def\C{{\mathbb C}}

\def\N{{\mathbb N}} 
\def\R{{\mathbb R}}

% Others 

\def\S{{\mathbb S}}

\def\CA{\mathcal {A}}
\def\CB{\mathcal {B}}

\def\CE{\mathcal {E}}

\def\CI{{\mathcal I}}

\def\CO{\mathcal {O}}

\def\CQ{\mathcal {Q}}
\def\CR{\mathcal {R}}
\def\CS{\mathcal {S}}

\def\SC{\mathscr {C}}
\def\SE{\mathscr {E}}
\def\SF{\mathscr {F}}
\def\SG{\mathscr {G}}

\def\SM{\mathscr {M}}

% Maths

\def\one{\mathds{1}}
\def\re{\mathop{\rm Re}\nolimits}
 \def\im{\mathop{\rm Im}\nolimits}

\def\I{\mathop{\rm I}\nolimits}
\def\Op{\mathop{\rm Op}\nolimits}

\newcommand{\tr}{\operatorname{tr}}

\newcommand{\chsupp}{\operatorname{ch\; supp}}
\newcommand{\Res}{\operatorname{Res}}

\newcommand{\supp}{\operatorname{supp}}
\newcommand{\spe}{\operatorname{sp}}

\def\arg{\mathop{\rm arg}\nolimits}
\def\dist{\mathop{\rm dist}\nolimits}

\def\<{\langle}
\def\>{\rangle}

\def\res{\mathop{\rm Res}\nolimits}

\def\ds{\displaystyle}

\newcommand{\fract}[2]{\genfrac{}{}{0pt}{}{\scriptstyle #1}{\scriptstyle #2}}

%----------------------------------

%\linespread {1,6}

%----------------------------------

\makeatletter
 \@addtoreset{equation}{section}
 \makeatother
 
%--------------------------------------------------------

\title{Applications of resonance theory without analyticity assumption}

\author[J.-F. Bony]{Jean-Fran\c{c}ois Bony}
\address{Jean-Fran\c{c}ois Bony, IMB, UMR CNRS 5251, Universit\'e de Bordeaux, 33405 Talence, France}
\email{bony@math.u-bordeaux.fr}
\author[L. Michel]{Laurent Michel}
\address{Laurent Michel, IMB, UMR CNRS 5251, Universit\'e de Bordeaux, 33405 Talence, France}
\email{lamichel@math.u-bordeaux.fr}
\author[T. Ramond]{Thierry Ramond}
\address{Thierry Ramond, Laboratoire de Math\'ematiques d'Orsay, Univ. Paris-Sud, CNRS, Universit\'e Paris-Saclay, 91405 Orsay, France}
\email{thierry.ramond@math.u-psud.fr}

\keywords{Scattering theory, semiclassical asymptotics, resonances, microlocal analysis, Schr\"{o}dinger operators}
\subjclass[2010]{35B34, 35P25, 81Q20, 35J10, 35S05, 47A10}

%\thanks{\textbf{Acknowledgments:}}

%--------------------------------------------------------

\begin{document}

\begin{abstract}
We prove that the results in scattering theory that involve resonances are still valid for non-analytic potentials, even if the notion of resonance is not defined in this setting. More precisely, we show that if the potential of a semiclassical Schr\"{o}dinger operator is supposed to be smooth and to decrease at infinity, the usual formulas relating scattering quantities and resonances still hold. The main ingredient for the proofs is a resolvent estimate of a new type, relating the resolvent of an operator with the resolvent of its cut-off counterpart.
\end{abstract}

\maketitle

\section{Introduction} \label{s1}

In this paper, we consider semiclassical Schr\"{o}dinger operators $P$ on $L^{2} ( \R^{n} )$, $n \geq 1$,
\begin{equation}\label{a1}
P = - h^{2} \Delta + V ( x ),
\end{equation}
where the potential $V$ is a real-valued smooth function. In this setting, the resonances of $P$ near the real axis are usually defined through the analytic distortion method due to Aguilar and Combes \cite{AgCo71_01} and Hunziker \cite{Hu86_01}. For this reason, the potential is supposed to be analytic outside of a compact set and to vanish at infinity. More precisely, $V \in C^{\infty} ( \R^{n} ; \R )$ is assumed to extend holomorphically in the sector 
\begin{equation} \label{b60}
\CS = \big\{ x \in \C^{n} ; \ \vert \re x \vert > C \text{ and } \vert \im x \vert \leq \delta \vert x \vert \big\} ,
\end{equation}
for some $C , \delta > 0$, and $V ( x ) \to 0$ as $x \to \infty$ in $\CS$. Under this assumption, one can define the distorted operator $P_{\theta}$ of angle $\theta > 0$ small enough. Its spectrum is discrete in $\CE_{\theta} = \{ z \in \C ; \ - 2 \theta < \arg z \leq 0 \}$ and the resonances of $P$ are the eigenvalues of $P_{\theta}$ in $\CE_{\theta}$. The resonances do not depend on $\theta$, and $\res ( P )$ denote their set. Methods close to that of analytic distortions have been developed by Helffer and Sj\"{o}strand \cite{HeSj86_01}, Sj\"{o}strand and Zworski \cite{SjZw91_01}, Lahmar-Benbernou and Martinez \cite{LaMa02_01} or Sj\"{o}strand \cite{Sj07_01}. They all require the analyticity of $V$ at infinity (see \eqref{b60}), or even globally, that is in the set
\begin{equation} \label{b61}
\CS_{\rm g} = \big\{ x \in \C^{n} ; \ \vert \im x \vert \leq \delta \< x \> \big\} ,
\end{equation}
for some $\delta > 0$. These different methods give the same resonances when they can be applied simultaneously (see Helffer and Martinez \cite{HeMa87_01}). A general presentation of resonance theory can be found in the books of Sj\"{o}strand \cite{Sj07_01} or Dyatlov and Zworski \cite{DyZw16_01}.

Nevertheless, when the potential is not analytic, various approaches allow to define the resonances. For exponentially decreasing potentials, that is $\vert V ( x ) \vert \lesssim e^{- \delta \vert x \vert}$ with $\delta > 0$, the cut-off resolvent
\begin{equation*}
z \longmapsto ( P - z )^{- 1} : L^{2}_{\rm comp} ( \R^{n} ) \to L^{2}_{\rm loc} ( \R^{n} ) ,
\end{equation*}
has a meromorphic extension from the upper complex half-plane to a neighborhood of the real axis. The resonances are then the poles of this extension. Such an idea goes back to Dolph, McLeod and Thoe \cite{DoMcTh66_01}. In the perturbation regime also, it is possible to define a time-dependent notion of resonances. This setting is devoted to the study of (non semiclassical) selfadjoint operators of the form $Q = Q_{0} + \kappa W$ where $Q_{0}$ has an embedded eigenvalue $\lambda_{0}$ with normalized eigenvector $\psi_{0}$ in its essential spectrum and $\vert \kappa \vert \ll 1$. Under the Fermi golden rule condition, one can show that the quantum evolution $\< \psi_{0} , e^{- i t Q} \psi_{0} \>$ behaves like $e^{- i t \lambda}$ for times $t$ controlled by $\kappa^{- 1}$ where $\lambda \approx \lambda_{0}$ is the resonance. This has been proved for instance by Orth \cite{Or90_01}, Soffer and Weinstein \cite{SoWe98_01} or Jensen and Nenciu \cite{JeNe06_01}. In the semiclassical regime, this idea has been followed by G{\'e}rard and Sigal \cite{GeSi92_01}. They define a quasiresonant state associated to a quasiresonance $z$ to be any reasonable function $u$ such that
\begin{equation*}
\left\{ \begin{aligned}
&( P - z ) u = \CO ( h^{\infty} ) \text{ in } L^{2}_{\rm loc} ( \R^{n} ) , \\
&u \text{ is an outgoing function.}
\end{aligned} \right.
\end{equation*}
Finally, Cancelier, Martinez and the third author \cite{CaMaRa05_01} and Martinez, Sj\"{o}strand and the third author \cite{MaRaSj09_01} have generalized the method of analytic distortions for potentials which are not analytic but only $C^{\infty}$. For that they construct a family of potentials $V_{C}$, $C \to + \infty$, which approximate $V$ and satisfy \eqref{b60} with the same constant $C$. The resonances of $P_{C} = - h^{2} \Delta + V_{C} ( x )$ are given by an analytic distortion and the resonances of $P$ are defined as the accumulation points of $\res ( P_{C} )$ as $C \to + \infty$. Summing up, for nonanalytic and non exponentially decreasing potentials, there exist notions of resonances, but they are not as indisputable as in the analytic setting.

In this paper, we will follow a different path. Instead of trying to define the resonances for nonanalytic semiclassical Schr\"{o}dinger operators, we want to generalize the applications of resonance theory to slowly decaying $C^{\infty}$ potentials. In other words, we want to replace automatically the analyticity assumption by a $C^{\infty}$ assumption in the results of scattering theory using the resonances. The precise symbol assumption \ref{h1} on $V$ as well as the general notations used throughout the paper are given in the next section.

The first application that we consider is the resonance expansion of the quantum propagator. This type of result says typically that, for $\chi \in C^{\infty}_{0} ( \R^{n} )$,
\begin{equation} \tag{A} \label{b62}
\chi e^{- i t P / h} \chi = \sum_{z \in \res ( P )} e^{- i t z / h} \chi \Pi_{z} \chi + \text{remainder} ,
\end{equation}
where $\Pi_{z}$ is the generalized spectral projector associated to the resonance $z$. Such a formula corresponds to the Dirichlet formula in the setting of eigenvalues. The resonance expansion of the wave group was first obtained by Lax and Phillips \cite{LaPh67_01} in the exterior of a star-shaped obstacle. It was then generalized to various non-trapping settings (see for instance Va{\u\i}nberg \cite{Va89_01} and the references of the second edition of \cite{LaPh67_01}). Concerning trapping regimes, there are two types of results: the first ones are valid without assumption on the trapping, while the latter, more precise, treat specific captures. Among the general results, one can cite Tang and Zworski \cite{TaZw00_01}, Stefanov \cite{St01_01} or Burq and Zworski \cite{BuZw01_01}. On the other hand, precise resonance expansions have been proved by Nakamura, Stefanov and Zworski \cite{NaStZw03_01} in the ``well in the island'' situation, by Fujii\'e, Zerzeri and two authors \cite{BoFuRaZe11_01} at barrier-top, by Dyatlov \cite{Dy12_01} for the wave equation in general relativity, \ldots Note that the cut-off function $\chi$ is compactly supported and so does not ``see'' the analyticity of $P$ at infinity in \eqref{b62}. Then, it is natural to hope that this formula still holds true in the $C^{\infty}$ setting. This is done in Section \ref{s2} under \ref{h1} only. In the formula obtained, the resonances $z$ are those of the Schr\"{o}dinger operator with potential $V$ truncated at infinity. In particular, we extend the resonance expansions known for potentials analytic at infinity.

The second application of resonance theory concerns the Breit--Wigner formula for the derivative of the spectral shift function. The precise definition of the spectral shift function (SSF) associated to the pair $( P , P_{0} )$, denoted  $s_{P , P_{0}} ( \lambda )$, can be found in \eqref{a73}. The Breit--Wigner formula says typically
\begin{equation} \tag{B} \label{b63}
s_{P , P_{0}}^{\prime} ( \lambda ) = \sum_{z \in \Res ( P )} \frac{\vert \im z \vert}{\pi \vert \lambda - z \vert^{2}} + \text{remainder} .
\end{equation}
Such a result has been proved by Melrose \cite{Me88_01} in the exterior of obstacles and by G{\'e}rard, Martinez and Robert \cite{GeMaRo89_01} in the well in the island situation. General formulas (that is without assumption on the trapping) have been successively proved by Petkov and Zworski \cite{PeZw99_01,PeZw00_01}, Sj\"{o}strand and the first author \cite{BoSj01_01}, Bruneau and Petkov \cite{BrPe03_01}, \ldots In Section \ref{s3}, we generalize \eqref{b63} and the results previously cited to the $C^{\infty}$ setting, more precisely under assumption \ref{h1} with $\rho > n$. As before, the resonances appearing in the formula we obtain are those of the Schr\"{o}dinger operator with potential $V$ truncated at infinity. Nevertheless, the new remainder term can no longer be $\CO ( h^{\infty} )$ since infinity gives a non-negligible contribution to the derivative of the SSF. It was not the case for the cut-off quantum evolution. As an application, we study the transitional regime of $s_{P , P_{0}}^{\prime} ( \lambda )$ at the energy level of a homoclinic trajectory between strong trapping (huge peaks) and non-trapping (smooth behavior).

As a last application of resonance theory, we study the scattering amplitude. The precise definition of this function, denoted $S_{P , P_{0}} ( \lambda , \theta , \omega )$, can be found at the beginning of Section \ref{s4}. For potentials analytic at infinity, it is known that
\begin{equation} \tag{C} \label{b65}
\lambda \longmapsto S_{P , P_{0}} ( \lambda , \theta , \omega ) \text{ has a meromorphic extension},
\end{equation}
from $\R$ to $\CE_{\theta}$ with poles at the resonances. It has first been proved by Lax and Phillips \cite{LaPh67_01} for obstacle scattering and generalized to different situations (see e.g. Agmon \cite{Ag86_01}). In the semiclassical limit, the residue of the scattering amplitude has been computed in the well in the island situation by Lahmar-Benbernou and Martinez \cite{Be99_01,LaMa99_01} and at barrier-top by Fujii\'e, Zerzeri and two authors \cite{BoFuRaZe11_01}. For $C^{\infty}$ potentials, we prove in Section \ref{s4} that \eqref{b65} still holds true, modulo a function which is $\CO ( h^{\infty} )$ on the real axis. Compared to the previous applications of resonance theory, the values at infinity of the potential is crucial in the behavior of the scattering amplitude and the results obtained for potentials analytic at infinity can not be directly transposed to the $C^{\infty}$ setting. Nevertheless, we show in Theorem \ref{b23} that the dependence of the scattering matrix with respect to a change of potential at infinity can be followed using natural Fourier integral operators.

To prove the results summarized previously, we use an intermediary operator analytic at infinity and equal to $P$ in an appropriate region. Concretely, let $\chi \in C^{\infty}_{0} ( \R^{n} )$ and
\begin{equation*}
Q = - h^{2} \Delta + W ( x ) ,
\end{equation*}
where $W = V$ in a sufficiently large domain containing the support of $\chi$ and the trapped set. Then, the cut-off resolvents of $P$ and $Q$ are almost the same. More precisely,
\begin{equation} \tag{D} \label{b64}
\chi ( P - \lambda \pm i 0 )^{- 1} \chi = \chi ( Q - \lambda \pm i 0 )^{- 1} \chi + \text{remainder} ,
\end{equation}
for $\lambda \in I \Subset ] 0 , + \infty [$. This estimate is obtained without assumption on the trapping in Section \ref{s5} for $W ( x ) = V ( x ) g_{0} ( x / R )$ with $R \gg 1$ and $g_{0}$ a plateau function. The proof is based on the constructions of Isozaki and Kitada and the general estimates on the resolvent of Burq. When the cut-off resolvent of $P$ is polynomially bounded, \eqref{b64} is proved in Section \ref{s6} under the assumption that $V$ and $W$ coincide near the trapped set. This proof only uses $C^{\infty}$ microlocal analysis.

Equation \eqref{b64} allows to extend the formulas \eqref{b62}, \eqref{b63} and \eqref{b65} to the $C^{\infty}$ setting the following way. For a Schr\"{o}dinger operator $T$, let $\SM ( T )$ be a quantum quantity which has an asymptotic expansion in terms of the resonances of $T$ when its potential is analytic at infinity, say
\begin{equation*}
\SM ( T ) = \SF \big( \res ( T ) \big) .
\end{equation*}
In the cases we consider, there exists also a representation formula which allows to write
\begin{equation*}
\SM ( T ) = \SG \big( \chi ( T - \lambda \pm i 0 )^{- 1} \chi \big) .
\end{equation*}
Note that $W$, the potential of $Q$, can be chosen compactly supported and then analytic at infinity. Thus, the previous equations combined with \eqref{b64} give
\begin{align*}
\SM ( P ) &= \SG \big( \chi ( P - \lambda \pm i 0 )^{- 1} \chi \big) \\
&= \SG \big( \chi ( Q - \lambda \pm i 0 )^{- 1} \chi \big) + \text{remainder} \\
&= \SM ( Q ) + \text{remainder} \\
&= \SF \big( \res ( Q ) \big) + \text{remainder} ,
\end{align*}
showing that $\SM$ has an asymptotic expansion in terms of resonances for $C^{\infty}$ potentials.

\section{General setting and notations}

In this short section, we collect some definitions and hypotheses made throughout the paper. Instead of assuming that $V$ is analytic at infinity (see \eqref{b60}), we suppose that
\begin{hypaa} \label{h1}
$V \in C^{\infty} ( \R^{n} ; \R )$ and there exists $\rho > 0$ such that, for all $\alpha \in \N^{n}$,
\begin{equation*}
\vert \partial_{x}^{\alpha} V ( x ) \vert \lesssim \< x \>^{- \rho - \vert \alpha \vert} .
\end{equation*}
\end{hypaa}
Thanks to the Cauchy formula, this assumption holds true for any potential $V$ which is $C^{\infty} ( \R^{n} ; \R )$, analytic at infinity in the sense of \eqref{b60} and satisfies $\vert V ( x ) \vert \lesssim \< x \>^{- \rho}$ for $x \in \CS$.

We only use the ordinary notations and some basic results of semiclassical microlocal analysis. For a clear presentation of this theory, we send back the reader for example to the textbooks of Dimassi and Sj\"ostrand \cite{DiSj99_01}, Martinez \cite{Ma02_02} and Zworski \cite{Zw12_01}. In particular, $S ( m )$ denotes the set of symbols controlled by the weight function $m$, the semiclassical pseudodifferential operator of symbol $a \in S ( m )$ is defined by
\begin{equation*}
( \Op ( a ) u ) ( x ) = \frac{1}{( 2 \pi h)^{n}} \iint e^{i ( x - y ) \cdot \xi / h} a \Big( \frac{x + y}{2} , \xi , h \Big) u( y ) \, d y \, d \xi ,
\end{equation*}
and $\Psi ( m ) = \Op ( S ( m ) )$ is the set of pseudodifferential operators of symbols in $S ( m )$.

For $V$ satisfying \ref{h1}, we let $p ( x , \xi ) = \xi^{2} + V ( x ) \in S ( \< \xi \>^{2} )$ denote the symbol of $P$, i.e. $P = \Op ( p )$. Its associated Hamiltonian vector field is
\begin{equation*}
H_{p} = \partial_{\xi} p \cdot \partial_{x} - \partial_{x} p \cdot \partial_{\xi} = 2 \xi \cdot \partial_{x} - \nabla V ( x ) \cdot \partial_{\xi} .
\end{equation*}
Integral curves $t \longmapsto \exp ( t H_{p} )( x , \xi )$ of $H_{p}$ are called Hamiltonian trajectories, and $p$ is constant along such curves. The trapped set at energy $E$ for $P$ is defined by
\begin{equation} \label{a92}
K_{p} ( E ) = \big\{ ( x , \xi ) \in p^{- 1} ( E ) ; \ t \longmapsto \exp ( t H_{p} ) ( x , \xi ) \text{ is bounded} \big\} .
\end{equation}
For $E > 0$, $K ( E )$ is compact and stable by the Hamiltonian flow.

\section{Resonance expansion of the propagator} \label{s2}

The most expected application of resonance theory is the expansion of the truncated quantum propagator in terms of the resonances. Roughly speaking, for $\chi \in C^{\infty}_{0} ( \R^{n} )$ and $\varphi \in C^{\infty}_{0} ( ] 0 , + \infty [ )$, such a result writes 
\begin{equation} \label{b68}
\chi e^{- i t P / h} \varphi ( P ) \chi = \sum_{\fract{z \in \Res ( P )}{z \text{ close to } \supp \varphi}} e^{- i t z / h} \chi \Pi_{z} \chi + \text{remainder} ,
\end{equation}
for $t \geq 0$ in an appropriate interval. In other words, \eqref{b68} quantifies the exponential decay of the local energy for the quantum evolution $e^{- i t P / h}$ which is unitary. Then, the resonances (resp. resonant states) can be seen as quasi-eigenvalues (resp. metastable states). We send back the reader to the introduction and the corollaries of the present section for references concerning resonance expansion of the propagator.

In this part, we obtain formulas like \eqref{b68} for Schr\"{o}dinger operators $P$ satisfying only \ref{h1} without the analyticity hypothesis. For that, we first consider another Schr\"{o}dinger operator
\begin{equation*}
Q = - h^{2} \Delta + W ( x ) ,
\end{equation*}
satisfying also \ref{h1} such that $V$ and $W$ coincide in a bounded region, and try to show that
\begin{equation} \label{a4}
\chi e^{- i t P / h} \varphi ( P ) \chi = \chi e^{- i t Q / h} \varphi ( Q ) \chi + \CO ( h^{\infty} ) ,
\end{equation}
for $t$ real in some interval. Then, choosing $W$ such that a resonance expansion like \eqref{b68} is already known for $Q$ (typically $W$ is analytic at infinity) and applying \eqref{a4}, we obtain a resonance expansion for $P$.

\begin{figure}%[!h]
\begin{center}
\begin{picture}(0,0)%
\includegraphics{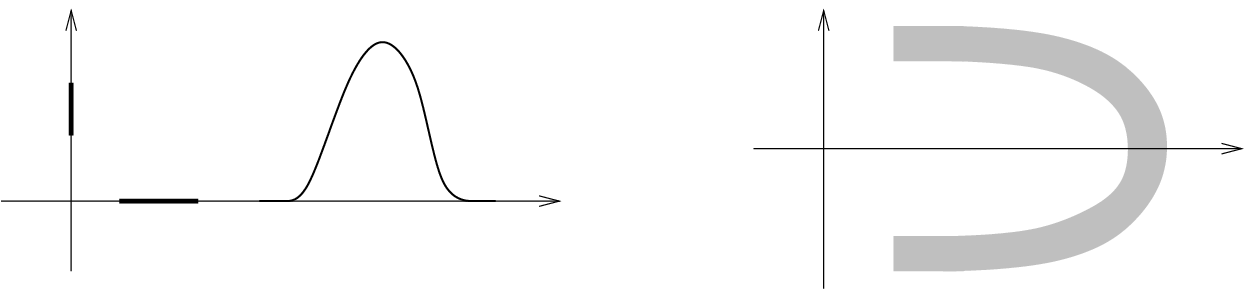}%
\end{picture}%
\setlength{\unitlength}{1105sp}%
\begingroup\makeatletter\ifx\SetFigFont\undefined%
\gdef\SetFigFont#1#2#3#4#5{%
  \reset@font\fontsize{#1}{#2pt}%
  \fontfamily{#3}\fontseries{#4}\fontshape{#5}%
  \selectfont}%
\fi\endgroup%
\begin{picture}(21344,4969)(-1221,-683)
\put(6076,3014){\makebox(0,0)[lb]{\smash{{\SetFigFont{9}{10.8}{\rmdefault}{\mddefault}{\updefault}$V ( x  )$}}}}
\put(-1049,2264){\makebox(0,0)[b]{\smash{{\SetFigFont{9}{10.8}{\rmdefault}{\mddefault}{\updefault}$\supp \varphi$}}}}
\put(1501,314){\makebox(0,0)[b]{\smash{{\SetFigFont{9}{10.8}{\rmdefault}{\mddefault}{\updefault}$\supp \chi$}}}}
\put(18001,3464){\makebox(0,0)[lb]{\smash{{\SetFigFont{9}{10.8}{\rmdefault}{\mddefault}{\updefault}$F_{p} ( \chi , \varphi )$}}}}
\put(13126,4139){\makebox(0,0)[lb]{\smash{{\SetFigFont{9}{10.8}{\rmdefault}{\mddefault}{\updefault}$T^{*} \R^{n}$}}}}
\end{picture}%
\end{center}
\caption{An example of potential $V$ and the associated set $F_{p} ( \chi , \varphi )$.} \label{f2}
\end{figure}

Following this general strategy, we want to prove \eqref{a4} for two operators $P = - h^{2} \Delta + V ( x )$ and $Q = - h^{2} \Delta + W ( x )$ satisfying \ref{h1} and two cut-off functions $\chi \in C^{\infty}_{0} ( \R^{n} )$ and $\varphi \in C^{\infty}_{0} ( ] 0 , + \infty [ )$. For simplicity, we suppose that $\pi_{x} ( K_{p} ( \supp \varphi ) ) \Subset \supp \chi$. It is natural to assume that $V$ and $W$ coincide on the support of $\chi$, but this hypothesis may not be enough, even for finite time. Indeed, there may be Hamiltonian curves of energy in $\supp \varphi$, starting above $\supp \chi$ and coming back above $\supp \chi$ after a finite time. Thus, we define
\begin{equation*}
F_{p} ( \chi , \varphi ) = \big\{ ( x , \xi ) \in T^{*} \R^{n} ; \ \exp ( t_{\pm} H_{p} ) ( x , \xi ) \in \supp ( \chi \varphi ( p ) ) \text{ for some } \pm t_{\pm} \geq 0 \big\} ,
\end{equation*}
the set of these curves (see Figure \ref{f2}). It is a compact subset of $T^{*} \R^{n}$ containing $\supp ( \chi \varphi ( p ) )$. This set can be seen as the convex hull of $\supp ( \chi \varphi ( p ) )$ for the Hamiltonian curves of $p$, but may be not connected. We assume that
\begin{hypaa} \label{h2}
$\pi_{x} ( K_{p} ( \supp \varphi ) \cup K_{q} ( \supp \varphi ) ) \Subset \supp \chi$, $F_{p} ( \chi , \varphi ) = F_{q} ( \chi , \varphi )$ and $p = q$ near $F_{p} ( \chi , \varphi )$.
\end{hypaa}
This assumption implies that $K_{p} ( \supp \varphi ) = K_{q} ( \supp \varphi )$ and $V = W$ near $\supp \chi$. Since we work with Schr\"{o}dinger operators, the last part of \ref{h2} is equivalent to $V = W$ near $\pi_{x} ( F_{p} ( \chi , \varphi ) )$, the base space projection of $F_{p} ( \chi , \varphi )$. Our first result is

\begin{proposition}[Comparison of evolutions for short time]\sl \label{a2}
Let $P , Q$ satisfying \ref{h1}, $\chi \in C^{\infty}_{0} ( \R^{n} )$ and $\varphi \in C^{\infty}_{0} ( ] 0 , + \infty [ )$ be such that \ref{h2} holds true. For all $C > 0$, we have
\begin{equation*}
\chi e^{- i t P / h} \varphi ( P ) \chi = \chi e^{- i t Q / h} \varphi ( Q ) \chi + \CO ( h^{\infty} ) ,
\end{equation*}
uniformly for $\vert t \vert \leq h^{- C}$.
\end{proposition}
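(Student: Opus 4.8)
The plan is to prove the estimate by Duhamel's formula, after first reducing both sides to the same initial state. \emph{Reduction.} Since $\varphi\in C^\infty_0(]0,+\infty[)$, the Helffer--Sj\"ostrand functional calculus gives $\varphi(P),\varphi(Q)\in\Psi(\<\xi\>^{-N})$ for every $N$, with complete symbols whose asymptotic expansions in $h$ have coefficients that are \emph{local} (universal differential expressions in the potential and in $\xi$). By \ref{h2} one has $V=W$ on a neighbourhood of $\supp\chi$, so these two symbols agree to infinite order there, and pseudolocality yields $(\varphi(P)-\varphi(Q))\chi=\CO(h^\infty)$. As $e^{-itP/h}$ is unitary, $\chi e^{-itP/h}(\varphi(P)-\varphi(Q))\chi=\CO(h^\infty)$ uniformly in $t$, so it remains to bound $\chi\big(e^{-itP/h}-e^{-itQ/h}\big)\varphi(Q)\chi$ for $\vert t\vert\le h^{-C}$.

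\emph{Duhamel.} Using $P-Q=V-W$,
\begin{equation*}
e^{-itP/h}-e^{-itQ/h}=-\frac{i}{h}\int_0^t e^{-i(t-s)P/h}\,(V-W)\,e^{-isQ/h}\,ds ,
\end{equation*}
so it suffices to show that $\chi\, e^{-i(t-s)P/h}\,(V-W)\,e^{-isQ/h}\,\varphi(Q)\chi=\CO(h^\infty)$ uniformly for $s$ and $t-s$ nonnegative with $t\le h^{-C}$ (and symmetrically for $t\le0$): the prefactor $h^{-1}\vert t\vert\le h^{-C-1}$ is then harmless against $\CO(h^\infty)$.

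\emph{Microlocal heart.} The operator $\varphi(Q)\chi$ is microsupported in $\supp(\chi\varphi(q))\subset F_q(\chi,\varphi)$, a compact set on which $V=W$ by \ref{h2}. If $(x,\xi)$ lies in the microsupport of $e^{-isQ/h}\varphi(Q)\chi$ with moreover $V(x)\neq W(x)$, then $(x,\xi)\notin F_q(\chi,\varphi)$, so the $H_q$-bicharacteristic through $(x,\xi)$ (which meets $\supp(\chi\varphi(q))$ at time $-s$) never returns to $\supp(\chi\varphi(q))$ in forward time, hence — being at positive energy with $V\to0$, it cannot remain bounded — escapes to infinity. Thus $(V-W)e^{-isQ/h}\varphi(Q)\chi$ is, modulo $\CO(h^\infty)$, microsupported on forward-escaping, hence eventually outgoing, rays; the non-escaping part is absorbed by $V-W$. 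Applying $e^{-i(t-s)P/h}$ with $t-s\ge0$ drives such data monotonically outward, away from $\supp\chi$: this is where the full strength of \ref{h2} ($F_p(\chi,\varphi)=F_q(\chi,\varphi)$, $p=q$ near it) is used, so that any bicharacteristic which would return over $\supp\chi$ has already been accounted for in $F_p(\chi,\varphi)$, where the $P$- and $Q$-flows coincide. Hence $\chi e^{-i(t-s)P/h}(V-W)e^{-isQ/h}\varphi(Q)\chi=\CO(h^\infty)$. The assumption $\pi_x(K_p(\supp\varphi)\cup K_q(\supp\varphi))\Subset\supp\chi$ guarantees that $\supp(\chi\varphi(q))$ is a neighbourhood of the trapped set, so no trapped mass is missed.

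\emph{Main obstacle.} The real difficulty is that \emph{all} these microlocal statements must hold uniformly for times up to $h^{-C}$, i.e.\ far beyond the Ehrenfest time $c\log(1/h)$ at which Egorov's theorem loses control of the spreading of symbols near a (possibly hyperbolic) trapped set; a naive Gronwall bound only confines $e^{-isQ/h}\varphi(Q)\chi$ near $F_q(\chi,\varphi)$ for a fixed time. They must therefore be obtained not from Egorov but from propagation estimates that are uniform in time: a positive-commutator / escape-function argument exploiting that, on the relevant energy shell, the complement of $F_q(\chi,\varphi)$ is non-trapping (every bicharacteristic there comes from and escapes to infinity), together with a standard uniform-in-time non-return estimate for outgoing data of Isozaki--Kitada type, available under \ref{h1}; the polynomial-in-$h^{-1}$ losses are then harmless. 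A secondary technical point, again controlled by the geometry of $F_p(\chi,\varphi)$, is to verify that the pieces of the microsupport reaching the region $\{V\neq W\}$ are genuinely outgoing and are not brought back over $\supp\chi$ by the $P$-evolution.
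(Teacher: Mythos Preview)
Your overall strategy---Duhamel plus one-sided uniform-in-time propagation estimates, with the large-time part handled by Isozaki--Kitada constructions---is the same as the paper's. But the execution has a genuine gap, and the gap is precisely what the paper's choice of Duhamel ``source'' is designed to avoid.

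You take $V-W$ as the source in Duhamel. The paper instead inserts a phase-space cutoff $\Op(g)$ with $\one_{F_p(\chi,\varphi)}\prec g$ and $\supp g$ contained in the region where $p=q$; the source is then $[P,\Op(g)]$, whose microsupport sits in the \emph{compact} set $\supp\nabla g\subset\{p=q\}$. This buys two things you are missing. First, at every point $\rho\in\supp\nabla g$ one has $H_p=H_q$, so the set $G$ of points reached (in backward time) from $\supp(\chi\varphi(q))$ by the $Q$-flow equals the analogous set for the $P$-flow; from this and $\rho\notin F_p$ the paper gets cleanly that the forward $H_p$-trajectory from $\rho$ never meets $\supp\chi$. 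Your argument tries to make the same claim at points $(x,\xi)$ with $V(x)\neq W(x)$, but there $p(x,\xi)\neq q(x,\xi)$ and $H_p\neq H_q$: the fact that the backward $H_q$-trajectory reaches $\supp(\chi\varphi(q))$ says nothing about the backward $H_p$-trajectory, so your appeal to $F_p=F_q$ does not yield that the forward $H_p$-flow avoids $\supp\chi$. (Note also that the $P$-energy of such a point need not lie in $\supp\varphi$, so the final cutoff $\chi$ on the left carries no hidden energy localization.) Second, compactness of $\supp\nabla g$ lets the paper split via a cutoff $\Op(\psi)$ and reduce to two \emph{one-sided} estimates, $\chi e^{-isP/h}[P,\Op(g)]\Op(\psi)=\CO(h^\infty)$ and $[P,\Op(g)]\Op(1-\psi)e^{-i(t-s)Q/h}\varphi(Q)\chi=\CO(h^\infty)$, each uniform for all nonnegative times: finite-time Egorov pushes the data into an outgoing region, then an Isozaki--Kitada parametrix handles the rest. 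No long-time Egorov, no escape-function/positive-commutator machinery, is needed. Your ``Main obstacle'' paragraph names this difficulty but does not resolve it; the concrete mechanism is the $\Op(g)$--$\Op(\psi)$ decomposition, not a positive-commutator estimate.

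In short: replace $V-W$ by $[P,\Op(g)]$ with $g$ compactly supported in $\{p=q\}$ and $\one_{F_p(\chi,\varphi)}\prec g$. Then your outline goes through, and the geometric step becomes rigorous because the $P$- and $Q$-flows agree on the microsupport of the source.
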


The proof of Proposition \ref{a2} as well as that of Remark \ref{a47} and that of Proposition \ref{a38} are postponed to Section \ref{s6}. In order to compare the evolutions for all time, we will now assume that the weighted resolvent of $P$ is polynomially bounded. More precisely, for $\varphi \in C^{\infty}_{0} ( ] 0 , + \infty [ )$, we assume that
\begin{hypaa} \label{h4}
There exist $C > 0$ and $s > 1 / 2$ such that
\begin{equation*}
\big\Vert \< x \>^{- s} ( P - \lambda \pm i 0 )^{- 1} \< x \>^{- s} \big\Vert \lesssim h^{- C} ,
\end{equation*}
uniformly for $\lambda$ near the support of $\varphi$.
\end{hypaa}
From Proposition \ref{a13}, if the assumption \ref{h4} is satisfied for some $s > 1 / 2$, it holds true for all $s > 1 / 2$ with the same constant $C$. Moreover, it is equivalent to assume \ref{h4} for $P$ or for $Q$. More precisely,

\begin{remark}\sl \label{a47}
Let $P , Q$ satisfying \ref{h1}, $\chi \in C^{\infty}_{0} ( \R^{n} )$ and $\varphi \in C^{\infty}_{0} ( ] 0 , + \infty [ )$ be such that \ref{h2} holds true. Then, $P$ satisfies \ref{h4} if and only if $Q$ satisfies \ref{h4}. Moreover, for all $s > 1 / 2$, we have in that case
\begin{equation*}
\big\Vert \< x \>^{- s} ( Q - \lambda \pm i 0  )^{- 1} \< x \>^{- s} \big\Vert \lesssim \big\Vert \< x \>^{- s} ( P - \lambda \pm i 0  )^{- 1} \< x \>^{- s} \big\Vert \lesssim \big\Vert \< x \>^{- s} ( Q - \lambda \pm i 0  )^{- 1} \< x \>^{- s} \big\Vert ,
\end{equation*}
uniformly for $\lambda$ near the support of $\varphi$.
\end{remark}

Remark \ref{a47} is essentially due to Datchev and Vasy \cite{DaVa12_02}. But it is not clear how to verify their geometric assumptions in our setting and to adapt directly their result. This is why we give a self-contained proof of Remark \ref{a47} in Section \ref{s6} (note that \eqref{a50} and \eqref{a52} are similar to \cite[Lemma 3.1]{DaVa12_02}).

\begin{proposition}[Comparison of evolutions under a polynomial estimate]\sl \label{a38}
Let $P , Q$ satisfy \ref{h1}, $\chi \in C^{\infty}_{0} ( \R^{n} )$ and $\varphi \in C^{\infty}_{0} ( ] 0 , + \infty [ )$ be such that \ref{h2} and \ref{h4} hold true. Then,
\begin{equation*}
\chi e^{- i t P / h} \varphi ( P ) \chi = \chi e^{- i t Q / h} \varphi ( Q ) \chi + \CO ( h^{\infty} ) ,
\end{equation*}
uniformly for $t \in \R$.
\end{proposition}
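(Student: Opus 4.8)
The strategy is to split the range of $t$ at the polynomial threshold $\vert t \vert = h^{- C}$, where $C$ is the exponent provided by \ref{h4} for $P$. By Remark \ref{a47}, the operator $Q$ also satisfies \ref{h4} with the same constant $C$. For $\vert t \vert \leq h^{- C}$, Proposition \ref{a2} applied with this particular value of $C$ gives at once
\begin{equation*}
\chi e^{- i t P / h} \varphi ( P ) \chi = \chi e^{- i t Q / h} \varphi ( Q ) \chi + \CO ( h^{\infty} ) .
\end{equation*}
It therefore remains to handle $\vert t \vert \geq h^{- C}$, and for this it is enough to show that
\begin{equation} \label{eq:aux-decay}
\big\Vert \chi e^{- i t P / h} \varphi ( P ) \chi \big\Vert = \CO ( h^{\infty} ) \qquad \text{for } \vert t \vert \geq h^{- C} ,
\end{equation}
since the same statement for $Q$ follows from the very same argument --- $Q$ also satisfies \ref{h1} and \ref{h4} --- and subtracting the two yields the proposition in that range.

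To prove \eqref{eq:aux-decay} I would use the Stone formula. As $\varphi \in C^{\infty}_{0} ( ] 0 , + \infty [ )$ and the spectrum of $P$ is purely absolutely continuous there, one has, as an identity between bounded operators on $L^{2} ( \R^{n} )$,
\begin{equation*}
\chi e^{- i t P / h} \varphi ( P ) \chi = \frac{1}{2 i \pi} \int_{\R} e^{- i t \lambda / h} \varphi ( \lambda ) \, \chi \big( ( P - \lambda - i 0 )^{- 1} - ( P - \lambda + i 0 )^{- 1} \big) \chi \, d \lambda .
\end{equation*}
Near $\supp \varphi$, the limiting absorption principle ensures that $\lambda \mapsto \chi ( P - \lambda \pm i 0 )^{- 1} \chi$ is a $C^{\infty}$ family of bounded operators, with $\partial_{\lambda}^{k} ( P - \lambda \pm i 0 )^{- 1} = k ! \, ( P - \lambda \pm i 0 )^{- k - 1}$. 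Writing $e^{- i t \lambda / h} = ( i h / t ) \partial_{\lambda} e^{- i t \lambda / h}$ and integrating by parts $M$ times (the boundary terms vanish since $\varphi$ has compact support), one obtains, with a constant depending on $M$,
\begin{equation*}
\big\Vert \chi e^{- i t P / h} \varphi ( P ) \chi \big\Vert \lesssim \Big( \frac{h}{\vert t \vert} \Big)^{M} \sup_{\lambda \in \supp \varphi} \ \max_{1 \leq \ell \leq M + 1} \big\Vert \chi ( P - \lambda \pm i 0 )^{- \ell} \chi \big\Vert .
\end{equation*}
Inserting the weights $\< x \>^{\pm s}$ between the successive resolvent factors, using that $\< x \>^{s} \chi$ is bounded, and applying \ref{h4} to each factor, one gets $\Vert \chi ( P - \lambda \pm i 0 )^{- \ell} \chi \Vert \lesssim h^{- \ell C}$ and hence
\begin{equation*}
\big\Vert \chi e^{- i t P / h} \varphi ( P ) \chi \big\Vert \lesssim \vert t \vert^{- M} h^{M ( 1 - C ) - C} .
\end{equation*}
For $\vert t \vert \geq h^{- C}$ the right-hand side is $\lesssim h^{M - C}$, and letting $M \to + \infty$ proves \eqref{eq:aux-decay}.

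The only points requiring some care are the validity of the Stone formula representation and of the integrations by parts, that is the $C^{\infty}$ regularity in $\lambda$ of $\chi ( P - \lambda \pm i 0 )^{- 1} \chi$ on a neighborhood of $\supp \varphi$ and the absence of singular spectrum and of embedded eigenvalues there; under \ref{h1} these are classical, and they are in any case implicit in the formulation of \ref{h4}. The key mechanism is the bound $\Vert \chi ( P - \lambda \pm i 0 )^{- \ell} \chi \Vert \lesssim h^{- \ell C}$: it is precisely here that \ref{h4} converts each $\lambda$-derivative of the resolvent into a controlled power of $h$, so that the oscillation of $e^{- i t \lambda / h}$ eventually dominates once $\vert t \vert$ exceeds the polynomial threshold $h^{- C}$ --- which is exactly the threshold up to which Proposition \ref{a2} already provides the comparison. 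I expect this derivative bound, together with the observation that the two regimes $\vert t \vert \le h^{- C}$ and $\vert t \vert \ge h^{- C}$ indeed exhaust $\R$, to be the substance of the argument; everything else is bookkeeping.
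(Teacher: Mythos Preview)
Your strategy is different from the paper's and, if it worked, would be slick; but there is a genuine gap at the step where you bound $\Vert \chi (P-\lambda\pm i0)^{-\ell}\chi \Vert$. The claim that this follows from \ref{h4} by ``inserting the weights $\<x\>^{\pm s}$ between the successive resolvent factors'' is not correct: \ref{h4} controls $\<x\>^{-s}(P-\lambda\pm i0)^{-1}\<x\>^{-s}$, i.e.\ the resolvent sandwiched between two \emph{decaying} weights. If you insert $\<x\>^{-s}\<x\>^{s}$ between consecutive resolvents, the interior factors become $\<x\>^{s}(P-\lambda\pm i0)^{-1}\<x\>^{-s}$, carrying a \emph{growing} weight on one side, and these are not bounded by \ref{h4}. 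Proposition~\ref{a13} does not help either: it only says the bound in \ref{h4} is uniform over $s>1/2$ for the symmetric form $\<x\>^{-s}(\cdot)\<x\>^{-s}$, not that one may trade a decaying weight for a growing one. Obtaining polynomial bounds on $\chi(P-\lambda\pm i0)^{-\ell}\chi$ in a trapping regime is a substantive fact requiring refined mapping properties of the resolvent between asymmetrically weighted spaces (Jensen--Mourre--Perry type estimates, or the Isozaki--Kitada parametrix at infinity with explicit $h$-dependence); it is not a consequence of \ref{h4} alone, and the paper does not establish it.

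The paper avoids higher resolvent powers altogether by working at the resolvent level: it proves directly that $\chi(P-\lambda\pm i0)^{-1}\chi = \chi(Q-\lambda\pm i0)^{-1}\chi + \CO(h^{\infty})$ uniformly for $\lambda\in\supp\varphi$. After inserting $\Op(g)$ with $g\equiv 1$ near $F_{p}(\chi,\varphi)$, the difference of resolvents becomes a commutator term $\chi(P-z)^{-1}[\Op(g),P](Q-z)^{-1}\widetilde{\varphi}(Q)\chi$; this is split by a phase-space cutoff $\psi$ into outgoing and incoming pieces (exactly as in the proof of Proposition~\ref{a2}), and each piece is shown to be $\CO(h^{\infty})$ by propagation of singularities combined with the Isozaki--Kitada constructions. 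Only the \emph{first} power of the resolvent ever appears, so \ref{h4} applies as stated, and the propagator comparison for all $t\in\R$ follows from a single application of the Stone formula with no integration by parts in $\lambda$.
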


With Propositions \ref{a2} and \ref{a38} in mind, one may ask if \eqref{a4} holds true for all time $t \in \R$ under the assumptions \ref{h1} and \ref{h2} only. It happens that such a statement is false.

\begin{remark}\sl \label{a58}
There exist $\chi \in C^{\infty}_{0} ( \R^{n} )$, $\varphi \in C^{\infty}_{0} ( ] 0 , + \infty [ )$ and operators $P , Q$ satisfying \ref{h1} and \ref{h2} such that \eqref{a4} does not hold uniformly for $t \in \R$. Of course, \ref{h4} is not satisfied in that case.
\end{remark}

\begin{figure}%[!h]
\begin{center}
\begin{picture}(0,0)%
\includegraphics{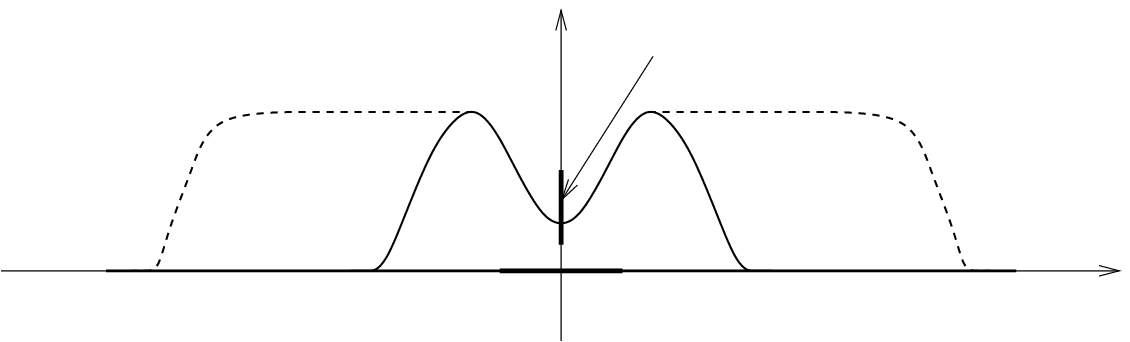}%
\end{picture}%
\setlength{\unitlength}{1105sp}%
\begingroup\makeatletter\ifx\SetFigFont\undefined%
\gdef\SetFigFont#1#2#3#4#5{%
  \reset@font\fontsize{#1}{#2pt}%
  \fontfamily{#3}\fontseries{#4}\fontshape{#5}%
  \selectfont}%
\fi\endgroup%
\begin{picture}(19244,5744)(-9621,-383)
\put(6826,2339){\makebox(0,0)[lb]{\smash{{\SetFigFont{9}{10.8}{\rmdefault}{\mddefault}{\updefault}$W ( x  )$}}}}
\put(2701,2339){\makebox(0,0)[lb]{\smash{{\SetFigFont{9}{10.8}{\rmdefault}{\mddefault}{\updefault}$V ( x  )$}}}}
\put(1801,4439){\makebox(0,0)[lb]{\smash{{\SetFigFont{9}{10.8}{\rmdefault}{\mddefault}{\updefault}$\supp \varphi$}}}}
\put(901,314){\makebox(0,0)[b]{\smash{{\SetFigFont{9}{10.8}{\rmdefault}{\mddefault}{\updefault}$\supp \chi$}}}}
\put(5401,314){\makebox(0,0)[b]{\smash{{\SetFigFont{9}{10.8}{\rmdefault}{\mddefault}{\updefault}$\supp \chi_{0}$}}}}
\end{picture}%
\end{center}
\caption{The setting in the proof of Remark \ref{a58}.} \label{f3}
\end{figure}

\begin{proof}[Proof of Remark \ref{a58}]
We first consider a smooth compactly supported potential $V$ as in Figure \ref{f3}. Using Theorem \ref{a15} below and \cite[Proposition D.1]{BoFuRaZe18_01} which allows to compare the truncated resolvent of $P = - h^{2} \Delta + V$ with the resolvent of the distorted operator $P_{\theta}$, we have $\partial_{z} \chi ( P - z )^{- 1} \chi = \partial_{z} \chi ( P_{\theta} - z )^{- 1} \chi = \chi ( P_{\theta} - z )^{- 2} \chi =\CO ( e^{3 C_{0} / h} )$, for $z = \lambda \pm i 0$, $\lambda \in \supp \varphi$. Then, an integration by parts in the Stone formula (see \eqref{a7}) yields
\begin{equation} \label{a56}
\chi e^{- i t P / h} \varphi ( P ) \chi = - \frac{1}{2 \pi} \frac{h}{t} \int_{\R} e^{- i t \lambda / h} \partial_{\lambda} \big( \varphi ( \lambda ) F_{P} ( \lambda ) \big) \, d \lambda = \CO \Big( h \frac{e^{3 C_{0} / h}}{t} \Big) = \CO ( h^{\infty} ) ,
\end{equation}
uniformly for $t \geq e^{4 C_{0} / h}$.

On the other hand, we choose another potential $W \in C^{\infty}_{0} ( \R^{n} )$ as in Figure \ref{f3} and we set $Q = - h^{2} \Delta + W$. In particular, $W$ coincides with $V$ near the support of $\chi$. We also ask that $W$ satisfies the geometric assumptions of Fujii{\'e}, Lahmar-Benbernou and Martinez \cite{FuLaMa11_01} (see Helffer and Sj{\"o}strand \cite[Chapitre 10]{HeSj86_01} in the analytic case). In particular, the minimum in the well is non-degenerate and we denote by $S_{0}$ the Agmon distance from this minimum to the sea. Under these assumptions, Theorem 2.3 of \cite{FuLaMa11_01} gives that the imaginary part of the first resonance $\rho ( h )$ (that is the resonance closest to energy of the bottom of the well) satisfies
\begin{equation*}
\im \rho \sim - f_{0} h^{\frac{1 - n_{\Gamma}}{2}} e^{- 2 S_{0} / h} ,
\end{equation*}
for some constant $f_{0} > 0$ and $0 \leq n_{\Gamma} \leq n - 1$. Eventually, we assume that the island is large enough so that $2 S_{0} - 1 > 4 C_{0}$. Let $u$ be a resonant state for $Q$ associated to the resonance $\rho$ and normalized in the island. From Theorem 4.3 of G{\'e}rard and Sigal \cite{GeSi92_01} and since $u$ is exponentially small outside the well, we have
\begin{equation} \label{a57}
\chi e^{- i t Q / h} \varphi ( Q ) \chi u = \chi e^{- i t Q / h} \chi_{0} u + \CO ( h^{\infty} ) = e^{- i t \rho / h} \chi u + \CO ( h^{\infty} ) ,
\end{equation}
uniformly for $0 \leq t \leq e^{( 2 S_{0} - 1 ) / h}$. The result of G{\'e}rard and Sigal requires the analyticity of the potential near the whole $\R^{n}$ since it relies on some estimates of Helffer and Sj{\"o}strand \cite{HeSj86_01}. But we can use \cite{FuLaMa11_01} instead to extend this result to potentials analytic at infinity.

In the present geometric setting, we have $F_{p} ( \chi , \varphi ) = F_{q} ( \chi , \varphi ) = \supp ( \chi \varphi ( p ) )$ and \ref{h2} is satisfied. Moreover, \eqref{a56} and \eqref{a57} imply
\begin{equation}
\big\Vert \chi e^{- i t P / h} \varphi ( P ) \chi u \big\Vert = \CO ( h^{\infty} ) \qquad \text{and} \qquad \big\Vert \chi e^{- i t Q / h} \varphi ( Q ) \chi u \big\Vert = 1 + \CO ( h^{\infty} ) ,
\end{equation}
uniformly for $e^{4 C_{0} / h} \leq t \leq e^{( 2 S_{0} - 1 ) / h}$. This prove that \eqref{a4} does not hold for such times.
\end{proof}

From the previous discussion, we have to be more specific about the operator $Q$ in order to have a general result and get rid of \ref{h4}. Then, let $g_{0} \in C^{\infty}_{0} ( \R^{n} ; [ 0 , 1 ] )$ be such that $g_{0} = 1$ near $B ( 0 , 1 )$. We assume that
\begin{hypaa} \label{h3}
$Q = - h^{2} \Delta + W ( x )$ with $\ds W ( x ) = g_{0} \Big( \frac {x}{R} \Big) V ( x )$,
\end{hypaa}
where $R > 1$ is a large constant which will be fixed in the sequel. Under this hypothesis, our main result is the following.

\begin{theorem}[General comparison of evolutions]\sl \label{a3}
Assume \ref{h1} and \ref{h3}. Let $\chi \in C^{\infty}_{0} ( \R^{n} )$ and $\varphi \in C^{\infty}_{0} ( ] 0 , + \infty [ )$. For $R > 0$ large enough, we have
\begin{equation*}
\chi e^{- i t P / h} \varphi ( P ) \chi = \chi e^{- i t Q / h} \varphi ( Q ) \chi + \CO ( h^{\infty} ) ,
\end{equation*}
uniformly for $t \in \R$.
\end{theorem}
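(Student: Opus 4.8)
The plan is to bootstrap Theorem \ref{a3} from Proposition \ref{a38} by producing, for $R$ large, an auxiliary operator which is analytic at infinity (so that \ref{h4} is available for it) and which interpolates between $P$ and $Q$. Concretely, I would introduce a third Schr\"odinger operator $\widetilde Q = - h^{2} \Delta + \widetilde W$ with $\widetilde W ( x ) = g_{0} ( x / R' ) V ( x )$ for some fixed radius $R'$ with $R' \ll R$ but large enough that $\pi_{x} ( K_{p} ( \supp \varphi ) ) \Subset B ( 0 , R' / 2 )$; such a $\widetilde W$ is compactly supported, hence analytic at infinity, so the resolvent estimate \ref{h4} holds for $\widetilde Q$ (this is the standard polynomial bound for compactly supported perturbations, which one may quote). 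By construction $W$ and $\widetilde W$ both equal $V$ on $B ( 0 , R' )$, so $p = \widetilde q$ near $F_{\widetilde q} ( \chi , \varphi )$ once $R'$ dominates the support data of $\chi , \varphi$ and the trapped set; hence \ref{h2} holds for the pair $( \widetilde Q , Q )$ as well as for $( \widetilde Q , P )$.

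The second step is the genuinely new estimate, contained in the later Sections \ref{s5}--\ref{s6}: for $R$ large enough one has the cut-off resolvent comparison \eqref{b64} between $P$ and $Q = -h^2\Delta + g_0(x/R)V$, valid without any trapping hypothesis, with an $\CO(h^\infty)$ remainder. This is precisely the content that makes $Q$ (rather than just $\widetilde Q$) usable. I would use this to transfer \ref{h4} from $\widetilde Q$ to $P$, and from $P$ to $Q$: combining Remark \ref{a47} applied to the pair $( \widetilde Q , P )$ (legitimate since \ref{h2} holds and \ref{h4} holds for $\widetilde Q$) gives that $P$ satisfies \ref{h4}; then Remark \ref{a47} applied to the pair $( P , Q )$ gives that $Q$ satisfies \ref{h4}. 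Once we know \ref{h4} holds for $P$ and \ref{h2} holds for $( P , Q )$, Proposition \ref{a38} applies verbatim and yields
\begin{equation*}
\chi e^{- i t P / h} \varphi ( P ) \chi = \chi e^{- i t Q / h} \varphi ( Q ) \chi + \CO ( h^{\infty} ) ,
\end{equation*}
uniformly for $t \in \R$, which is the assertion.

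There is a small subtlety about whether \ref{h4} for $P$ really holds with constants independent of $R$ (it should not depend on $R$ at all, since \ref{h4} is a statement purely about $P$); the route through $\widetilde Q$ makes this transparent because $\widetilde Q$ is fixed once $R'$ is fixed, and $R'$ is chosen before $R$. Alternatively, and perhaps more cleanly, one can bypass $\widetilde Q$ entirely: \ref{h4} for $Q = -h^2\Delta + g_0(x/R)V$ is automatic because $W$ is compactly supported, and then one needs \eqref{b64} plus Remark \ref{a47} (now applied to the pair $(P,Q)$, using \eqref{b64} to get \ref{h4} for $P$ out of \ref{h4} for $Q$) to feed Proposition \ref{a38}. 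The main obstacle, which is why the proof is deferred, is establishing \eqref{b64} uniformly in the absence of a trapping hypothesis; everything else here is bookkeeping with results already in hand. I would therefore present Theorem \ref{a3} as a corollary of Proposition \ref{a38}, Remark \ref{a47}, and the resolvent comparison \eqref{b64}, and relegate the real work to the later sections as the authors indicate.
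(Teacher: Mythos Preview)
Your approach has a genuine gap: you assert that \ref{h4} (a polynomial bound on the weighted resolvent) holds automatically for Schr\"odinger operators with compactly supported potential, but this is false. The universal a priori bound is Burq's \emph{exponential} estimate \eqref{a16}; a polynomial bound requires assumptions on the trapping. Remark \ref{a58} in the paper constructs precisely a compactly supported potential (a well in an island) for which \ref{h4} fails---the resolvent norm there is of order $e^{C/h}$, not $h^{-C}$. Since $Q$ in \ref{h3} is just $P$ truncated at scale $R$, if $P$ has such trapping inside $B(0,R')$ then so do $\widetilde Q$ and $Q$, and neither satisfies \ref{h4}. Your bootstrap through Remark \ref{a47} and Proposition \ref{a38} therefore cannot get started; indeed, Remark \ref{a58} is placed in the paper exactly to explain why Proposition \ref{a38} is insufficient for Theorem \ref{a3}.

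The paper's proof avoids this obstacle by never requiring a pointwise polynomial resolvent bound. It uses the Stone formula \eqref{a7} to write the propagator as an integral over $\lambda$ of the cut-off spectral measure, applies the resolvent comparison Theorem \ref{a6} (your \eqref{b64}) to produce a remainder of the form
\[
\CO(h^{\infty}) \int_{\supp\varphi} \big\Vert \langle x\rangle^{-1}(Q-\lambda-i0)^{-1}\langle x\rangle^{-1}\big\Vert \, d\lambda,
\]
and then controls this integral by Stefanov's estimate \eqref{a9}, which gives a polynomial bound on the $L^{1}$-in-$\lambda$ norm of the resolvent, valid without any trapping hypothesis. The key point is that even when the resolvent is exponentially large at isolated energies, its integral over a compact energy window is only polynomially large, and this is all the Stone formula needs.
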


The $R$'s for which this result holds true may depend on $\chi , \varphi$ and $P$. Note also that \ref{h2} is satisfied for $R$ large enough. The proof of Theorem \ref{a3} relies on the estimate of the difference of the resolvents given in Theorem \ref{a6} below and on a polynomial bound on the integral of the resolvent obtained by Stefanov.

\begin{proof}
By the Stone formula, we have
\begin{equation} \label{a7}
\chi e^{- i t P / h} \varphi ( P ) \chi = \frac{1}{2 \pi i} \int_{\R} e^{- i t \lambda / h} \varphi ( \lambda ) F_{P} ( \lambda ) \, d \lambda ,
\end{equation}
where
\begin{align*}
F_{P} ( \lambda ) &= \chi \lim_{\varepsilon \to 0} \big( (P - \lambda - i \varepsilon )^{-1} - (P - \lambda + i \varepsilon )^{-1} \big) \chi  \\
&= \chi ( P - \lambda - i 0 )^{-1} \chi - \chi ( P - \lambda + i 0 )^{-1} \chi ,
\end{align*}
thanks to the limiting absorption principle. On the other hand, Theorem \ref{a6} gives for $R$ large enough
\begin{equation*}
\chi ( P - \lambda \pm i 0 )^{-1} \chi = \chi ( Q - \lambda \pm i 0 )^{-1} \chi + \CO ( h^{\infty} ) \big\Vert \< x \>^{- 1} ( Q - \lambda - i 0 )^{-1} \< x \>^{- 1} \big\Vert ,
\end{equation*}
uniformly for $\lambda \in \supp \varphi$. Thus, \eqref{a7} implies
\begin{equation} \label{a8}
\chi e^{- i t P / h} \varphi ( P ) \chi = \chi e^{- i t Q / h} \varphi ( Q ) \chi + \CO ( h^{\infty} ) \int_{\supp \varphi} \big\Vert \< x \>^{- 1} ( Q - \lambda - i 0 )^{-1} \< x \>^{- 1} \big\Vert \, d \lambda .
\end{equation}
Since $Q$ is a compactly supported perturbation of $- h^{2} \Delta$, one can use Proposition 3 of Stefanov \cite{St01_01} which gives that
\begin{equation} \label{a9}
\int_{\supp \varphi} \big\Vert \< x \>^{- 1} ( Q - \lambda - i 0 )^{-1} \< x \>^{- 1} \big\Vert \, d \lambda \lesssim h^{- \frac{5 n}{2} - 3} .
\end{equation}
More precisely, this result is originally stated with cut-off functions instead of weights $\< x \>^{- 1}$ and in the high frequency regime. But, the first point can be overcome thanks to Proposition \ref{a13}. On the other hand, the argument of Stefanov can be directly adapted to our semiclassical regime and provides the upper bound \eqref{a9} in that case (taking the $\varepsilon$ there equal to $1 / 4$). Eventually, Theorem \ref{a3} follows from \eqref{a8} and \eqref{a9}.
\end{proof}

\begin{remark}\sl \label{a72}
In \ref{h3}, we have chosen to stick the potential $V$ to $0$, but we could have sticked it to another potential. More precisely, one can replace $W ( x )$ in \ref{h3} by
\begin{equation*}
 W ( x ) = g_{0} \Big( \frac {x}{R} \Big) V ( x ) + ( 1 - g_{0} ) \Big( \frac {x}{R} \Big) \widetilde{V} ( x ) ,
\end{equation*}
where $\widetilde{V}$ satisfies \ref{h1}. The choice $\widetilde{V} = 0$ we have made is the most natural for the applications to the resonance theory.
\end{remark}

We have chosen to state Theorem \ref{a3} for Schr\"{o}dinger operators $P = - h^{2} \Delta + V ( x )$ but this result may be generalized to more general operators. For instance, one could consider elliptic selfadjoint differential operators of order two close to $- h^{2} \Delta$ at infinity, smooth compact obstacles with connected complement and even manifolds with different types of ends. The important ingredients used in the proof are a polynomial bound of the resolvent truncated at infinity, and that what goes to infinity does not come back.

Theorem \ref{a3} allows to extend straightly all the resonance expansions already obtained for operators with compactly supported potential to the setting of operators with $C^{\infty}$ potential. In the rest of this section, we give some examples of such applications. We start with a general result showing that the propagator associated to any operator satisfying \ref{h1} has a resonance expansion in long time. For that, we use a theorem of Burq and Zworski \cite{BuZw01_01}. Instead, we could have considered Tang and Zworski \cite{TaZw00_01}, Stefanov \cite{St01_01}, \ldots

\begin{corollary}[General resonance expansion]\sl \label{a11}
Assume \ref{h1} and let $\chi \in C^{\infty}_{0} ( \R^{n} )$, $\varphi \in C^{\infty}_{0} ( ] 0 , + \infty [ )$ with $[ a , b ] = \chsupp \varphi$, $0 < \delta < a / 4$ and $M > 1$ be large enough. Then, there exist $L , R > 1$, $\delta < c ( h ) < 2 \delta$ and $d ( h ) = \CO ( h )$ such that
\begin{equation} \label{a59}
\chi e^{- i t P / h} \chi \varphi ( P ) = \sum_{z \in \Omega ( h ) \cap \res ( Q )} \chi \CR \big( e^{- i t \bullet / h} ( \bullet - Q )^{- 1} , z \big) \chi \varphi ( Q ) + \CO ( h^{\infty} ),
\end{equation}
uniformly for $t > h^{- L}$ where $Q$ is as in \ref{h3}, $\res ( Q )$ is the set of its resonances,
\begin{equation*}
\Omega ( h ) = \big] a - c ( h ) , b + c ( h ) \big[ - i \big[ 0 , h^{M} ( 1 + d ( h ) ) \big[ ,
\end{equation*}
and $\CR ( f ( \bullet ) , z )$ denotes the residue of a meromorphic family of operator $f$ at $z$.
\end{corollary}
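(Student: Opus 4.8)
The plan is to combine Theorem \ref{a3}, which reduces the long-time propagator of $P$ to that of the compactly supported operator $Q$ up to $\CO(h^\infty)$, with an already-established resonance expansion for $Q$. First I would apply Theorem \ref{a3} with the $R$ supplied by \ref{h3}: for $R$ large enough,
\begin{equation*}
\chi e^{-itP/h} \varphi(P) \chi = \chi e^{-itQ/h} \varphi(Q) \chi + \CO(h^\infty),
\end{equation*}
uniformly in $t \in \R$, so it suffices to produce the expansion \eqref{a59} for $Q$. Since $Q = -h^2\Delta + W$ with $W$ compactly supported, $Q$ is a black-box/compactly supported perturbation of the free Laplacian in the sense of Burq and Zworski \cite{BuZw01_01}; in particular $\res(Q)$ is defined through the meromorphic continuation of the truncated resolvent, and the necessary polynomial resolvent bounds on a region just below the real axis are available (either from \cite{BuZw01_01} directly or, for the integral bound, from Stefanov \cite{St01_01} as already used in the proof of Theorem \ref{a3}).

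Next I would run the standard contour-deformation argument for $Q$. Starting from the Stone formula
\begin{equation*}
\chi e^{-itQ/h} \varphi(Q) \chi = \frac{1}{2\pi i} \int_\R e^{-it\lambda/h} \varphi(\lambda) \big( \chi (Q-\lambda-i0)^{-1}\chi - \chi(Q-\lambda+i0)^{-1}\chi \big) \, d\lambda,
\end{equation*}
I would deform the contour from the real axis down into the lower half-plane to the level $\im z = -h^M(1+d(h))$, picking up by the residue theorem the finite sum of residues at the resonances $z \in \Omega(h) \cap \res(Q)$, which gives precisely the main term $\sum_{z} \chi\, \CR(e^{-it\bullet/h}(\bullet-Q)^{-1}, z)\,\chi\,\varphi(Q)$. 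The horizontal truncation of the contour at the endpoints $\lambda = a - c(h)$ and $\lambda = b + c(h)$ is where the cutoff $\varphi$ and the freedom to choose $\delta < c(h) < 2\delta$ enter: one chooses $c(h)$ (by an averaging/pigeonhole argument over the parameter range, à la Tang–Zworski) so that the vertical segments avoid resonances and the resolvent is polynomially bounded there, making those contributions $\CO(h^\infty)$ once multiplied by $e^{-it\lambda/h}$ integrated against the rapidly decaying tails of $\varphi$ near $a$ and $b$ — this uses $0 < \delta < a/4$ so the cut stays inside $]0,+\infty[$ where $\varphi$ is supported and away from $0$. The remaining bottom segment at $\im z = -h^M(1+d(h))$ contributes a term bounded by $e^{-t h^{M-1}(1+d(h))}$ times the integral of the resolvent norm, which by Stefanov's bound \eqref{a9} is $\lesssim h^{-5n/2-3}$; hence for $t > h^{-L}$ with $L > M - 1$ this is $\CO(h^\infty)$, fixing the threshold $L$ and $M$.

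Finally I would assemble the pieces: the main sum from the residues, the $\CO(h^\infty)$ from the vertical and bottom segments, and the $\CO(h^\infty)$ from Theorem \ref{a3} relating $P$ back to $Q$, yielding \eqref{a59}. The main obstacle is not conceptual but technical bookkeeping in the contour deformation: one must justify the meromorphic continuation of $\chi(Q-z)^{-1}\chi$ past the real axis into the window $\Omega(h)$, control the number of resonances there and the polynomial bounds on the resolvent on the deformed contour with explicit powers of $h$, and choose $c(h), d(h), L, M$ compatibly so that every error term is genuinely $\CO(h^\infty)$ uniformly for $t > h^{-L}$. All of this is standard for compactly supported (black-box) perturbations — which is exactly why reducing from $P$ to $Q$ via Theorem \ref{a3} is the crucial step — and the precise estimates can be quoted from \cite{BuZw01_01} and \cite{St01_01}.
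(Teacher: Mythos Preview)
Your approach is essentially the paper's, but you have glossed over one detail and done more work than necessary on another.

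The detail: the corollary is about $\chi e^{-itP/h}\chi\varphi(P)$, whereas Theorem \ref{a3} gives $\chi e^{-itP/h}\varphi(P)\chi = \chi e^{-itQ/h}\varphi(Q)\chi + \CO(h^\infty)$, with the spectral cutoff and the spatial cutoff in the opposite order on the right. You simply wrote down the Theorem \ref{a3} identity and declared ``it suffices to produce the expansion for $Q$'', but that does not yet match the operator ordering in \eqref{a59}. The paper fixes this with a short auxiliary-cutoff argument: take $\varphi \prec \widetilde{\varphi} \prec \one_{]0,+\infty[}$ and use pseudodifferential calculus together with $V=W$ near $\supp\chi$ to write
\begin{align*}
\chi e^{-itP/h}\chi\varphi(P)
&= \chi e^{-itP/h}\widetilde{\varphi}(P)\chi\varphi(P) + \CO(h^\infty) \\
&= \chi e^{-itQ/h}\widetilde{\varphi}(Q)\chi\varphi(P) + \CO(h^\infty) \\
&= \chi e^{-itQ/h}\widetilde{\varphi}(Q)\chi\varphi(Q) + \CO(h^\infty) \\
&= \chi e^{-itQ/h}\chi\varphi(Q) + \CO(h^\infty),
\end{align*}
the second line being Theorem \ref{a3} applied with $\widetilde{\varphi}$. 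This is easy but not automatic, and your proposal skips it.

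The extra work: once you are reduced to $\chi e^{-itQ/h}\chi\varphi(Q)$ for the compactly supported $Q$, the paper does not redo the Stone formula and contour deformation at all --- it simply invokes Theorem~1 of Burq--Zworski \cite{BuZw01_01}, which already provides the expansion \eqref{a59} together with the constants $L$, $c(h)$, $d(h)$ and the constraint on $M$. Your outline of the contour argument is correct in spirit and is essentially how \cite{BuZw01_01} proceeds, but for this corollary it suffices to cite that result.
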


The constant $R$ can be chosen arbitrarily large and independent of $M$. The small functions $c ( h ) $ and $d ( h )$ guaranty that $\partial \Omega ( h ) \setminus \R$, the ``remainder'' part of the boundary of $\Omega$, is away from the resonances of $Q$. Finally, $\chsupp \varphi$ denotes the convex hull of the support of $\varphi$.

\begin{proof}
Let $\widetilde{\varphi} \in C^{\infty}_{0} ( \R ; [ 0 , 1 ] )$ be such that $\varphi \prec \widetilde{\varphi} \prec \one_{] 0 , + \infty [}$. From the pseudodifferential calculus and Theorem \ref{a3}, we have
\begin{align}
\chi e^{- i t P / h} \chi \varphi ( P ) &= \chi e^{- i t P / h} \widetilde{\varphi} ( P ) \chi \varphi ( P ) + \CO ( h^{\infty} )  \nonumber \\
&= \chi e^{- i t Q / h} \widetilde{\varphi} ( Q ) \chi \varphi ( P ) + \CO ( h^{\infty} )  \nonumber \\
&= \chi e^{- i t Q / h} \widetilde{\varphi} ( Q ) \chi \varphi ( Q ) + \CO ( h^{\infty} )  \nonumber \\
&= \chi e^{- i t Q / h} \chi \varphi ( Q ) + \CO ( h^{\infty} ) , \label{a10}
\end{align}
uniformly for $t \in \R$. In the previous equalities, $R > 1$ is chosen large enough such that $V = W$ near the support of $\chi$ and such that Theorem \ref{a3} holds true. Then, Corollary \ref{a11} is a direct consequence of \eqref{a10} and of Theorem 1 of Burq and Zworski \cite{BuZw01_01}. Here, we use that the Schr\"{o}dinger operator with compactly supported potential $Q$ enters into the setting of \cite{BuZw01_01}.
\end{proof}

Resonance expansion may be seen as the generalization of the Dirichlet formula for eigenvalues to the theory of resonances. For $P$ with compact resolvent, this formula gives
\begin{equation*}
e^{- i t P / h} = \sum_{\lambda \in \spe ( P )} e^{- i t \lambda / h} \Pi_{\lambda} ,
\end{equation*}
where $\Pi_{\lambda}$ is the spectral projector associated to the eigenvalue $\lambda$ of $P$. However, Theorem \ref{a3} does not have a counterpart for eigenvalues. Indeed, assume that
\begin{equation*}
e^{- i t P / h} \varphi ( P ) = e^{- i t Q / h} \varphi ( Q ) + \CO ( h^{\infty} ) ,
\end{equation*}
holds true with $\supp \varphi \cap( \spe_{\rm ess} ( P ) \cup \spe_{\rm ess} ( Q ) ) =\emptyset$. Then, the eigenvalues of $P$ are exactly those of $Q$ in $\varphi^{- 1} ( 1 )$ for $h$ small enough, which is not reasonable in general. Indeed, assume that $\lambda_{0} \in \varphi^{- 1} ( 1 )$ is an eigenvalue of $P$ but not of $Q$. We can write
\begin{equation*}
\Pi_{\lambda_{0}} ( P ) = \sum_{\lambda \in \spe ( Q )} \varphi ( \lambda ) e^{- i t ( \lambda - \lambda_{0} ) / h} \Pi_{\lambda} ( Q  ) - \sum_{\lambda_{0} \neq \lambda \in \spe ( P )} \varphi ( \lambda ) e^{- i t ( \lambda - \lambda_{0} ) / h} \Pi_{\lambda} ( P ) + r ( t ) ,
\end{equation*}
with $\Vert r ( t ) \Vert \leq 1 / 2$. Integrating with respect to $t \in [ 0 , T ]$ and taking the norm lead to $T \leq C + T / 2$ for some constant $C > 0$ which provides a contradiction for $T$ large. This argument can not be made for resonances because the leading part of the asymptotic (i.e. the sum in \eqref{a59}) is already $\CO ( h^{\infty} )$ at the time $T$ needed to get the contradiction. In other words, the difference between the resonances of $P$ and $Q$ is small compared to the imaginary part of these resonances (under \ref{h3} with $R$ large enough). Then, Theorem \ref{a3} can be seen as ``the stability at infinity of the metastability''.

\begin{figure}%[!h]
\begin{center}
\begin{picture}(0,0)%
\includegraphics{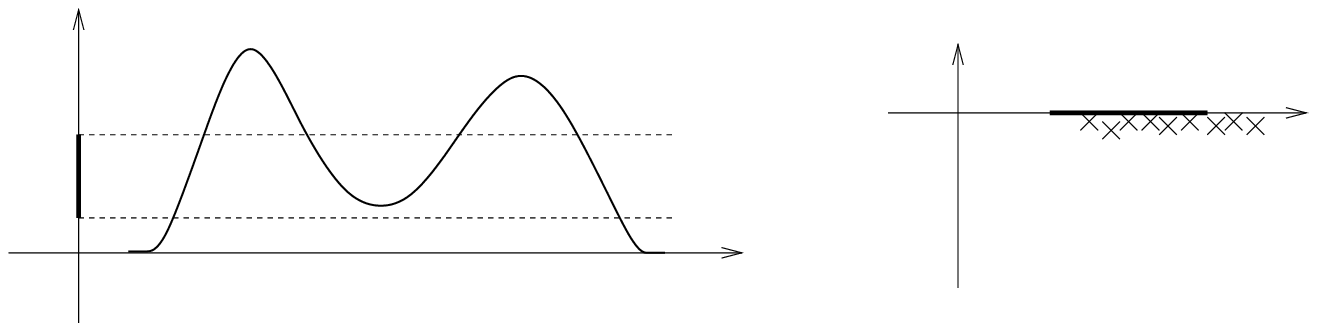}%
\end{picture}%
\setlength{\unitlength}{1105sp}%
\begingroup\makeatletter\ifx\SetFigFont\undefined%
\gdef\SetFigFont#1#2#3#4#5{%
  \reset@font\fontsize{#1}{#2pt}%
  \fontfamily{#3}\fontseries{#4}\fontshape{#5}%
  \selectfont}%
\fi\endgroup%
\begin{picture}(23662,5444)(-2564,-383)
\put(3676,3839){\makebox(0,0)[lb]{\smash{{\SetFigFont{9}{10.8}{\rmdefault}{\mddefault}{\updefault}$V ( x  )$}}}}
\put(-2549,2039){\makebox(0,0)[lb]{\smash{{\SetFigFont{9}{10.8}{\rmdefault}{\mddefault}{\updefault}$\chsupp \varphi$}}}}
\put(17176,3614){\makebox(0,0)[lb]{\smash{{\SetFigFont{9}{10.8}{\rmdefault}{\mddefault}{\updefault}$\chsupp \varphi$}}}}
\end{picture}%
\end{center}
\caption{The geometric setting and the resonances of $Q$ in Example \ref{a61}.} \label{f4}
\end{figure}

\begin{example}\rm \label{a61}
We now apply Theorem \ref{a3} to the well in the island setting. We consider $P$ satisfying \ref{h1} and $\varphi \in C^{\infty}_{0} ( ] 0 , + \infty [ )$. Noting $I = \chsupp \varphi$, we assume that there exist two closed sets, $\Sigma^{e} ( I )$, $\Sigma^{i} ( I )$, such that
\begin{equation*}
p ^{- 1} ( I ) = \Sigma^{e} ( I ) \cup \Sigma^{i} ( I ) , \qquad \Sigma^{e} ( I ) \cap \Sigma^{i} ( I ) = \emptyset, \qquad \Sigma^{i} ( I ) \Subset T^{*} \R^{n} ,
\end{equation*}
and
\begin{equation*}
\rho \in \Sigma^{e} ( I ) \quad \Longrightarrow \quad \exp ( t H_{p}) ( \rho ) \to \infty \text{ as } t \to \pm \infty .
\end{equation*}
In particular, $K_{p} ( I ) = \Sigma^{i} ( I )$. The geometric setting as well as the distribution of resonances when the potential is analytic at infinity (e.g. for $Q$) are illustrated in Figure \ref{f4}. We decompose any $\chi \in C^{\infty}_{0} ( \R^{n} )$ as $\chi = \chi_{1} + \chi_{2}$ with
\begin{equation*}
\pi ( \Sigma^{e} ( I ) ) \cap \supp \chi_{1} = \emptyset , \qquad \pi ( \Sigma^{i} ( I ) ) \cap \supp \chi_{2} = \emptyset ,
\end{equation*}
and $\pi ( x , \xi ) = x$. Combining Nakamura, Stefanov and Zworski \cite{NaStZw03_01} in the case of Schr\"{o}dinger operators with \eqref{a10}, we immediately obtain

\begin{corollary}[Shape resonances]\sl \label{a60}
Let $P , \varphi , \chi$ be as before, $0 < \delta \ll 1$ and $C_{1} > 0$. Then, there exist $R > 1$, $\delta < c ( h ) < 2 \delta$ and $C_{2} > 0$ such that
\begin{align*}
\chi e^{- i t P / h} \chi \varphi ( P ) = \sum_{z \in \Omega ( h ) \cap \res ( Q )} \chi_{1} \CR \big( e^{- i t \bullet / h} & ( \bullet - Q )^{- 1} , z \big) \chi_{1} \varphi ( Q ) \\
&+ \chi_{2} \CO \big( \big\< t - C_{2} ) _{+} / h \big\>^{- \infty} \big) \chi_{2} + \CO ( h^{\infty} ),
\end{align*}
uniformly for $t \geq 0$ where $Q$ is as in \ref{h3}, $\res ( Q )$ is the set of its resonances,
\begin{equation*}
\Omega ( h ) = \chsupp \varphi + ] - c ( h ) , c ( h ) [ - i [ C_{1} h , 0 ] ,
\end{equation*}
and $\CR ( f ( \bullet ) , z )$ denotes the residue of a meromorphic family of operator $f$ at $z$.
\end{corollary}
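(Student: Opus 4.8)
The plan is to transfer the statement to the auxiliary operator $Q$ of \ref{h3}, whose potential is compactly supported — hence analytic at infinity — and then to invoke the well in the island resonance expansion of Nakamura, Stefanov and Zworski \cite{NaStZw03_01} for $Q$. Fix $R > 1$ large enough that Theorem \ref{a3} applies, so that, exactly as in \eqref{a10},
\begin{equation*}
\chi e^{- i t P / h} \chi \varphi ( P ) = \chi e^{- i t Q / h} \chi \varphi ( Q ) + \CO ( h^{\infty} )
\end{equation*}
uniformly for $t \in \R$; it then suffices to establish the expansion for $Q$.

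For this, one first checks that, for $R$ large, $Q$ satisfies the hypotheses of Example \ref{a61} with the \emph{same} interior set $\Sigma^{i} ( I )$ and the \emph{same} splitting $\chi = \chi_{1} + \chi_{2}$. Since $\Sigma^{i} ( I )$ and $\Sigma^{e} ( I )$ are disjoint closed subsets of $p^{- 1} ( I )$ with $\Sigma^{i} ( I )$ compact, there is a bounded open set $U \supset \Sigma^{i} ( I )$ with $\overline{U} \cap \Sigma^{e} ( I ) = \emptyset$, so that $\overline{U} \cap p^{- 1} ( I ) = \Sigma^{i} ( I )$; taking $R$ large enough that $q = p$ on $\overline{U}$ (and on $\supp \chi$) gives $\overline{U} \cap q^{- 1} ( I ) = \Sigma^{i} ( I )$. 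Then $\Sigma^{i}_{q} ( I ) := \Sigma^{i} ( I )$ and $\Sigma^{e}_{q} ( I ) := q^{- 1} ( I ) \setminus \Sigma^{i} ( I )$ are closed (the second one avoids the neighbourhood $U$ of $\Sigma^{i} ( I )$), disjoint, cover $q^{- 1} ( I )$, and $\Sigma^{i}_{q} ( I ) \Subset T^{*} \R^{n}$; moreover $\pi ( \Sigma^{e}_{q} ( I ) ) \cap \supp \chi_{1} = \emptyset$ and $\pi ( \Sigma^{i}_{q} ( I ) ) \cap \supp \chi_{2} = \emptyset$ because $q^{- 1} ( I )$ and $p^{- 1} ( I )$ coincide over $\supp \chi$. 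Finally, a Hamiltonian curve of $q$ through $\Sigma^{e}_{q} ( I )$ stays in that closed $H_{q}$-invariant set, so its limit sets lie in $\Sigma^{e}_{q} ( I ) \cap K_{q} ( I )$; it therefore escapes to infinity in both time directions — in fact $x \to \infty$, since we are at energy in $I \Subset \, ] 0 , + \infty [$ — provided $K_{q} ( I ) = \Sigma^{i} ( I )$.

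The equality $K_{q} ( I ) = \Sigma^{i} ( I )$ is the one point that genuinely uses the structure $W ( x ) = g_{0} ( x / R ) V ( x )$ and the largeness of $R$: for $R$ large, $W$ is so small outside a fixed ball $B ( 0 , r_{1} )$ (uniformly in $R$) that $\frac{d}{d t} ( x \cdot \xi ) = 2 \vert \xi \vert^{2} - x \cdot \nabla W ( x ) > 0$ along the $H_{q}$-flow on $q^{- 1} ( I ) \cap \{ \vert x \vert \geq r_{1} \}$; a standard escape-function argument then shows that a bounded Hamiltonian curve of $q$ at energy in $I$ never leaves $\pi^{- 1} ( B ( 0 , r_{1} ) )$, where $q = p$, hence lies in $K_{p} ( I ) = \Sigma^{i} ( I )$. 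The reverse inclusion is immediate since $\Sigma^{i} ( I )$ is compact and $H_{q}$-invariant. I expect this escape estimate, together with the bookkeeping of the previous paragraph, to be the main (if modest) technical point; everything else is formal.

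With $Q$ placed in the setting of \cite{NaStZw03_01}, their theorem produces $\delta < c ( h ) < 2 \delta$, chosen so that the (polynomially many in $h^{- 1}$) resonances of $Q$ near $\chsupp \varphi$ stay away from $\partial \Omega ( h ) \setminus \R$, and a constant $C_{2} > 0$ such that
\begin{equation*}
\chi e^{- i t Q / h} \chi \varphi ( Q ) = \sum_{z \in \Omega ( h ) \cap \res ( Q )} \chi_{1} \CR \big( e^{- i t \bullet / h} ( \bullet - Q )^{- 1} , z \big) \chi_{1} \varphi ( Q ) + \chi_{2} \CO \big( \big\< ( t - C_{2} )_{+} / h \big\>^{- \infty} \big) \chi_{2} + \CO ( h^{\infty} ) ,
\end{equation*}
uniformly for $t \geq 0$, with $\Omega ( h ) = \chsupp \varphi + \, ] - c ( h ) , c ( h ) [ \, - i [ C_{1} h , 0 ]$. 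Combining this with the comparison of evolutions above proves the corollary.
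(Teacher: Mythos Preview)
Your proof is correct and follows exactly the paper's approach: reduce to $Q$ via \eqref{a10} (i.e., Theorem \ref{a3}) and then invoke Nakamura--Stefanov--Zworski for the compactly supported potential $W$. The paper states the corollary as an immediate consequence of these two ingredients and does not spell out the verification that $Q$ inherits the well-in-the-island geometry; your two middle paragraphs supply that verification carefully, and the convexity/escape argument showing $K_{q}(I)\subset\overline{B(0,r_{1})}\times\R^{n}$ (hence $K_{q}(I)=\Sigma^{i}(I)$ once $R>r_{1}$) is the right way to do it.
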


Thus, we have the same result as in \cite{NaStZw03_01} without the analyticity assumption. A lot is known concerning the asymptotic of shape resonances. First, the resonances of $Q$ in $\Omega ( h )$ are exponentially close to the eigenvalues of the Dirichlet restriction of $Q$ near the well. In particular, their imaginary part is exponentially small. Finally, their precise asymptotic can be obtained under some geometric assumptions. We refer to Helffer and Sj{\"o}strand \cite{HeSj86_01} for potentials analytic near the whole real axis. The case of potentials analytic at infinity (as the potential $W$ defined in \ref{h3} for instance) has been treated by Fujii{\'e}, Lahmar-Benbernou and Martinez \cite{FuLaMa11_01} (see also Nakamura, Stefanov and Zworski \cite{NaStZw03_01}).
\end{example}

\begin{figure}%[!h]
\begin{center}
\begin{picture}(0,0)%
\includegraphics{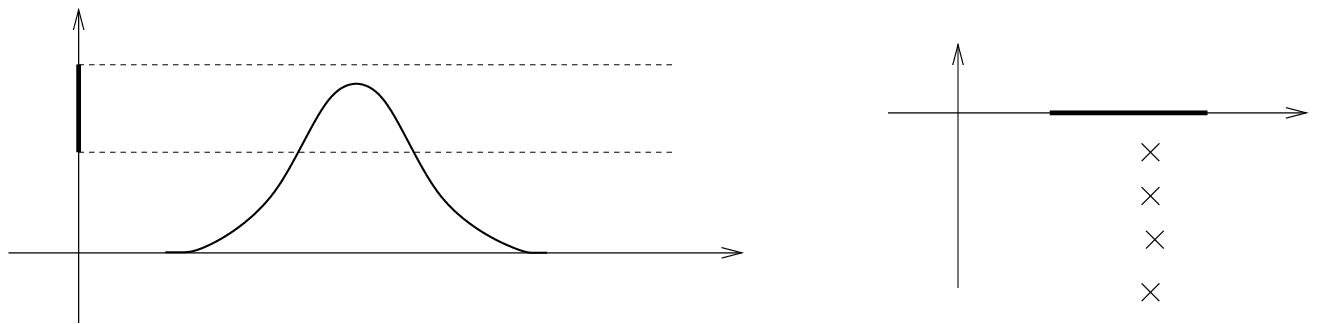}%
\end{picture}%
\setlength{\unitlength}{1105sp}%
\begingroup\makeatletter\ifx\SetFigFont\undefined%
\gdef\SetFigFont#1#2#3#4#5{%
  \reset@font\fontsize{#1}{#2pt}%
  \fontfamily{#3}\fontseries{#4}\fontshape{#5}%
  \selectfont}%
\fi\endgroup%
\begin{picture}(23662,5444)(-2564,-383)
\put(17176,3614){\makebox(0,0)[lb]{\smash{{\SetFigFont{9}{10.8}{\rmdefault}{\mddefault}{\updefault}$\chsupp \varphi$}}}}
\put(-2549,3239){\makebox(0,0)[lb]{\smash{{\SetFigFont{9}{10.8}{\rmdefault}{\mddefault}{\updefault}$\chsupp \varphi$}}}}
\put(5701,3164){\makebox(0,0)[lb]{\smash{{\SetFigFont{9}{10.8}{\rmdefault}{\mddefault}{\updefault}$V ( x  )$}}}}
\end{picture}%
\end{center}
\caption{The geometric setting and the resonances of $Q$ in Example \ref{a62}.} \label{f5}
\end{figure}

\begin{example}\rm \label{a62}
As another application of our result, one can described the quantum propagator truncated near the maximum of the potential. In order to have totally explicit expressions, we work in dimension $n = 1$. We assume that $V$ has a non-degenerate maximum at $x = 0$, i.e.
\begin{equation*}
V (x) = E_{0} - \frac{\lambda^{2}}{4} x^{2} + \CO ( x^{3} ) ,
\end{equation*}
with $E_{0} , \lambda > 0$, and that the trapped set at energy $E_{0}$ satisfies $K ( E_{0} ) = \{ ( 0,0) \}$ (see Figure \ref{f5}). As a consequence, $x = 0$ is the unique global maximum of $V$, and there exists a pointed neighborhood of $E_{0}$ in which all the energy levels are non trapping.

Under these hypotheses, the distribution of the resonances of $Q$, defined in \ref{h3} with $R$ large enough, near $E_{0}$ is known. In any complex neighborhood of $E_0$ of size $h$, these resonances are the
\begin{equation} \label{a64}
z_{k} ( h ) = E_{0} - i h \lambda \Big( \frac{1}{2} + k \Big) + \CO ( h^{2} ) ,
\end{equation}
with $k \in \N$. They actually have a complete expansion in powers of $h$ and are simple. The simplest way to prove this result is to compute the coefficients of the $2 \times 2$ scattering matrix (for a complex energy $z$) and to find their poles. For that, it is enough to propagate through the fixed point $( 0 , 0 )$ the Jost solutions at infinity (that is $e^{\pm i \sqrt{z} x / h}$ since $W$ is compactly supported). We omit the details and just mention that similar results have been obtained under various assumptions by Sj\"{o}strand \cite{Sj87_01}, Briet, Combes and Duclos \cite{BrCoDu87_02}, the third author \cite{Ra96_01} and Fujii\'e, Zerzeri and two authors \cite{BoFuRaZe19_01}.

Let $\Pi_{k}$ denote the generalized spectral projection of $Q$ at the resonance $z_{k}$, that is the residue of the meromorphic extension of the truncated resolvent $( z - Q )^{- 1}$ at $z_{k}$. A direct adaptation of Theorem 4.1 of \cite{BoFuRaZe11_01} to potentials analytic only at infinity gives that
\begin{equation} \label{a65}
\Pi_{k} = c_{k} f_{k} \big\< \overline{f_{k}} , \cdot \big\> .
\end{equation}
Here, the constant $c_{k}$ is given by
\begin{equation*}
c_{k} ( h ) = h^{- k - \frac{1}{2}} \frac{e^{- i \frac{\pi}{2} ( k + \frac{1}{2} )}}{\sqrt{2 \pi} k !} \lambda^{k + \frac{1}{2}} ,
\end{equation*}
and the function $f_{k}$ is a locally uniformly bounded and outgoing solution of the Schr\"{o}dinger equation $( P - z_{k} ) f_{k}= 0$ which can be written $f_{k} ( x , h ) = a_{k} ( x , h ) e^{i \varphi_{+} ( x ) / h}$ near $0$ where $\varphi_{+}$ is the generating function of the outgoing manifold of the Hamiltonian vector field at $( 0 , 0 )$ with $\varphi_{+} ( 0 ) = 0$ and $a_{k}$ is a classical symbol
\begin{equation*}
a_{k} ( x , h ) \sim \sum_{j = 0}^{\infty} a_{j} ( x ) h^{j} ,
\end{equation*}
and $a_{0} ( x ) = x^{k} + \CO ( x^{k + 1} )$, see \cite{BoFuRaZe11_01} for more details. In this situation, we have the

\begin{corollary}[Barrier-top resonances]\sl \label{a63}
Let $P$ be as before, $\chi \in C^{\infty}_{0} ( \R^{n} )$ and $\one_{\{ E_{0} \}} \prec \varphi \in C^{\infty}_{0} ( \R )$ be supported near $E_{0}$. Then, for all $K \in \N$ there exists $C_{K} > 0$ such that
\begin{equation} \label{a66}
\chi e^{- i t P / h} \chi \varphi ( P ) = \sum_{0 \leq k < K} e^{- i t z_{k} / h} \chi \Pi_{k} \chi + \CO \big( e^{- \lambda K t} h^{- C_{K}} \big) + \CO ( h^{\infty} ),
\end{equation}
uniformly for $t \geq 0$.
\end{corollary}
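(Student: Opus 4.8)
The plan is to use Theorem \ref{a3} to replace $P$ by the comparison operator $Q$ of \ref{h3}, whose potential $W = g_{0} ( \cdot / R ) V$ is compactly supported --- hence analytic at infinity --- so that the known barrier-top resonance expansion applies to $Q$; the corollary will then be a mere translation of that expansion.

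First I would reproduce the computation \eqref{a10} from the proof of Corollary \ref{a11}: choosing $\varphi \prec \widetilde{\varphi} \prec \one_{] 0 , + \infty [}$, inserting $\widetilde{\varphi} ( P )$ and $\widetilde{\varphi} ( Q )$, and combining the pseudodifferential calculus with Theorem \ref{a3} (valid for $R$ large, where \ref{h3} forces \ref{h2}), one obtains
\begin{equation*}
\chi e^{- i t P / h} \chi \varphi ( P ) = \chi e^{- i t Q / h} \chi \varphi ( Q ) + \CO ( h^{\infty} ) ,
\end{equation*}
uniformly for $t \in \R$. For $R$ large the operator $Q$ moreover retains the geometry of Example \ref{a62}: since $W = V$ near $0$, the potential of $Q$ still has a non-degenerate maximum at $0$ with the same $E_{0} , \lambda$, and $K ( E_{0} )$ for $Q$ equals $\{ ( 0 , 0 ) \}$, because the modification of $V$ takes place only where $\vert x \vert \geq R$, on which $\vert W \vert < E_{0}$ so that every trajectory of energy $E_{0}$ escapes to infinity. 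Hence the resonances $z_{k} ( h )$ of \eqref{a64} and the projectors $\Pi_{k}$ of \eqref{a65} are those of $Q$, and it is enough to prove \eqref{a66} with $P$ replaced by $Q$.

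For $Q$, analytic at infinity, that statement is a direct adaptation (to this class of potentials) of the Schr\"{o}dinger group expansion of \cite{BoFuRaZe11_01}. The mechanism is standard: from Stone's formula \eqref{a7} for $Q$ and an almost-analytic extension $\widetilde{\varphi}$ of $\varphi$ equal to $1$ near $E_{0}$, one writes $F_{Q}$ through the two boundary values of the cut-off resolvent and deforms the contour from $\R$ down to the line $\{ \im z = - h \lambda K \}$, which for $h$ small lies strictly between $z_{K - 1}$ and $z_{K}$ and crosses no other resonance of $Q$ near $E_{0}$. The outgoing boundary value is the one continuing meromorphically into the lower half-plane, so the residue theorem picks up $\sum_{0 \leq k < K} e^{- i t z_{k} / h} \chi \Pi_{k} \chi$ (using $\widetilde{\varphi} ( z_{k} ) = 1$, valid since $z_{k} \to E_{0}$); the $\overline{\partial} \widetilde{\varphi}$-term is $\CO ( h^{\infty} )$ because $\overline{\partial} \widetilde{\varphi} = \CO ( \vert \im z \vert^{\infty} ) = \CO ( h^{\infty} )$ on a strip of height $h \lambda K$ and because $\supp \overline{\partial} \widetilde{\varphi}$ stays away from $E_{0}$; and the remaining integral over $\{ \im z = - h \lambda K \}$ is $\CO ( e^{- \lambda K t} h^{- C_{K}} )$ for $t \geq 0$, since there $\vert e^{- i t z / h} \vert = e^{- \lambda K t}$ while $\chi ( Q - z )^{- 1} \chi$ is polynomially bounded on that line.

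The only substantial point is precisely this polynomial bound for the cut-off (equivalently, distorted) resolvent of $Q$ on the deep line $\{ \im z = - h \lambda K \}$, away from the $z_{k}$: it is the barrier-top resolvent estimate of \cite{BoFuRaZe11_01}, obtained from the microlocal normal form at the hyperbolic fixed point $( 0 , 0 )$ and crucially using the non-degeneracy of the maximum together with $K ( E_{0} ) = \{ ( 0 , 0 ) \}$; it is available for $Q$ since $W$ is analytic at infinity, the passage between the distorted and the cut-off resolvent being as in \cite[Proposition D.1]{BoFuRaZe18_01}. That is therefore the hard part of \eqref{a66}, but it is already settled in the analytic framework, so for the present corollary one merely invokes it for $Q$ --- the purpose of Theorem \ref{a3} being exactly to license this reduction. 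What remains is routine: for fixed $K$ and $h$ small, the $\CO ( h^{2} )$ corrections in \eqref{a64} keep $z_{0} , \dots , z_{K - 1}$ in the strip and where $\varphi \equiv 1$; $F_{q} ( \chi , \varphi )$ is compact so \ref{h2} holds for $R$ large; and $C_{K}$ may be enlarged to absorb the polynomial size of $\chi \Pi_{k} \chi$ at small $t$.
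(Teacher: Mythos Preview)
Your proposal is correct and follows essentially the same route as the paper: reduce to $Q$ via \eqref{a10} (i.e.\ Theorem~\ref{a3}) and then invoke the barrier-top expansion of \cite[Section 6]{BoFuRaZe11_01} for $Q$, whose potential is compactly supported and hence analytic at infinity. The paper simply cites that reference for the $Q$-step whereas you sketch its contour-deformation mechanism, and it adds two remarks you do not make explicitly: that Proposition~\ref{a38} would suffice here since the weighted resolvent is polynomially bounded, and that the hypothesis $\varphi = 1$ near $E_{0}$ is what allows one to drop the energy cut-off from the residue terms via $\varphi ( Q ) \chi \overline{f_{k}} = \chi \overline{f_{k}} + \CO ( h^{\infty} )$.
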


Thus, we remove the analyticity assumption in \cite[Theorem 6.1]{BoFuRaZe11_01} in dimension $n = 1$. Thanks to \eqref{a64} and \eqref{a65}, the sum in the right hand side of \eqref{a66} is explicit. Corollary \ref{a63} is a direct consequence of \eqref{a10} and \cite[Section 6]{BoFuRaZe11_01} adapted to potentials analytic only at infinity. Since the weighted resolvent of $P$ is polynomially bounded, we could have used here Proposition \ref{a38} instead of Theorem \ref{a3}. The assumption that $\varphi = 1$ near $E_{0}$ is used to show that $\varphi ( P ) \chi \overline{f_{k}} = \chi \overline{f_{k}} + \CO ( h^{\infty} )$ and then to remove the energy cut-off in \eqref{a66}.
\end{example}

Until now we only have generalized resonance expansions of the propagator, but other results of the resonance theory prove that $e^{- i t P / h} u$ is close to $e^{- i t z / h} u$ for $u$ resonant state associated to a resonance $z$ of $P$. In the semiclassical regime, this idea comes back to G{\'e}rard and Sigal \cite{GeSi92_01}. As previously, it is possible to extend these results to our $C^{\infty}$ setting. For instance, combining Theorem 3.7 and Remark 4 of \cite{GeSi92_01} with our Theorem \ref{a3}, we obtain

\begin{corollary}[Propagation of resonant states]\sl \label{a12}
Assume \ref{h1}, $\chi \in C^{\infty}_{0} ( \R^{n} )$ and $C , E_{0} > 0$. Let $\chi \prec \widetilde{\chi} \in C^{\infty}_{0} ( \R^{n} )$ be such that $\widetilde{\chi} = 1$ in a large neighborhood of $0$ and let $Q$ be as in \ref{h3} with $R$ large enough. Then, for any resonant state $u = u ( h )$ associated to a resonance $z = z ( h ) \in B ( E_{0} , C h )$ of $Q$ with $\vert \im z \vert \gtrsim h^{C}$, we have
\begin{equation*}
\chi e^{- i t P / h} \widetilde{\chi} u = e^{- i t z / h} \chi u + \CO ( h^{\infty} ) \Vert \widetilde{\chi} u \Vert ,
\end{equation*}
uniformly for $t \geq 0$.
\end{corollary}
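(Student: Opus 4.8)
The plan is to deduce the statement from the analogous result for $Q$, which is available from the work of G\'erard and Sigal, by transporting it to $P$ via the propagator comparison of Theorem \ref{a3}. Since the potential $W$ of $Q$ is compactly supported, the cut-off resolvent of $Q$ continues meromorphically through the real axis, $z$ is a bona fide resonance of $Q$ and $u$ a bona fide outgoing generalized eigenfunction; hence \cite[Theorem 3.7 and Remark 4]{GeSi92_01} applies to $Q$ and provides, uniformly for $t \geq 0$,
\begin{equation} \label{pp-gs}
\chi e^{- i t Q / h} \widehat{\chi} u = e^{- i t z / h} \chi u + \CO ( h^{\infty} ) \Vert \widehat{\chi} u \Vert
\end{equation}
for every cut-off $\widehat{\chi}$ with $\chi \prec \widehat{\chi} \prec \widetilde{\chi}$ that equals $1$ on a large enough neighborhood of $\pi_{x} ( K_{q} ( E_{0} ) )$ (for $R$ large this set coincides with $\pi_{x} ( K_{p} ( E_{0} ) )$, which is contained in $\{ \widetilde{\chi} = 1 \}$ by hypothesis). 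I would then fix $\varphi \in C^{\infty}_{0} ( ] 0 , + \infty [ )$ with $\varphi = 1$ near $E_{0}$ and an intermediate cut-off $\widetilde{\chi}_{0}$ with $\pi_{x} ( K_{p} ( E_{0} ) \cup K_{q} ( E_{0} ) ) \subset \{ \widetilde{\chi}_{0} = 1 \}$, $\chi \prec \widetilde{\chi}_{0} \prec \widetilde{\chi}$, and such that a neighborhood of $\supp \nabla \widetilde{\chi}_{0}$ lies inside $\{ \widetilde{\chi} = 1 \} \cap \{ V = W \}$ (possible for $R$ large enough). Subtracting \eqref{pp-gs} for $\widetilde{\chi}$ and for $\widetilde{\chi}_{0}$ gives $\chi e^{- i t Q / h} ( \widetilde{\chi} - \widetilde{\chi}_{0} ) u = \CO ( h^{\infty} ) \Vert \widetilde{\chi} u \Vert$ for $t \geq 0$, so it remains to prove $\chi e^{- i t P / h} \widetilde{\chi} u = e^{- i t z / h} \chi u + \CO ( h^{\infty} ) \Vert \widetilde{\chi} u \Vert$ for $t \geq 0$.

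The core of the argument will be an energy localization of the datum $\widetilde{\chi}_{0} u$. Because $( Q - z ) u = 0$ and $P = Q$ near $\supp \widetilde{\chi}_{0}$, a direct induction shows that $( P - z )^{k} \widetilde{\chi}_{0} u = ( \ad_{Q} )^{k} ( \widetilde{\chi}_{0} ) u$ for every $k$, where $( \ad_{Q} )^{k} ( \widetilde{\chi}_{0} )$ is a semiclassical differential operator of order $\leq k$, of size $\CO ( h^{k} )$, with coefficients supported in $\supp \nabla \widetilde{\chi}_{0}$; combined with local elliptic estimates for the (smooth) solution $u$ this yields $\Vert ( P - z )^{k} \widetilde{\chi}_{0} u \Vert \lesssim h^{k} \Vert \widetilde{\chi} u \Vert$ (here is where a neighborhood of $\supp \nabla \widetilde{\chi}_{0}$ inside $\{ \widetilde{\chi} = 1 \}$ is used). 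Since $1 - \varphi$ vanishes on a fixed neighborhood of $E_{0}$ while $z \in B ( E_{0} , C h )$, the function $\lambda \mapsto ( 1 - \varphi ) ( \lambda ) ( \lambda - z )^{- k}$ is bounded on $\spe ( P ) \subset \R$ uniformly in $h$, so
\begin{equation*}
( 1 - \varphi ) ( P ) \widetilde{\chi}_{0} u = \big( ( 1 - \varphi ) ( P ) ( P - z )^{- k} \big) ( P - z )^{k} \widetilde{\chi}_{0} u = \CO ( h^{\infty} ) \Vert \widetilde{\chi} u \Vert ,
\end{equation*}
i.e. $\varphi ( P ) \widetilde{\chi}_{0} u = \widetilde{\chi}_{0} u + \CO ( h^{\infty} ) \Vert \widetilde{\chi} u \Vert$, and similarly for $Q$. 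Next I would pick $\chi_{1} \in C^{\infty}_{0} ( \R^{n} )$ with $\widetilde{\chi}_{0} \prec \chi_{1}$ and $V = W$ on $\supp \chi_{1}$. Theorem \ref{a3}, in the two cut-off form $\chi e^{- i t P / h} \varphi ( P ) \chi_{1} = \chi e^{- i t Q / h} \varphi ( Q ) \chi_{1} + \CO ( h^{\infty} )$ that follows from the stated one, together with the localization above, gives, uniformly for $t \in \R$,
\begin{align*}
\chi e^{- i t P / h} \widetilde{\chi}_{0} u
&= \chi e^{- i t P / h} \varphi ( P ) \chi_{1} \widetilde{\chi}_{0} u + \CO ( h^{\infty} ) \Vert \widetilde{\chi} u \Vert \\
&= \chi e^{- i t Q / h} \varphi ( Q ) \chi_{1} \widetilde{\chi}_{0} u + \CO ( h^{\infty} ) \Vert \widetilde{\chi} u \Vert = \chi e^{- i t Q / h} \widetilde{\chi}_{0} u + \CO ( h^{\infty} ) \Vert \widetilde{\chi} u \Vert .
\end{align*}
With \eqref{pp-gs} for $\widetilde{\chi}_{0}$ this proves the sought identity with $\widetilde{\chi}_{0} u$ in place of $\widetilde{\chi} u$. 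Finally, the remaining piece $( \widetilde{\chi} - \widetilde{\chi}_{0} ) u$ is supported in a region disjoint from $\pi_{x} ( K_{p} ( E_{0} ) )$ and, $u$ being outgoing, is microlocally concentrated modulo $\CO ( h^{\infty} ) \Vert \widetilde{\chi} u \Vert$ in the outgoing part of $p^{- 1} ( \supp \varphi )$ away from the trapped set; the ``what goes out does not come back'' propagation estimates behind the proofs of Theorems \ref{a3} and \ref{a6} then give $\chi e^{- i t P / h} ( \widetilde{\chi} - \widetilde{\chi}_{0} ) u = \CO ( h^{\infty} ) \Vert \widetilde{\chi} u \Vert$ for $t \geq 0$, and adding the two contributions finishes the proof.

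The step I expect to cause the most trouble is the energy localization $\varphi ( P ) \widetilde{\chi}_{0} u = \widetilde{\chi}_{0} u + \CO ( h^{\infty} ) \Vert \widetilde{\chi} u \Vert$, the delicate points being that $u$ is a resonant state of $Q$ rather than of $P$ and is not globally $L^{2}$, so one must carefully exploit the equality $P = Q$ near $\supp \widetilde{\chi}_{0}$ and the bound $z \in B ( E_{0} , C h )$ (which keeps the growth of $u$ over fixed sets under control); this same observation underlies the alternative route in which one skips $\widetilde{\chi}_{0}$ and establishes the localization directly for $\widetilde{\chi} u$. The disposal of the outgoing remainder $( \widetilde{\chi} - \widetilde{\chi}_{0} ) u$ uniformly for $t \geq 0$ under \ref{h1} alone is nontrivial as well, though it only re-uses the microlocal estimates already developed in Sections \ref{s5} and \ref{s6}.
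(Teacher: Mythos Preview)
Your proposal is correct and follows exactly the route the paper indicates: the paper's entire argument for Corollary~\ref{a12} is the sentence ``combining Theorem 3.7 and Remark 4 of \cite{GeSi92_01} with our Theorem \ref{a3}'', and your write-up is a faithful fleshing-out of that combination. The energy-localization step $(1-\varphi)(P)\widetilde{\chi}_{0}u=\CO(h^{\infty})\Vert\widetilde{\chi}u\Vert$ via $(P-z)^{k}\widetilde{\chi}_{0}u=(\ad_{Q})^{k}(\widetilde{\chi}_{0})u$ is the right mechanism; the local $H^{k}_{h}$ control of $u$ on $\supp\nabla\widetilde{\chi}_{0}$ by $\Vert\widetilde{\chi}u\Vert$ follows from the standard multiplier identity $\int\psi^{2}\vert h\nabla u\vert^{2}=-\int\psi^{2}(W-z)\vert u\vert^{2}-2\int\psi(h\nabla\psi)\cdot\bar u(h\nabla u)$ and iteration of the equation, so the worry you flag there is harmless.

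One remark on economy: the splitting $\widetilde{\chi}=\widetilde{\chi}_{0}+(\widetilde{\chi}-\widetilde{\chi}_{0})$ and the separate ``outgoing remainder'' step can be avoided. Once you know from G\'erard--Sigal that $\Vert\widehat{\chi}u\Vert\lesssim\Vert\widetilde{\chi}u\Vert$ for a slightly larger $\widehat{\chi}\succ\widetilde{\chi}$ (this is contained in their estimates on resonant states, since $z\in B(E_{0},Ch)$ keeps the exponential growth of $u$ bounded on fixed compacts), the same commutator argument gives $(1-\varphi)(P)\widetilde{\chi}u=\CO(h^{\infty})\Vert\widehat{\chi}u\Vert=\CO(h^{\infty})\Vert\widetilde{\chi}u\Vert$ directly, and Theorem~\ref{a3} plus \eqref{pp-gs} for $\widetilde{\chi}$ finish in one stroke. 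Your longer route is not wrong, just slightly more involved than necessary.
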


Note that the potential is not (always) assumed to be analytic in the paper of G{\'e}rard and Sigal \cite{GeSi92_01}. For example, they treat potential maxima in Section 4.C under the assumption \ref{h1}. The result obtained is different from Corollary \ref{a63}. In their case, the initial data must be a quasimode and the asymptotic holds for all $t \geq 0$. In our case, the initial data is general and the asymptotic holds for $t \gtrsim \vert \ln h \vert$ (in fact, the resonance expansion regime is not valid before the Ehrenfest time, as explained in Remark 6.2 of a paper with Fujii\'e and Zerzeri \cite{BoFuRaZe11_01}). For systems coming from quantum chemistry, Briet and Martinez \cite{BrMa17_01,BrMa19_01} have obtained the resonance expansion of the quantity $t \longmapsto \< e^{- i t P / h} \varphi ( P ) u , u \>$ where $u$ is a quasimode. When their interaction operator $W$ is differential (and not pseudodifferential), it may be possible in their results to remove the analyticity assumption on the coefficients.

\section{Spectral shift function} \label{s3}

In this part, we assume \ref{h1} with $\rho > n$ and denote $P_{0} = - h^{2} \Delta$. Then, the spectral shift function (SSF) associated to the pair $( P , P_{0} )$ is defined as the distribution given by
\begin{equation} \label{a73}
\forall \varphi \in C^{\infty}_{0} ( \R ) , \qquad \big\< s_{P , P_{0}}^{\prime} , \varphi \big\> = \tr ( \varphi ( P ) - \varphi ( P_{0} ) ) ,
\end{equation}
with the convention that $s ( \lambda ) = 0$ for $\lambda \ll - 1$. As a matter of fact, it coincides with the counting function of eigenvalues for $\lambda < 0$. On the other hand, this function is $C^{\infty}$ for $\lambda > 0$. We send back the reader to the book of Yafaev \cite{Ya92_01} and the survey of Robert \cite{Ro99_01} for general informations about the spectral shift function.

Under general assumptions, Robert \cite[Theorem 1.6]{Ro94_01} has proved that the SSF always satisfies a Weyl law
\begin{equation} \label{a75}
s_{P , P_{0}} ( \lambda ) = s_{0} ( \lambda ) h^{- n} + \CO ( h^{1 - n} ) ,
\end{equation}
for $\lambda$ near a noncritical positive energy with
\begin{equation*}
s_{0} ( \lambda ) = \frac{1}{( 2 \pi )^{n}} \int_{\R^{n}} \bigg( \int_{\xi^{2} + V ( x ) \leq \lambda} d \xi - \int_{\xi^{2} \leq \lambda} d \xi \bigg) d x .
\end{equation*}
In this manner, resonance theory is not useful for the asymptotic of the SSF. The situation is rather different for that concerns its derivative. For this function, the Breit--Wigner formula claims that
\begin{equation} \label{b66}
s_{P , P_{0}}^{\prime} ( \lambda ) = \sum_{\fract{z \in \Res ( P )}{z \text{ close to } \lambda}} \frac{\vert \im z \vert}{\pi \vert \lambda - z \vert^{2}} + \text{remainder} .
\end{equation}
As recalled in the introduction, such a formula has been first obtained by Melrose \cite[Section 4]{Me88_01} in obstacle scattering and G{\'e}rard, Martinez and Robert \cite{GeMaRo89_01} in the well in the island situation. In the semiclassical regime and without assumption on the trapping, it has been established successively for potentials analytic at infinity by Petkov and Zworski \cite{PeZw99_01,PeZw00_01}, Bruneau and Petkov \cite{BrPe03_01}, Dimassi and Petkov \cite{DiPe03_01}, \ldots

Here, we show formulas similar to \eqref{b66} and extend the previous results under the assumption that $V$ satisfies only \ref{h1} with $\rho > n$. For that, we follow the general strategy explained at the end of the introduction and already used in Section \ref{s2}. The first step is to prove a comparison result
\begin{equation} \label{b67}
s_{P , P_{0}}^{\prime} ( \lambda ) = s_{Q , P_{0}}^{\prime} ( \lambda ) + \text{remainder} ,
\end{equation}
where $Q = - h^{2} \Delta + W ( x )$ is such that $W$ satisfies \ref{h1} with $\rho > n$ and coincides with $V$ on a sufficiently large region. Afterwards, a Breit--Wigner formula for $P$ can be obtained applying \eqref{b67}, choosing the potential $W$ analytic at infinity and using a Breit--Wigner formula for $Q$ from a previous paper.

We begin with a result like \eqref{b67} which only requires that $P = Q$ near the trapped set and that the weighted resolvent of $P$ is polynomially bounded. More precisely, let $I \subset ] 0 , + \infty [$ be a compact interval. The trapped set $K_{\bullet}$ being defined in \eqref{a92}, we assume that
\begin{hypaa} \label{h5}
The symbols $p , q$ of $P , Q$ satisfy
\begin{equation*}
p = q \text{ near the trapped sets } K_{p} ( I ) = K_{q} ( I ) .
\end{equation*}
\end{hypaa}
Under this assumption, we have the following result.

\begin{proposition}[Comparison of the derivative of SSF under a polynomial estimate]\sl \label{a91}
Let $P$, $Q$ satisfying \ref{h1} with $\rho > n$, \ref{h4} for $\lambda$ near $I$ and \ref{h5}. Then,
\begin{equation*}
s_{P , P_{0}}^{\prime} ( \lambda ) = s_{Q , P_{0}}^{\prime} ( \lambda ) + \sigma ( \lambda ; h ) ,
\end{equation*}
uniformly for $\lambda \in I$, where the function $\sigma ( \lambda ; h )$ has a complete expansion in powers of $h$
\begin{equation*}
\sigma ( \lambda ; h ) \asymp \sigma_{0} ( \lambda ) h^{- n} + \sigma_{1} ( \lambda ) h^{1 - n} + \cdots ,
\end{equation*}
and the functions $\sigma_{\bullet} ( \lambda )$ are $C^{\infty}$ near $I$.
\end{proposition}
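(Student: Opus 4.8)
The plan is to reduce the statement to a trace formula for the derivative of the SSF and then, inside that trace, to separate a smooth ``far field'' contribution from an ``interaction'' contribution that is controlled by the cut-off resolvent comparison of Section~\ref{s5}. By the cocycle property of the spectral shift function, $\sigma ( \lambda ; h ) = s_{P , P_{0}}^{\prime} ( \lambda ) - s_{Q , P_{0}}^{\prime} ( \lambda ) = s_{P , Q}^{\prime} ( \lambda )$. Since $\rho > n$ and $V - W = \CO ( \< x \>^{- \rho} )$, the resolvent identity $( P - z )^{- 1} - ( Q - z )^{- 1} = - ( P - z )^{- 1} ( V - W ) ( Q - z )^{- 1}$, iterated sufficiently many times and combined with the limiting absorption principle and the bound~\ref{h4} (valid for $Q$ as well, since $p = q$ near $K_{p} ( I )$, cf. Remark~\ref{a47}), shows that $( P - z )^{- 1} - ( Q - z )^{- 1}$ is trace class, continuously up to the real axis for $\re z > 0$. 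Hence, by the standard resolvent representation of $s^{\prime}$ (see \cite{Ya92_01, Ro99_01}),
\[
\sigma ( \lambda ; h ) = \frac{1}{\pi} \im \tr \big[ ( P - \lambda - i 0 )^{- 1} - ( Q - \lambda - i 0 )^{- 1} \big] ,
\]
uniformly for $\lambda \in I$; by the resolvent identity this also equals $- \frac{1}{\pi} \im \tr \big[ ( P - \lambda - i 0 )^{- 1} ( V - W ) ( Q - \lambda - i 0 )^{- 1} \big]$.

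Next I would localize the perturbation away from the trapped set. By \ref{h5} one has $V = W$ on a neighborhood of $\pi_{x} ( K_{p} ( I ) ) = \pi_{x} ( K_{q} ( I ) )$, so I choose $\chi \in C^{\infty}_{0} ( \R^{n} ; [ 0 , 1 ] )$ equal to $1$ near that set and supported inside it, whence $\chi ( V - W ) \equiv 0$ and $V - W = ( 1 - \chi ) ( V - W ) ( 1 - \chi )$; thus only $( P - \lambda - i 0 )^{- 1} ( 1 - \chi )$ and $( 1 - \chi ) ( Q - \lambda - i 0 )^{- 1}$ enter the trace. For these I would use the parametrix constructions that underlie Theorem~\ref{a6} (Sections~\ref{s5} and~\ref{s6}): for $T = P$ or $Q$, after microlocalizing in a small energy window around $I$, they furnish a decomposition
\[
( T - \lambda - i 0 )^{- 1} ( 1 - \chi ) = \CE_{T} ( \lambda ) + \CG_{T} ( \lambda ) + \CO ( h^{\infty} ) ,
\]
where $\CE_{T} ( \lambda )$ is an Isozaki--Kitada/Burq parametrix — a semiclassical operator with a complete symbol expansion, built from $T$ and depending smoothly on $\lambda$ near $I$ — and $\CG_{T} ( \lambda )$ is a finite sum of terms each carrying a factor $\theta ( T - \lambda - i 0 )^{- 1} \widetilde\theta$ with $\theta , \widetilde\theta \in C^{\infty}_{0} ( \R^{n} )$ supported in a neighborhood of $K_{p} ( I )$ on which $p = q$, the remaining factors being the same for $P$ and $Q$. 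It is here that \ref{h4} enters, to keep the polynomially bounded terms $\CG_{T} ( \lambda )$ under control.

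Substituting this decomposition, and the analogous one for $( 1 - \chi ) ( Q - \lambda - i 0 )^{- 1}$, into the trace and expanding, every term retaining a $\CG_{P}$ or $\CG_{Q}$ factor reduces — after cyclic permutation and because $p = q$ on the supports of the relevant cut-offs so that the corresponding factors of $P$ and $Q$ coincide — to an expression $\tr \big[ \theta \big( ( P - \lambda - i 0 )^{- 1} - ( Q - \lambda - i 0 )^{- 1} \big) \widetilde\theta \, A \big]$ with $A$ bounded, which is $\CO ( h^{\infty} )$ by Theorem~\ref{a6} (in its trace-norm form, obtained from the operator-norm statement by inserting additional cut-off resolvents and absorbing $\| \< x \>^{- 1} ( Q - \lambda - i 0 )^{- 1} \< x \>^{- 1} \|$ with~\ref{h4}). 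The remaining terms involve only the operators $\CE_{T} ( \lambda )$, the smooth functional-calculus pieces coming from the energy cut-off, and $V - W = \CO ( \< x \>^{- \rho} )$ with $\rho > n$; being traces of products of semiclassical operators with complete symbol expansions, by the standard semiclassical trace asymptotics they contribute a term with a complete expansion $\sigma_{0} ( \lambda ) h^{- n} + \sigma_{1} ( \lambda ) h^{1 - n} + \cdots$, with $\sigma_{\bullet}$ smooth near $I$. This term is not $\CO ( h^{\infty} )$ precisely because $V$ and $W$ differ at infinity, and collecting the two contributions gives the proposition.

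The step I expect to be the main obstacle is producing the resolvent decomposition above with $\CE_{T} ( \lambda )$ a \emph{genuine} classical semiclassical operator (not merely a polynomially bounded one), with control of all its $\lambda$-derivatives locally uniformly near $I$, and with $\CG_{T} ( \lambda )$ localized near $K_{p} ( I )$ — hence within $\{ p = q \}$: this requires splitting $1 - \chi$ into outgoing, incoming and intermediate microlocal regions, treating the outgoing and incoming parts by the Isozaki--Kitada parametrix and the intermediate part — which a priori feels the stable and unstable manifolds of $K_{p} ( I )$ — by Burq's resolvent estimates together with~\ref{h4}, propagating everything either out to infinity (where the parametrix is accurate) or back into a fixed neighborhood of $K_{p} ( I )$ (where \ref{h4} and Theorem~\ref{a6} apply). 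A secondary, more routine issue is upgrading the bounds used along the way — the trace-class claim of the first step, the $\CO ( h^{\infty} )$ remainders, and Theorem~\ref{a6} — from operator norm to trace norm, which is harmless but must be handled with some care since $V - W$ is not compactly supported.
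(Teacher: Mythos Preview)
Your overall strategy --- separate the region near the trapped set (where $p=q$ and a resolvent comparison applies) from the region away from it (where trajectories escape and one gets a full semiclassical expansion) --- is indeed the mechanism behind the paper's proof, but the implementation differs in two substantive ways and one of your steps invokes the wrong tool.

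\medskip

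\textbf{Starting formula.} You begin from the bare representation $\sigma(\lambda;h)=\tfrac{1}{\pi}\im\tr\big[(P-\lambda-i0)^{-1}-(Q-\lambda-i0)^{-1}\big]$, arguing trace-class continuity up to the real axis by iterating the resolvent identity under \ref{h4}. This is delicate: with only $\rho>n$ and polynomial resolvent bounds, one does not automatically have trace-norm limits of the full resolvent difference on the spectrum. The paper avoids this issue entirely by starting from Robert's asymptotic representation formula for $s'_{T,P_0}$ (Theorem~1.10 of \cite{Ro94_01}, see \eqref{a77}), which already writes $s'_{P,P_0}-s'_{Q,P_0}$ as $\tr\big(\chi(E'_P(\lambda)-E'_Q(\lambda))\chi\big)$ plus a term $\widetilde\sigma(\lambda;h)$ with a full expansion, plus $\CO(h^\infty)$ remainders controlled by \ref{h4}. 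This packages the ``far field'' contribution you call $\CE_T$ into the already-smooth $\widetilde\sigma$, leaving only a \emph{spatially} cut-off trace to analyze.

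\medskip

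\textbf{Wrong citation.} You invoke Theorem~\ref{a6} for the resolvent comparison near the trapped set, but that theorem assumes \ref{h3} (the specific truncation $W=g_0(x/R)V$), which is \emph{not} a hypothesis of Proposition~\ref{a91}. The correct input under \ref{h5}+\ref{h4} is the argument of Proposition~\ref{a38} (equations \eqref{a45}--\eqref{a46}), applied with the cut-off $\Op(f)$ in place of $\chi$; this is exactly what the paper does to get \eqref{a95}.

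\medskip

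\textbf{Decomposition away from the trapped set.} Rather than your abstract splitting $\CE_T+\CG_T$, the paper proceeds concretely: it inserts a phase-space cut-off $\Op(f)$ with $\one_{K_p(\lambda_0)}\prec f$, so that away from $\supp f$ every trajectory escapes. The traces $\tr\big(\Op(k^p_\pm)(P-\lambda\mp i0)^{-1}\big)$ (and their $Q$-analogues) with symbols supported in $\supp(\chi(1-f)\varphi(p))$ are then handled by a time splitting \eqref{a97}: the short-time piece $G_{\rm small}$ has a full expansion by Robert--Tamura \cite[Lemma~3.1]{RoTa88_01}, the finite-time piece $G_{\rm large}$ is $\CO(h^\infty)$ by non-return, and the tail $G_{\rm infinity}$ is $\CO(h^\infty)$ by the Isozaki--Kitada outgoing parametrix. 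This replaces your hypothetical $\CE_T$ and makes the ``main obstacle'' you flag disappear: no global parametrix for $(T-\lambda-i0)^{-1}(1-\chi)$ is needed, only the short-time WKB trace and standard propagation estimates.
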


The proof of Proposition \ref{a91} is postponed until Section \ref{s6}. The regular symbol $\sigma ( \lambda ; h )$ takes into account somehow the difference at infinity between $V$ and $W$. For instance, if $P$ (and thus $Q$) is non-trapping (and thus noncritical) near $\lambda$, we have
\begin{equation*}
\sigma_{0} ( \lambda ) = \frac{1}{( 2 \pi )^{n}} \partial_{\lambda} \int_{\R^{n}} \bigg( \int_{\xi^{2} + V ( x ) \leq \lambda} d \xi - \int_{\xi^{2} + W ( x ) \leq \lambda} d \xi \bigg) d x ,
\end{equation*}
from Theorem 1.3 of \cite{Ro94_01} and \eqref{a75}. In particular, there is no hope to replace $\sigma ( \lambda ; h )$ by $\CO ( h^{\infty} )$ in Proposition \ref{a91}. Nevertheless, if $W$ is as in \ref{h3}, one can show that $\sigma_{0} ( \lambda )$ goes to $0$ as $R \to + \infty$. In some sense, Proposition \ref{a91} can be seen as the equivalent of Proposition \ref{a38} for the SSF.

As in Section \ref{s2}, there is no hope to remove the assumption \ref{h4} in Proposition \ref{a91}. More precisely, considering the geometric setting in the proof of Remark \ref{a58} and using the forthcoming Corollary \ref{a86}, there exist operators $P , Q$ satisfying \ref{h1} with $\rho > n$ ($V , W$ are compactly supported) and \ref{h5} such that $s_{P , P_{0}}^{\prime}$ and $s_{Q , P_{0}}^{\prime}$ are not of the same order. That is
\begin{equation*}
s_{P , P_{0}}^{\prime} ( \lambda ) \nsim s_{Q , P_{0}}^{\prime} ( \lambda ) + \CO ( h^{- n } ) .
\end{equation*}
To obtain a general theorem, we suppose \ref{h3} as in Theorem \ref{a3} and our main result concerning the SSF is the following.

\begin{theorem}[General comparison of the derivative of SSF]\sl \label{a74}
Assume \ref{h1} with $\rho > n$ and \ref{h3}. Let $I \subset ] 0 , + \infty [$ be a compact interval. For $R > 0$ large enough, we have
\begin{equation*}
s_{P , P_{0}}^{\prime} ( \lambda ) = s_{Q , P_{0}}^{\prime} ( \lambda ) + \sigma ( \lambda ; h ) + \CO ( h^{\infty} ) \dist ( \lambda , \res ( Q ) )^{- 1} ,
\end{equation*}
uniformly for $\lambda \in I$, where the function $\sigma ( \lambda ; h )$ has a complete expansion in powers of $h$
\begin{equation*}
\sigma ( \lambda ; h ) \asymp \sigma_{0} ( \lambda ) h^{- n} + \sigma_{1} ( \lambda ) h^{1 - n} + \cdots ,
\end{equation*}
and the functions $\sigma_{\bullet} ( \lambda )$ are $C^{\infty}$ near $I$.
\end{theorem}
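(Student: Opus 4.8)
The plan is to follow the paper's general strategy: write $s_{P,P_0}^{\prime}(\lambda)$ as the sum of a term carried by a cut-off resolvent near the trapped set and a smooth background term, compare the first term for $P$ and for $Q$ by means of Theorem \ref{a6}, and absorb the difference of background terms into $\sigma(\lambda;h)$.

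First I would record the reductions available for $R$ large. At a positive energy, the Hamiltonian trajectories of $p=\xi^{2}+V(x)$ that enter the region $\{|x|\ge R\}$, where $V$ is as small as one wishes, escape to infinity; hence $K_{p}(I)=K_{q}(I)$, this set is contained in a fixed compact subset $B(0,R_{0})\times\R^{n}$ with $R_{0}$ independent of $R$, and $V=W$ on $B(0,R)\supset\pi_{x}(K_{p}(I))$, so that \ref{h5} holds for $R$ large. Since $\rho>n$ and $V-W=(1-g_{0}(\cdot/R))V$ decays like $\<x\>^{-\rho}$, the differences $\varphi(P)-\varphi(Q)$ and, after subtracting finitely many Born terms, the corresponding differences of resolvents, are trace class; thus $s_{P,P_{0}}-s_{Q,P_{0}}=s_{P,Q}$ is a well-defined $C^{\infty}$ function near $I$, and the quantity to estimate is $D(\lambda):=s_{P,P_{0}}^{\prime}(\lambda)-s_{Q,P_{0}}^{\prime}(\lambda)=s_{P,Q}^{\prime}(\lambda)$.

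Next, following Robert \cite{Ro94_01} and the representation of the derivative of the SSF used by Petkov and Zworski \cite{PeZw99_01} and Bruneau and Petkov \cite{BrPe03_01} (and already exploited in the proof of Proposition \ref{a91}), I would write, for $T\in\{P,Q\}$ and $\lambda$ near $I$,
\begin{equation*}
s_{T,P_{0}}^{\prime}(\lambda)=-\frac{1}{\pi}\im\tr\Big(\chi\big[(T-\lambda-i0)^{-1}-(P_{0}-\lambda-i0)^{-1}\big]\chi\Big)+\beta_{T}(\lambda;h),
\end{equation*}
where $\chi\in C^{\infty}_{0}(\R^{n})$ equals $1$ on a large ball containing $\pi_{x}(K_{p}(I))$ and is supported in $B(0,R)$, and where the background term $\beta_{T}(\lambda;h)$ has a complete asymptotic expansion $\beta_{T}(\lambda;h)\asymp\beta_{T,0}(\lambda)h^{-n}+\beta_{T,1}(\lambda)h^{1-n}+\cdots$ with coefficients $\beta_{T,\bullet}$ of class $C^{\infty}$ near $I$. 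The point is that the first term involves only the cut-off resolvent of $T$, while $\beta_{T}$ comes from the region away from the trapped set and from infinity, where the flow is non-trapping and stationary phase together with the symbolic calculus — for which $C^{\infty}$ regularity of $V$, resp.\ $W$, suffices — produce the claimed expansion; this is precisely what replaces the analyticity used in the references. Setting $\sigma(\lambda;h):=\beta_{P}(\lambda;h)-\beta_{Q}(\lambda;h)$, which again has a complete expansion in powers of $h$ with $C^{\infty}$ coefficients as a difference of two such, one gets
\begin{equation*}
D(\lambda)=-\frac{1}{\pi}\im\tr\Big(\chi\big[(P-\lambda-i0)^{-1}-(Q-\lambda-i0)^{-1}\big]\chi\Big)+\sigma(\lambda;h).
\end{equation*}
It then remains to estimate the trace term. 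For $R$ large, Theorem \ref{a6} applies with the cut-off $\chi$ and gives
\begin{equation*}
\chi(P-\lambda\pm i0)^{-1}\chi-\chi(Q-\lambda\pm i0)^{-1}\chi=\CO(h^{\infty})\,\big\|\<x\>^{-1}(Q-\lambda-i0)^{-1}\<x\>^{-1}\big\|;
\end{equation*}
applying it between suitable weighted Sobolev spaces and using the compact support of $\chi$, this passes to a trace-norm estimate at the cost of a fixed power of $h$. Finally, since $W$ is compactly supported, $Q$ is a compactly supported perturbation of $-h^{2}\Delta$, so $\chi(Q-z)^{-1}\chi$ continues meromorphically across $\R$ with poles at $\res(Q)$; combining the $e^{C/h}$ a priori bound of Burq with the semiclassical maximum principle of Tang and Zworski \cite{TaZw00_01} and Stefanov \cite{St01_01} yields $\|\<x\>^{-1}(Q-\lambda\pm i0)^{-1}\<x\>^{-1}\|\lesssim h^{-N}\dist(\lambda,\res(Q))^{-1}$ for some fixed $N$ and $\lambda$ near $I$. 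Hence the trace term is $\CO(h^{\infty})\dist(\lambda,\res(Q))^{-1}$, and adding $\sigma(\lambda;h)$ gives the theorem.

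The hard part is the representation formula of the third paragraph: arranging that the trapping contribution is carried \emph{exactly} by a cut-off resolvent (so that Theorem \ref{a6} applies verbatim), and, above all, proving that the background $\beta_{T}$ is a genuine classical symbol with $C^{\infty}$ coefficients with no analyticity hypothesis — this is the content imported from the proof of Proposition \ref{a91} and ultimately from Robert's $C^{\infty}$ Weyl law for the SSF. A secondary technical point is the passage from the operator-norm estimate of Theorem \ref{a6} to a trace-norm estimate and the uniform polynomial bound on the cut-off resolvent of $Q$ near $\res(Q)$.
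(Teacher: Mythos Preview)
Your approach is essentially the paper's, but one point needs correction. Robert's representation (Theorem~1.10 of \cite{Ro94_01}, which is the relevant reference here) gives, for $T\in\{P,Q\}$,
\[
s'_{T,P_0}(\lambda)=\tr\big(\chi(E'_T(\lambda)-E'_{P_0}(\lambda))\chi\big)+\sigma^T(\lambda;h)+\CO(h^\infty)\big\|\<x\>^{-1}(T-\lambda-i0)^{-1}\<x\>^{-1}\big\|,
\]
where only $\sigma^T$ has a complete expansion. Your $\beta_T$ is therefore \emph{not} a classical symbol: it carries an $\CO(h^\infty)$-times-resolvent-norm tail, and for $T=P$ this can be exponentially large when \ref{h4} fails. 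This is not repaired by importing Proposition \ref{a91}, which already assumes \ref{h4}. The paper handles it via Corollary \ref{a80}, a by-product of the proof of Theorem \ref{a6}: for $R$ large one has $\|\<x\>^{-1}(P-\lambda-i0)^{-1}\<x\>^{-1}\|\lesssim\|\<x\>^{-1}(Q-\lambda-i0)^{-1}\<x\>^{-1}\|$, so both tails are controlled by the resolvent of $Q$ and land in the final $\CO(h^\infty)\dist(\lambda,\res(Q))^{-1}$. The remaining steps match the paper: the passage from operator to trace norm is done by writing $\chi(\cdot)\chi=(P+i)^{-N}T_N\,\widetilde\chi(\cdot)\chi$ with $T_N$ a compactly supported differential operator of order $N>n$, and the bound $\|\<x\>^{-1}(Q-\lambda-i0)^{-1}\<x\>^{-1}\|\lesssim h^{-M}\dist(\lambda,\res(Q))^{-1}$ is Proposition~4 of Stefanov \cite{St02_01}.
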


As in Theorem \ref{a3}, the $R$'s for which this result holds true may depend on $I$ and $P$. The proof of Theorem \ref{a74} relies on the estimate of the difference of the resolvents given in Theorem \ref{a6}, on a representation formula for the SSF by Robert and and on an estimate of the resolvent obtained by Stefanov.

\begin{proof}
From Theorem 1.10 of Robert \cite{Ro94_01}, the derivative of the SSF satisfies the following asymptotic representation formula:
\begin{equation} \label{a77}
\begin{aligned}
s_{P , P_{0}}^{\prime} ( \lambda ) ={}& \tr \big( \chi \big( E^{\prime}_{P} ( \lambda ) - E^{\prime}_{P_{0}} ( \lambda ) \big) \chi \big)   \\
&+ \tr \big( \Op ( k_{+} ) ( P_{0} - \lambda - i 0 )^{- 1} \big) + \tr \big( \Op ( k_{-} ) ( P_{0} - \lambda + i 0 )^{- 1} \big)  \\
&+ \tr \big( X_{1}^{\pm} ( P - \lambda \mp i 0 )^{- 1} Y_{1}^{\pm} ( P_{0} - \lambda \mp i 0 )^{- 1} Z_{1}^{\pm} \big)  \\
&+ \tr \big( X_{0}^{\pm} ( P_{0} - \lambda \mp i 0 )^{- 1} Y_{0}^{\pm} ( P_{0} - \lambda \mp i 0 )^{- 1} Z_{0}^{\pm} \big) ,
\end{aligned}
\end{equation}
for all $\lambda \in I$. In the two last lines we mean that we have a $(+)$ and a $(-)$ term. The cut-off function $\chi \in C^{\infty}_{0} ( \R^{n} )$ is equal to $1$ on a sufficiently large neighborhood of $0$. For $T$ selfadjoint, we use the notation
\begin{equation} \label{a83}
E^{\prime}_{T} ( \lambda ) = \frac{1}{2 i \pi} \big( ( T - \lambda - i 0 )^{- 1} - ( T - \lambda + i 0 )^{- 1} \big) .
\end{equation}
The symbols $k_{\pm}$ are classical symbols satisfying $\partial_{x}^{\alpha} \partial_{\xi}^{\beta} k_{\pm} ( x , \xi ) = \CO ( \< x \>^{- \rho - \vert \alpha \vert} \< \xi \>^{- \infty} )$ for every multiindexes $\alpha , \beta \in \N^{n}$. The operators $X_{\bullet}^{\pm} , Y_{\bullet}^{\pm} , Z_{\bullet}^{\pm}$ are negligible in the sense that, for all $M >0$, we have
\begin{align}
&\big\Vert \< x \>^{M} Y_{\bullet}^{\pm} ( P_{0} - \lambda \mp i 0 )^{- 1} Z_{\bullet}^{\pm} \< x \>^{M} \big\Vert_{\tr} = \CO ( h^{\infty} ) ,  \label{a78} \\
&\big\Vert \< x \>^{M} X_{\bullet}^{\pm} \< x \>^{M} \big\Vert = \CO ( h^{\infty} ) , \label{a79}
\end{align}
uniformly for $\lambda \in I$. Furthermore, \eqref{a77} can be differentiated in $\lambda$ at any order and we have also estimates like \eqref{a78} and \eqref{a79}. We send the reader to the paper of Robert \cite{Ro94_01} for more details. The SSF $s_{Q , P_{0}} ( \lambda )$ satisfies a formula similar to \eqref{a77} and, following the paper of Robert (which is based on the constructions of Isozaki and Kitada), one can see that the cut-off function $\chi$ can be chosen independent of $R$ large enough.

As explained in Section 6 of \cite{Ro94_01}, the terms on the second line of \eqref{a77} have a complete asymptotic expansion in $h$ with $C^{\infty}$ coefficients. That is
\begin{align*}
\sigma^{P} ( \lambda ; h ) : ={}& \tr \big( \Op ( k_{+} ) ( P_{0} - \lambda - i 0 )^{- 1} \big) + \tr \big( \Op ( k_{-} ) ( P_{0} - \lambda + i 0 )^{- 1} \big) \\
\asymp{}& \sigma_{0}^{P} ( \lambda ) h^{- n} + \sigma_{1}^{P} ( \lambda ) h^{1 - n} + \cdots
\end{align*}
With \eqref{a78} and \eqref{a79} in mind, the third and fourth lines of \eqref{a77} are directly estimated by $\CO ( h^{\infty} ) \Vert \< x \>^{- 1} ( P - \lambda - i 0 )^{-1} \< x \>^{- 1} \Vert$. Thus, \eqref{a77} provides
\begin{equation*}
s_{P , P_{0}}^{\prime} ( \lambda ) = \tr \big( \chi \big( E^{\prime}_{P} ( \lambda ) - E^{\prime}_{P_{0}} ( \lambda ) \big) \chi \big) + \sigma^{P} ( \lambda ; h ) + \CO ( h^{\infty} ) \big\Vert \< x \>^{- 1} ( P - \lambda - i 0 )^{-1} \< x \>^{- 1} \big\Vert .
\end{equation*}
The same way, we have
\begin{equation*}
s_{Q , P_{0}}^{\prime} ( \lambda ) = \tr \big( \chi \big( E^{\prime}_{Q} ( \lambda ) - E^{\prime}_{P_{0}} ( \lambda ) \big) \chi \big) + \sigma^{Q} ( \lambda ; h ) + \CO ( h^{\infty} ) \big\Vert \< x \>^{- 1} ( Q - \lambda - i 0 )^{-1} \< x \>^{- 1} \big\Vert ,
\end{equation*}
where $\sigma^{Q}$ has a complete asymptotic expansion in $h$ with $C^{\infty}$ coefficients. Since the cut-off functions $\chi$ can be chosen to be the same (and independent of $R$) in the two last equations, we deduce
\begin{align}
s_{P , P_{0}}^{\prime} ( \lambda ) ={}& s_{Q , P_{0}}^{\prime} ( \lambda ) + \tr \big( \chi \big( E^{\prime}_{P} ( \lambda ) - E^{\prime}_{Q} ( \lambda ) \big) \chi \big) \nonumber \\
&+ \widehat{\sigma} ( \lambda ; h ) + \CO ( h^{\infty} ) \big\Vert \< x \>^{- 1} ( Q - \lambda - i 0 )^{-1} \< x \>^{- 1} \big\Vert , \label{a81}
\end{align}
for $R$ large enough uniformly for $\lambda \in I$. Here, $\widehat{\sigma} = \sigma^{P} - \sigma^{Q}$ has a complete asymptotic expansion in $h$ with $C^{\infty}$ coefficients and Corollary \ref{a80} is used to estimate the resolvent of $P$.

To treat the trace of $\chi \big( E^{\prime}_{P} ( \lambda ) - E^{\prime}_{Q} ( \lambda ) \big) \chi$, it is enough to control the trace norm of the difference of the resolvents of $P$ and $Q$ (see \eqref{a83}). For that, we use Theorem \ref{a6} which gives an upper bound in operator norm and the following trick. Since $P$ and $Q$ coincide near the support of $\chi$, we have
\begin{align*}
\chi \big( ( P - \lambda &- i 0 )^{-1} - ( Q - \lambda - i 0 )^{-1} \big) \chi  \\
&= ( P + i )^{- 1} ( P - \lambda + \lambda + i ) \chi \big( ( P - \lambda - i 0 )^{-1} - ( Q - \lambda - i 0 )^{-1} \big) \chi  \\
&= ( P + i )^{- 1} \big( [ P , \chi ] + ( \lambda + i ) \chi \big) \big( ( P - \lambda - i 0 )^{-1} - ( Q - \lambda - i 0 )^{-1} \big) \chi .
\end{align*}
By induction, we obtain for any $N \in \N$
\begin{equation*}
\chi \big( ( P - \lambda - i 0 )^{-1} - ( Q - \lambda - i 0 )^{-1} \big) \chi = ( P + i )^{- N} T_{N} \widetilde{\chi} \big( ( P - \lambda - i 0 )^{-1} - ( Q - \lambda - i 0 )^{-1} \big) \chi ,
\end{equation*}
where $T_{N}$ is a differential operator of order $N$ with coefficients in $C^{\infty}_{0} ( \R^{n} )$ and $\widetilde{\chi}$ is any function in $C^{\infty}_{0} ( \R^{n} )$ with $\chi \prec \widetilde{\chi}$. For $N > n$, the microlocal analysis implies
\begin{align}
\big\Vert \chi \big( ( P - \lambda &- i 0 )^{-1} - ( Q - \lambda - i 0 )^{-1} \big) \chi \big\Vert_{\tr}   \nonumber \\
&\leq \big\Vert ( P + i )^{- N} T_{N} \big\Vert_{\tr} \big\Vert \widetilde{\chi} \big( ( P - \lambda - i 0 )^{-1} - ( Q - \lambda - i 0 )^{-1} \big) \chi \big\Vert   \nonumber \\
&\lesssim h^{- n} \big\Vert \widetilde{\chi} \big( ( P - \lambda - i 0 )^{-1} - ( Q - \lambda - i 0 )^{-1} \big) \chi \big\Vert \nonumber \\
&= \CO ( h^{\infty} ) \big\Vert \< x \>^{- 1} ( Q - \lambda - i 0 )^{- 1} \< x \>^{- 1} \big\Vert ,   \label{a82}
\end{align}
thanks to Theorem \ref{a6}. Combining \eqref{a81} with \eqref{a82} and \eqref{a83}, we get
\begin{equation} \label{a84}
s_{Q , P_{0}}^{\prime} ( \lambda ) = s_{Q , P_{0}}^{\prime} ( \lambda ) + \widehat{\sigma} ( \lambda ; h ) + \CO ( h^{\infty} ) \big\Vert \< x \>^{- 1} ( Q - \lambda - i 0 )^{-1} \< x \>^{- 1} \big\Vert , 
\end{equation}
uniformly for $\lambda \in I$.

It remains to estimate the weighted resolvent of $Q$ in terms of its resonances. For that, we use a result of Stefanov. Since $Q$ is a compactly supported perturbation of $- h^{2} \Delta$ and \eqref{a16} holds true, Proposition 4 of \cite{St02_01} states that, for any $\psi \in C^{\infty}_{0} ( \R^{n} )$,
\begin{equation*}
\big\Vert \psi ( Q - \lambda - i 0 )^{-1} \psi \big\Vert \leq \frac{h^{- \frac{3 n}{2} - 1}}{d ( z , h )} ,
\end{equation*}
for all $\lambda \in I$ and $h$ small enough, where $d ( z , h ) = \min ( \dist ( z , \res ( Q  ) ) , 1 )$. Applying Proposition \ref{a13}, this inequality becomes
\begin{equation} \label{a85}
\big\Vert \< x \>^{- 1} ( Q - \lambda - i 0 )^{-1} \< x \>^{- 1} \big\Vert \leq \frac{h^{- \frac{3 n}{2} - 1}}{d ( z , h )} ,
\end{equation}
Eventually, Theorem \ref{a74} is a consequence of \eqref{a84} and \eqref{a85}.
\end{proof}

The possible generalizations below Theorem \ref{a3} can still apply to Theorem \ref{a74}. For instance, one can glue the potential $V$ to the potential $\widetilde{V}$ as in Remark \ref{a72}, assuming that $\widetilde{V}$ satisfies \ref{h1} with $\rho > n$. On can also consider other types of operators.

If the weighted resolvent of $P$ is polynomially bounded (that is if \ref{h4} holds true), one can remove the term with $\dist ( \lambda , \res ( Q ) )^{- 1}$ in Theorem \ref{a74}. More precisely, combining \eqref{a84} with Remark \ref{a47}, we recover Proposition \ref{a91} under the hypothesis \ref{h3}.

Theorem \ref{a74} allows to extend all the results on the derivative of the SSF obtained for operators with compactly supported potential to the case of operators with $C^{\infty}$ potential. Such applications are now discussed. We first show that there is a Breit--Wigner formula for the derivative of the SSF associated to any operator under the assumption \ref{h1} with $\rho > n$. For that, we apply a result of Bruneau and Petkov \cite{BrPe03_01} (in the case where $\Omega$ is symmetric with respect to $\R$ to simplify the exposition). Instead, we could have considered Petkov and Zworski \cite{PeZw99_01,PeZw00_01}, Sj\"{o}strand and the first author \cite{BoSj01_01}, \ldots

\begin{corollary}[General Breit--Wigner formula]\sl \label{a76}
Assume \ref{h1} with $\rho > n$ and \ref{h3}. Let $W \Subset \Omega \Subset e^{i ] - \frac{\pi}{2} , \frac{\pi}{2} [} ] 0 , + \infty [$ be open, simply connected and relatively compact sets that are symmetric with respect to $\R$. Assume that $I = W \cap \R$ and $J = \Omega \cap \R$ are intervals. For $R > 0$ large enough, we have
\begin{equation*}
s_{P , P_{0}}^{\prime} ( \lambda ) = \sum_{z \in \Res ( Q ) \cap \Omega} \frac{\vert \im z \vert}{\pi \vert \lambda - z \vert^{2}} + \sigma ( \lambda ; h ) + \im r ( \lambda ; h ) + \frac{\CO ( h^{\infty} )}{\dist ( \lambda , \res ( Q ) )} ,
\end{equation*}
uniformly for $\lambda \in I$. Here, $\sigma ( \lambda ; h )$ is as in Theorem \ref{a74} and $r ( z ; h )$ is a function holomorphic in $\Omega$ with $\vert r ( z ; h ) \vert \lesssim h^{- n}$ uniformly for $z \in W$.
\end{corollary}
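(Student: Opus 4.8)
The plan is to follow the general strategy of the paper: reduce the statement for $P$ to a Breit--Wigner formula already available for the operator $Q$, whose potential $W(x)=g_{0}(x/R)V(x)$ is compactly supported, and glue the two together by means of the comparison Theorem \ref{a74}. First I would fix $R>0$ large enough that Theorem \ref{a74} applies with the compact interval $\overline{I}\subset\,]0,+\infty[$ --- this is legitimate since $\overline{W}$ is a compact subset of the open right half-plane $e^{i] - \frac{\pi}{2}, \frac{\pi}{2}[}\,]0,+\infty[$, so that $\overline{I}=\overline{W}\cap\R$ is a compact subinterval of $]0,+\infty[$. Theorem \ref{a74} then yields
\begin{equation*}
s_{P,P_{0}}^{\prime}(\lambda)=s_{Q,P_{0}}^{\prime}(\lambda)+\sigma(\lambda;h)+\frac{\CO(h^{\infty})}{\dist(\lambda,\res(Q))},
\end{equation*}
uniformly for $\lambda\in I$, where $\sigma(\lambda;h)$ is the symbol with complete expansion in powers of $h$ and $C^{\infty}$ coefficients described there. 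It therefore only remains to prove a Breit--Wigner formula for $s_{Q,P_{0}}^{\prime}$ by itself.

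Since $Q=-h^{2}\Delta+W$ with $W$ compactly supported by \ref{h3}, $Q$ is a compactly supported perturbation of $-h^{2}\Delta$: its resonances are defined classically (through an analytic distortion, or as the poles of the meromorphic continuation of the cut-off resolvent), $Q$ belongs to the semiclassical black-box framework, and the number of its resonances in a fixed relatively compact subset of the open right half-plane is $\CO(h^{-n})$ by the usual semiclassical upper bounds. Moreover, $Q$ satisfies the a priori polynomial bound on the weighted resolvent needed for these results --- this is precisely the estimate used in the proof of Theorem \ref{a74}, namely \eqref{a85}, which comes from Proposition 4 of Stefanov \cite{St02_01}. Hence the hypotheses of the Breit--Wigner formula of Bruneau and Petkov \cite{BrPe03_01}, in the case where $\Omega$ is symmetric with respect to $\R$, are fulfilled for $Q$ with the sets $W\Subset\Omega$ and the intervals $I=W\cap\R$, $J=\Omega\cap\R$ as in the statement. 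It gives
\begin{equation*}
s_{Q,P_{0}}^{\prime}(\lambda)=\sum_{z\in\Res(Q)\cap\Omega}\frac{\vert\im z\vert}{\pi\vert\lambda-z\vert^{2}}+\im r(\lambda;h),
\end{equation*}
uniformly for $\lambda\in I$, with $r(z;h)$ holomorphic in $\Omega$ and $\vert r(z;h)\vert\lesssim h^{-n}$ uniformly for $z\in W$ --- the polynomial bound on the smaller set $W\Subset\Omega$ coming, as usual in this circle of ideas, from Cauchy estimates applied to the holomorphic remainder, which is itself controlled by the resonance count $\CO(h^{-n})$. Substituting this identity into the displayed consequence of Theorem \ref{a74} produces exactly the asserted formula.

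The remaining bookkeeping --- matching the term $\CO(h^{\infty})\dist(\lambda,\res(Q))^{-1}$ coming from Theorem \ref{a74} with the one in the statement, and checking that $\sigma$ and $\im r$ retain their stated properties --- is routine. I expect the only genuine point to verify carefully to be that $Q$ meets \emph{every} hypothesis of the cited Breit--Wigner theorem: essentially the a priori polynomial resolvent bound and membership in the relevant black-box class. This is exactly where the choice of a compactly supported $W$, imposed by \ref{h3}, is used, and I do not anticipate any obstacle beyond this standard verification --- the substantive analytic work having already been carried out in Theorems \ref{a74} and \ref{a6}.
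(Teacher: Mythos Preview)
Your proposal is correct and follows exactly the paper's approach: the paper's proof is the single sentence ``Corollary \ref{a76} is a direct consequence of Theorem 1 of \cite{BrPe03_01} and Theorem \ref{a74},'' and you have spelled out precisely this combination, applying Theorem \ref{a74} to pass from $P$ to $Q$ and then invoking Bruneau--Petkov for the compactly supported perturbation $Q$. Your additional care about the compactness of $\overline{I}$ and the verification that $Q$ lies in the black-box framework is appropriate and not in conflict with the paper.
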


Corollary \ref{a76} is a direct consequence of Theorem 1 of \cite{BrPe03_01} and Theorem \ref{a74}. We now give explicit Breit--Wigner formulas in two geometric situations.

\begin{example}\rm \label{a87}
We compute the derivative of the SSF in the well in the island situation. We consider $P$ satisfying \ref{h1} with $\rho > n$ and the geometric assumptions of Example \ref{a61} on some interval $I$. In particular, Figure \ref{f4} provides an example of such potential and the corresponding distribution of resonances when the potential is analytic at infinity.

Let $P^{i}$ denote the operator $P$ (or $Q$) restricted to an neighborhood of the $\pi ( \Sigma^{i} ( I ) )$ with Dirichlet boundary condition. Thus, $P^{i}$ encodes the dynamics inside the island. There exists a bijection between the eigenvalues of $P^{i}$ and the resonances of $Q$
\begin{equation} \label{a88}
F : \sigma ( P^{i} ) \cap \big( I + [ \alpha_{1} ( h ) , \alpha_{2} ( h ) ] \big) \longrightarrow \res ( Q ) \cap \big( I + [ \alpha_{3} ( h ) , \alpha_{4} ( h ) ] - i [ 0 , C h ] \big) ,
\end{equation}
such that $F ( z ) = z + \CO ( e^{- \delta_{0} / h} )$ and $\alpha_{\bullet} ( h ) = \CO ( e^{- \delta_{0} / h} )$ for any $C > 0$ large enough and some $\delta_{0} > 0$. This follows from the corollary on page 201 of Nakamura, Stefanov and Zworski \cite{NaStZw03_01} in the present setting (see also Helffer and Sj{\"o}strand \cite{HeSj86_01} for analytic potentials). In particular, any resonance of $Q$ in $I + i [ - 1 , 0 ]$ satisfies
\begin{equation} \label{a90}
\vert \im z \vert \leq e^{- \delta_{0} / h} \qquad \text{ or } \qquad \vert \im z \vert \geq C h ,
\end{equation}
for $h$ small enough. In the second case, they actually satisfy $\vert \im z \vert \geq M h \vert \ln h \vert$ for any $M > 0$ (see Lemma 4.5 and Lemma 4.6 of \cite{NaStZw03_01}).

To study the SSF near an isolated resonance, we make the following separation assumption. Let $\CI ( h ) \subset I$ be an interval containing only one eigenvalue $z_{0} ( h )$ of $P^{i}$ such that
\begin{equation} \label{a89}
\dist \big( \partial \CI ( h ) , \sigma ( P^{i} ) \big) \geq h^{M} ,
\end{equation}
for some $M \geq 1$. From \eqref{a88}, there exists a unique resonance $z = z ( h )$ of $Q$ exponentially close to the real axis with real part in $\CI ( h )$ and the other resonances of $Q$ are at distance at least $h^{M} / 2$ from $\CI ( h )$.

\begin{corollary}[Shape resonances]\sl \label{a86}
Under the previous assumptions,
\begin{equation*}
s_{P , P_{0}}^{\prime} ( \lambda ) = \frac{\vert \im z \vert}{\pi \vert \lambda - z \vert^{2}} + \frac{\CO ( h^{\infty} )}{\vert \lambda - z \vert} + \CO ( h^{- n - 1} ) ,
\end{equation*}
uniformly for $\lambda \in \CI ( h )$.
\end{corollary}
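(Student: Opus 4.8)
The plan is to deduce Corollary~\ref{a86} from the general Breit--Wigner formula of Corollary~\ref{a76} by collapsing the resonance sum for $Q$ to the single contribution of $z$, using the description of $\res ( Q )$ in the well in the island situation recalled above. First I would fix once and for all $h$-independent, open, simply connected, relatively compact sets $W \Subset \Omega \Subset e^{i ] - \frac{\pi}{2} , \frac{\pi}{2} [} ] 0 , + \infty [$, symmetric with respect to $\R$, which are thin complex neighborhoods of the interval $I$ of Example~\ref{a61}, arranged so that $W \cap \R$ and $\Omega \cap \R$ are intervals containing $I$ (hence $\CI ( h )$) and so that $\Omega \subset \{ \vert \im z \vert < 1 / 2 \}$. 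Since $\re z \in \CI ( h )$ and $\vert \im z \vert \leq e^{- \delta_{0} / h}$, one has $z \in \Omega$ for $h$ small. Corollary~\ref{a76}, applied with $R$ large enough (depending on $I$ and $P$), then gives for $\lambda \in \CI ( h )$
\begin{equation*}
s_{P , P_{0}}^{\prime} ( \lambda ) = \sum_{w \in \Res ( Q ) \cap \Omega} \frac{\vert \im w \vert}{\pi \vert \lambda - w \vert^{2}} + \sigma ( \lambda ; h ) + \im r ( \lambda ; h ) + \frac{\CO ( h^{\infty} )}{\dist ( \lambda , \res ( Q ) )} ,
\end{equation*}
where $\sigma ( \lambda ; h ) \asymp \sigma_{0} ( \lambda ) h^{- n} + \cdots = \CO ( h^{- n} )$ and $r$ is holomorphic on $\Omega$ with $\vert r ( \lambda ; h ) \vert \lesssim h^{- n}$ on $W$, so that $\im r ( \lambda ; h ) = \CO ( h^{- n} )$ for $\lambda \in \CI ( h )$.

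The heart of the argument is then to see that, on $\CI ( h )$, the resonance sum equals $\vert \im z \vert / ( \pi \vert \lambda - z \vert^{2} )$ up to $\CO ( h^{- n - 1} )$ and that the last remainder becomes $\CO ( h^{\infty} ) / \vert \lambda - z \vert + \CO ( h^{\infty} )$. For the latter: by the discussion before the statement, $z$ is the only resonance of $Q$ with real part in $\CI ( h )$ and exponentially small imaginary part, and all other resonances of $Q$ lie at distance $\geq h^{M} / 2$ from $\CI ( h )$; hence $\dist ( \lambda , \res ( Q ) ) \geq \min ( \vert \lambda - z \vert , h^{M} / 2 )$, which gives the claimed bound since $\CO ( h^{\infty} ) h^{- M} = \CO ( h^{\infty} )$. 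For the sum, I would split $\Res ( Q ) \cap \Omega$ into $\{ z \}$ and the rest, for which $\vert \lambda - w \vert \geq h^{M} / 2$. By the dichotomy~\eqref{a90} each such $w$ satisfies $\vert \im w \vert \leq e^{- \delta_{0} / h}$ or $\vert \im w \vert \geq C h$; in the first case $\vert \im w \vert / ( \pi \vert \lambda - w \vert^{2} ) \leq e^{- \delta_{0} / h} / ( \pi ( h^{M} / 2 )^{2} ) = \CO ( h^{\infty} )$, while in the second case $\vert \lambda - w \vert^{2} \geq \vert \im w \vert^{2}$ yields $\vert \im w \vert / ( \pi \vert \lambda - w \vert^{2} ) \leq 1 / ( \pi C h )$. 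Invoking the classical polynomial bound $\# ( \Res ( Q ) \cap \Omega ) = \CO ( h^{- n} )$ for compactly supported perturbations of $- h^{2} \Delta$ and summing, the contribution of the remaining resonances is $\CO ( h^{\infty} ) + \CO ( h^{- n} ) \cdot \CO ( h^{- 1} ) = \CO ( h^{- n - 1} )$.

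Collecting the $w = z$ term, the $\CO ( h^{- n - 1} )$ from the other resonances, the $\CO ( h^{- n} ) \subset \CO ( h^{- n - 1} )$ coming from $\sigma + \im r$, and the two contributions of the last remainder, one obtains $s_{P , P_{0}}^{\prime} ( \lambda ) = \vert \im z \vert / ( \pi \vert \lambda - z \vert^{2} ) + \CO ( h^{\infty} ) / \vert \lambda - z \vert + \CO ( h^{- n - 1} )$, uniformly for $\lambda \in \CI ( h )$, as desired. The main obstacle is the middle step: one needs that near $\CI ( h )$ the resonance $z$ is isolated at scale $h^{M}$ from the rest of $\res ( Q )$, that the remaining resonances obey~\eqref{a90}, and that only $\CO ( h^{- n} )$ of them sit in a fixed region $\Omega$ — the first two points being supplied by the well in the island structure recalled in Example~\ref{a87} (through Nakamura, Stefanov and Zworski~\cite{NaStZw03_01}), and the last by the standard upper bound on the resonance counting function; everything else is bookkeeping with the remainders.
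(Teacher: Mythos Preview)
Your proof is correct and follows essentially the same route as the paper's: apply Corollary~\ref{a76}, isolate the term $w=z$ in the resonance sum, bound the remaining resonances using the dichotomy~\eqref{a90} together with the $\CO(h^{-n})$ counting bound, and absorb $\sigma+\im r$ into $\CO(h^{-n})$. The only cosmetic differences are that the paper explicitly enlarges $I$ to a slightly bigger interval $J$ (so that \eqref{a88}--\eqref{a90} hold on $\Omega\cap\R$) and takes $\Omega=J+i\,]{-2},2[$ rather than a thin strip; your thinner $\Omega$ works equally well and makes the application of~\eqref{a90} a touch more direct.
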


The imaginary part of the resonances of $Q$, close to the minimum of the well, has been computed asymptotically by Fujii{\'e}, Lahmar-Benbernou and Martinez \cite{FuLaMa11_01} (see Helffer and Sj{\"o}strand \cite{HeSj86_01} in the analytic case) under some geometric assumptions. The two functions $\vert \im z \vert \vert \lambda - z \vert^{- 2} / \pi$ and $\CO ( h^{\infty} ) \vert \lambda - z \vert^{- 1}$ of $\lambda \in [\re z - 1 , \re z  + 1]$ form two peaks centered at $\re z$, of height $\vert \im z \vert^{- 1} / \pi$ and $\CO ( h^{\infty} ) \vert \im z \vert^{- 1}$ respectively, and of total mass $1 + \CO ( h^{\infty} )$ and $\CO ( h^{\infty} )$ respectively. Then, $\CO ( h^{\infty} ) \vert \lambda - z \vert^{- 1}$ is actually a remainder term even if it can be larger than the leading term for some values of $\lambda$.

\begin{proof}
Let $I \Subset J \Subset ] 0 , + \infty [$ be an open interval such that the geometric assumptions, \eqref{a88} and \eqref{a90} hold true with $I$ replaced by $J$, with perhaps new constants. Choosing $W = I + i ] - 1 , 1 [$ and $\Omega = J + i ] - 2 , 2 [$, Corollary \ref{a76} gives
\begin{equation*}
s_{P , P_{0}}^{\prime} ( \lambda ) = \frac{\vert \im z \vert}{\pi \vert \lambda - z \vert^{2}} + \sum_{\fract{\rho \in \Res ( Q ) \cap \Omega}{\rho \neq z}} \frac{\vert \im \rho \vert}{\pi \vert \lambda - \rho \vert^{2}} + \frac{\CO ( h^{\infty} )}{\vert \lambda - z \vert} + \frac{\CO ( h^{\infty} )}{\dist ( \lambda , \res ( Q ) \setminus \{ z \})} + \CO ( h^{- n} ) ,
\end{equation*}
uniformly for $\lambda \in I$. Since the number of resonances of $Q$ in $\Omega$ is bounded by $\CO ( h^{- n} )$, the inequalities \eqref{a90} and \eqref{a89} imply
\begin{align*}
\sum_{\fract{\rho \in \Res ( Q ) \cap \Omega}{\rho \neq z}} \frac{\vert \im \rho \vert}{\pi \vert \lambda - \rho \vert^{2}} &= \sum_{\fract{\rho \in \Res ( Q ) \cap \Omega}{\rho \neq z , \ \vert \im z \vert \leq e^{- \delta_{0} / h}}} \frac{\vert \im \rho \vert}{\pi \vert \lambda - \rho \vert^{2}} + \sum_{\fract{\rho \in \Res ( Q ) \cap \Omega}{\rho \neq z , \ \vert \im z \vert \geq C h}} \frac{\vert \im \rho \vert}{\pi \vert \lambda - \rho \vert^{2}}   \\
&\lesssim \sum_{\fract{\rho \in \Res ( Q ) \cap \Omega}{\rho \neq z , \ \vert \im z \vert \leq e^{- \delta_{0} / h}}} \frac{e^{- \delta_{0} / h}}{h^{2 M}} + \sum_{\fract{\rho \in \Res ( Q ) \cap \Omega}{\rho \neq z , \ \vert \im z \vert \geq C h}} \frac{1}{h} \\
&\lesssim h^{- n - 1} ,
\end{align*}
uniformly for $\lambda \in \CI ( h ) $. The same way,
\begin{equation*}
\frac{\CO ( h^{\infty} )}{\dist ( \lambda , \res ( Q ) \setminus \{ z \})} = \frac{\CO ( h^{\infty} )}{h^{M}} = \CO ( h^{\infty} ) ,
\end{equation*}
and the corollary follows.
\end{proof}

When the potential is dilation analytic outside the island, a similar formula has been obtained by G{\'e}rard, Martinez and Robert \cite{GeMaRo89_01}. By comparison with Corollary \ref{a86}, they have $e^{- \varepsilon / h}$ for all $\varepsilon > 0$ instead of $h^{M}$ for some $M > 0$ in the separation assumption \eqref{a89}. Under their hypothesis, we would have an additional rest of the form $e^{\varepsilon / h}$ for all $\varepsilon > 0$. Moreover, their remainder terms
\begin{equation*}
\frac{\CO \big( e^ {- ( 2 S_{0} - \varepsilon ) / h} \big)}{\vert \lambda - z \vert} + \CO ( h^{- n} ) ,
\end{equation*}
for all $\varepsilon > 0$, are better than ours. In the previous formula, $S_{0} > 0$ is the Agmon distance between the well and the sea. We have proved Corollary \ref{a86} using Corollary \ref{a76}. But, it may be possible to first generalize \cite{GeMaRo89_01} to analytic potentials at infinity and then to apply Theorem \ref{a74} to obtain Corollary \ref{a86}. Such a proof may lead to different remainder terms.

If each element of $I$ is a noncritical energy level for $P$, the remainder term $\CO ( h^{- n - 1} )$ can be replaced by $\CO ( h^{- n} )$. Indeed, applying \cite[Corollary 1]{BrPe03_01} instead of \cite[Theorem 1]{BrPe03_01} leads to another version of Corollary \ref{a76} where the $h$-independent sets $W$, $\Omega$ are replaced by neighborhoods of size $h$ of $\lambda$. We conclude using upper bounds on the number of resonances in small domains.

Instead of computing the derivative of the SSF, some results are devoted to the jump of the SSF across the real part of a resonance, that is the quantity
\begin{equation*}
s_{P , P_{0}}^{\prime} ( \re z + \delta ) - s_{P , P_{0}}^{\prime} ( \re z - \delta ) ,
\end{equation*}
with $0 < \delta \ll 1$. Some of them require the analyticity of the potential $V$ like Theorem 6.4 of Robert \cite{Ro94_01}, whereas others require only a symbol assumption on $V$ like Nakamura \cite{Na99_01}. The first ones can be generalized to the $C^{\infty}$ setting using Theorem \ref{a74}.
\end{example}

\begin{example}\rm \label{b2}
At the end of this part, we give an explicit Breit-Wigner formula in dimension $n = 1$ in presence of a homoclinic trajectory in the $C^{\infty}$ setting. In the analytic setting, this has been done by Fujii\'e and the third author below Theorem 2.2 of \cite{FuRa03_01}.

\begin{figure}%[!h]
\begin{center}
\begin{picture}(0,0)%
\includegraphics{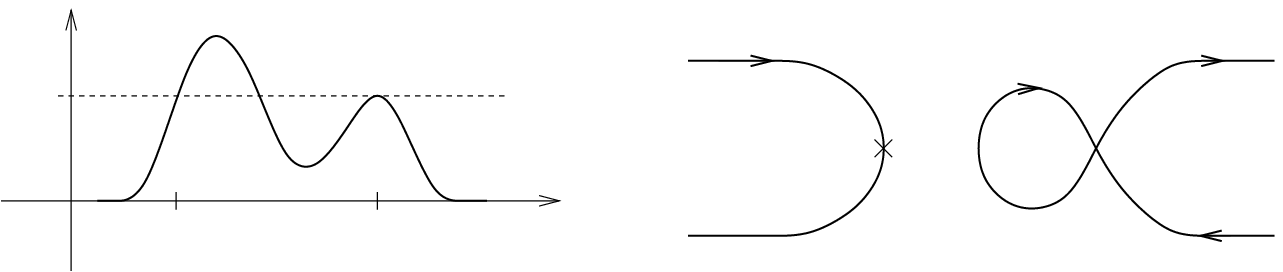}%
\end{picture}%
\setlength{\unitlength}{1105sp}%
\begingroup\makeatletter\ifx\SetFigFont\undefined%
\gdef\SetFigFont#1#2#3#4#5{%
  \reset@font\fontsize{#1}{#2pt}%
  \fontfamily{#3}\fontseries{#4}\fontshape{#5}%
  \selectfont}%
\fi\endgroup%
\begin{picture}(21880,4544)(-1221,-383)
\put(13576,1589){\makebox(0,0)[rb]{\smash{{\SetFigFont{9}{10.8}{\rmdefault}{\mddefault}{\updefault}$( x_{\ell} , 0)$}}}}
\put(17851,1589){\makebox(0,0)[lb]{\smash{{\SetFigFont{9}{10.8}{\rmdefault}{\mddefault}{\updefault}$( 0 , 0)$}}}}
\put(16426,3089){\makebox(0,0)[b]{\smash{{\SetFigFont{9}{10.8}{\rmdefault}{\mddefault}{\updefault}$\gamma_{0}$}}}}
\put(19651,3614){\makebox(0,0)[b]{\smash{{\SetFigFont{9}{10.8}{\rmdefault}{\mddefault}{\updefault}$\gamma_{\rm out}$}}}}
\put(19651,-286){\makebox(0,0)[b]{\smash{{\SetFigFont{9}{10.8}{\rmdefault}{\mddefault}{\updefault}$\gamma_{\rm in}$}}}}
\put(-374,2564){\makebox(0,0)[rb]{\smash{{\SetFigFont{9}{10.8}{\rmdefault}{\mddefault}{\updefault}$E_{0}$}}}}
\put(5251,164){\makebox(0,0)[b]{\smash{{\SetFigFont{9}{10.8}{\rmdefault}{\mddefault}{\updefault}$0$}}}}
\put(1801,164){\makebox(0,0)[b]{\smash{{\SetFigFont{9}{10.8}{\rmdefault}{\mddefault}{\updefault}$x_{\ell}$}}}}
\put(3001,3389){\makebox(0,0)[lb]{\smash{{\SetFigFont{9}{10.8}{\rmdefault}{\mddefault}{\updefault}$V ( x  )$}}}}
\end{picture}
\end{center}
\caption{The geometric setting of Example \ref{b2}.} \label{f6}
\end{figure}

We assume that the potential $V$ satisfies \ref{h1} with $\rho > n = 1$ and is as in Figure \ref{f6}. In particular, $V$ has a local non-degenerate maximum at $x = 0$, i.e.
\begin{equation*}
V (x) = E_{0} - \frac{\mu^{2}}{4} x^{2} + \CO ( x^{3} ) ,
\end{equation*}
with $E_{0} , \mu > 0$. The trapped set at energy $E_{0}$ consists of the hyperbolic fixed point $( 0 , 0 )$ and a homoclinic trajectory $\gamma _{0} ( t ) = ( x_{0} ( t ) , \xi_{0} ( t ) )$. To state our result, we need to define some geometric quantities. The action along $\gamma_{0}$ is
\begin{equation*}
A_{0} = \int_{\gamma_{0}} \xi \, d x .
\end{equation*}
Let $\gamma_{\rm in} ( t ) , \gamma_{\rm out} ( t )$ be the Hamiltonian trajectories of energy $E_{0}$ such that
\begin{equation*}
\begin{aligned}
\gamma_{\rm in} ( t ) &= ( x_{\rm in} ( t ) , \xi_{\rm in} ( t ) ) = \big( - 2 \sqrt{E_{0}} t , - \sqrt{E_{0}} \big)  \qquad &&\text{ for } t \ll - 1 , \\
\gamma_{\rm out} ( t ) &= ( x_{\rm out} ( t ) , \xi_{\rm out} ( t ) ) = \big( 2 \sqrt{E_{0}} t , \sqrt{E_{0}} \big)  \qquad &&\text{ for } t \gg 1 .
\end{aligned}
\end{equation*}
They have the following asymptotic behaviors at $0$
\begin{equation*}
\begin{aligned}
x_{0} ( t ) &= g_{0}^{\pm} e^{\pm \mu t} + o ( e^{\pm \mu t} ) \qquad  &&\text{ as } t \to \mp \infty ,  \\
x_{\rm in} ( t ) &= g_{\rm in} e^{- \mu t} + o ( e^{- \mu t} )   &&\text{ as } t \to + \infty ,  \\
x_{\rm out} ( t ) &= g_{\rm out} e^{\mu t} + o ( e^{\mu t} )     &&\text{ as } t \to - \infty .
\end{aligned}
\end{equation*}
(see e.g. Helffer and Sj\"{o}strand \cite[(2.7)]{HeSj85_01}). In fact, $( x_{\rm in} ( t ) , \xi_{\rm in} ( t ) ) = ( x_{\rm out} ( - t ) , - \xi_{\rm out} ( - t ) )$, $g_{\rm in} = g_{\rm out} > 0$ and $g_{0}^{\pm} < 0$. The rescaled spectral parameter is defined by
\begin{equation*}
\sigma = \frac{\lambda - E_{0}}{h} .
\end{equation*}
With these notations, one can define the three following quantities:
\begin{align}
\CQ_{0} &= e^{i A_{0} / h} \Gamma \Big( \frac{1}{2} - i \frac{\sigma}{\mu} \Big) \frac{1}{\sqrt{2 \pi}} e^{- \frac{\pi}{2} \frac{\sigma}{\mu}} \big( \mu \vert g_{0}^{+} \vert \vert g_{0}^{-} \vert \big)^{i \frac{\sigma}{\mu}} ,   \nonumber \\
\CA &= e^{i A_{0} / h} \Gamma^{2} \Big( \frac{1}{2} - i \frac{\sigma}{\mu} \Big) \frac{i}{2 \pi} e^{\pi \frac{\sigma}{\mu}} \big( \mu^{2} \vert g_{\rm in} \vert \vert g_{\rm out} \vert \vert g_{0}^{-} \vert \vert g_{0}^{+} \vert \big)^{i \frac{\sigma}{\mu}} ,   \label{b13} \\
\CB &= \Gamma \Big( \frac{1}{2} - i \frac{\sigma}{\mu} \Big) \frac{i}{\sqrt{2 \pi}} e^{- \frac{\pi}{2} \frac{\sigma}{\mu}} \big( \mu \vert g_{\rm out} \vert \vert g_{\rm in} \vert \big)^{i \frac{\sigma}{\mu}} .  \nonumber
\end{align}
In this setting, the derivative of the spectral shift function satisfies

\begin{corollary}[Homoclinic resonances]\sl \label{b3}
Let $C > 0$. Under the previous assumptions,
\begin{equation*}
s_{P , P_{0}}^{\prime} ( \lambda ) = \frac{\vert \ln h \vert}{2 \pi \mu h} \frac{2 h^{- 2 i \frac{\sigma}{\mu}} \CA \big( 1 - h^{- i \frac{\sigma}{\mu}} \CQ_{0} \big)^{- 1} + h^{- 3 i \frac{\sigma}{\mu}} \CA \CQ_{0} \big( 1 - h^{- i \frac{\sigma}{\mu}} \CQ_{0} \big)^{- 2} - h^{- i \frac{\sigma}{\mu}} \CB}{h^{- 2 i \frac{\sigma}{\mu}} \CA \big( 1 - h^{- i \frac{\sigma}{\mu}} \CQ_{0} \big)^{- 1} - h^{- i \frac{\sigma}{\mu}} \CB} + \CO ( h^{- 1} ) ,
\end{equation*}
uniformly for $\lambda \in [ E_{0} - C h , E_{0} + C h ]$.
\end{corollary}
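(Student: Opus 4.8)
The plan is to reduce, exactly as throughout this section, to the operator $Q$ of \ref{h3} with compactly supported --- hence analytic at infinity --- potential $W = g_{0} ( x / R ) V$, and then to invoke the explicit homoclinic Breit--Wigner formula of Fujii\'e and the third author below Theorem 2.2 of \cite{FuRa03_01}, adapted to potentials analytic only at infinity. The standing assumption \ref{h1} with $\rho > 1$ and the choice \ref{h3} for $Q$ are precisely the hypotheses of Theorem \ref{a74}. Fix a compact interval $I \subset ] 0 , + \infty [$ with $E_{0}$ in its interior, small enough that $K_{p} ( I ) = \{ ( 0 , 0 ) \} \cup \gamma_{0}$. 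For $R$ large, $W = V$ on a ball containing $\{ ( 0 , 0 ) \} \cup \gamma_{0}$, and the cut-off $g_{0} ( x / R )$ creates no new trapping at energies in $I$ (on $R \lesssim \vert x \vert \lesssim 2 R$ one has $\vert W \vert \leq \vert V \vert \lesssim R^{- \rho}$, so the flow there is essentially free at energies near $E_{0} > 0$; this is consistent with \ref{h2} holding for $R$ large, as noted below Theorem \ref{a3}), so $K_{q} ( I ) = K_{p} ( I )$. Theorem \ref{a74} then gives, uniformly for $\lambda \in [ E_{0} - C h , E_{0} + C h ] \subset I$,
\[
s_{P , P_{0}}^{\prime} ( \lambda ) = s_{Q , P_{0}}^{\prime} ( \lambda ) + \sigma ( \lambda ; h ) + \CO ( h^{\infty} ) \dist ( \lambda , \res ( Q ) )^{- 1} ,
\]
with $\sigma ( \lambda ; h ) = \CO ( h^{- n} ) = \CO ( h^{- 1} )$.

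The heart of the matter is the formula for $s_{Q , P_{0}}^{\prime}$. Applying to $Q$ the analytic-at-infinity version of \cite{FuRa03_01} produces the right-hand side of Corollary \ref{b3} built from the dynamical data of the homoclinic configuration of $Q$: the action $A_{0}$ along $\gamma_{0}$, the hyperbolicity exponent $\mu$ at $( 0 , 0 )$, the constants $g_{0}^{\pm}$, and the asymptotic constants $g_{\rm in} , g_{\rm out}$ of the incoming and outgoing curves. Since $V = W$ near $\{ ( 0 , 0 ) \} \cup \gamma_{0}$, the first three coincide for $P$ and $Q$, hence $\CQ_{0}$ is unchanged; the constants $g_{\rm in} , g_{\rm out}$ do depend on the potential at infinity, but this is harmless because the right-hand side of Corollary \ref{b3} is in fact a function of $\CQ_{0}$ alone. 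Indeed, from the definitions \eqref{b13} one checks the identity $\CA = e^{2 \pi \sigma / \mu} \CQ_{0} \CB$, and substituting it into the quotient cancels every occurrence of $\CA$ and $\CB$, leaving an expression in $\CQ_{0}$, $\sigma / \mu$ and $h^{- i \sigma / \mu}$ only (the prefactor $\vert \ln h \vert / ( 2 \pi \mu h )$ depending only on $\mu , h$). Therefore the explicit term proved for $s_{Q , P_{0}}^{\prime}$ is literally that of Corollary \ref{b3}, up to a smooth $\CO ( h^{- 1} )$ background absorbed in the remainder.

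There remains the term $\CO ( h^{\infty} ) \dist ( \lambda , \res ( Q ) )^{- 1}$. Here I would use that in the homoclinic regime the resonances of $Q$ in the window lie at distance $\gtrsim h / \vert \ln h \vert$ from the real axis (they form an approximate lattice with imaginary parts of that order), which is part of the same adapted statement from \cite{FuRa03_01}. Consequently $\dist ( \lambda , \res ( Q ) ) \gtrsim h / \vert \ln h \vert$ for real $\lambda$ in the window, so $\CO ( h^{\infty} ) \dist ( \lambda , \res ( Q ) )^{- 1} = \CO ( h^{\infty} )$. Combining this with $\sigma ( \lambda ; h ) = \CO ( h^{- 1} )$ and the previous paragraph yields the corollary.

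The main obstacle is the adaptation of \cite{FuRa03_01} to potentials analytic only at infinity, together with the attendant resonance lower bound. This is of the same nature as the adaptations invoked earlier (compare the use of \cite{BoFuRaZe11_01} in Example \ref{a62}): the microlocal analysis near the hyperbolic point and along the homoclinic loop takes place in a fixed neighborhood of the trapped set and is insensitive to the potential at infinity, while the only global input --- the meromorphic continuation of the cut-off resolvent, whose poles define $\res ( Q )$ --- is available for any compactly supported $W$ (here $n = 1$ is odd). I therefore expect this step to be essentially bookkeeping, the genuine conceptual work --- passing from a $C^{\infty}$ potential to one analytic at infinity --- having already been done by Theorem \ref{a74}.
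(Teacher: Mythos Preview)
Your reduction to $Q$ via Theorem \ref{a74} (or Proposition \ref{a91}, since \ref{h4} holds here) is fine, and your algebraic observation that $\CA = e^{2 \pi \sigma / \mu} \CQ_{0} \CB$ --- so that the displayed quotient depends only on $\CQ_{0}$ and $\sigma / \mu$ --- is correct and neatly disposes of the $g_{\rm in} , g_{\rm out}$ issue. But the step you call ``essentially bookkeeping'' is in fact the entire difficulty, and your comparison with Example \ref{a62} misidentifies what goes wrong.

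The obstacle is not that the analysis of \cite{FuRa03_01} is sensitive to the potential at infinity; it is that the analysis of \cite{FuRa03_01} near the fixed point and along the homoclinic loop is carried out by \emph{exact WKB}, which requires analyticity of the potential in a full complex neighborhood of the real axis --- in particular, analyticity near the trapped set itself. Passing to $Q$ makes $W$ compactly supported (hence trivially analytic at infinity), but $W$ remains merely $C^{\infty}$ on its support, and exact WKB simply does not apply there. The paper says this explicitly just before the proof: one \emph{cannot} rely on Theorem \ref{a74} together with \cite{FuRa03_01}, precisely because of the exact WKB requirement. The adaptations you cite from Example \ref{a62} work because \cite{BoFuRaZe11_01} already uses $C^{\infty}$ microlocal methods near the critical point; \cite{FuRa03_01} does not.

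What the paper does instead is redo the homoclinic computation from scratch with $C^{\infty}$ tools: it writes $s_{P , P_{0}}^{\prime}$ via the Birman--Krein formula and the representation \eqref{b14} of the one-dimensional scattering matrix, replaces the resolvent by that of the distorted operator (polynomially bounded on a strip $\Omega$ by \cite{BoFuRaZe18_01}), and then computes the four coefficients $S_{a , b} ( \lambda )$ microlocally. The passage through the hyperbolic fixed point --- the heart of the matter --- is handled by Lemma 11.5 and Section 11.2 of \cite{BoFuRaZe18_01}, which are $C^{\infty}$ results, not exact WKB. This yields $\det S ( \lambda ) = \alpha ( \lambda ) + \CO ( h^{\zeta} )$ with $\alpha$ explicit and holomorphic on $\Omega$, and differentiating the logarithm gives the formula. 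So the genuine work is not the comparison theorem but the replacement of the analytic machinery of \cite{FuRa03_01} by the $C^{\infty}$ normal-form machinery of \cite{BoFuRaZe18_01}; your proposal does not supply this.
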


\begin{figure}%[!h]
\begin{center}\setlength{\unitlength}{987sp}
\begin{picture}(18000,10000)(1500,-1000)
\includegraphics[width=320pt]{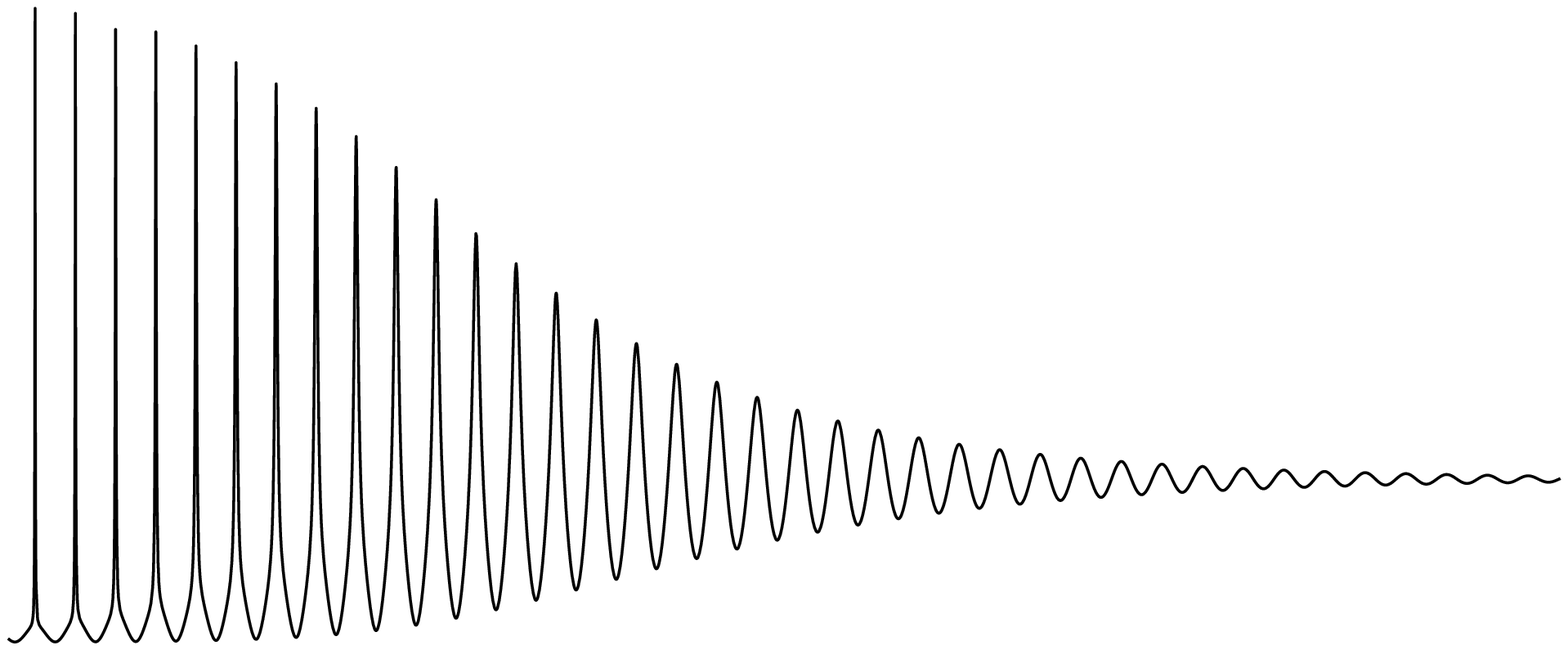}
\put(-22000,-1500){\vector(0,1){11500}}
\put(-23000,-500){\vector(1,0){24500}}
\put(-15500,-800){\line(0,1){600}}
\put(-15100,-1400){\makebox(0,0)[rb]{\smash{$E_{0}$}}}
\put(-1700,-800){\line(0,1){600}}
\put(-290,-1400){\makebox(0,0)[rb]{\smash{$E_{0} + C h$}}}
\end{picture}
\end{center}
\caption{The derivative of the SSF given by Corollary \ref{b3}.} \label{f7}
\end{figure}

This result is illustrated in Figure \ref{f7} and has been analyzed in \cite{FuRa03_01}. The asymptotic of the scattering matrix $S ( \lambda )$ and the SSF $s_{P , P_{0}} ( \lambda )$ can be found in the proof of Corollary \ref{b3}. To show this result, one can not simply rely on Theorem \ref{a74} and \cite{FuRa03_01}. Indeed, \cite{FuRa03_01} is based on the exact WKB analysis that requires the analyticity of the potential $V$ in a whole neighborhood of the real axis. Thus, we give a new proof that uses only marginally the results of this section.

\begin{proof}
From Section 4.3 (B) of Fujii\'e, Zerzeri and two authors \cite{BoFuRaZe18_01} and Remark \ref{a47}, we know that \ref{h4} holds true. Then, applying Proposition \ref{a91} (or Theorem \ref{a74}), we can always assume that $V$ is compactly supported. Let $\theta ( \lambda )$ denote the scattering phase defined by
\begin{equation}  \label{b19}
\det S ( \lambda ) = e^{2 i \theta ( \lambda )} ,
\end{equation}
$S ( \lambda )$ being the (unitary) scattering matrix at energy $\lambda \in \R$. The Birman--Krein formula states that
\begin{equation}\label{b5}
\theta^{\prime} ( \lambda ) = \pi s_{P , P_{0}}^{\prime} ( \lambda ) .
\end{equation}
Thus, to compute the SSF, it is enough to compute the coefficients of the scattering matrix. For compactly supported potentials in dimension $n = 1$, the scattering matrix has the following representation formula (see e.g. Proposition 2.1 of Petkov and Zworski \cite{PeZw01_01}):
\begin{equation}
S ( \lambda ) = \left( \begin{array}{cc}
S_{+ , +} ( \lambda ) & S_{+ , -} ( \lambda ) \\
S_{- , +} ( \lambda ) & S_{- , -} ( \lambda )
\end{array} \right) ,
\end{equation}
with
\begin{equation} \label{b14}
S_{a , b} ( \lambda ) = \frac{i}{2 h \sqrt{\lambda}} \Big\< [ P , \chi ] e^{a i \sqrt{\lambda} x / h} , ( P - \lambda - i 0 )^{- 1} [ P , \chi_{b} ] e^{b i \sqrt{\lambda} x / h} \Big\> ,
\end{equation}
where $a , b = \pm$ and the cut-off functions $\chi , \chi_{\pm}$ are as in Figure \ref{f8}. We send back the reader to \cite{FuRa03_01} for more details on the scattering theory in dimension $1$. From Section 4.3 (B) of \cite{BoFuRaZe18_01}, the resolvent of the distorted operator $P_{\theta}$, with $\theta = h \vert \ln h \vert$, is polynomially bounded in 
\begin{equation*}
\Omega = E_{0} + [ - C h , C h ] + i \big[ - \nu h \vert \ln h \vert^{- 1} , \nu h \vert \ln h \vert^{- 1} \big] ,
\end{equation*}
for $\nu$ small enough depending on $C$. Then, if the distortion occurs outside the support of $\chi$, one can replace the resolvent of $P$ by that of $P_{\theta}$ in \eqref{b14} and the quantity $( P_{\theta} - \lambda )^{- 1} [ P , \chi_{b} ] e^{b i \sqrt{\lambda} x / h}$ is polynomially bounded for $\lambda \in \Omega$. In particular, we can use the semiclassical $C^{\infty}$ microlocal analysis to treat these quantities.

\begin{figure}%[!h]
\begin{center}
\begin{picture}(0,0)%
\includegraphics{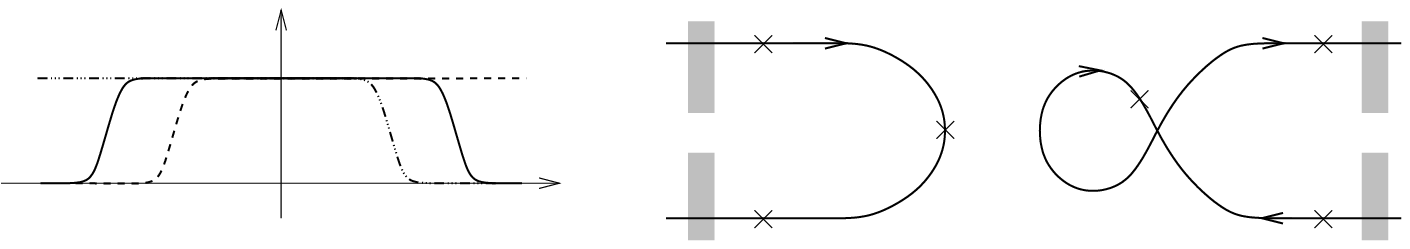}%
\end{picture}%
\setlength{\unitlength}{1105sp}%
\begingroup\makeatletter\ifx\SetFigFont\undefined%
\gdef\SetFigFont#1#2#3#4#5{%
  \reset@font\fontsize{#1}{#2pt}%
  \fontfamily{#3}\fontseries{#4}\fontshape{#5}%
  \selectfont}%
\fi\endgroup%
\begin{picture}(24712,4216)(-1221,-355)
\put(11926,1589){\makebox(0,0)[rb]{\smash{{\SetFigFont{9}{10.8}{\rmdefault}{\mddefault}{\updefault}$\supp \chi_{+}^{\prime}$}}}}
\put(2026,1589){\makebox(0,0)[lb]{\smash{{\SetFigFont{9}{10.8}{\rmdefault}{\mddefault}{\updefault}$\chi_{+}$}}}}
\put(5326,1589){\makebox(0,0)[rb]{\smash{{\SetFigFont{9}{10.8}{\rmdefault}{\mddefault}{\updefault}$\chi_{-}$}}}}
\put(6901,1589){\makebox(0,0)[lb]{\smash{{\SetFigFont{9}{10.8}{\rmdefault}{\mddefault}{\updefault}$\chi$}}}}
\put(17476,3089){\makebox(0,0)[b]{\smash{{\SetFigFont{9}{10.8}{\rmdefault}{\mddefault}{\updefault}$\gamma_{0}$}}}}
\put(20701,-286){\makebox(0,0)[b]{\smash{{\SetFigFont{9}{10.8}{\rmdefault}{\mddefault}{\updefault}$\gamma_{\rm in}$}}}}
\put(20701,3614){\makebox(0,0)[b]{\smash{{\SetFigFont{9}{10.8}{\rmdefault}{\mddefault}{\updefault}$\gamma_{\rm out}$}}}}
\put(14701,1589){\makebox(0,0)[rb]{\smash{{\SetFigFont{9}{10.8}{\rmdefault}{\mddefault}{\updefault}$( x_{\ell} , 0)$}}}}
\put(18151,1889){\makebox(0,0)[rb]{\smash{{\SetFigFont{9}{10.8}{\rmdefault}{\mddefault}{\updefault}$\rho_{0}$}}}}
\put(18901,1589){\makebox(0,0)[lb]{\smash{{\SetFigFont{9}{10.8}{\rmdefault}{\mddefault}{\updefault}$( 0 , 0)$}}}}
\put(23476,1589){\makebox(0,0)[rb]{\smash{{\SetFigFont{9}{10.8}{\rmdefault}{\mddefault}{\updefault}$\supp \chi_{-}^{\prime}$}}}}
\put(21526,689){\makebox(0,0)[b]{\smash{{\SetFigFont{9}{10.8}{\rmdefault}{\mddefault}{\updefault}$\rho_{\rm in}$}}}}
\put(21526,2564){\makebox(0,0)[b]{\smash{{\SetFigFont{9}{10.8}{\rmdefault}{\mddefault}{\updefault}$\rho_{\rm out}$}}}}
\put(13126,3614){\makebox(0,0)[b]{\smash{{\SetFigFont{9}{10.8}{\rmdefault}{\mddefault}{\updefault}$\gamma_{\ell}^{+} , \gamma_{\ell}^{-}$}}}}
\put(11926,2564){\makebox(0,0)[b]{\smash{{\SetFigFont{9}{10.8}{\rmdefault}{\mddefault}{\updefault}$\rho_{\ell}^{+}$}}}}
\put(11926,689){\makebox(0,0)[b]{\smash{{\SetFigFont{9}{10.8}{\rmdefault}{\mddefault}{\updefault}$\rho_{\ell}^{-}$}}}}
\end{picture}%
\end{center}
\caption{The cut-off functions $\chi_{\star}$ and the geometry of Corollary \ref{b3}.} \label{f8}
\end{figure}

We compute the four coefficients of $S ( \lambda )$ for $\lambda = E_{0} + h \sigma \in \Omega$ and begin with $S_{- , +} ( \lambda )$. Let $\gamma_{\ell}^{\pm} ( t ) = ( x_{\ell}^{\pm} ( t ) , \xi_{\ell}^{\pm} ( t ) )$ be two parameterizations of the Hamiltonian trajectory ``on the left'' such that $x_{\ell}^{\pm} ( t ) = \pm 2 \sqrt{E_{0}} t$ for $\mp t \gg 1$ and
\begin{equation*}
A_{\ell} = 2 \int_{- \infty}^{x_{\ell}} \big( \sqrt{E_{0} - V ( x )} - \sqrt{E_{0}} \big) d x + 2 \sqrt{E_{0}} x_{\ell} ,
\end{equation*}
its action. Then, there exists $T_{\ell} \in \R$ such that $\gamma_{\ell}^{+} ( t ) = \gamma_{\ell}^{-} ( t + T_{\ell} )$. Let also $\rho_{\ell}^{\pm}$ be two points on this trajectory according to Figure \ref{f8}. We consider
\begin{equation*}
v = ( P_{\theta} - \lambda )^{- 1} [ P , \chi_{+} ] e^{i \sqrt{\lambda} x / h} .
\end{equation*}
Working as in Section 4.2 of \cite{BoFuRaZe11_01}, we get $v = e^{i \sqrt{\lambda} x / h}$ microlocally near $\rho_{\ell}^{+}$. Moreover, $v$ satisfies the evolution equation $( P - \lambda ) v = 0$ microlocally along $\gamma_{\ell}^{\pm}$ between $\rho_{\ell}^{+}$ and $\rho_{\ell}^{-}$. Then, the propagation of singularities implies that
\begin{equation*}
v = \big( i e^{- i \sigma T_{\ell}} e^{i A_{\ell} / h} + S ( h ) \big) e^{- i \sqrt{\lambda} x / h} ,
\end{equation*}
microlocally near $\rho_{\ell}^{-}$. Finally, a direct computation in \eqref{b14} provides
\begin{equation} \label{b15}
S_{- , +} ( \lambda ) = i e^{- i \sigma T_{\ell}} e^{i A_{\ell} / h} + \CO ( h ) ,
\end{equation}
uniformly for $\lambda \in \Omega$. More simply, the diagonal terms satisfy
\begin{equation} \label{b16}
S_{+ , +} ( \lambda ) = \CO ( h^{\infty} ) \qquad \text{and} \qquad S_{- , -} ( \lambda ) = \CO ( h^{\infty} ) ,
\end{equation}
uniformly for $\lambda \in \Omega$.

It remains to compute $S_{+ , -} ( \lambda )$. For that, we define
\begin{equation*}
u = ( P_{\theta} - \lambda )^{- 1} [ P , \chi_{-} ] e^{- i \sqrt{\lambda} x / h} .
\end{equation*}
We choose three points $\rho_{\star}$, with $\star = 0 , {\rm in} , {\rm out}$ as in Figure \ref{f8}. In particular, $\rho_{\star} = \gamma_{\star} ( t_{\star} )$ for some $t_{\star} \in \R$. Let $u_{\star}$ be a microlocal restriction of $u$ to a neighborhood of $\rho_{\star}$. We send back the reader to \cite{BoFuRaZe18_01} for more details on such techniques. Let $\varphi_{-}^{0}$ (resp. $\varphi_{+}^{0}$) denote the generating phase function of the incoming (resp. outgoing) Lagrangian manifold $\Lambda_{-}$ (resp. $\Lambda_{+}$) associated to the fixed point $( 0 , 0 )$ with $\varphi_{\pm}^{0} ( 0 ) = 0$. This means that $\Lambda_{\pm} = \{ ( x , \varphi_{\pm}^{0} {}^{\prime} ( x ) ) \}$ for $x$ near $0$. As in the previous paragraph, we have
\begin{equation} \label{b17}
u = e^{- i \sqrt{\lambda} x / h} ,
\end{equation}
microlocally near $\rho_{\rm in}$. From \cite[Section 8]{BoFuRaZe18_01}, $u_{0} , u_{\rm in}$ (resp. $u_{\rm out}$) are Lagrangian distributions of order $0$ with Lagrangian manifold $\Lambda_{-}$ (resp. $\Lambda_{+}$). Inspired by \cite[(11.25)]{BoFuRaZe18_01}, we normalized the symbols of $u_{\star}$ as
\begin{align*}
u_{0} ( x ) &= e^{i \sigma t_{0}} \sqrt{\frac{\mu \vert g_{-}^{0} \vert}{\vert \partial_{t} x_{0} ( t_{-}^{0} ) \vert}} a_{0} ( x , h ) e^{i \varphi_{-}^{0} ( x ) / h} ,  \\
u_{\rm in} ( x ) &= e^{i \sigma t_{\rm in}} \sqrt{\frac{\mu \vert g_{\rm in} \vert}{\vert \partial_{t} x_{\rm in} ( t_{\rm in} ) \vert}} a_{\rm in} ( x , h ) e^{i \varphi_{-}^{0} ( x ) / h} ,  \\
u_{\rm out} ( x ) &= e^{i \sigma t_{\rm out}} \sqrt{\frac{\mu \vert g_{\rm out} \vert}{\vert \partial_{t} x_{\rm out} ( t_{\rm out} ) \vert}} a_{\rm out} ( x , h ) e^{i \varphi_{+}^{0} ( x ) / h} .
\end{align*}
The action along the trajectories $\gamma_{\rm in} , \gamma_{\rm out}$ is
\begin{equation*}
A_{\rm in} = A_{\rm out} = \int_{0}^{+ \infty} \big( \sqrt{E_{0} - V ( x )} - \sqrt{E_{0}} \big) \, d x .
\end{equation*}
From \eqref{b17} and the previous normalization, we have
\begin{equation} \label{b11}
a_{\rm in} ( x_{\rm in} , h ) = e^{i A_{\rm in} / h} \sqrt{\frac{2 \sqrt{E_{0}}}{\mu \vert g_{\rm in} \vert}} + \CO ( h ) .
\end{equation}
On the other hand, Lemma 11.5 of \cite{BoFuRaZe18_01} yields
\begin{equation} \label{b7}
a_{0} ( x_{0} , h ) = h^{- i \frac{\sigma}{\mu}} \CQ_{0} a_{0} ( x_{0} , h ) + h^{- i \frac{\sigma}{\mu}} \CQ_{\rm in} a_{\rm in} ( x_{\rm in} , h ) + \CO ( h^{\zeta} ),
\end{equation}
where $\CQ_{0}$ is given by \eqref{b13} and 
\begin{equation*}
\CQ_{\rm in} = e^{i A_{0} / h} \Gamma \Big( \frac{1}{2} - i \frac{\sigma}{\mu} \Big) \frac{1}{\sqrt{2 \pi}} \sqrt{\frac{\big\vert g_{\rm in} \big\vert}{\vert g_{-}^{0} \vert}} e^{i \frac{\pi}{2}} e^{\frac{\pi}{2} \frac{\sigma}{\mu}}  \big( \mu \vert g_{+}^{0} \vert \vert g_{\rm in} \vert \big)^{i \frac{\sigma}{\mu}} ,
\end{equation*}
and some $0 < \zeta < 1$. From Section 11.2 of \cite{BoFuRaZe18_01}, the quantization function $1 - h^{- i \frac{\sigma}{\mu}} \CQ_{0} ( z , h )$ has a uniformly bounded inverse for $z \in \Omega$ assuming $\nu$ small enough. Then \eqref{b7} becomes
\begin{equation} \label{b8}
a_{0} ( x_{0} , h ) = h^{- i \frac{\sigma}{\mu}} \CQ_{\rm in} \big( 1 - h^{- i \frac{\sigma}{\mu}} \CQ_{0} \big)^{- 1} a_{\rm in} ( x_{\rm in} , h ) + \CO ( h^{\zeta} ) .
\end{equation}
Now, working as in \cite[(11.29)]{BoFuRaZe18_01}, we get
\begin{equation} \label{b9}
a_{\rm out} ( x_{\rm out} , h ) = h^{- i \frac{\sigma}{\mu}} \CR_{0} a_{0} ( x_{0} , h ) + h^{- i \frac{\sigma}{\mu}} \CR_{\rm in} a_{\rm in} ( x_{\rm in} , h ) + \CO ( h^{\zeta} ),
\end{equation}
with
\begin{align*}
\CR_{0} &= \Gamma \Big( \frac{1}{2} - i \frac{\sigma}{\mu} \Big) \frac{1}{\sqrt{2 \pi}} \sqrt{\frac{\big\vert g_{-}^{0} \big\vert}{\big\vert g_{\rm out} \big\vert}} e^{\frac{\pi}{2} \frac{\sigma}{\mu}} \big( \mu \vert g_{\rm out} \vert \vert g_{-}^{0} \vert \big)^{i \frac{\sigma}{\mu}} ,  \\
\CR_{\rm in} &= \Gamma \Big( \frac{1}{2} - i \frac{\sigma}{\mu} \Big) \frac{1}{\sqrt{2 \pi}} e^{- i \frac{\pi}{2}} \sqrt{\frac{\big\vert g_{\rm in} \big\vert}{\vert g_{\rm out} \vert}} e^{- \frac{\pi}{2} \frac{\sigma}{\mu}} \big( \mu \vert g_{\rm out} \vert \vert g_{\rm in} \vert \big)^{i \frac{\sigma}{\mu}} .
\end{align*}
Combining \eqref{b8} and \eqref{b9}, it comes
\begin{equation} \label{b10}
a_{\rm out} ( x_{\rm out} , h ) = \Big(  h^{- 2 i \frac{\sigma}{\mu}} \CR_{0} \CQ_{\rm in} \big( 1 - h^{- i \frac{\sigma}{\mu}} \CQ_{0} \big)^{- 1} + h^{- i \frac{\sigma}{\mu}} \CR_{\rm in} \Big) a_{\rm in} ( x_{\rm in} , h ) + \CO ( h^{\zeta} ),
\end{equation}
uniformly for $z \in \Omega$. On the other hand, we have
\begin{equation*}
u = \Big( e^{i A_{\rm out} / h} \sqrt{\frac{\mu \vert g_{\rm out} \vert}{2 \sqrt{E_{0}}}} a_{\rm out} ( x_{\rm out} , h ) + S ( h ) \Big) e^{i \sqrt{\lambda} x / h} ,
\end{equation*}
microlocally near $\rho_{\rm out}$. Thanks to \eqref{b14}, this leads to
\begin{equation} \label{b12}
S_{+ , -} ( \lambda ) = e^{i A_{\rm out} / h} \sqrt{\frac{\mu \vert g_{\rm out} \vert}{2 \sqrt{E_{0}}}} a_{\rm out} ( x_{\rm out} , h ) +\CO ( h ).
\end{equation}
Combining \eqref{b11}, \eqref{b10} and \eqref{b12}, we eventually obtain
\begin{equation} \label{b18}
S_{+ , -} ( \lambda ) = e^{i (A_{\rm in} +  A_{\rm out} ) / h} \Big(  h^{- 2 i \frac{\sigma}{\mu}} \CA \big( 1 - h^{- i \frac{\sigma}{\mu}} \CQ_{0} \big)^{- 1} - h^{- i \frac{\sigma}{\mu}} \CB \Big) + \CO ( h^{\zeta} ) ,
\end{equation}
where $\CA , \CB , \CQ_{0}$ are given by \eqref{b13}.

From the computation of the coefficients of $S ( \lambda )$ carried out in \eqref{b15}, \eqref{b16} and \eqref{b18}, one can write $\det S ( \lambda ) = \alpha ( \lambda ) + r ( \lambda )$ with
\begin{equation*}
\alpha ( \lambda ) = - i e^{- i \sigma T_{\ell}} e^{i ( A_{\ell} + A_{\rm in} +  A_{\rm out} ) / h} \Big(  h^{- 2 i \frac{\sigma}{\mu}} \CA \big( 1 - h^{- i \frac{\sigma}{\mu}} \CQ_{0} \big)^{- 1} - h^{- i \frac{\sigma}{\mu}} \CB \Big) ,
\end{equation*}
and $r ( \lambda ) = \CO ( h^{\zeta} )$ uniformly for $\lambda \in \Omega$. From \eqref{b19} and \eqref{b5}, we deduce
\begin{equation} \label{b20}
s_{P , P_{0}}^{\prime} ( \lambda ) = \frac{1}{2 \pi i} \frac{\alpha^{\prime} ( \lambda ) + r^{\prime} ( \lambda )}{\alpha ( \lambda ) + r ( \lambda )} .
\end{equation}
Since $S ( \lambda )$ is unitary for $\lambda \in \R \cap \Omega$, we have $\vert \alpha ( \lambda ) \vert = 1 + \CO ( h^{\zeta} )$ uniformly for $\lambda \in \R \cap \Omega$. On the other hand, since $\det S ( \lambda )$ and $\alpha ( \lambda )$ are holomorphic functions in $\Omega$, so is $r ( \lambda )$. Taking first $\Omega$ slightly larger, the Cauchy formula implies $r^{\prime} ( \lambda ) = \CO ( h^{\zeta - 1} \vert \ln h \vert ) = \CO ( h^{- 1} )$ on $\R \cap \Omega$. Summing up \eqref{b20} becomes
\begin{equation} \label{b21}
s_{P , P_{0}}^{\prime} ( \lambda ) = \frac{1}{2 \pi i} \frac{\alpha^{\prime} ( \lambda )}{\alpha ( \lambda )} + \CO ( h^{- 1} ) ,
\end{equation}
uniformly for $\lambda \in \R \cap \Omega$. It remains to compute $\alpha^{\prime} ( \lambda )$. The derivative of $\CA$, $\CB$ and $\CQ_{0}$ with respect to $\lambda$ gives a term of order $\CO ( h^{- 1} )$, whereas
\begin{equation*}
\partial_{\lambda} h^{- i \frac{\sigma}{\mu}}  = i \frac{\vert \ln h \vert}{\mu h} h^{- i \frac{\sigma}{\mu}} ,
\end{equation*}
is of order $h^{- 1} \vert \ln h \vert$. Then, \eqref{b21} gives
\begin{equation*}
s_{P , P_{0}}^{\prime} ( \lambda ) = \frac{\vert \ln h \vert}{2 \pi \mu h} \frac{2 h^{- 2 i \frac{\sigma}{\mu}} \CA \big( 1 - h^{- i \frac{\sigma}{\mu}} \CQ_{0} \big)^{- 1} + h^{- 3 i \frac{\sigma}{\mu}} \CA \CQ_{0} \big( 1 - h^{- i \frac{\sigma}{\mu}} \CQ_{0} \big)^{- 2} - h^{- i \frac{\sigma}{\mu}} \CB}{h^{- 2 i \frac{\sigma}{\mu}} \CA \big( 1 - h^{- i \frac{\sigma}{\mu}} \CQ_{0} \big)^{- 1} - h^{- i \frac{\sigma}{\mu}} \CB} + \CO ( h^{- 1} ) ,
\end{equation*}
and the corollary follows.
\end{proof}
\end{example}

\section{Scattering amplitude} \label{s4}

In this part, we assume \ref{h1} with $\rho > 1$ and denote $P_{0} = - h^{2} \Delta$. Since $P$ is a short-range perturbation of $P_{0}$, the scattering matrix $S_{P , P_{0}} ( \lambda )$ associated to the pair $( P , P_{0} )$ at energy $\lambda > 0$ is a well-defined bounded operator on $L^{2} ( \S^{n - 1} )$. The scattering amplitude $S_{P , P_{0}} ( \lambda , \theta , \omega )$ is defined as the distribution kernel of $S_{P , P_{0}} ( \lambda )$. It happens that $S_{P , P_{0}} ( \lambda , \theta , \omega )$ is smooth outside the diagonal $\theta = \omega$ (see e.g. Isozaki and Kitada \cite{IsKi86_01}). As usual, $\omega \in \S^{n - 1}$ (resp. $\theta \in \S^{n - 1}$) is called the initial (resp. final) direction. Depending on the paper \cite{LaMa99_01,PeZw01_01,RoTa89_01,St02_01}, the scattering amplitude may be normalized in different ways (that is modulo a constant); we have chosen not to multiply by such a constant.

Under the assumptions which allow to define the resonances, we generally know that the scattering amplitude $\lambda \longmapsto S_{P , P_{0}} ( \lambda , \theta , \omega )$ has a meromorphic extension to (a part of) the complex plane with poles at the resonances. This has been first proved by Lax and Phillips \cite{LaPh67_01} for obstacle scattering and then by several authors in various situations. For instance, Agmon \cite{Ag86_01} has studied the Schr\"{o}dinger operators whose potential is analytic at infinity and satisfies \ref{h1} with $\rho > 1$. We refer to G{\'e}rard and Martinez \cite{GeMa89_02} for more details and references about this question.

We want to obtain the meromorphic extension, modulo small error terms, of the scattering amplitude under \ref{h1} with $\rho > 1$ only. Following the strategy of the previous sections, we first compare the scattering amplitudes associated to the pair $( P , P_{0} )$ and to the pair $( Q , P_{0} )$. As before, we could expect to have
\begin{equation} \label{b22}
S_{P , P_{0}} ( \lambda , \theta , \omega ) \approx S_{Q , P_{0}} ( \lambda , \theta , \omega ) ,
\end{equation}
for $Q = - h^{2} \Delta + W ( x )$ satisfying \ref{h3} with $R$ large enough. Unfortunately, there is no hope to have such a comparison result because the scattering amplitude crucially depends on the potential at infinity.

\begin{example}\rm \label{b27}
We give an example of operator showing that \eqref{b22} is in general far from being true. In dimension $n = 2$, we take $V ( x ) = E_{0} e^{- x^{2}}$ non-radially slightly perturbed at infinity. Such a potential is illustrated in Figure \ref{f10}. More generally, we could have chosen any potential $V$ non-radial at infinity, whose trapped set at energy $E_{0}$ is given by the unique, non-degenerate and isotropic global maximum of $V$, which satisfies the general assumptions of Section 2 of Alexandrova and two authors \cite{AlBoRa08_01} and whose Hamiltonian trajectories with energy $E_{0}$ behave at infinity ``nicely'' (that is like those of $E_{0} e^{- x^{2}}$).

\begin{figure}%[!h]
\begin{center}
\begin{picture}(0,0)%
\includegraphics{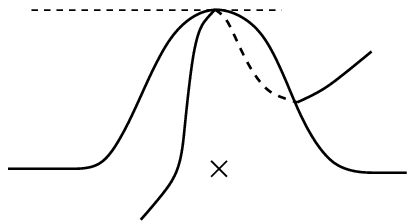}%
\end{picture}%
\setlength{\unitlength}{987sp}%
\begingroup\makeatletter\ifx\SetFigFont\undefined%
\gdef\SetFigFont#1#2#3#4#5{%
  \reset@font\fontsize{#1}{#2pt}%
  \fontfamily{#3}\fontseries{#4}\fontshape{#5}%
  \selectfont}%
\fi\endgroup%
\begin{picture}(8084,4120)(-8489,-4905)
\put(-4199,-4561){\makebox(0,0)[lb]{\smash{{\SetFigFont{9}{10.8}{\rmdefault}{\mddefault}{\updefault}$0$}}}}
\put(-8474,-1036){\makebox(0,0)[lb]{\smash{{\SetFigFont{9}{10.8}{\rmdefault}{\mddefault}{\updefault}$E_{0}$}}}}
\put(-2849,-1711){\makebox(0,0)[lb]{\smash{{\SetFigFont{9}{10.8}{\rmdefault}{\mddefault}{\updefault}$V ( x )$}}}}
\end{picture}%
$\qquad \qquad$
\begin{picture}(0,0)%
\includegraphics{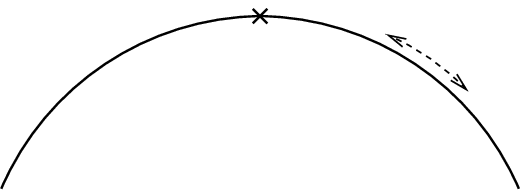}%
\end{picture}%
\setlength{\unitlength}{908sp}%
\begingroup\makeatletter\ifx\SetFigFont\undefined%
\gdef\SetFigFont#1#2#3#4#5{%
  \reset@font\fontsize{#1}{#2pt}%
  \fontfamily{#3}\fontseries{#4}\fontshape{#5}%
  \selectfont}%
\fi\endgroup%
\begin{picture}(10860,4314)(-5429,809)
\put(3751,3764){\makebox(0,0)[lb]{\smash{{\SetFigFont{9}{10.8}{\rmdefault}{\mddefault}{\updefault}$\theta \in \S^{1}$}}}}
\put(151,4964){\makebox(0,0)[b]{\smash{{\SetFigFont{9}{10.8}{\rmdefault}{\mddefault}{\updefault}$\omega^{\perp}$}}}}
\put(  1,3164){\makebox(0,0)[b]{\smash{{\SetFigFont{9}{10.8}{\rmdefault}{\mddefault}{\updefault}$h^{- 1 / 2} \vert \ln h \vert^{- 1 / 2}$}}}}
\put(-3524,1664){\makebox(0,0)[b]{\smash{{\SetFigFont{9}{10.8}{\rmdefault}{\mddefault}{\updefault}$h^{- 1 / 2}$}}}}
\put(3826,1664){\makebox(0,0)[b]{\smash{{\SetFigFont{9}{10.8}{\rmdefault}{\mddefault}{\updefault}$1$}}}}
\end{picture}%
\end{center}
\caption{The potential of Example \ref{b27} and the order of the scattering amplitude in function of the final direction $\theta \in \S^{1}$.} \label{f10}
\end{figure}

We fix an initial direction $\omega \in \S^{1}$. From Theorem 2.6 (a) and (c) of \cite{AlBoRa08_01}, there exists a particular direction $\omega^{\perp} \in \S^{1}$ such that
\begin{equation*}
S_{P , P_{0}} ( E_{0} , \theta , \omega ) \propto \left\{ \begin{aligned}
&1 &&\text{ for } \theta < \omega^{\perp} \text{ near } \omega^{\perp} , \\
&h^{- 1 / 2} \vert \ln h \vert^{- 1 / 2} &&\text{ for } \theta = \omega^{\perp} , \\
&h^{- 1 / 2} &&\text{ for } \theta > \omega^{\perp} \text{ near } \omega^{\perp} .
\end{aligned} \right.
\end{equation*}
Let us now consider $Q$ as in \ref{h3}. Depending on $R > 0$ large enough (and also of the function $g_{0}$ appearing in \ref{h3}), $S_{Q , P_{0}} ( E_{0} , \omega^{\perp} , \omega )$ can be of order $1$, $h^{- 1 / 2} \vert \ln h \vert^{- 1 / 2}$ or $h^{- 1 / 2}$ since the perturbation is non-radial. Thus, $S_{P , P_{0}} ( \lambda , \theta , \omega )$ and $S_{Q , P_{0}} ( \lambda , \theta , \omega )$ are not always of the same order and this order may depend on $R > 0$ arbitrarily large.
\end{example}

Even if it is not possible to relate the scattering amplitudes of $P$ and $Q$, it is possible to replace the resolvent of $P$ by that of $Q$ in the usual representation of the scattering amplitude. More precisely, we have

\begin{remark}[Representation formula for the scattering amplitude]\sl \label{b28}
Assume \ref{h1} with $\rho > 1$ and \ref{h3}. Let $I \subset ] 0 , + \infty [$ be a compact interval and $\theta , \omega \in \S^{n - 1}$ be such that $\theta \neq \omega$. For $R > 0$ large enough, we have
\begin{align}
S_{P , P_{0}} ( \lambda , \theta , \omega )
&= c_{n} \big\< [ P , \chi_{+} ] g_{+} e^{i \varphi_{+} / h} , ( P - \lambda - i 0 )^{- 1} [ P , \chi_{-} ] g_{-} e^{i \varphi_{-} / h} \big\> + \CO ( h^{\infty} ) \nonumber \\
&= c_{n} \big\< [ P , \chi_{+} ] g_{+} e^{i \varphi_{+} / h} , ( Q - \lambda - i 0 )^{- 1} [ P , \chi_{-} ] g_{-} e^{i \varphi_{-} / h} \big\> + \CO ( h^{\infty} ) , \label{b31}
\end{align}
uniformly for $\lambda \in I$. Here, the cut-off functions $\chi_{\pm} ( x ) \in C^{\infty}_{0} ( \R^{n} )$ satisfy $\chi_{-} \prec \chi_{+}$ and are equal to $1$ on a (arbitrary) large region,
\begin{equation} \label{b35}
c_{n} = i \pi ( 2 \pi h )^{- n} \lambda^{\frac{n - 2}{2}} ,
\end{equation}
the phases $\varphi_{-} = \varphi_{-} ( x , \omega , \lambda )$ and $\varphi_{+} = \varphi_{+} ( x , \theta , \lambda )$ are smooth in all their variables, and the amplitudes $g_{-} = g_{-} ( x , \omega , \lambda , h )$ and $g_{+} = g_{+} ( x , \theta , \lambda , h )$ are classical symbols in $x , \theta , \omega , \lambda$ of class $S ( 1 )$. Eventually, $\varphi_{\pm}$ and $g_{\pm}$ are given by the constructions of Isozaki and Kitada for the operator $P$ (see Sections 1-2 of \cite{RoTa89_01} or Sections 2-3 of \cite{Mi04_01} for more details).
\end{remark}

In \eqref{b31}, the phases $\varphi_{\pm}$ and the symbols $g_{\pm}$ depend on $P$ at infinity only, whereas the cut-off resolvent $[ P , \chi_{+} ] ( Q - \lambda - i 0 )^{- 1} [ P , \chi_{-} ]$ takes into account the operator $P$ (or $Q$) in a compact subset. Using the constructions of Isozaki and Kitada \cite{IsKi85_01}, the first equality of \eqref{b31} has been first obtained by Robert and Tamura \cite{RoTa89_01} in the non-trapping case and then by the second author in the general setting \cite[Theorem 3.2]{Mi04_01}. The potential is assumed to be analytic at infinity in \cite{Mi04_01}, but this assumption can be removed since \eqref{a17} is now known without analyticity. The second equality of \eqref{b31} is a direct consequence of the first one and of Proposition \ref{b29}.

We do not know if the scattering amplitude has a meromorphic extension under \ref{h1} with $\rho > 1$. However Remark \ref{b28} allows to write $S_{P , P_{0}} ( \lambda , \theta , \omega )$ has a meromorphic function controlled by the norm of the resolvent of $Q$ modulo a small remainder term.

\begin{proposition}[Almost meromorphic extension of the scattering amplitude]\sl \label{b32}
Let $P$ satisfy \ref{h1} with $\rho > 1$, $E_{0} > 0$ and $\theta , \omega \in \S^{n - 1}$ be such that $\theta \neq \omega$. There exists a function $S_{\rm approx} ( z , \theta , \omega )$ meromorphic in $B ( E_{0} , C h )$ for any $C > 0$ and $h$ small enough such that
\begin{equation*}
S_{P , P_{0}} ( \lambda , \theta , \omega ) = S_{\rm approx} ( \lambda , \theta , \omega ) + \CO ( h^{\infty} ) ,
\end{equation*}
uniformly for $\lambda \in [ E_{0} - C h , E_{0} + C h ]$ and
\begin{equation} \label{b34}
\vert S_{\rm approx} ( z , \theta , \omega ) \vert \lesssim h^{2 - n} \big\Vert \one_{\SC} ( Q - z )^{- 1} \one_{\SC} \big\Vert ,
\end{equation}
uniformly for $z \in B ( E_{0} ,  C h )$ where $\SC$ is an arbitrarily far away ring and $Q$ is as in \ref{h3} with $R$ large enough.
\end{proposition}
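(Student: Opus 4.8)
The plan is to define $S_{\rm approx}$ by continuing analytically, in the spectral parameter, the right-hand side of the representation formula of Remark~\ref{b28}. Fix $C>0$, pick a compact interval $I\subset]0,+\infty[$ containing a neighborhood of $E_0$, and let $Q$ be as in \ref{h3} with $R$ large enough for Remark~\ref{b28} to hold on $I$; choose the cut-off functions $\chi_-\prec\chi_+$ of that remark so that, in addition, $\supp\nabla\chi_\pm\subset\SC$, which is possible since $\chi_\pm$ may be taken equal to $1$ on an arbitrarily large ball and $\SC$ is an arbitrarily far away ring. For $\lambda\in I$ real, Remark~\ref{b28} reads
\begin{equation*}
S_{P,P_0}(\lambda,\theta,\omega)=c_n(\lambda)\big\langle [P,\chi_+]g_+e^{i\varphi_+/h},(Q-\lambda-i0)^{-1}[P,\chi_-]g_-e^{i\varphi_-/h}\big\rangle+\CO(h^\infty),
\end{equation*}
with $c_n(\lambda)=i\pi(2\pi h)^{-n}\lambda^{(n-2)/2}$ and $\varphi_\pm=\varphi_\pm(x,\cdot,\lambda)$, $g_\pm=g_\pm(x,\cdot,\lambda)$ the Isozaki--Kitada phases and symbols of $P$. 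The structural point is that $[P,\chi_\pm]=-2h^2\nabla\chi_\pm\cdot\nabla-h^2\Delta\chi_\pm$ has coefficients supported in the compact set $\supp\nabla\chi_\pm\subset\SC$, so the only operator-valued object in this formula is $\one_\SC(Q-\lambda-i0)^{-1}\one_\SC$.

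Next I would continue each ingredient to $z\in B(E_0,Ch)$. The prefactor $c_n(z)=i\pi(2\pi h)^{-n}z^{(n-2)/2}$ is holomorphic near $E_0>0$. The Isozaki--Kitada construction for $P$ depends holomorphically on the energy parameter in a fixed complex neighborhood of $E_0$: the eikonal and transport equations defining $\varphi_\pm$ and $g_\pm$ involve the energy only through boundary data at infinity and right-hand sides that are holomorphic in $z$, while $V$ enters only through its smooth dependence on $x$; thus $\varphi_\pm(x,\cdot,z)$ and $g_\pm(x,\cdot,z)$ extend holomorphically in $z$, and since $\im z=\CO(h)$ with $x$ in the bounded set $\supp\nabla\chi_\pm$ we have $\im\varphi_\pm=\CO(h)$ and hence $|e^{i\varphi_\pm/h}|=\CO(1)$ there. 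Finally $Q=-h^2\Delta+W$ with $W$ compactly supported, so $\one_\SC(Q-z)^{-1}\one_\SC$, defined for $\im z>0$, continues meromorphically across $]0,+\infty[$, with poles at $\res(Q)$, and its boundary value at a real $\lambda$ is $\one_\SC(Q-\lambda-i0)^{-1}\one_\SC$. Setting
\begin{equation*}
S_{\rm approx}(z,\theta,\omega):=c_n(z)\int_{\R^n}\Big([P,\chi_+]\,\overline{g_+(x,\theta,\bar z)}\,e^{-i\overline{\varphi_+(x,\theta,\bar z)}/h}\Big)\,\Big((Q-z)^{-1}[P,\chi_-]g_-(\cdot,\omega,z)e^{i\varphi_-(\cdot,\omega,z)/h}\Big)(x)\,dx,
\end{equation*}
the first factor is holomorphic in $z$ (the complex conjugate of an antiholomorphic function of $z$), the bracketed factor is meromorphic in $z$, and the integral is a bilinear pairing of two compactly supported functions, so $z\mapsto S_{\rm approx}(z,\theta,\omega)$ is meromorphic in $B(E_0,Ch)$. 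Evaluating at a real $z=\lambda$ the phases are real, the first factor becomes $\overline{[P,\chi_+]g_+e^{i\varphi_+/h}}$, and one recovers $c_n(\lambda)\langle [P,\chi_+]g_+e^{i\varphi_+/h},(Q-\lambda-i0)^{-1}[P,\chi_-]g_-e^{i\varphi_-/h}\rangle$; by the displayed formula above this equals $S_{P,P_0}(\lambda,\theta,\omega)+\CO(h^\infty)$ uniformly for $\lambda\in[E_0-Ch,E_0+Ch]$.

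For the bound \eqref{b34} I would use that applying the commutator $[P,\chi_\pm]=-2h^2\nabla\chi_\pm\cdot\nabla-h^2\Delta\chi_\pm$ to a function of the form $g_\pm e^{i\varphi_\pm/h}$ gains one power of $h$: the leading contribution is $-2ih(\nabla\chi_\pm\cdot\nabla_x\varphi_\pm)g_\pm e^{i\varphi_\pm/h}=\CO(h)$ and the remainder is $\CO(h^2)$, using $g_\pm\in S(1)$, the boundedness of $\nabla_x\varphi_\pm$ on $\supp\nabla\chi_\pm$, and the continuation above (all uniformly for $z\in B(E_0,Ch)$). Hence each of the two functions in the pairing has $L^2$-norm $\CO(h)$, and by Cauchy--Schwarz together with $|c_n(z)|\lesssim h^{-n}$,
\begin{equation*}
|S_{\rm approx}(z,\theta,\omega)|\lesssim h^{-n}\cdot h\cdot h\cdot\big\Vert\one_{\supp\nabla\chi_+}(Q-z)^{-1}\one_{\supp\nabla\chi_-}\big\Vert\lesssim h^{2-n}\big\Vert\one_\SC(Q-z)^{-1}\one_\SC\big\Vert,
\end{equation*}
the last step using $\supp\nabla\chi_\pm\subset\SC$.

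The main obstacle is the holomorphic, and uniform in $h$, dependence of the Isozaki--Kitada phases and symbols on the complex energy $z$: one has to check that the constructions of \cite{RoTa89_01,Mi04_01} carry over to $z\in B(E_0,Ch)$ and that the oscillatory factors $e^{\pm i\varphi_\pm/h}$ stay $\CO(1)$ on the fixed compact supports of $\nabla\chi_\pm$ (the only place where $\im z=\CO(h)$ is needed). Everything else --- the meromorphic continuation of the cut-off resolvent of the compactly supported perturbation $Q$, the gain of one power of $h$ from each commutator, and the bookkeeping of conjugations to keep the pairing bilinear in $z$ --- is routine.
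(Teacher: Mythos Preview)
Your overall architecture is right, and the bound \eqref{b34} follows exactly as you say from $c_n=\CO(h^{-n})$ and the $\CO(h)$ gain from each commutator. The issue is the step you yourself flag as ``the main obstacle'': the holomorphic dependence of the Isozaki--Kitada phases and symbols on the energy. For a potential that is only $C^\infty$, this is not merely something to check --- it is generally \emph{false}. The phase $\varphi_-$ is obtained (in the Isozaki--Kitada construction) from a function $\varphi(x,\xi)$ solving $|\nabla_x\varphi|^2+V(x)=|\xi|^2$ by setting $\varphi_-(x,\omega,\lambda)=\varphi(x,\sqrt\lambda\,\omega)$, and $\varphi(x,\xi)$ is built by integrating $V$ (and iteratively its derivatives) along curves that depend on $\xi$. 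Thus $\varphi_-$ involves expressions like $\int V(x+2\sqrt\lambda\,\omega\,t)\,dt$, which for merely smooth $V$ are $C^\infty$ in $\lambda$ but have no reason to extend holomorphically. The same obstruction applies to the transport symbols $g_\pm$. So your definition of $S_{\rm approx}$ by direct analytic continuation in $z$ does not go through under \ref{h1} alone.

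The paper sidesteps this with an ``almost analytic extension'' trick that uses only smoothness. Since $\varphi_+(x,\omega,\lambda)$ is $C^\infty$ in $\lambda$, one Taylor expands at $E_0$ in the rescaled variable $\sigma=(\lambda-E_0)/h$:
\[
\varphi_+(x,\omega,E_0+h\sigma)\ \asymp\ \varphi_+(x,\omega,E_0)+\sum_{j\ge1}\varphi_+^j(x,\sigma)\,h^j,
\]
where each $\varphi_+^j$ is \emph{polynomial} in $\sigma$ (hence entire). A Borel summation then produces a function $F_+(x,\lambda,h)$ that is genuinely holomorphic in $z\in B(E_0,Ch)$ for every $C$, bounded there, and equal to $e^{i\varphi_+/h}$ on the real interval modulo $\CO(h^\infty)$. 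Doing the same for $e^{i\varphi_-/h}$ and $g_\pm$ and inserting these holomorphic replacements into \eqref{b31} (together with the meromorphic cut-off resolvent of $Q$) defines $S_{\rm approx}$. In other words, the paper does not continue the Isozaki--Kitada data themselves; it replaces them by holomorphic surrogates that agree to $\CO(h^\infty)$ on the real segment, which is all the proposition asks for. Once you make this substitution, the rest of your argument (meromorphy from the $Q$-resolvent, the $h^{2-n}$ bound) is exactly what is needed.
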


\begin{proof}
Since $\varphi_{+} ( x , \omega , \lambda )$ is $C^{\infty}$ in $x , \lambda$, the Taylor formula gives
\begin{equation*}
\varphi_{+} ( x , \omega , E_{0} + h \sigma ) \asymp \varphi_{+} ( x , \omega , E_{0} ) + \sum_{j \geq 1} \varphi_{+}^{j} ( x , \sigma ) h^{j} ,
\end{equation*}
in $S ( 1 )$ where the $\varphi_{+}^{j} ( x , \sigma )$ are $C^{\infty}$ in $x$ and polynomial in $\sigma \in [ - C , C ]$. Then
\begin{equation} \label{b33}
e^{i \varphi_{+} / h} \asymp e^{i \varphi_{+} ( x , \omega , E_{0} ) / h} e^{i \varphi_{+}^{1} ( x , \sigma )} \Big( 1 + \sum_{j \geq 1} f_{j} ( x , \sigma ) h^{j} \Big) ,
\end{equation}
in $S ( 1 )$  where the $f_{j} ( x , \sigma )$ are $C^{\infty}$ in $x$ and polynomial in $\sigma \in [ - C , C ]$. Using the Borel lemma, one can construct a function $f ( x , \sigma , h )$, which is $C^{\infty}$ in $x$ and holomorphic in $\sigma \in \C$, such that $f ( x , \sigma , h ) \asymp 1 + \sum_{j \geq 1} f_{j} ( x , \sigma ) h^{j}$ for $\sigma \in B ( 0 , C )$ for any $C > 0$. Thus, \eqref{b33} gives $e^{i \varphi_{+} / h} = F_{+} ( x , \lambda , h ) + S ( h^{\infty} )$ with
\begin{equation*}
F_{+} ( x , \lambda , h ) = e^{i \varphi_{+} ( x , \omega , E_{0} ) / h} e^{i \varphi_{+}^{1} \big( x , \frac{\lambda - E_{0}}{h} \big)} f \Big( x , \frac{\lambda - E_{0}}{h} , h \Big),
\end{equation*}
uniformly for $\lambda \in [ E_{0} - C h , E_{0} + C h ]$ for any $C > 0$. Note that $F_{+}$ is holomorphic and bounded in $B ( E_{0} , C h )$ for any $C > 0$. Doing the same procedure, we can write $e^{i \varphi_{-} / h} = F_{-} ( x , \lambda , h ) + S ( h^{\infty} )$ and $g_{\pm} = G_{\pm} ( x , \lambda , h ) + S ( h^{\infty} )$ uniformly for $\lambda \in [ E_{0} - C h , E_{0} + C h ]$ where the functions $F_{-} , G_{\pm}$ are holomorphic and bounded in $B ( E_{0} , C h )$.

Combining the previous constructions with \eqref{a17} for $Q$, \eqref{b31} becomes
\begin{equation*}
S_{P , P_{0}} ( \lambda , \theta , \omega ) = S_{\rm approx} ( \lambda , \theta , \omega ) + \CO ( h^{\infty} ) ,
\end{equation*}
uniformly for $\lambda \in [ E_{0} - C h , E_{0} + C h ]$ with
\begin{equation} \label{b36}
S_{\rm approx} ( \lambda , \theta , \omega ) = c_{n} \big\< [ P , \chi_{+} ] G_{+} F_{+} , ( Q - \lambda - i 0 )^{- 1} [ P , \chi_{-} ] F_{-} G_{-} \big\> .
\end{equation}
Since $F_{\pm} , G_{\pm}$ are holomorphic and the cut-off resolvent of $Q$ is meromorphic, $S_{\rm approx} ( z , \theta , \omega )$ has a meromorphic extension in $B ( E_{0} , C h )$ for any $C > 0$ and $h$ small enough. Taking $\SC$ such that $\supp \nabla \chi_{\pm} \prec \one_{\SC}$, \eqref{b34} follows from \eqref{b35}, \eqref{b36} and that $[ P , \chi_{\pm} ]$ are of order $h$.
\end{proof}

In the well in the island situation (see Example \ref{a61}) and for globally analytic potentials, Lahmar-Benbernou and Martinez \cite{Be99_01,LaMa99_01} have computed the asymptotic of the residue of the scattering amplitude at the resonances. This result allows them to compute the asymptotic of the scattering amplitude on the real axis. Using Remark \ref{b28} and adapting \cite[Corollary 2.1]{LaMa99_01}, it seems possible to remove the analyticity assumption in this last result. For that, we would have to use \cite{FuLaMa11_01} instead of \cite{HeSj86_01}. As this may be quite technical, we state no result in this direction. On the other hand, the residue of the scattering amplitude at barrier-top has been computed by Fujii\'e, Zerzeri and two authors \cite[Theorem 5.1]{BoFuRaZe11_01}.

\begin{figure}%[!h]
\begin{center}
\begin{picture}(0,0)%
\includegraphics{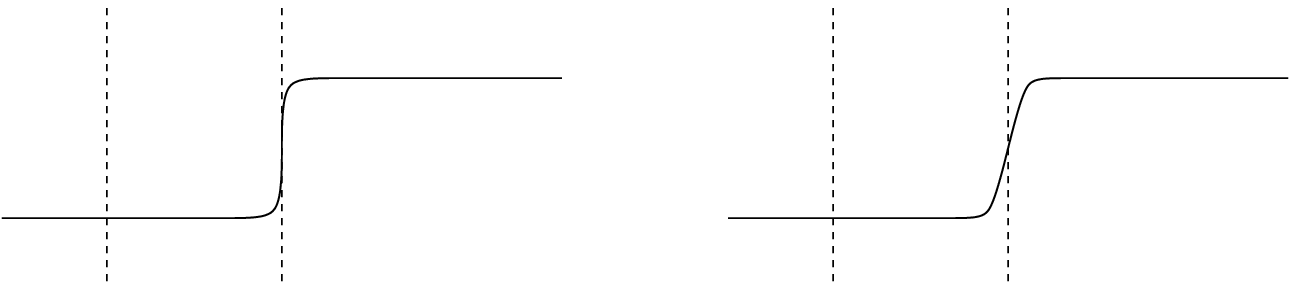}%
\end{picture}%
\setlength{\unitlength}{1105sp}%
\begingroup\makeatletter\ifx\SetFigFont\undefined%
\gdef\SetFigFont#1#2#3#4#5{%
  \reset@font\fontsize{#1}{#2pt}%
  \fontfamily{#3}\fontseries{#4}\fontshape{#5}%
  \selectfont}%
\fi\endgroup%
\begin{picture}(22116,4866)(-1232,-394)
\put(16201,4064){\makebox(0,0)[lb]{\smash{{\SetFigFont{9}{10.8}{\rmdefault}{\mddefault}{\updefault}$\Lambda_{\rm fin}^{\theta_{0}}$}}}}
\put(13201,4064){\makebox(0,0)[lb]{\smash{{\SetFigFont{9}{10.8}{\rmdefault}{\mddefault}{\updefault}$\Lambda_{\rm fin}^{\theta}$}}}}
\put(19801,3539){\makebox(0,0)[lb]{\smash{{\SetFigFont{9}{10.8}{\rmdefault}{\mddefault}{\updefault}$\Lambda_{\rm ini}^{\omega}$}}}}
\put(7351,3539){\makebox(0,0)[lb]{\smash{{\SetFigFont{9}{10.8}{\rmdefault}{\mddefault}{\updefault}$\Lambda_{\rm ini}^{\omega_{0}}$}}}}
\put(751,4064){\makebox(0,0)[lb]{\smash{{\SetFigFont{9}{10.8}{\rmdefault}{\mddefault}{\updefault}$\Lambda_{\rm fin}^{\theta}$}}}}
\put(3751,4064){\makebox(0,0)[lb]{\smash{{\SetFigFont{9}{10.8}{\rmdefault}{\mddefault}{\updefault}$\Lambda_{\rm fin}^{\theta_{0}}$}}}}
\put(5851,839){\makebox(0,0)[lb]{\smash{{\SetFigFont{9}{10.8}{\rmdefault}{\mddefault}{\updefault}$( L )$}}}}
\put(18301,839){\makebox(0,0)[lb]{\smash{{\SetFigFont{9}{10.8}{\rmdefault}{\mddefault}{\updefault}$( R )$}}}}
\end{picture}%
\end{center}
\caption{A cross section of the Lagrangian manifolds $\Lambda_{\bullet}^{\star}$.} \label{f12}
\end{figure}

We have seen previously that $S_{P , P_{0}} ( \lambda , \theta , \omega )$ and $S_{Q , P_{0}} ( \lambda , \theta , \omega )$ are not always comparable. Nevertheless, for any $\theta , \omega \in \S^{n - 1}$, one could hope to find two directions $\widetilde{\theta} ( \theta , \omega ) , \widetilde{\omega} ( \theta , \omega ) \in \S^{n - 1}$ close to $\theta , \omega$ such that
\begin{equation} \label{b55}
S_{P , P_{0}} ( \lambda , \theta , \omega ) \approx S_{Q , P_{0}} ( \lambda , \widetilde{\theta} , \widetilde{\omega} ) .
\end{equation}
This property is satisfied in Example \ref{b27}, but does not hold in general. Indeed, for a non-trapping energy $\lambda_{0} > 0$ in dimension $n  = 2$, Robert and Tamura \cite{RoTa89_01} have proved that the scattering amplitude is given by the scalar product of two Lagrangian distributions, associated to two Lagrangian manifolds $\Lambda_{\rm fin}^{\theta}$ and $\Lambda_{\rm ini}^{\omega}$ stable by the Hamiltonian flow. Assume now that these Lagrangian manifolds are as in Figure \ref{f12}; that is, the cross sections (normal to $H_{p}$) of $\Lambda_{\rm fin}^{\theta_{0}}$ and $\Lambda_{\rm ini}^{\omega_{0}}$ have a contact of order $3$ $(L)$ whereas they intersect transversally for $( \theta , \omega ) \neq ( \theta_{0} , \omega_{0} )$ $(R)$. Adapting \cite{RoTa89_01} and using Section 7.7 of H{\"o}rmander \cite{Ho90_01} to compute degenerate stationary phases, one can check that the scattering amplitude satisfies the asymptotic
\begin{equation} \label{b56}
S_{P , P_{0}} ( \lambda_{0} , \theta , \omega ) \propto \left\{
\begin{aligned}
&h^{- 3 / 4} &&\text{ for } ( \theta , \omega ) = ( \theta_{0} , \omega_{0} ) ,  \\
&h^{- 1 / 2} &&\text{ for } ( \theta , \omega ) \neq ( \theta_{0} , \omega_{0} ) .
\end{aligned} \right.
\end{equation}
If now we consider $Q$ as in \ref{h3} with $R$ large, it may happen that the cross section of the Lagrangian manifolds $\Lambda_{\rm fin}^{\theta}$ and $\Lambda_{\rm ini}^{\omega}$ for the operator $Q$ intersect transversally (see Figure \ref{f12} $(R)$) for all $( \theta , \omega )$ near $( \theta_{0} , \omega_{0} )$. Then,
\begin{equation} \label{b57}
S_{Q , P_{0}} ( \lambda_{0} , \theta , \omega ) \propto h^{- 1 / 2} ,
\end{equation}
for all $( \theta , \omega )$ near $( \theta_{0} , \omega_{0} )$. Eventually, \eqref{b56} and \eqref{b57} show that \eqref{b55} can not be true for $( \theta , \omega ) = ( \theta_{0} , \omega_{0} )$. The idea behind this counterexample is that the scattering amplitude, that is the distribution kernel of the scattering matrix, behaves badly in presence of caustics. Thus, it seems more relevant to consider the scattering matrix rather than its kernel. In the rest of this section, we obtain a positive result in this direction and link the scattering matrices $S_{P , P_{0}} ( \lambda )$ and $S_{Q , P_{0}} ( \lambda )$.

\begin{figure}%[!h]
\begin{center}
\begin{picture}(0,0)%
\includegraphics{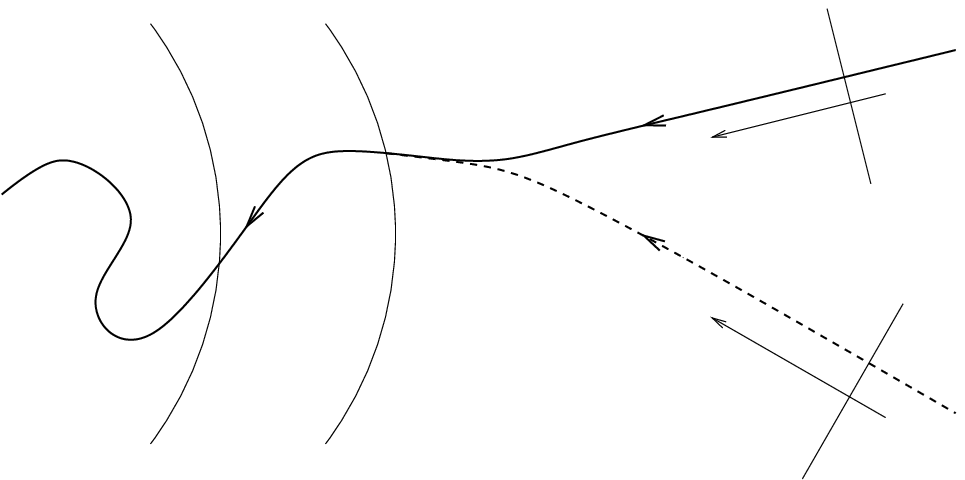}%
\end{picture}%
\setlength{\unitlength}{1105sp}%
\begingroup\makeatletter\ifx\SetFigFont\undefined%
\gdef\SetFigFont#1#2#3#4#5{%
  \reset@font\fontsize{#1}{#2pt}%
  \fontfamily{#3}\fontseries{#4}\fontshape{#5}%
  \selectfont}%
\fi\endgroup%
\begin{picture}(16416,8101)(1618,-3383)
\put(16951,-1336){\makebox(0,0)[lb]{\smash{{\SetFigFont{9}{10.8}{\rmdefault}{\mddefault}{\updefault}$\widetilde{z}$}}}}
\put(5776,3389){\makebox(0,0)[lb]{\smash{{\SetFigFont{9}{10.8}{\rmdefault}{\mddefault}{\updefault}$B ( 0 , R )$}}}}
\put(14701,-3061){\makebox(0,0)[lb]{\smash{{\SetFigFont{9}{10.8}{\rmdefault}{\mddefault}{\updefault}$\widetilde{\omega}^ {\perp}$}}}}
\put(15601,1514){\makebox(0,0)[lb]{\smash{{\SetFigFont{9}{10.8}{\rmdefault}{\mddefault}{\updefault}$\omega^ {\perp}$}}}}
\put(16201,3914){\makebox(0,0)[lb]{\smash{{\SetFigFont{9}{10.8}{\rmdefault}{\mddefault}{\updefault}$z$}}}}
\put(2176,3389){\makebox(0,0)[lb]{\smash{{\SetFigFont{9}{10.8}{\rmdefault}{\mddefault}{\updefault}$B ( 0 , R / 3 )$}}}}
\put(13351,-1186){\makebox(0,0)[lb]{\smash{{\SetFigFont{9}{10.8}{\rmdefault}{\mddefault}{\updefault}$\widetilde{\omega}$}}}}
\put(13726,2039){\makebox(0,0)[lb]{\smash{{\SetFigFont{9}{10.8}{\rmdefault}{\mddefault}{\updefault}$\omega$}}}}
\put(9901,164){\makebox(0,0)[lb]{\smash{{\SetFigFont{9}{10.8}{\rmdefault}{\mddefault}{\updefault}$\gamma_{Q}^{-} ( \cdot , \widetilde{\omega} , \widetilde{z} , \lambda )$}}}}
\put(9901,3164){\makebox(0,0)[lb]{\smash{{\SetFigFont{9}{10.8}{\rmdefault}{\mddefault}{\updefault}$\gamma_{P}^{-} ( \cdot , \omega , z , \lambda )$}}}}
\end{picture}%
\end{center}
\caption{The canonical relation $\Lambda^{\rm ini}_{Q \leftarrow P}$.} \label{f9}
\end{figure}

To state our result, we use some properties of the Hamiltonian flow at infinity whose proofs can be found in Alexandrova and two authors \cite{AlBoRa08_02}. For any energy $\lambda > 0$, asymptotic direction $\alpha \in \S^{n - 1}$ and impact parameter $z \in T^{*}_{\alpha} \S^{n - 1} \simeq \alpha^{\perp}$, there exist two trajectories
\begin{equation*}
\gamma_{P}^{\pm} ( t , \alpha , z , \lambda ) = ( x_{P}^{\pm} ( t , \alpha , z , \lambda ) , \xi_{P}^{\pm} ( t , \alpha , z , \lambda ) ) ,
\end{equation*}
of $H_{p}$ in $p ^{- 1} ( \lambda )$ such that
\begin{align*}
&\lim_{t \to \pm \infty} \big\vert x_{P}^{\pm} ( t , \alpha , z , \lambda ) - 2 \sqrt{\lambda} \alpha t - z \big\vert = 0 , \\
&\lim_{t \to \pm \infty} \big\vert \xi_{P}^{\pm} ( t , \alpha , z , \lambda ) - \sqrt{\lambda} \alpha \big\vert = 0 .
\end{align*}
The trajectories $\gamma_{Q}^{\pm}$ are defined the same way. We set
\begin{align*}
\Lambda^{\rm ini}_{Q \leftarrow P} ( \lambda ) = \big\{ \big( \widetilde{\omega} , - \sqrt{\lambda} \widetilde{z} , \omega , - \sqrt{\lambda} z \big) &\in T^{*} \S^{n - 1} \times T^{*} \S^{n - 1} ; \ \exists t_{P} , t_{Q}  \in \R , \\
&\gamma_{Q}^{-} ( t_{Q} , \widetilde{\omega} , \widetilde{z} , \lambda ) = \gamma_{P}^{-} ( t_{P} , \omega , z , \lambda ) \in B ( 0 , R / 3 ) \times \R^{n} \big\} .
\end{align*}
This relation is illustrated in Figure \ref{f9} and enjoys the following properties.

\begin{lemma}\sl \label{b24}
Let $M > 0$, $\CI \subset ] 0 , + \infty [$ be a compact interval and then $R > 0$ be large enough. for all $( \omega , z , \lambda ) \in \S^{n - 1} \times B ( 0 , M ) \times \CI$, there exists a unique $( \widetilde{\omega} , \widetilde{z} ) \in \S^{n - 1} \times \R^{n - 1}$ such that
\begin{equation*}
\big( \widetilde{\omega} , - \sqrt{\lambda} \widetilde{z} , \omega , - \sqrt{\lambda} z \big) \in \Lambda^{\rm ini}_{Q \leftarrow P} ( \lambda ) .
\end{equation*}
Moreover, for $\lambda \in \CI$ and restricted to $( \omega , z ) \in \S^{n - 1} \times B ( 0 , M )$, $\Lambda^{\rm ini}_{Q \leftarrow P} ( \lambda )$ is a canonical relation given by a canonical transformation.
\end{lemma}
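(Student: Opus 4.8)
The plan is to exhibit, for each $\lambda$ and each $R$ large, a map $\Phi_\lambda\colon(\omega,z)\mapsto(\widetilde\omega,\widetilde z)$ and to show that $\Lambda^{\mathrm{ini}}_{Q\leftarrow P}(\lambda)$, restricted as in the statement, is exactly the graph of the corresponding map of $T^*\S^{n-1}$, which is a canonical transformation. The mechanism is that $W=V$ on $\overline{B(0,R)}$ (see \ref{h3}), so every Hamiltonian trajectory segment lying in $\overline{B(0,R)}$ is common to $P$ and $Q$, while in $\{\vert x\vert\ge R\}$ both potentials are short-range of size $R^{-\rho}$ there, so a virial estimate holds for $R$ large. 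Concretely, there is $R_1>0$, depending only on $V$ and $\CI$, with $H_p(x\cdot\xi)=2(\lambda-V)-x\cdot\nabla V\ge\inf\CI>0$ on $\{\vert x\vert\ge R_1\}\cap p^{-1}(\CI)$; and since $\vert x\vert\,\vert\nabla W(x)\vert\lesssim R^{-\rho}$ for $\vert x\vert\ge R$ (use $W=g_0(\cdot/R)V$ and \ref{h1}), also $H_q(x\cdot\xi)\ge\inf\CI>0$ on $\{\vert x\vert\ge R\}\cap q^{-1}(\CI)$ once $R$ is large. Thus, in the respective regions, $x\cdot\xi$ is strictly increasing along the flow, with the usual consequences: every crossing of $\{\vert x\vert=r\}$ with $r\ge R_1$ (resp. $r\ge R$) is transversal, a trajectory leaving $B(0,r)$ does so with $x\cdot\xi>0$ and then escapes radially to infinity without returning, and an incoming branch coming from infinity inside the region is radially strictly decreasing until it first meets $\{\vert x\vert=r\}$, with $x\cdot\xi<0$ there. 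I would also borrow from \cite{AlBoRa08_02} the existence and elementary properties of the trajectories $\gamma_P^\pm,\gamma_Q^\pm$ and of the classical wave transformations; since the set of free closest-approach points $\{(z,\sqrt\lambda\,\omega):\omega\in\S^{n-1},\ z\in\omega^\perp,\ \vert z\vert\le M,\ \lambda\in\CI\}$ is compact, its image under the wave transformation for $P$ is a compact subset of $\{\vert x\vert\le R_3\}$ for some fixed $R_3$, so $\gamma_P^-(\cdot,\omega,z,\lambda)$ meets $B(0,R_3)$ for all $(\omega,z,\lambda)\in\S^{n-1}\times B(0,M)\times\CI$. Fix $R>3\max(R_1,R_3,M)$, large enough for the above.

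Now fix $(\omega,z,\lambda)$ in that range, set $\gamma:=\gamma_P^-(\cdot,\omega,z,\lambda)$, and let $t_-$ be the first time $\gamma$ enters $B(0,R/3)$ (it does, by the previous paragraph). On $(-\infty,t_-)$ the trajectory stays in $\{\vert x\vert\ge R/3>R_1\}$, so there $\vert x\vert$ is strictly decreasing from $+\infty$ to $R/3$ and crosses $\{\vert x\vert=R\}$ at a single time $s_-<t_-$, at which $x\cdot\xi<0$. Moreover $\{t:\gamma(t)\in B(0,R/3)\}$ is a single interval $[t_-,t_+)$: if $\gamma$ left $B(0,R/3)$ it would cross $\{\vert x\vert=R/3\}$ with $x\cdot\xi>0$ and escape. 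Running the $Q$-flow backward from $\gamma(s_-)$: since $\vert x\vert=R$ and $x\cdot\xi<0$ there, it stays in $\{\vert x\vert\ge R\}$, is radially monotone, reaches infinity, and (the momentum being eventually constant, $W$ being compactly supported) it coincides with $\gamma_Q^-(\cdot+\tau,\widetilde\omega,\widetilde z,\lambda)$ for a unique $(\widetilde\omega,\widetilde z)\in\S^{n-1}\times\widetilde\omega^\perp$ and some $\tau\in\R$; set $\Phi_\lambda(\omega,z):=(\widetilde\omega,\widetilde z)$. Running the $Q$-flow forward from $\gamma(s_-)$ instead, it agrees with $\gamma$ as long as it stays in $\overline{B(0,R)}$, in particular it passes through $\gamma(t_-)\in B(0,R/3)$; hence $(\widetilde\omega,-\sqrt\lambda\,\widetilde z,\omega,-\sqrt\lambda\,z)\in\Lambda^{\mathrm{ini}}_{Q\leftarrow P}(\lambda)$.

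It remains to see that $\Lambda^{\mathrm{ini}}_{Q\leftarrow P}(\lambda)$ contains no other element over $(\omega,-\sqrt\lambda\,z)$ and that $\Phi_\lambda$ is a canonical transformation. If $\gamma_Q^-(t_Q,\widetilde\omega',\widetilde z',\lambda)=\gamma(t_P)=:\rho\in B(0,R/3)\times\R^n$, then $t_P\in[t_-,t_+)$, so $\gamma|_{[s_-,t_P]}\subset\overline{B(0,R)}$; running the $Q$-flow backward from $\rho$ therefore retraces $\gamma$ down to $\gamma(s_-)$ and then, as above, continues monotonically to infinity with incoming data $(\widetilde\omega,\widetilde z)=\Phi_\lambda(\omega,z)$, whence $(\widetilde\omega',\widetilde z')=\Phi_\lambda(\omega,z)$; this is the uniqueness. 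Smoothness of $\Phi_\lambda$ in $(\omega,z,\lambda)$ follows from smoothness of the two flows and the implicit function theorem at the transversal crossing $s_-$ (where $x\cdot\xi\ne0$). Finally $\Phi_\lambda$ is symplectic: by construction it is obtained by flowing along $H_p$, matching along the hypersurface $p=q$, and flowing along $H_q$, so it coincides, at the level of orbits, with the composition of the $P$- and $Q$-incoming parametrizations of the reduced energy surface, both of which are symplectomorphisms onto their images (see \cite{AlBoRa08_02}); a bijective symplectic map being automatically a diffeomorphism onto its image, this shows that $\Lambda^{\mathrm{ini}}_{Q\leftarrow P}(\lambda)$ is a canonical relation given by a canonical transformation of $T^*\S^{n-1}$. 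The main difficulty is the geometric control at infinity for $R$ large in the first paragraph — that incoming trajectories with bounded impact parameter do reach $B(0,R/3)$, and that the monotonicity/escape dichotomy holds throughout $\{\vert x\vert\ge R/3\}$ — which is exactly where the short-range hypothesis $\rho>1$ and the flow estimates of \cite{AlBoRa08_02} enter; granting these, the matching, uniqueness and symplecticity are essentially formal.
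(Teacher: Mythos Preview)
Your proof is correct and follows essentially the same strategy as the paper's: both use a virial-type monotonicity of $x\cdot\xi$ outside a large ball to get existence and uniqueness of the matching point, the compactness of bounded-impact-parameter trajectories to guarantee entry into $B(0,R/3)$, and the incoming parametrizations from \cite{AlBoRa08_02} for the symplectic structure. Your monotonicity argument for $\vert x\vert$ along the incoming branch is the direct counterpart of the paper's contradiction argument establishing that a $p$-trajectory hitting $B(0,R/3)$ at two times stays in $B(0,R)$ in between.

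The one place where the packaging differs is the canonical-relation step. The paper writes $\Lambda^{\mathrm{ini}}_{Q\leftarrow P}(\lambda)=C_-^Q(\lambda)^{-1}\circ C_-^P(\lambda)$ and invokes H\"ormander's clean intersection theorem (excess $e=1$) to conclude that the composition is a canonical relation, then uses the smooth parametrization by $(\omega,z)$ to upgrade to a canonical transformation. Your route is more hands-on: you argue that $\Phi_\lambda$ is a composition of Hamiltonian flows and the incoming wave maps of \cite{AlBoRa08_02}, each symplectic. This is in principle fine, but the sentence ``coincides, at the level of orbits, with the composition of the $P$- and $Q$-incoming parametrizations of the reduced energy surface'' is doing real work that deserves one more line: you are implicitly quotienting out the $\R$-action of the flow on the energy surface and identifying $T^*\S^{n-1}$ with this quotient via the wave transformation, which is exactly what the clean-intersection computation with excess $1$ encodes. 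Either approach gives the result; the paper's formulation has the advantage of making the Lagrangian-submanifold structure of $\Lambda^{\mathrm{ini}}_{Q\leftarrow P}(\lambda)$ explicit, which is what is actually needed downstream for the $h$-FIO calculus in Theorem~\ref{b23}.
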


\begin{proof}
A compactness argument gives that, for $R$ large enough, every trajectory $\gamma_{P}^{-} ( \cdot , \omega , z , \lambda )$ with $( \omega , z , \lambda ) \in \S^{n - 1} \times B ( 0 , M ) \times \CI$ reaches $B ( 0 , R / 4 ) \times \R^{n}$. Let $t_{P}$ denote the first time such that $x_{P}^{-} ( t_{P} , \omega , z , \lambda ) \in \partial B ( 0 , R / 4 ) = \{ x \in \R^{n} ; \ \vert x \vert = R / 4 \}$. If $R$ is large enough, $\gamma_{P}^{-} ( t_{P} , \omega , z , \lambda )$ belongs to an incoming region and $\exp ( t H_{q} ) ( \gamma_{P}^{-} ( t_{P} , \omega , z , \lambda ) )$ goes to infinity as $t \to - \infty$. This means that there exist $( \widetilde{\omega} , \widetilde{z} ) \in T^{*} \S^{n - 1}$ and $t_{Q} \in \R$ such that $\gamma_{P}^{-} ( t_{P} , \omega , z , \lambda ) = \gamma_{Q}^{-} ( t_{Q} , \widetilde{\omega} , \widetilde{z} , \lambda )$, that is $( \widetilde{\omega} , - \sqrt{\lambda} \widetilde{z} , \omega , - \sqrt{\lambda} z ) \in \Lambda^{\rm ini}_{Q \leftarrow P} ( \lambda )$.

On the other hand, let $\gamma ( t ) = ( x ( t ) , \xi ( t ) )$ be a Hamiltonian trajectory of $H_{p}$ in $p^{- 1} ( \CI )$ and let $R$ be large enough. If $x ( t _{1} ) , x ( t_{2} ) \in B ( 0 , R / 3 )$ for some $t_{1} < t_{2}$, then
\begin{equation} \label{b25}
x ( t ) \in B ( 0 , R ) ,
\end{equation}
for all $t_{1} \leq t \leq t_{2}$. Indeed, assume that \eqref{b25} does not hold. Then, there exist times $t_{1} \leq s_{1} < t < s_{2} \leq t_{2}$ such that $\vert x ( s_{\bullet} ) \vert = R / 3$, $\vert x ( t ) \vert \geq R$ and $\vert x ( s ) \vert \geq R / 3$ for all $s \in [ s_{1} , s_{2} ]$. The Hamiltonian equations yield
\begin{equation*}
s_{2} - s_{1} \geq \frac{2 R}{3 \sqrt{\lambda}} + o ( R ) .
\end{equation*}
Since $\partial_{t} ( x \cdot \xi ) = 2 \xi^{2} - x \cdot \nabla V ( x )$, we have
\begin{equation*}
x ( s_{2} ) \cdot \xi ( s_{2} ) \geq 2 \lambda ( s_{2} - s_{1} ) + x ( s_{1} ) \cdot \xi ( s_{1} ) + o_{R \to + \infty} ( s_{2} - s_{1} ) \geq \sqrt{\lambda} R + o ( R ) ,
\end{equation*}
whereas $\vert x ( s_{2} ) \cdot \xi ( s_{2} ) \vert \leq \sqrt{\lambda} R / 3 + o ( R )$. The two previous equations give a contradiction and \eqref{b25} follows. In particular, assume that $( \widetilde{\omega}_{1} , - \sqrt{\lambda} \widetilde{z}_{1} , \omega , - \sqrt{\lambda} z )$ and $( \widetilde{\omega}_{2} , - \sqrt{\lambda} \widetilde{z}_{2} , \omega , - \sqrt{\lambda} z )$ belong to $\Lambda^{\rm ini}_{Q \leftarrow P} ( \lambda )$ for some $\lambda \in \CI$. By definition, $\gamma_{P}^{-} ( t_{P}^{\bullet} , \omega , z , \lambda ) = \gamma_{Q}^{-} ( t_{Q}^{\bullet} , \widetilde{\omega}_{\bullet} , \widetilde{z}_{\bullet} , \lambda ) \in B ( 0 , R / 3 ) \times \R^{n}$ and we can suppose that $t_{P}^{1} \leq t_{P}^{2}$. By \eqref{b25}, $\gamma_{P}^{-} ( t , \omega , z , \lambda )$ stays for $t \in [ t_{P}^{1} , t_{P}^{2} ]$ in $B ( 0 , R ) \times \R^{n}$, where $P$ and $Q$ coincide. Then, the trajectories $\gamma_{Q}^{-} ( \cdot , \widetilde{\omega}_{1} , \widetilde{z}_{1} , \lambda )$ and $\gamma_{Q}^{-} ( \cdot , \widetilde{\omega}_{2} , \widetilde{z}_{2} , \lambda )$ are the same, and eventually $( \widetilde{\omega}_{1} , \widetilde{z}_{1} ) = ( \widetilde{\omega}_{2} , \widetilde{z}_{2} )$. This shows that, for $( \omega , z , \lambda ) \in \S^{n - 1} \times \R^{n - 1} \times \CI$, there exists at most one $( \widetilde{\omega} , \widetilde{z} ) \in \S^{n - 1} \times \R^{n - 1}$ such that $( \widetilde{\omega} , - \sqrt{\lambda} \widetilde{z} , \omega , - \sqrt{\lambda} z ) \in \Lambda^{\rm ini}_{Q \leftarrow P} ( \lambda )$.

Let us now define
\begin{equation} \label{b26}
C_{-}^{P} ( \lambda ) = \big\{ \big( x , \xi , \omega , - \sqrt{\lambda} z \big) \in T^{*} \R^{n} \times T^{*} \S^{n - 1} ; \ \exists t_{P} \in \R , \ ( x , \xi ) = \gamma_{P}^{-} ( t_{P} , \omega , z , \lambda ) \big\} .
\end{equation}
Restricted to $( \omega , z ) \in \S^{n - 1} \times B ( 0 , M)$, $C_{-}^{P} ( \lambda )$ is a canonical relation thanks to Lemma 4.2 of \cite{AlBoRa08_02}. With the definitions and notations of Section 25.2 of H\"{o}rmander \cite{Ho94_01} (see also the Appendix A.4 of \cite{AlBoRa08_02}), $C_{-}^{Q} ( \lambda )^{- 1} \times C_{-}^{P} ( \lambda )$ intersects cleanly $T^{*} \S^{n - 1} \times \Delta ( T^{*} \R^{n} ) \times T^{*} \S^{n - 1}$ with excess $e = 1$. By Theorem 21.2.14 of H\"{o}rmander \cite{Ho94_02}, 
\begin{equation*}
\Lambda^{\rm ini}_{Q \leftarrow P} ( \lambda ) = C_{-}^{Q} ( \lambda )^{- 1} \circ C_{-}^{P} ( \lambda ) ,
\end{equation*}
is a canonical relation after restriction to $( \omega , z ) \in \S^{n - 1} \times B ( 0 , M )$. Eventually, since $\Lambda^{\rm ini}_{Q \leftarrow P} ( \lambda )$ can be smoothly parametrizes by $( \omega , z ) \in \S^{n - 1} \times B ( 0 , M )$, this canonical relation is given by a canonical transformation.
\end{proof}

We also denote
\begin{align*}
\Lambda^{\rm fin}_{Q \leftarrow P} ( \lambda ) = \big\{ \big( \widetilde{\omega} , - \sqrt{\lambda} \widetilde{z} , \omega , - \sqrt{\lambda} z \big) &\in T^{*} \S^{n - 1} \times T^{*} \S^{n - 1} ; \ \exists t_{P} , t_{Q}  \in \R , \\
&\gamma_{Q}^{+} ( t_{Q} , \widetilde{\omega} , \widetilde{z} , \lambda ) = \gamma_{P}^{+} ( t_{P} , \omega , z , \lambda ) \in B ( 0 , R / 3 ) \times \R^{n} \big\} .
\end{align*}
Since the symbol $p ( x , \xi )$ of a Schr\"{o}dinger operator is even in $\xi$, $( x ( t ) , \xi ( t ) )$ is an integral curve of $H_{p}$ iff $( x ( - t ) , - \xi ( - t ) )$ is an integral curve of $H_{p}$. Then,
\begin{equation*}
( \widetilde{\omega} , \widetilde{\omega}^{*} , \omega , \omega^{*} ) \in \Lambda^{\rm ini}_{Q \leftarrow P} ( \lambda ) \quad \Longleftrightarrow \quad ( - \widetilde{\omega} , - \widetilde{\omega}^{*} , - \omega , - \omega^{*} ) \in \Lambda^{\rm fin}_{Q \leftarrow P} ( \lambda ) .
\end{equation*}
Inverting the rule of $P$ and $Q$ in the definition of $\Lambda^{\bullet}_{Q \leftarrow P} ( \lambda )$, for $\bullet = {\rm ini} , {\rm fin}$, one obtain a new canonical relation $\Lambda^{\bullet}_{P \leftarrow Q} ( \lambda ) = ( \Lambda^{\bullet}_{Q \leftarrow P} ( \lambda ) )^{- 1}$. The four relations $\Lambda_{\star}^{\bullet} ( \lambda )$ satisfy Lemma \ref{b24} mutatis mutandis.

We now define semiclassical Fourier integral operators. For the general theory of the FIOs in the classical setting, we refer to H\"{o}rmander \cite[Section 25.2]{Ho94_01}. The theory of the semiclassical FIOs can be found in the books of Ivrii \cite[Section 1.2]{Iv98_01}, Robert \cite{Ro87_01} or in the PhD thesis of Dozias \cite{Do94_01}. We follow the presentation of Alexandrova and two authors \cite[Appendix A.4]{AlBoRa08_02}. We restrict ourselves to the case of compactly supported operators on $\R^{n}$. Using local charts, the following definition can easily be extended to the case of compact manifolds.

\begin{definition}\sl \label{b58}
Let $r \in \R$, $\Lambda$ be a canonical relation from $T^{*} \R^{n}$ to $T^{*} \R^{m}$ and $A : L^{2}( \R^{n}) \rightarrow L^{2} ( \R^{m})$ be a linear operator bounded by $\CO (h^{- N})$, $N>0$. Then, $A$ is called a $h$-Fourier integral operator ($h$-FIO) with compactly supported symbol of order $r$ associated to $\Lambda$ and we note
\begin{equation*}
A \in \mathcal{I}_{h}^{r} ( \R^{m} \times \R^{n}, \Lambda ') ,
\end{equation*}
if, modulo an operator $\CO (h^{\infty})$, $A$ is a finite sum of operators of the form
\begin{equation} \label{b59}
h^{- r - \frac{n + m}{4} - \frac{d}{2}} \int_{\theta \in \R^{d}} e^{i \varphi ( x , y , \theta ) / h} a ( x , y , \theta ; h ) \, d \theta ,
\end{equation}
where the symbol $a \in S^0(1)$ has compact support in the variables $x, y , \theta$ (uniformly with respect to $h$) and the function $\varphi$ is a non-degenerate phase function defined near the support of $a$ with $\Lambda_{\varphi} {}' \subset \Lambda$.
\end{definition}

Using the previous geometric constructions and notations, the scattering matrices $S_{P , P_{0}} ( \lambda )$ and $S_{Q , P_{0}} ( \lambda )$ are connected by the following result.

\begin{theorem}[Comparison of scattering matrices]\sl \label{b23}
Assume \ref{h1} with $\rho > 1$ and \ref{h3}. Let $I \subset ] 0 , + \infty [$ be a compact interval and let $\chi_{\rm ini} , \chi_{\rm fin} \in C^{\infty} ( \S^{n - 1} )$ have disjoint support. For $R > 0$ large enough, there exists two $h$-FIOs
\begin{equation*}
U_{\rm fin} ( \lambda ) \in \mathcal{I}_{h}^{0} \big( \S^{n - 1} \times \S^{n - 1} , \Lambda^{\rm fin}_{P \leftarrow Q} ( \lambda ) {}^{\prime} \big) , \qquad U_{\rm ini} ( \lambda ) \in \mathcal{I}_{h}^{0} \big( \S^{n - 1} \times \S^{n - 1} , \Lambda^{\rm ini}_{Q \leftarrow P} ( \lambda ) {}^{\prime} \big) ,
\end{equation*}
with compactly supported symbols and smooth for $\lambda \in \I$, such that
\begin{equation*}
\chi_{\rm fin} S_{P , P_{0}} ( \lambda ) \chi_{\rm ini} = U_{\rm fin} ( \lambda ) S_{Q , P_{0}} ( \lambda ) U_{\rm ini} ( \lambda ) + \CO ( h^{\infty} ) ,
\end{equation*}
uniformly for $\lambda \in I$.
\end{theorem}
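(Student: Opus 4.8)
The idea is to run the strategy of Remark \ref{b28} on both $P$ and $Q$: write each of $S_{P,P_0}(\lambda)$ and $S_{Q,P_0}(\lambda)$ as the \emph{same} cut-off resolvent of $Q$ sandwiched between Isozaki--Kitada parametrices, the parametrices on the $P$-side and on the $Q$-side differing only through the dynamics at infinity, and then absorb that difference into two Fourier integral operators implementing the canonical relations $\Lambda^{\rm ini}_{Q\leftarrow P}(\lambda)$ and $\Lambda^{\rm fin}_{P\leftarrow Q}(\lambda)$ of Lemma \ref{b24}. Concretely, I would first recall that the Isozaki--Kitada construction underlying Remark \ref{b28} provides, for any $T=-h^2\Delta+U$ with $U$ satisfying \ref{h1} with $\rho>1$, identification operators $J_{T,\pm}(\lambda):L^2(\S^{n-1})\to L^2(\R^n)$ that are $h$-FIOs associated to the canonical relations $C_\pm^T(\lambda)$ of \eqref{b26} (with elliptic classical symbols $1+\CO(h)$), depend only on $U$ near infinity and smoothly on $\lambda\in I$, and satisfy, for $\chi_{\rm fin},\chi_{\rm ini}$ of disjoint support,
\begin{equation*}
\chi_{\rm fin}S_{T,P_0}(\lambda)\chi_{\rm ini}=c_n\,\chi_{\rm fin}\big(J_{T,+}(\lambda)\big)^{*}[T,\chi_+](T-\lambda-i0)^{-1}[T,\chi_-]J_{T,-}(\lambda)\chi_{\rm ini}+\CO(h^\infty),
\end{equation*}
with $c_n$ as in \eqref{b35} and the free term dropping out by disjointness of the supports. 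Taking $T=P$, using that $R$ is large enough so that $p=q$ on $B(0,R)$ (hence $[P,\chi_\pm]=[Q,\chi_\pm]$), and replacing the cut-off resolvent of $P$ by that of $Q$ modulo $\CO(h^\infty)$ (Theorem \ref{a6} / Proposition \ref{b29}, exactly as in the second line of \eqref{b31}), I would insert cut-offs $\psi\in C_0^\infty(B(0,R))$ equal to $1$ near $\supp\nabla\chi_\pm$ at no cost and rewrite $\chi_{\rm fin}S_{P,P_0}(\lambda)\chi_{\rm ini}$, modulo $\CO(h^\infty)$, as $c_n\,\chi_{\rm fin}L_+^P\,(Q-\lambda-i0)^{-1}\,L_-^P\chi_{\rm ini}$, where $L_-^T:=\psi[T,\chi_-]J_{T,-}(\lambda)$ and $L_+^T:=\big(J_{T,+}(\lambda)\big)^{*}[T,\chi_+]\psi$.

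Next I would compare the $P$- and $Q$-parametrices. The operators $L_\pm^T$ are $h$-FIOs of order $0$ whose canonical relations are the pieces of $C_-^T(\lambda)$, resp.\ $(C_+^T(\lambda))^{-1}$, lying over $\supp\nabla\chi_\pm$, and whose symbols are elliptic there as soon as $R$ is large enough that $\nabla\chi_\pm$ is transverse to the incoming/outgoing trajectories; in particular $L_-^Q$ and $L_+^Q$ admit microlocal left parametrices $(L_-^Q)^{\sharp}$, $(L_+^Q)^{\sharp}$ on the region at hand. I would then \emph{define}
\begin{equation*}
U_{\rm ini}(\lambda):=(L_-^Q)^{\sharp}L_-^P,\qquad U_{\rm fin}(\lambda):=L_+^P(L_+^Q)^{\sharp},
\end{equation*}
so that $L_-^QU_{\rm ini}(\lambda)=L_-^P+\CO(h^\infty)$ and $U_{\rm fin}(\lambda)L_+^Q=L_+^P+\CO(h^\infty)$ after the relevant microlocal cut-offs. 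The canonical relations of $U_{\rm ini}(\lambda)$ and $U_{\rm fin}(\lambda)$ are $(C_-^Q(\lambda))^{-1}\circ C_-^P(\lambda)$ and $(C_+^P(\lambda))^{-1}\circ C_+^Q(\lambda)$, which, by the clean-composition computation already carried out in the proof of Lemma \ref{b24} (excess $e=1$), equal $\Lambda^{\rm ini}_{Q\leftarrow P}(\lambda)$ and $\Lambda^{\rm fin}_{P\leftarrow Q}(\lambda)$ and, on the relevant compact sets, are graphs of canonical transformations; a bookkeeping of orders along this excess-one composition then gives $U_{\rm ini}(\lambda)\in\mathcal I_h^0(\S^{n-1}\times\S^{n-1},\Lambda^{\rm ini}_{Q\leftarrow P}(\lambda)')$ and $U_{\rm fin}(\lambda)\in\mathcal I_h^0(\S^{n-1}\times\S^{n-1},\Lambda^{\rm fin}_{P\leftarrow Q}(\lambda)')$ with compactly supported symbols, smooth in $\lambda\in I$. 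One further arranges the symbols so that $U_{\rm ini}(\lambda)=\widetilde\chi_{\rm ini}U_{\rm ini}(\lambda)\chi_{\rm ini}+\CO(h^\infty)$ and $U_{\rm fin}(\lambda)=\chi_{\rm fin}U_{\rm fin}(\lambda)\widetilde\chi_{\rm fin}+\CO(h^\infty)$, where $\widetilde\chi_{\rm ini},\widetilde\chi_{\rm fin}\in C^\infty(\S^{n-1})$ are $1$ near the images of the relevant frequency sets under $\Lambda^{\rm ini}_{Q\leftarrow P}(\lambda)$, resp.\ $\Lambda^{\rm fin}_{P\leftarrow Q}(\lambda)$; since for $R$ large these images are $\CO(R^{1-\rho})$-close to $\supp\chi_{\rm ini}$, resp.\ $\supp\chi_{\rm fin}$, one may still take $\widetilde\chi_{\rm ini}$ and $\widetilde\chi_{\rm fin}$ with disjoint supports.

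To finish, I would substitute the two intertwining identities into the expression for $\chi_{\rm fin}S_{P,P_0}(\lambda)\chi_{\rm ini}$ obtained in the first paragraph, getting $U_{\rm fin}(\lambda)\big(c_n L_+^Q(Q-\lambda-i0)^{-1}L_-^Q\big)U_{\rm ini}(\lambda)+\CO(h^\infty)$, and then recognize, via the first paragraph applied to $T=Q$ with the cut-offs $\widetilde\chi_{\rm fin},\widetilde\chi_{\rm ini}$, that $c_n L_+^Q(Q-\lambda-i0)^{-1}L_-^Q$ equals $\widetilde\chi_{\rm fin}S_{Q,P_0}(\lambda)\widetilde\chi_{\rm ini}+\CO(h^\infty)$ when sandwiched appropriately (the free part again vanishing by disjointness). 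Using $U_{\rm fin}(\lambda)\widetilde\chi_{\rm fin}=U_{\rm fin}(\lambda)+\CO(h^\infty)$ and $\widetilde\chi_{\rm ini}U_{\rm ini}(\lambda)=U_{\rm ini}(\lambda)+\CO(h^\infty)$ then yields $\chi_{\rm fin}S_{P,P_0}(\lambda)\chi_{\rm ini}=U_{\rm fin}(\lambda)S_{Q,P_0}(\lambda)U_{\rm ini}(\lambda)+\CO(h^\infty)$ uniformly for $\lambda\in I$, which is the claim. The hard part is the second paragraph: establishing the microlocal ellipticity and left-invertibility of $\psi[Q,\chi_\mp]J_{Q,\mp}$ over $\supp\nabla\chi_\mp$ (which is what dictates how $\chi_\mp$ must be chosen relative to $R$ and to the Hamiltonian trajectories at infinity), and running the excess-one clean composition of the Isozaki--Kitada relations $C_\mp^P,C_\mp^Q$ so that $U_{\rm ini}(\lambda)$ and $U_{\rm fin}(\lambda)$ come out as genuine order-$0$ $h$-FIOs associated to $\Lambda^{\rm ini}_{Q\leftarrow P}(\lambda)$ and $\Lambda^{\rm fin}_{P\leftarrow Q}(\lambda)$ — precisely the geometric content that Lemma \ref{b24} was designed to supply. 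Everything else is routine bookkeeping with cut-offs and $\CO(h^\infty)$ remainders.
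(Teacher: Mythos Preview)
Your overall architecture matches the paper's: both routes start from the Isozaki--Kitada representation of Remark~\ref{b28}, replace the $P$-resolvent by the $Q$-resolvent, and then seek $U_{\rm ini}, U_{\rm fin}$ so that $[P,\chi_-]K_-^Q U_{\rm ini}\approx[P,\chi_-]K_-^P$ and the analogous identity on the outgoing side. The paper constructs $U_{\rm ini}=\widetilde\chi_{\rm ini} V R\chi_{\rm ini}$, where $V$ is any elliptic $h$-FIO quantizing $\Lambda^{\rm ini}_{Q\leftarrow P}$ and $R=\Op(r)$ is a pseudodifferential operator whose symbol is fixed term by term so that the full symbol of $K_-^Q V R$ equals that of $K_-^P$; this matching works because both symbols obey the \emph{same} transport equations \eqref{b42}, \eqref{b41} (since $p=q$ near $\supp\nabla\chi_-$) and because the map \eqref{b44} is a local diffeomorphism on a transversal hypersurface $H_0$. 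Your alternative --- defining $U_{\rm ini}=(L_-^Q)^\sharp L_-^P$ via an abstract microlocal parametrix --- is a legitimate substitute for that symbol calculus.

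There is, however, a real gap. The Isozaki--Kitada amplitude $g_-^T$ (for $T=P,Q$) is supported on a cone $\Sigma_1$ but satisfies the WKB transport equations only on a strictly smaller cone $\Sigma_2$; on $\Sigma_1\setminus\Sigma_2$ it is merely a smooth cutoff to zero and is certainly not elliptic. Consequently $L_-^T=\psi[T,\chi_-]K_-^T$ is \emph{not} an elliptic $h$-FIO over all of $\supp\nabla\chi_-$: ellipticity fails on $S_1=(\Sigma_1\setminus\Sigma_2)\cap\supp\nabla\chi_-$, and no choice of $\chi_-$ makes $S_1$ empty (the ring $\supp\nabla\chi_-$ inevitably meets the boundary layer $\Sigma_1\setminus\Sigma_2$). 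Hence your parametrix $(L_-^Q)^\sharp$ exists only over $S_2$-type sets, and the intertwining $L_-^Q U_{\rm ini}=L_-^P+\CO(h^\infty)$ can only be achieved there; over $S_1$ you are left with an $\CO(1)$ remainder. The paper meets exactly this obstruction --- the term $\chi_-^1\CO(1)$ in \eqref{b48} --- and eliminates it by a separate propagation argument culminating in \eqref{b71}: $S_1$ lies in a zone from which no Hamiltonian trajectory reaches the microsupports of $U_{\rm fin}(K_+^Q)^*[Q,\chi_+]$ or $\chi_+^1$, so its contribution to the sandwiched resolvent is $\CO(h^\infty)$. This step is not ``routine bookkeeping''; it requires identifying precisely where the IK construction ceases to be WKB-exact and then invoking a nontrivial resolvent estimate of Robert--Tamura/Michel type. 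Your proposal should make this explicit rather than folding it into the claim that $L_-^Q$ is left-invertible ``over $\supp\nabla\chi_-$''.
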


The two $h$-FIOs $U_{\rm ini}$ and $U_{\rm fin}$ ``exchange'' the quantum evolutions at infinity of $P$ and $Q$.  The cut-off functions $\chi_{\rm ini} , \chi_{\rm fin}$ allow to avoid the diagonal $\theta = \omega$. In the non-trapping case, Alexandrova \cite{Al06_01} (see also Alexandrova and two authors \cite[Theorem 2.5]{AlBoRa08_01}) has proved that the scattering matrix is an $h$-FIO outside the diagonal
\begin{equation} \label{b50}
S_{P , P_{0}} ( \lambda ) \in \mathcal{I}_{h}^{0} \big( \S^{n - 1} \times \S^{n - 1} , \Lambda_{\rm clas}^{P} ( \lambda ) {}^{\prime} \big) ,
\end{equation}
for the classical canonical relation
\begin{equation*}
\Lambda_{\rm clas}^{P} ( \lambda ) = \big\{ \big( \widetilde{\omega} , - \sqrt{\lambda} \widetilde{z} , \omega , - \sqrt{\lambda} z \big) \in T^{*} \S^{n - 1} \times T^{*} \S^{n - 1} ; \ \exists t \in \R , \ \gamma_{P}^{+} ( t , \widetilde{\omega} , \widetilde{z} , \lambda ) = \gamma_{P}^{-} ( t , \omega , z , \lambda ) \big\} .
\end{equation*}
Since the composition of canonical relations leads to $\Lambda_{\rm clas}^{P} = \Lambda^{\rm fin}_{P \leftarrow Q} \circ \Lambda_{\rm clas}^{Q} \circ \Lambda^{\rm ini}_{Q \leftarrow P}$, Theorem \ref{b23} can be deduced from \eqref{b50} in the non-trapping case. One can also verify that the microlocal structure of the scattering matrix at barrier-top stated in \cite[Theorem 2.4]{AlBoRa08_01} is compatible with Theorem \ref{b23}.

\begin{proof}
Since \eqref{b31} holds true uniformly for $( \theta , \omega ) \in \supp \chi_{\rm fin} \times \supp \chi_{\rm ini}$, the scattering matrix can be written as an operator on $L^{2} ( \S^{n - 1} )$
\begin{equation} \label{b37}
\chi_{\rm fin} S_{P , P_{0}} ( \lambda ) \chi_{\rm ini} = - c_{n} \chi_{\rm fin} ( K_{+}^{P} )^{*} [ P , \chi_{+} ] ( Q - \lambda - i 0 )^{- 1} [ P , \chi_{-} ] K_{-}^{P} \chi_{\rm ini} + \CO ( h^{\infty} ) ,
\end{equation}
where the kernels $K_{\pm}^{P} : L^{2} ( \S^{n - 1} ) \rightarrow L^{2}_{\rm loc} ( \R^{n} )$ are
\begin{align}
K_{+}^{P} ( x , \theta ) &= g_{+}^{P} ( x, \theta , \lambda , h ) e^{i \varphi_{+}^{P} ( x, \theta , \lambda ) / h} , \\
K_{-}^{P} ( x , \omega ) &= g_{-}^{P} ( x, \omega , \lambda , h ) e^{i \varphi_{-}^{P} ( x, \omega , \lambda ) / h} , \label{b46}
\end{align}
and the quantities $\varphi_{\pm}^{P}$ and $g_{\pm}^{P}$ are given by the constructions of Isozaki and Kitada for the operator $P$. In particular, $g_{\pm}^{P}$ are classical symbols supported in incoming/outgoing regions. Compared to Remark \ref{b28}, the dependency on the operator $P$ has been noted. Such a representation of the scattering matrix was used in Section 4.2 of \cite{AlBoRa08_02}.

We consider $K_{-}^{Q}$ and we are looking for a $h$-FIO $U_{\rm ini}$ such that $K_{-}^{Q} U_{\rm ini} = K_{-}^{P} \chi_{\rm ini} + \CO ( h^{\infty} )$. Here and in that follows, we work for $x$ in a small neighborhood of $\supp \nabla \chi_{-}$ and for $\omega$ near $\omega_{0} \in \S^{n - 1}$, say $\omega \in \Omega$. We first apply a $h$-FIO in order to adjust the canonical relations of the operators $K_{-}^{Q}$ and $K_{-}^{P}$. From \cite{AlBoRa08_02} (see (4.9) and Lemma 4.2), we have
\begin{equation} \label{b38}
K_{-}^{Q} \in \mathcal{I}_{h}^{-\frac{2 n +3}{4}} \big( \R^{n} \times \S^{n-1} , C_{-}^{Q} \, {}^{\prime} \big) ,
\end{equation}
with the canonical relation
\begin{equation*}
C_{-}^{Q} = \big\{ ( x , \xi , \omega , - \sqrt{\lambda} z ) ; \ \exists t \in \R , \ ( x , \xi ) = \gamma_{-}^{Q} ( t , \omega , z , \lambda ), \ ( x , \omega ) \in \supp \big( ( \nabla \chi_{-} ) g_{-}^{Q} \big) + B ( 0 , \varepsilon ) \big\} ,
\end{equation*}
for any $\varepsilon > 0$. We now take a $h$-FIO
\begin{equation*}
V \in \mathcal{I}_{h}^{0} \big( \S^{n - 1} \times \S^{n - 1} , \Lambda^{\rm ini}_{Q \leftarrow P} ( \lambda ) {}^{\prime} \big) ,
\end{equation*}
whose symbol is classical and elliptic near $\S^{n - 1} \times B ( 0 , M )$. Using that $C_{-}^{Q} \circ \Lambda^{\rm ini}_{Q \leftarrow P} ( \lambda ) = C_{-}^{P}$, the composition rules of the $h$-FIOs imply that
\begin{equation} \label{b49}
A : = K_{-}^{Q} V \in \mathcal{I}_{h}^{-\frac{2 n +3}{4}} \big( \R^{n} \times \S^{n-1} , C_{-}^{P} \, {}^{\prime} \big) .
\end{equation}
Since $\varphi_{-}^{P} ( \cdot , \cdot , \lambda )$ is a generating function of $C_{-}^{P}$, there exists a classical symbol $a ( x , \omega , h ) \asymp a_{0} ( x , \omega ) + a_{1} ( x , \omega ) h + \cdots$ such that the kernel of $A$ is given by
\begin{equation} \label{b39}
A ( x , \omega ) = a ( x , \omega , h ) e^{i \varphi_{-}^{P} ( x , \omega , \lambda ) / h} ,
\end{equation}
modulo $\CO ( h^{\infty} )$ in operator norm. At this stage, the phase of $A ( x , \omega )$ is that of $K_{-}^{P}$.

\begin{figure}%[!h]
\begin{center}
\begin{picture}(0,0)%
\includegraphics{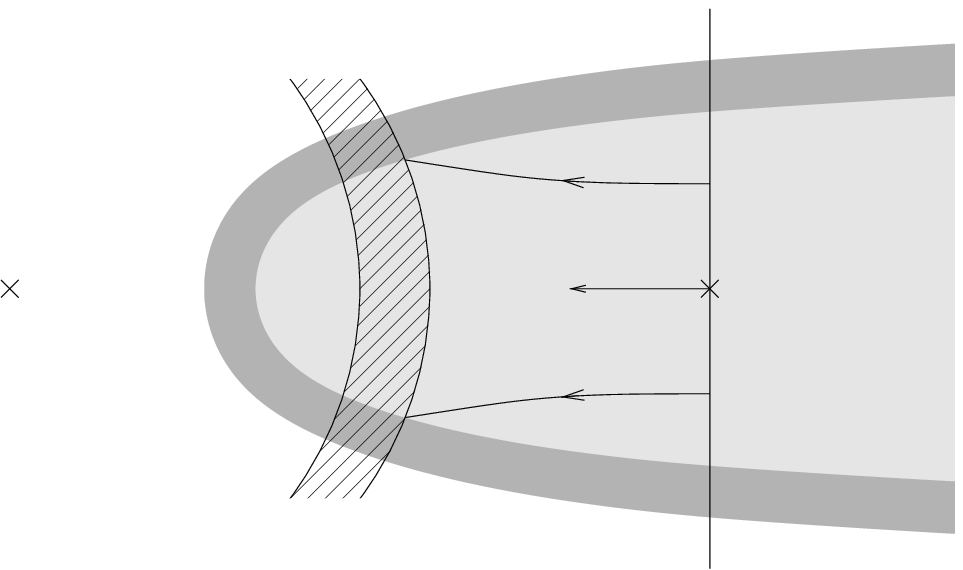}%
\end{picture}%
\setlength{\unitlength}{1105sp}%
\begingroup\makeatletter\ifx\SetFigFont\undefined%
\gdef\SetFigFont#1#2#3#4#5{%
  \reset@font\fontsize{#1}{#2pt}%
  \fontfamily{#3}\fontseries{#4}\fontshape{#5}%
  \selectfont}%
\fi\endgroup%
\begin{picture}(16373,9644)(-1371,-3983)
\put(4621,-1711){\makebox(0,0)[lb]{\smash{{\SetFigFont{9}{10.8}{\rmdefault}{\mddefault}{\updefault}$S_{1}$}}}}
\put(4621,3119){\makebox(0,0)[lb]{\smash{{\SetFigFont{9}{10.8}{\rmdefault}{\mddefault}{\updefault}$S_{1}$}}}}
\put(12976,4289){\makebox(0,0)[lb]{\smash{{\SetFigFont{9}{10.8}{\rmdefault}{\mddefault}{\updefault}$\Sigma_{1}$}}}}
\put(12976,3314){\makebox(0,0)[lb]{\smash{{\SetFigFont{9}{10.8}{\rmdefault}{\mddefault}{\updefault}$\Sigma_{2}$}}}}
\put(5176,689){\makebox(0,0)[lb]{\smash{{\SetFigFont{9}{10.8}{\rmdefault}{\mddefault}{\updefault}$S_{2}$}}}}
\put(-1199,239){\makebox(0,0)[b]{\smash{{\SetFigFont{9}{10.8}{\rmdefault}{\mddefault}{\updefault}$0$}}}}
\put(8101,1139){\makebox(0,0)[lb]{\smash{{\SetFigFont{9}{10.8}{\rmdefault}{\mddefault}{\updefault}$\omega_{0}$}}}}
\put(11101,764){\makebox(0,0)[lb]{\smash{{\SetFigFont{9}{10.8}{\rmdefault}{\mddefault}{\updefault}$x_{0}$}}}}
\put(10951,-1786){\makebox(0,0)[lb]{\smash{{\SetFigFont{9}{10.8}{\rmdefault}{\mddefault}{\updefault}$H_{0}$}}}}
\put(9901,-436){\makebox(0,0)[lb]{\smash{{\SetFigFont{9}{10.8}{\rmdefault}{\mddefault}{\updefault}$\SE$}}}}
\put(3076,-3361){\makebox(0,0)[lb]{\smash{{\SetFigFont{9}{10.8}{\rmdefault}{\mddefault}{\updefault}$\supp \nabla \chi_{-}$}}}}
\end{picture}%
\end{center}
\caption{The geometric setting in the proof of Theorem \ref{b23}.} \label{f11}
\end{figure}

We will now apply a pseudodifferential operator and use a propagation equation for $K_{-}^{\bullet}$ to adjust the symbols $a$ and $g_{-}^{\bullet}$. This can not be done directly since $C_{-}^{P}$ is not given by a canonical transformation. Let $r ( \omega , z , h ) \asymp r_{0} ( \omega , z ) + r_{1} ( \omega , z ) h + \cdots$ be a classical symbol in $C^{\infty}_{0} ( T^{*} \S^{n - 1} )$ and $R = \Op ( r )$. By the composition rules of a $h$-FIO by a pseudodifferential operator, \eqref{b49} yields
\begin{equation} \label{b47}
B : = K_{-}^{Q} V R = A R \in \mathcal{I}_{h}^{-\frac{2 n +3}{4}} \big( \R^{n} \times \S^{n-1} , C_{-}^{P} \, {}^{\prime} \big) .
\end{equation}
As before, there exists a classical symbol $b ( x , \omega , h ) \asymp b_{0} ( x , \omega ) + b_{1} ( x , \omega ) h + \cdots$ such that the kernel of $B$ is given by
\begin{equation} \label{b40}
B ( x , \omega ) = b ( x , \omega , h ) e^{i \varphi_{-}^{P} ( x , \omega , \lambda ) / h} ,
\end{equation}
modulo $\CO ( h^{\infty} )$ in operator norm. Moreover, for all $j \in \N$,
\begin{equation} \label{b43}
b_{j} ( x , \omega ) = a_{0} ( x , \omega ) r_{j} \big( \omega , - \nabla_{\omega} \varphi_{-}^{P} ( x , \omega , \lambda ) \big) + \widetilde{b}_{j} ( x , \omega ) ,
\end{equation}
where $\widetilde{b}_{j}$ depends only on $a$, $\varphi_{-}^{P}$ and on the $r_{k}$'s for $0 \leq k \leq j - 1$.

We collect properties of the symbol $g_{-}^{P}$ which can be found in Robert and Tamura \cite{RoTa89_01} and use some notations of this paper. First, $g_{-}^{P}$ is defined by $g_{-}^{P} ( x , \omega , \lambda , h ) = b_{-} ( x , \sqrt{\lambda} \omega , h )$ (see \cite[(2.1)]{RoTa89_01}) where the symbol $b_{-}$ is supported in $\Gamma_{-} ( 5 R_{0} , d_{3} , - \sigma_{4} )$ (see \cite[Page 165]{RoTa89_01}). Moreover, $b_{-} ( x , \xi , h ) = b_{- , 0} ( x , \xi ) + b_{- , 1} ( x , \xi ) h + \cdots$ satisfies the transport equations
\begin{equation} \label{b51}
2 \nabla_{x} \varphi_{-}^{P} \cdot \nabla_{x} b_{- , j} + ( \Delta \varphi_{-}^{P} ) b_{- , j} = i \Delta b_{- , j - 1} ,
\end{equation}
and the asymptotic conditions
\begin{equation} \label{b52}
b_{- , 0} \rightarrow 1 , \ b_{- , j} \rightarrow 1 \text{ for } j \geq 1 , \text{ as } \vert x \vert \rightarrow + \infty ,
\end{equation}
for $x \in \Gamma_{-} ( 6 R_{0} , d_{4} , - \sigma_{3} )$ (see \cite[Page 165]{RoTa89_01}). Let $\Sigma_{1} = \pi_{x} ( \Gamma_{-} ( 5 R_{0} , d_{3} , - \sigma_{4} ) \cap \{ \xi \in \sqrt{\lambda} \Omega \} ) \subset \R^{n}$ and $\Sigma_{2} = \pi_{x} ( \Gamma_{-} ( 6 R_{0} , d_{4} , - \sigma_{3} ) \cap \{ \xi \in \sqrt{\lambda} \Omega \} ) \subset \R^{n}$ (see Figure \ref{f11}). We also define $S_{1} = ( \Sigma_{1} \setminus \Sigma_{2} ) \cap \supp \nabla \chi_{-}$ and $S_{2} = \Sigma_{2} \cap \supp \nabla \chi_{-}$. Let $H_{0}$ be a hypersurface normal to $\omega_{0}$ near the support of $\nabla \chi_{-}$ as in Figure \ref{f11}. Finally, let $\SE$ denote the reunion of the integral curves of the vector field $\nabla_{x} \varphi_{-}^{P}$ in negative time starting from $S_{2}$ until they reach $\SE$. We assume that
\begin{equation} \label{b53}
\SE \subset \Sigma_{2} .
\end{equation}
Decreasing slightly $S_{2}$ and taking $R_{0}$ large enough, this can always be done.

The previous properties on $b_{-}$ imply that $g_{-}^{P} ( x , \omega , \lambda , h ) \asymp g_{- , 0}^{P} ( x , \omega ) + g_{- , 1}^{P} ( x , \omega ) h + \cdots$ is supported in $\Sigma_{1}$ and satisfies the usual transport equation
\begin{equation} \label{b42}
2 \nabla_{x} \varphi_{-}^{P} \cdot \nabla_{x} g_{- , j}^{P} + ( \Delta \varphi_{-}^{P} ) g_{- , j}^{P} = i \Delta g_{- , j - 1}^{P} ,
\end{equation}
near $\Sigma_{2} \times \Omega$. This shows that the constructions of Isozaki and Kitada are WKB solutions in $\Sigma_{2}$, that is
\begin{equation*}
( P - \lambda ) K_{-}^{P} = \CO ( h^{\infty} ) ,
\end{equation*}
in operator norm. Using that $P = Q$ near the support of $\nabla \chi_{-}$ where the different operators are considered, we also have
\begin{equation*}
( P - \lambda ) B = ( Q - \lambda ) K_{-}^{Q} V R = \CO ( h^{\infty} ) ,
\end{equation*}
which implies
\begin{equation} \label{b41}
2 \nabla_{x} \varphi_{-}^{P} \cdot \nabla_{x} b_{j} + ( \Delta \varphi_{-}^{P} ) b_{j} = i \Delta b_{j - 1} .
\end{equation}
From Proposition 2.4 of Isozaki and Kitada \cite{IsKi85_01},
\begin{equation} \label{b44}
\left\{
\begin{aligned}
&\S^{n - 1} \times H_{0} &&\longrightarrow &&T^{*} \S^{n - 1} \\
&( \omega , x ) &&\longmapsto &&\big( \omega , - \nabla_{\omega} \varphi_{-}^{P} ( x , \omega , \lambda ) \big)
\end{aligned} \right.
\end{equation}
is a local diffeomorphism near $\Omega \times ( H_{0} \cap \SE ) \subset \S^{n - 1} \times \R^{n}$. The functions $r_{j}$ are constructed inductively in the following way: assume that $r_{k}$ for $0 \leq k \leq j - 1$ have been arranged such that $b_{k} = g_{- , k}^{P}$ near $\SE \times \Omega$ for $0 \leq k \leq j - 1$. We choose $r_{j}$ such that
\begin{equation} \label{b45}
r_{j} \big( \omega , - \nabla_{\omega} \varphi_{-}^{P} ( x , \omega , \lambda ) \big) = \big( g_{- , j}^{P} ( x , \omega ) - \widetilde{b}_{j} ( x , \omega ) \big) / a_{0} ( x , \omega ) ,
\end{equation}
for $( x , \omega )$ near $( H_{0} \cap \SE ) \times \Omega$. This is possible since \eqref{b44} is a diffeomorphism, $\widetilde{b}_{j}$ does not depend on $r_{j}$ and $a_{0}$ is elliptic near $( H_{0} \cap \SE ) \times \Omega$ because the symbol of $V$ is elliptic and $g_{-}^{Q}$ satisfies \eqref{b52} mutatis mutandis. From \eqref{b45}, $b_{j}$ and $g_{- , j}^{P}$ coincide near $( H_{0} \cap \SE ) \times \Omega$. Moreover, these functions satisfy the same evolution equation near $\SE \times \Omega$ thanks to \eqref{b42}, \eqref{b41} and $b_{j - 1} = g_{- , j - 1}^{P}$ near $\SE \times \Omega$. Then, $b_{j} = g_{- , j}^{P}$ near $\SE \times \Omega$. Summing up,
\begin{equation} \label{b54}
b = g_{-}^{P} + \CO ( h^{\infty} ) ,
\end{equation}
near $S_{2} \times \Omega$.

Thus, \eqref{b46}, \eqref{b47}, \eqref{b40} and \eqref{b54} give
\begin{equation*}
[ P , \chi_{-} ] K_{-}^{P} = [ P , \chi_{-} ] K_{-}^{Q} V R + \widetilde{\chi}_{-} \CO ( h^{\infty} ) + \chi_{-}^{1} \CO ( 1 ) ,
\end{equation*}
for $\nabla \chi_{-} \prec \widetilde{\chi}_{-}$ and $\one_{S_{1}} \prec \chi_{-}^{1}$ supported near $\supp \nabla \chi_{-}$ and $S_{1}$ respectively. In particular,
\begin{equation} \label{b48}
[ P , \chi_{-} ] K_{-}^{P} \chi_{\rm ini} = [ P , \chi_{-} ] K_{-}^{Q} U_{\rm ini} ( \lambda ) + \widetilde{\chi}_{-} \CO ( h^{\infty} ) + \chi_{-}^{1} \CO ( 1 ) ,
\end{equation}
with
\begin{equation*}
U_{\rm ini} ( \lambda ) : = \widetilde{\chi}_{\rm ini} V R \chi_{\rm ini} \in \mathcal{I}_{h}^{0} \big( \S^{n - 1} \times \S^{n - 1} , \Lambda^{\rm ini}_{Q \leftarrow P} ( \lambda ) {}^{\prime} \big) ,
\end{equation*}
and $\one_{\pi_{\omega} ( \Lambda^{\rm ini}_{Q \leftarrow P} ( \lambda ) ( \pi_{\omega}^{- 1} ( \supp \chi_{\rm ini} ) ) )} \prec \widetilde{\chi}_{\rm ini} \in C^{\infty}_{0} ( \S^{n - 1} )$. Mimicking the proof of \eqref{b48}, there exists an operator
\begin{equation*}
U_{\rm fin} ( \lambda ) : = \widetilde{\chi}_{\rm fin} \widehat{V} \widehat{R} \chi_{\rm fin} \in \mathcal{I}_{h}^{0} \big( \S^{n - 1} \times \S^{n - 1} , \Lambda^{\rm fin}_{P \leftarrow Q} ( \lambda ) {}^{\prime} \big) ,
\end{equation*}
with $\one_{\pi_{\omega} ( \Lambda^{\rm fin}_{P \leftarrow Q} ( \lambda ) ( \pi_{\omega}^{- 1} ( \supp \chi_{\rm fin} ) ) )} \prec \widetilde{\chi}_{\rm fin} \in C^{\infty}_{0} ( \S^{n - 1} )$ such that
\begin{equation}
\chi_{\rm fin} ( K_{+}^{P} )^{*} [ P , \chi_{+} ] = U_{\rm fin} ( \lambda ) ( K_{+}^{Q} )^{*} [ P , \chi_{+} ] + \CO ( h^{\infty} ) \widetilde{\chi}_{+} + \CO ( 1 ) \chi_{+}^{1} ,
\end{equation}
for $\nabla \chi_{+} \prec \widetilde{\chi}_{+}$ and $\chi_{+}^{1}$ satisfying properties similar to $\widetilde{\chi}_{-}$ and $\chi_{-}^{1}$. Then, \eqref{b37} becomes
\begin{align} \label{b69}
\chi_{\rm fin} S_{P , P_{0}} ( \lambda ) \chi_{\rm ini} ={}& - c_{n} U_{\rm fin} ( \lambda ) ( K_{+}^{Q} )^{*} [ P , \chi_{+} ] ( Q - \lambda - i 0 )^{- 1} [ P , \chi_{-} ] K_{-}^{Q} U_{\rm ini} ( \lambda )  + \CO ( h^{\infty} )  \nonumber  \\
&+ \CO ( h^{\infty} ) \big\Vert U_{\rm fin} ( \lambda ) ( K_{+}^{Q} )^{*} [ P , \chi_{+} ] ( Q - \lambda - i 0 )^{- 1} \widetilde{\chi}_{-} \big\Vert  \nonumber  \\
&+ \CO ( h^{\infty} ) \big\Vert \widetilde{\chi}_{+} ( Q - \lambda - i 0 )^{- 1} [ P , \chi_{-} ] K_{-}^{Q} U_{\rm ini} ( \lambda ) \big\Vert  \nonumber  \\
&+ \CO ( h^{- n} ) \big\Vert U_{\rm fin} ( \lambda ) ( K_{+}^{Q} )^{*} [ P , \chi_{+} ] ( Q - \lambda - i 0 )^{- 1} \chi_{-}^{1} \big\Vert \\
&+ \CO ( h^{- n} ) \big\Vert \chi_{+}^{1} ( Q - \lambda - i 0 )^{- 1} [ P , \chi_{-} ] K_{-}^{Q} U_{\rm ini} ( \lambda ) \big\Vert \nonumber   \\
&+ \CO ( h^{\infty} ) \big\Vert \widetilde{\chi}_{+} ( Q - \lambda - i 0 )^{- 1} \chi_{-}^{1} \big\Vert + \CO ( h^{\infty} ) \big\Vert \chi_{+}^{1} ( Q - \lambda - i 0 )^{- 1} \widetilde{\chi}_{-} \big\Vert  \nonumber   \\
&+ \CO ( h^{\infty} ) \big\Vert \widetilde{\chi}_{+} ( Q - \lambda - i 0 )^{- 1} \widetilde{\chi}_{-} \big\Vert + \CO ( h^{- n} ) \big\Vert \chi_{+}^{1} ( Q - \lambda - i 0 )^{- 1} \chi_{-}^{1} \big\Vert . \nonumber
\end{align}
Since the supports of $\nabla \chi_{\pm}$, $\widetilde{\chi}_{\pm}$ and $\chi_{\pm}^{1}$ can be chosen arbitrary far away from the origin, \eqref{a17} applied to the operator $Q$ implies
\begin{equation} \label{b70}
\big\Vert U_{\rm fin} ( \lambda ) ( K_{+}^{Q} )^{*} [ P , \chi_{+} ] ( Q - \lambda - i 0 )^{- 1} \widetilde{\chi}_{-} \big\Vert + \big\Vert \widetilde{\chi}_{+} ( Q - \lambda - i 0 )^{- 1} [ P , \chi_{-} ] K_{-}^{Q} U_{\rm ini} ( \lambda ) \big\Vert  = \CO ( 1 ) ,
\end{equation}
and
\begin{equation} \label{b72}
 \big\Vert \widetilde{\chi}_{+} ( Q - \lambda - i 0 )^{- 1} \chi_{-}^{1} \big\Vert + \big\Vert \chi_{+}^{1} ( Q - \lambda - i 0 )^{- 1} \widetilde{\chi}_{-} \big\Vert + \big\Vert \widetilde{\chi}_{+} ( Q - \lambda - i 0 )^{- 1} \widetilde{\chi}_{-} \big\Vert = \CO ( h^{- 1} ) .
\end{equation}
On the other hand, $\chi_{-}^{1}$ (resp. $\chi_{+}^{1}$) is supported in an outgoing (resp. incoming) region and no Hamiltonian trajectory starting from the support of $\chi_{-}^{1}$ (resp. $\chi_{+}^{1}$) touches the microsupport of $U_{\rm fin} ( \lambda ) ( K_{+}^{Q} )^{*} [ Q , \chi_{+} ]$ or $\chi_{+}^{1}$ (resp. $[ Q , \chi_{-} ] K_{-}^{Q} U_{\rm ini} ( \lambda )$ or $\chi_{-}^{1}$) in positive (resp. negative) time because $\chi_{\rm ini}$ and $\chi_{\rm fin}$ have disjoint support. Then, the proof of Proposition 3.1 of \cite{Mi04_01} (or the proof of \eqref{a25} for an alternative approach) yields
\begin{gather}
\big\Vert U_{\rm fin} ( \lambda ) ( K_{+}^{Q} )^{*} [ P , \chi_{+} ] ( Q - \lambda - i 0 )^{- 1} \chi_{-}^{1} \big\Vert + \big\Vert \chi_{+}^{1} ( Q - \lambda - i 0 )^{- 1} [ P , \chi_{-} ] K_{-}^{Q} U_{\rm ini} ( \lambda ) \big\Vert \nonumber   \\
+ \big\Vert \chi_{+}^{1} ( Q - \lambda - i 0 )^{- 1} \chi_{-}^{1} \big\Vert = \CO ( h^{\infty} ) .  \label{b71}
\end{gather}
Combining \eqref{b69} with \eqref{b70}, \eqref{b72} and \eqref{b71} gives
\begin{equation*}
\chi_{\rm fin} S_{P , P_{0}} ( \lambda ) \chi_{\rm ini} = - c_{n} U_{\rm fin} ( \lambda ) ( K_{+}^{Q} )^{*} [ Q , \chi_{+} ] ( Q - \lambda - i 0 )^{- 1} [ Q , \chi_{-} ] K_{-}^{Q} U_{\rm ini} ( \lambda ) + \CO ( h^{\infty} ) .
\end{equation*}
Using Remark \ref{b28} for $Q$ and $\supp \widetilde{\chi}_{\rm ini} \cap \supp \widetilde{\chi}_{\rm fin} = \emptyset$, we eventually obtain
\begin{equation*}
\chi_{\rm fin} S_{P , P_{0}} ( \lambda ) \chi_{\rm ini} = U_{\rm fin} ( \lambda ) S_{Q , P_{0}} ( \lambda ) U_{\rm ini} ( \lambda ) + \CO ( h^{\infty} ) ,
\end{equation*}
and Theorem \ref{b23} follows.
\end{proof}

\section{Resolvent estimates} \label{s5}

We first recall that, thanks to Section 6 of Robert and Tamura \cite{RoTa87_01}, cut-off resolvents and truncated resolvents have equivalent norms. More precisely, their approach (together with the proof of Proposition 1.5 of \cite{BoPe13_01} which guaranties that the forecoming norms of the resolvent are at least like $h^{- 1}$) gives

\begin{proposition}\sl \label{a13}
Assume \ref{h1}, $0 < E_{1} < E_{2}$ and $s >  1 / 2$. Let $\chi \in C^{\infty}_{0} ( \R^{n} )$ be equal to $1$ on a sufficiently large neighborhood of $0$. Then, we have
\begin{equation*}
\big\Vert \< x \>^{- s} ( P - z )^{- 1} \< x \>^{- s} \big\Vert \lesssim \Vert \chi ( P - z )^{- 1} \chi \Vert \lesssim \big\Vert \< x \>^{- s} ( P - z )^{- 1} \< x \>^{- s} \big\Vert ,
\end{equation*}
uniformly for $z \in [ E_{1} , E_{2} ] + i [ - h , h ] \setminus \R$ and $h$ small enough.
\end{proposition}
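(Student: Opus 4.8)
The plan is to reduce everything to the free operator $P_{0}=-h^{2}\Delta$ away from the interaction region, together with the elementary fact that any additive remainder of size $\CO(h^{-1})$ (or $\CO(h^{\infty})$, or $\CO(1)$) is harmless, since $\Vert\chi(P-z)^{-1}\chi\Vert\gtrsim h^{-1}$ by \cite{BoPe13_01}. The second inequality is immediate: as $\chi$ has compact support, $\langle x\rangle^{s}\chi\in C^{\infty}_{0}(\R^{n})$, so $\chi=\langle x\rangle^{-s}(\langle x\rangle^{s}\chi)$ and
\[
\chi(P-z)^{-1}\chi=\langle x\rangle^{-s}(\langle x\rangle^{s}\chi)(P-z)^{-1}(\langle x\rangle^{s}\chi)\langle x\rangle^{-s},
\]
whence $\Vert\chi(P-z)^{-1}\chi\Vert\le\Vert\langle x\rangle^{s}\chi\Vert_{\infty}^{2}\,\Vert\langle x\rangle^{-s}(P-z)^{-1}\langle x\rangle^{-s}\Vert$. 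Only the first inequality requires work.

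First I would record the free estimate $\Vert\langle x\rangle^{-s}(P_{0}-z)^{-1}\langle x\rangle^{-s}\Vert\lesssim h^{-1}$, uniformly for $z\in[E_{1},E_{2}]+i[-h,h]\setminus\R$: rescaling $P_{0}-z=h^{2}(-\Delta-h^{-2}z)$ reduces this to the high-energy limiting absorption principle for $-\Delta$ at energies in $h^{-2}[E_{1},E_{2}]\subset(0,\infty)$, which gives a bound $\lesssim(h^{-2}E_{1})^{-1/2}$ for the rescaled resolvent and hence $\lesssim h^{-2}\cdot h=h^{-1}$ after undoing the scaling, uniformly up to the real axis. Next, using that $\chi\equiv1$ on a large ball, I would fix cut-offs $\chi_{0}\prec\widetilde\chi_{0}$ supported where $\chi\equiv1$ and equal to $1$ on $B(0,R_{0})$, set $\psi_{0}=1-\chi_{0}$, and decompose $\langle x\rangle^{-s}(P-z)^{-1}\langle x\rangle^{-s}$ into the four pieces $\langle x\rangle^{-s}a(P-z)^{-1}b\langle x\rangle^{-s}$ with $a,b\in\{\chi_{0},\psi_{0}\}$. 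The purely ``near'' piece $a=b=\chi_{0}$ is $\lesssim\Vert\chi(P-z)^{-1}\chi\Vert$ because $\chi_{0}\chi=\chi_{0}$. For the three ``far'' pieces, in which at least one factor is $\psi_{0}\langle x\rangle^{-s}$ (a bounded function supported in $\{\vert x\vert>R_{0}\}$), I would glue with $P_{0}$ via $\psi_{0}(P-z)^{-1}=\psi_{0}(P_{0}-z)^{-1}-\psi_{0}(P_{0}-z)^{-1}V(P-z)^{-1}$ and $[P,\psi_{0}]=-[P,\chi_{0}]$ (a first-order operator with $\CO(h)$ coefficients supported where $\chi\equiv1$); after iterating the resolvent/commutator identities one is reduced to: (a) free resolvents between the weights, $\CO(h^{-1})$ by the previous step; (b) commutator terms whose ``kernel'' is pinned in $\{\chi\equiv1\}$, controlled by $\Vert\chi(P-z)^{-1}\chi\Vert$ via elliptic regularity, the $\CO(h)$ gain from each commutator compensating the loss; and (c) the potential term $\psi_{0}V(P_{0}-z)^{-1}$, which between the weights is $\CO(R_{0}^{-\rho}h^{-1})=\CO(h^{-1})$ since $\Vert V\Vert_{L^{\infty}(\{\vert x\vert>R_{0}\})}\lesssim R_{0}^{-\rho}$.

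The hard part will be precisely that $\rho>0$ is \emph{not} assumed large, so the Born iteration of $V(P_{0}-z)^{-1}$ does not converge and (c) cannot be summed naively; instead one must know that \emph{no Hamiltonian trajectory returns from infinity}, i.e.\ that the resolvent of $P$ localized far from the origin is itself $\CO(h^{-1})$ and that an incoming tail influences the bounded region only through the interaction zone (where it is again governed by $\Vert\chi(P-z)^{-1}\chi\Vert$). This microlocal propagation estimate away from the origin is the substantive content of Section~6 of \cite{RoTa87_01}, which I would invoke to close the three ``far'' pieces. The remaining difficulty is bookkeeping: one must arrange the iteration so that no term of the form (large operator norm)$\,\times\,\Vert\chi(P-z)^{-1}\chi\Vert$ survives without a compensating small or $\CO(h)$ factor, which is why one works with a nested finite family $\chi_{0}\prec\chi_{1}\prec\cdots\prec\chi$. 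Once this is done, collecting the ``near'' piece, the $\CO(h^{-1})$ and $\CO(h^{\infty})$ remainders, and using $\Vert\chi(P-z)^{-1}\chi\Vert\gtrsim h^{-1}$ to absorb the latter, yields $\Vert\langle x\rangle^{-s}(P-z)^{-1}\langle x\rangle^{-s}\Vert\lesssim\Vert\chi(P-z)^{-1}\chi\Vert$.
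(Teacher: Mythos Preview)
The paper does not actually prove Proposition~\ref{a13}: it simply attributes the result to Section~6 of Robert--Tamura \cite{RoTa87_01} (for the Isozaki--Kitada parametrix at infinity that controls the long-range tail) together with Proposition~1.5 of \cite{BoPe13_01} (for the lower bound $\Vert\chi(P-z)^{-1}\chi\Vert\gtrsim h^{-1}$ used to absorb $\CO(h^{-1})$ remainders). Your sketch is consistent with exactly this mechanism: the trivial direction, the near/far decomposition, the free-resolvent bound on the far pieces, and the appeal to \cite{RoTa87_01} for the non-trapping dynamics at infinity are precisely the ingredients the paper points to, and your identification of the $h^{-1}$ lower bound as the tool that swallows the additive remainders matches the paper's parenthetical citation of \cite{BoPe13_01}.

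One small comment: your discussion of the Born iteration with $V$ is a slight detour. In the Robert--Tamura approach one does not expand in powers of $V(P_{0}-z)^{-1}$ but instead builds an Isozaki--Kitada FIO parametrix for the full operator $P$ in outgoing/incoming regions, which directly yields the $\CO(h^{-1})$ bound on the far pieces without any smallness of $V$ at infinity beyond the symbol decay \ref{h1}. This is what you effectively acknowledge when you say ``instead one must know that no Hamiltonian trajectory returns from infinity'' and invoke \cite{RoTa87_01}; the Born-series paragraph before that could simply be dropped.
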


In particular, the operator $Q$ defined in \ref{h3} satisfies such an estimate. Moreover, one can check that this proposition is valid uniformly for $R$ large enough. Notice that Proposition \ref{a13} shows that the norm of the weighted resolvent is essentially independent of $s > 1 / 2$.

We now recall a priori estimates on the resolvent that are valid without assumption on the trapped set or the dimension. Among the authors which have progressively proved them, we would like to cite Fern\'{a}ndez and Lavine \cite{FeLa90_01}, Burq \cite{Bu98_01,Bu02_02} and Vodev \cite{Vo02_01}. For a shorter proof and additional references, we send back the reader to Datchev \cite{Da14_01}.

\begin{theorem}\sl \label{a15}
Assume \ref{h1}, $0 < E_{1} < E_{2}$. There exist $C_{0} , R_{0} > 0$ such that, for $s > 1 / 2$,
\begin{gather}
\big\Vert \< x \>^{- s} ( P - \lambda \pm i 0 )^{- 1} \< x \>^{- s} \big\Vert \lesssim e^{C_{0} / h} , \label{a16}  \\
\big\Vert \< x \>^{- s} \one_{\vert x \vert > R_{0}} ( P - \lambda \pm i 0 )^{- 1}  \one_{\vert x \vert > R_{0}} \< x \>^{- s} \big\Vert \lesssim h^{- 1} , \label{a17}
\end{gather}
uniformly for $\lambda \in [ E_{1} , E_{2} ]$ and $h$ small enough.
\end{theorem}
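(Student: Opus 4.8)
The plan is to prove the two bounds separately: the exterior estimate \eqref{a17} by a positive-commutator (Mourre/Morawetz-type) argument exploiting that every positive energy is non-trapping far from the origin, and then the global bound \eqref{a16} by coupling \eqref{a17} with a semiclassical Carleman estimate in a large ball, which propagates control through the (possibly trapping) region $\{\vert x\vert\le R_0\}$ at the price of the exponential factor. Throughout, the limiting absorption principle — valid under \ref{h1}, since $P$ is a short/long range perturbation of $-h^2\Delta$ with no positive eigenvalues — guarantees that $(P-\lambda\pm i0)^{-1}\colon\<x\>^sL^2\to\<x\>^{-s}L^2$ exists for $s>1/2$, so that the quantities to be bounded are a priori finite and only their $h$-dependence is at stake.

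\emph{Step 1: the exterior estimate \eqref{a17}.} Fix $[E_1,E_2]\Subset\,]0,+\infty[$. Since $p=\xi^2+V$ and $V,\nabla V=\CO(\<x\>^{-\rho})$, on the energy shell $p^{-1}(\lambda)$ with $\lambda\in[E_1,E_2]$ one computes
\begin{equation*}
H_p(x\cdot\xi)=2\xi^2-x\cdot\nabla V(x)=2\lambda-2V(x)-x\cdot\nabla V(x)=2\lambda+\CO(\<x\>^{-\rho}),
\end{equation*}
so $x\cdot\xi$ is a global escape function in $\{\vert x\vert>R_0\}$ once $R_0$ is large enough ($H_p(x\cdot\xi)\ge E_1>0$ there). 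I would run the positive-commutator method with a bounded selfadjoint observable $A=\tfrac12(\Op(a)+\Op(a)^*)$, with $a=\psi(p)^2\,\theta(x)\,\phi(\vert x\vert)\,(x\cdot\xi)/\vert x\vert$, where $\psi\in C^\infty_0(]E_1/2,2E_2[)$ is $1$ near $[E_1,E_2]$, $\theta$ cuts off to $\{\vert x\vert>R_0\}$, and $\phi\ge0$ is bounded, nondecreasing, with $\phi'(r)$ comparable to $\<r\>^{-2s}$; on the energy shell and in $\{\vert x\vert>2R_0\}$ the symbol $\{p,a\}$ is then $\gtrsim\<x\>^{-2s}$. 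For $u=(P-\lambda\pm i0)^{-1}f$, evaluating $\mathrm{Im}\<f,\tfrac{i}{h}Au\>=\mathrm{Im}\<(P-\lambda\pm i0)u,\tfrac{i}{h}Au\>$ yields on one side $\<\tfrac{i}{h}[P,A]u,u\>$ (up to a sign-favorable $\varepsilon$-term), bounded below by the sharp G\aa rding inequality by $\gtrsim\Vert\<x\>^{-s}\one_{\vert x\vert>2R_0}\psi(P)u\Vert^2$ minus $\CO(h)$ errors and an error supported in the transition region $\{R_0<\vert x\vert<2R_0\}$, and on the other side by $\lesssim h^{-1}\Vert A\Vert\,\Vert\<x\>^sf\Vert\,\Vert\<x\>^{-s}u\Vert$; the off-shell part $(1-\psi(P))u$ is disposed of by ellipticity of $(P-\lambda)(1-\psi(P))$, and a standard bootstrap together with Proposition \ref{a13} removes the remaining $\Vert\<x\>^{-s}u\Vert$ and gives \eqref{a17} for all $s>1/2$.

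\emph{Step 2: the global estimate \eqref{a16}.} Since $P$ may be trapping, I would not seek a dynamically adapted weight but use a H\"ormander-type semiclassical Carleman estimate valid for an essentially arbitrary weight. Fix $R_1>R_0$, and pick $\varphi\in C^\infty(\R^n;\R)$ radial with $\nabla\varphi\ne0$ and $\varphi$ strictly convex on $\{\vert x\vert\le R_1\}$ (e.g.\ $\varphi(x)=e^{\mu\vert x\vert^2}$, $\mu$ large); then $e^{\varphi/h}(P-\lambda)e^{-\varphi/h}$ satisfies H\"ormander's subellipticity condition there for $h$ small, whence
\begin{equation*}
h\Vert e^{\varphi/h}v\Vert^2+h^3\Vert e^{\varphi/h}\nabla v\Vert^2\ \lesssim\ \Vert e^{\varphi/h}(P-\lambda)v\Vert^2 ,\qquad v\in C^\infty_0(\{\vert x\vert<R_1\}) .
\end{equation*}
Applying this to $v=\chi u$ with $\chi\in C^\infty_0(\{\vert x\vert<R_1\})$ equal to $1$ on $\{\vert x\vert\le R_1/2\}$ and $u=(P-\lambda\pm i0)^{-1}f$, the commutator $[P,\chi]$ is supported in $\{R_1/2<\vert x\vert<R_1\}\subset\{\vert x\vert>R_0\}$, where \eqref{a17} (together with an interior elliptic estimate to convert it into an $H^1$ bound) controls $u$ by $\CO(h^{-1})$ times $\<x\>^s$-weighted norms; dividing by $e^{2\min\varphi/h}$ and using $e^{\varphi/h}\le e^{\max\varphi/h}$ on the relevant sets yields $\Vert\<x\>^{-s}u\Vert_{L^2(\vert x\vert\le R_1/2)}\lesssim e^{C_0/h}\Vert\<x\>^sf\Vert$ with $C_0$ depending only on $E_1,E_2$ (and the fixed $R_0,R_1,\varphi$, all chosen before letting $h\to0$, hence $h$-independent); combined with \eqref{a17} this is \eqref{a16}.

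\emph{Main obstacle.} The crux is the error term localized in the transition region $\{R_0<\vert x\vert<2R_0\}$ in Step 1, where $x\cdot\xi$ is not yet an escape function: one must split incoming from outgoing directions there and show that incoming trajectories either leave $\{\vert x\vert>R_0\}$ or escape to infinity, with all constants uniform as $R_0\to+\infty$. Making this quantitative is the entire content of the non-trapping-at-infinity estimate and is precisely what Fern\'andez--Lavine \cite{FeLa90_01}, Burq \cite{Bu98_01,Bu02_02}, Vodev \cite{Vo02_01} and Datchev \cite{Da14_01} carry out; I would quote their conclusion rather than reprove it. Granting \eqref{a17} and the (classical) Carleman estimate, the coupling in Step 2 is essentially bookkeeping, the one point requiring attention being that the annular term is genuinely governed by \eqref{a17} — which it is, since $\supp [P,\chi]\subset\{\vert x\vert>R_0\}$.
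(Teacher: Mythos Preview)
The paper does not prove this theorem; it is stated as a known result and attributed to Fern\'andez--Lavine \cite{FeLa90_01}, Burq \cite{Bu98_01,Bu02_02}, Vodev \cite{Vo02_01}, with a pointer to Datchev \cite{Da14_01} for a short proof. So there is no ``paper's own proof'' to compare against; your sketch is essentially a roadmap through that literature, and you yourself end by saying you would quote their conclusion.

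One substantive comment on your ordering. You present \eqref{a17} as the first, self-contained step and \eqref{a16} as the consequence, but in Burq's approach (and in the Cardoso--Vodev estimate \eqref{a18} that the paper later invokes to prove Proposition \ref{a14}) it is the other way around, or rather both come out simultaneously from a \emph{single} Carleman-type inequality with a radial phase $\varphi$ that is large and convex on a big ball and levels off at infinity. The point is that the ``transition-region'' error you flag in Step 1 is not a technicality: it involves $u=(P-\lambda\pm i0)^{-1}f$ on the annulus $\{R_0<\vert x\vert<2R_0\}$, and because of tunnelling from the trapped set this quantity can genuinely be of size $e^{C/h}$. A pure positive-commutator argument with a cutoff $\theta(x)$ therefore cannot close \eqref{a17} on its own; the Carleman weight is precisely what absorbs that boundary contribution. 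Your Step 2 is the right picture once one realises the Carleman weight must extend across the annulus, and then both \eqref{a16} and \eqref{a17} drop out together --- \eqref{a16} from the exponential factor $e^{(\varphi(a)-\varphi(R_1))/h}$, and \eqref{a17} from the region where $\varphi$ is constant.
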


Moreover, we will make use of the following additional result.

\begin{proposition}\sl \label{a14}
Assume \ref{h1}, $0 < E_{1} < E_{2}$ and $s >  1 / 2$. There exists $R_{0} > 0$ such that
\begin{equation*}
\big\Vert \< x \>^{- s} \one_{\vert x \vert > R_{0}} ( P - \lambda \pm i 0 )^{- 1} \< x \>^{- s} \big\Vert \lesssim h^{- 1 / 2} \big\Vert \< x \>^{- s} ( P - \lambda \pm i 0 )^{- 1} \< x \>^{- s} \big\Vert^{1 / 2} ,
\end{equation*}
uniformly for $\lambda \in [ E_{1} , E_{2} ]$ and $h$ small enough.
\end{proposition}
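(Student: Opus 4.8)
The plan is to run a positive-commutator (Mourre--Morawetz) argument in the exterior region $\{\vert x\vert>R_{0}\}$, where, at energies in $[E_{1},E_{2}]\Subset\,]0,+\infty[$, the classical flow is non-trapping: indeed $\frac{d}{dt}(x\cdot\xi)=2(p-V)-x\cdot\nabla V\geq 2E_{1}-\CO(R_{0}^{-\rho})>0$ on $p^{-1}([E_{1},E_{2}])\cap\{\vert x\vert>R_{0}\}$ by \ref{h1}. This is exactly the mechanism behind the exterior bound \eqref{a17} of Theorem \ref{a15}; the only novelty is that the source will keep the full weight $\< x\>^{-s}$ instead of $\one_{\vert x\vert>R_{0}}\< x\>^{-s}$. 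By Proposition \ref{a13} we may work with weighted resolvents. Set $v=v_{\varepsilon}=(P-\lambda\mp i\varepsilon)^{-1}\< x\>^{-s}f$ for $\varepsilon>0$ and $N=\Vert\< x\>^{-s}(P-\lambda\mp i0)^{-1}\< x\>^{-s}\Vert$; all bounds will be uniform in $\varepsilon$ and we let $\varepsilon\to0$ at the end, the limit being justified by the limiting absorption principle (valid under \ref{h1}). Two a priori facts are recorded first: the optical-theorem identity
\[
\varepsilon\Vert v\Vert^{2}=\big\vert\im\<\< x\>^{-s}f,v\>\big\vert\leq\Vert f\Vert\,\big\Vert\< x\>^{-s}v\big\Vert\leq N\Vert f\Vert^{2}+o_{\varepsilon}(1),
\]
and the elliptic estimate (integrate $\< x\>^{-2s}\vert h\nabla v\vert^{2}$ by parts against $(P-\lambda\mp i\varepsilon)v=\< x\>^{-s}f$, using $\Vert V\Vert_{\infty}+\vert\lambda\vert=\CO(1)$)
\[
\big\Vert\< x\>^{-s}h\nabla v\big\Vert\lesssim\big\Vert\< x\>^{-s}v\big\Vert+\Vert f\Vert^{1/2}\big\Vert\< x\>^{-s}v\big\Vert^{1/2}\lesssim N\Vert f\Vert+o_{\varepsilon}(1),
\]
where the last inequality uses $\Vert\< x\>^{-s}v\Vert\leq N\Vert f\Vert$ and $N\gtrsim h^{-1}\geq1$.

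Next I would choose an escape function $b=\psi(\vert x\vert)\,\tfrac{x\cdot\xi}{\vert x\vert}$, localised in energy near $[E_{1},E_{2}]$, with $\psi\geq0$, $\psi'\geq0$, $\psi\equiv0$ on $\{\vert x\vert\leq R_{0}/2\}$, and $\psi'(r),\ \psi(r)/r\asymp\< r\>^{-2s}$ on $\{r\geq R_{0}\}$. Then $H_{p}b=\psi'\,\tfrac{(x\cdot\xi)^{2}}{\vert x\vert^{2}}+\tfrac{\psi}{\vert x\vert}\vert\xi_{\perp}\vert^{2}-\tfrac{\psi}{2\vert x\vert}x\cdot\nabla V$ (modulo the energy cut-off), and for $R_{0}$ large the bound $\vert x\cdot\nabla V\vert\lesssim\< x\>^{-\rho}$ together with $\xi^{2}=p-V\geq E_{1}/2$ on the energy shell force $H_{p}b\geq0$ everywhere and $H_{p}b\geq c\< x\>^{-2s}$ on $p^{-1}([E_{1},E_{2}])\cap\{\vert x\vert\geq R_{0}\}$ (the first term controls the directions $\xi\parallel x$, the second those with $\xi\perp x$). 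Quantising a suitable lower-order modification of $b$ to a self-adjoint operator $B$ of order $(0,1)$ with symbol supported in $\{\vert x\vert>R_{0}/2\}$, and using the commutator identity $\<\tfrac{i}{h}[P,B]v,v\>=-\tfrac{2}{h}\im\<Bv,\< x\>^{-s}f\>\mp\tfrac{2\varepsilon}{h}\<Bv,v\>$ together with a G{\aa}rding-type lower bound $\<\tfrac{i}{h}[P,B]v,v\>\gtrsim\Vert\< x\>^{-s}\one_{\vert x\vert>R_{0}}v\Vert^{2}-(\mathrm{errors})$ (after a harmless energy localisation costing $\CO(\Vert f\Vert^{2})$), one gets
\[
\big\Vert\< x\>^{-s}\one_{\vert x\vert>R_{0}}v\big\Vert^{2}\lesssim\tfrac1h\big\vert\<Bv,\< x\>^{-s}f\>\big\vert+\tfrac{\varepsilon}{h}\big\vert\<Bv,v\>\big\vert+(\mathrm{errors})+\CO(\Vert f\Vert^{2}).
\]
Since $\Vert\< x\>^{-s}Bv\Vert\lesssim\Vert\< x\>^{-s}h\nabla v\Vert+\Vert\< x\>^{-s}v\Vert\lesssim N\Vert f\Vert+o_{\varepsilon}(1)$, the first term is $\lesssim h^{-1}N\Vert f\Vert^{2}$; and $\tfrac{\varepsilon}{h}\vert\<Bv,v\>\vert\lesssim\tfrac{\varepsilon}{h}(\Vert v\Vert^{2}+\Vert h\nabla v\Vert^{2})$, which by the optical-theorem identity and $\Vert h\nabla v\Vert^{2}\lesssim\Vert v\Vert^{2}+\Vert f\Vert\Vert v\Vert$ tends to $\lesssim h^{-1}N\Vert f\Vert^{2}$ as $\varepsilon\to0$. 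Hence $\Vert\< x\>^{-s}\one_{\vert x\vert>R_{0}}v_{\varepsilon}\Vert^{2}\lesssim h^{-1}N\Vert f\Vert^{2}+o_{\varepsilon}(1)$, and letting $\varepsilon\to0$ yields the proposition. The gain of $N^{1/2}$ over the naive bound $N$ is precisely what the elliptic estimate buys in the source term.

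The hard part is the step labelled $(\mathrm{errors})$: one has to quantise $b$ and organise the expansion of $\tfrac{i}{h}[P,B]$ so that every remainder is either absorbed into the leading term $\Vert\< x\>^{-s}\one_{\vert x\vert>R_{0}}v\Vert^{2}$ or is genuinely negligible --- and, crucially, so that none of them can produce a factor $N^{2}$. The prescription $\psi'(r),\ \psi(r)/r\asymp\< r\>^{-2s}$ is dictated by this: the dangerous remainders are of the schematic form $h^{2}\< x\>^{-2s-2}\vert v\vert^{2}$ (from $\Delta^{2}$ of the multiplier) and $\< x\>^{-\rho}\< x\>^{-2s}\vert v\vert^{2}$ (from the potential), which are dominated by $c\< x\>^{-2s}\vert v\vert^{2}$ for $h$ small and $R_{0}$ large; the remaining remainders carry a prefactor $\CO(h^{\infty})$ in front of quantities that are $\CO(e^{C/h})$ by \eqref{a16}, hence negligible. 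Organising this bookkeeping, tracking correctly the single power $h^{-1}$ produced by the semiclassical commutator, and treating the behaviour at infinity through the optical-theorem identity rather than by forcing the Morawetz identity to be sign-definite, is exactly what is carried out in the references cited for Theorem \ref{a15} (Fern\'andez--Lavine, Burq, Vodev, Cardoso--Vodev, Datchev); I would follow those arguments, the sole modification being that the source carries the weight $\< x\>^{-s}$ rather than $\one_{\vert x\vert>R_{0}}\< x\>^{-s}$.
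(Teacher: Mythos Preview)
Your strategy—a Mourre--Morawetz positive-commutator estimate localised in the exterior region—is the right heuristic, but with purely polynomial weights it cannot close when the full resolvent norm $N$ may be exponentially large in $1/h$ (which is allowed by \eqref{a16} and is sharp for shape resonances). The difficulty is precisely the step you flag as ``$(\mathrm{errors})$''. Your escape function $b=\psi(\vert x\vert)\,x\cdot\xi/\vert x\vert$ has a cutoff $\psi$ turning on in the annulus $\{R_{0}/2<\vert x\vert<R_{0}\}$; there your principal symbol satisfies only $H_{p}b\geq0$, not $H_{p}b\geq c\<x\>^{-2s}$, so it cannot absorb anything. The lower-order terms of the commutator expansion (those involving $\psi''$, $\psi'''$, etc.) are supported precisely in this annulus and are of size $\CO(h^{2})$, not $\CO(h^{\infty})$; they produce a contribution bounded only by $h^{2}\Vert\one_{R_{0}/2<\vert x\vert<R_{0}}v\Vert^{2}\lesssim h^{2}N^{2}\Vert f\Vert^{2}$, which overwhelms the target $h^{-1}N\Vert f\Vert^{2}$ as soon as $N\gg h^{-3}$. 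One cannot cure this by making $\psi$ vanish to infinite order (then $\vert\psi''\vert/\psi'$ blows up near $R_{0}/2$), and one cannot drop the cutoff (then the potential term $-\psi\,\partial_{r}V$ picks up the trapping region and again costs $N^{2}$).

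The paper's fix is to make the weight $h$-dependent: it invokes the Carleman estimate of Cardoso--Vodev (their Theorem~2.2, adapted with Burq's radial phase $\varphi$), in which the transition region carries the factor $e^{(\varphi(r)-\varphi(a))/h}$. The key observation is \eqref{a19}: $\varphi(a)-\varphi(R_{1}+1)$ can be chosen larger than the constant $C_{0}$ of \eqref{a16}. The boundary commutator $[P,\psi]$, supported where $\vert x\vert<R_{1}+1$, then acquires the prefactor $e^{-(C_{0}+1)/h}$, which cancels the $e^{C_{0}/h}$ from the a~priori bound and reduces that contribution to $\CO(1)\Vert u\Vert^{2}$ (see \eqref{a20}). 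The remaining surface term at $\vert x\vert=a$ is handled by the Green formula and yields exactly the factor $h^{-1}N$ (see \eqref{a21}). The references you cite for Theorem~\ref{a15} all rely on such Carleman weights rather than on a polynomial-weight commutator; the ``sole modification'' you propose is therefore not sufficient.
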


This upper bound is a kind of interpolation between \eqref{a16} and \eqref{a17}. It is sharp in the well in the island setting, see Theorem 2 of Datchev, Dyatlov and Zworski \cite{DaDyZw15_01}. Such estimate has been first proved by Stefanov \cite[proof of Theorem 3]{St02_01} for compactly supported perturbations of the Laplacian using Proposition 2.2 of Burq \cite{Bu98_01}. Then, Datchev and Vasy \cite{DaVa13_01} have proved it with the cut-off function $\one_{\vert x \vert > R_{0}}$ replaced by any plateau function vanishing near the base space projection of the trapped set, and for operators with polynomially bounded resolvent. One can recover their result combining Proposition \ref{a14} with the usual propagation of singularities. Nevertheless, there is no hope to replace in the general setting the function $\one_{\vert x \vert > R_{0}}$ by any cut-off function supported outside the trapped set (for instance, one can consider the well in the island situation \cite{DaDyZw15_01,FuLaMa11_01,HeSj86_01}). As previously, the operator $Q$ defined in \ref{h3} satisfies Proposition \ref{a14} uniformly for $R$ large enough.

\begin{proof}
We follow the approach of Stefanov \cite{St02_01}, but can no longer use Burq \cite{Bu98_01} since $P$ is not a compactly supported perturbation of $- h^{2} \Delta$. Instead, we use Theorem 2.2 of Cardoso and Vodev \cite{CaVo02_01}: for all $0 < s - 1 / 2 \ll 1$, there exist $C_{1} , C_{2} > 0$ such that
\begin{align}
\big\Vert & e^{(\varphi ( r ) - \varphi ( a ) ) / h}  v \big\Vert^{2}_{H^{1} ( [ R_{1} , a ] )} + \big\Vert \< x \>^{- s} v \big\Vert^{2}_{H^{1} ( [ a , + \infty [ )} \leq - C_{2} h \im \< \partial_{r} v , v \>_{L^{2} ( S_{a} )} \nonumber \\
&\qquad \quad + C_{1} h^{2} \big\Vert e^{(\varphi ( r ) - \varphi ( a ) ) / h} ( P - z ) v \big\Vert^{2}_{L^{2} ( [ R_{1}  , a ] )} + C_{1} h^{2} \big\Vert \< x \>^{s} ( P - z ) v \big\Vert^{2}_{L^{2} ( [ a , + \infty [ )} , \label{a18}
\end{align}
uniformly for $\re z \in [ E_{1} , E_{2} ]$, $0 < \im z < h^{2}$, $h$ small enough and any function $v \in H^{2} ( \R^{n} )$ with $\< x \>^{s} ( P - z ) v \in L^{2} ( \R^{n} )$ and $\supp v \subset \{ x \in \R^{n} ; \ \vert x \vert > R_{1} \}$. In this expression, $r = \vert x \vert$, $0 < R_{1} < a$ are sufficiently large, the different norms are taken for $r$ in the appropriate regions and $S_{a} = \{ x \in \R^{n} ; \ \vert x \vert = a \}$. This inequality was originally stated in the high frequency limit for Laplace--Beltrami operators (that is without potential). Nevertheless, the passage to the semicassical regime is straightforward. Moreover, for the generalization to semiclassical Schr\"{o}dinger operators with potentials, one can replace Proposition 2.3 and Proposition 2.4 of Cardoso and Vodev \cite{CaVo02_01} by Proposition 6.2 of Burq \cite{Bu02_02} and Proposition 2.3 of Cardoso, Popov and Vodev \cite{CaPoVo04_01} respectively. In particular, the radial phase function $\varphi$ is that of Burq \cite[Section 4.3]{Bu02_02}.

It is important to note that the non-decreasing radial function $\varphi$ can be chosen arbitrarily large. More precisely, for any $C > 0$, there exist a function $\varphi$ and a constant $a > R_{1}$ as in Section 4.3 of \cite{Bu02_02} such that \eqref{a18} holds and
\begin{equation} \label{a19}
\varphi ( a ) - \varphi ( R_{1} + 1 ) \geq C .
\end{equation}
Of course the constant $a$ (noted $R_{3}$ in \cite{Bu02_02}) depends on $C$. To achieve that goal, we first define a function $\widehat{\varphi}$ as in \cite{Bu02_02} such that $\widehat{\varphi}^{\prime} ( r ) = \kappa$ for $r \in [R_{1} , R_{2} + 1 / \ln \kappa ]$ with $R_{2} = \kappa^{- 3 / 2}$ and $0 < \kappa \ll 1$. Then, we glue $\widehat{\varphi}^{\prime}$ with $\Psi^{\prime} ( r ) = e^{1 / ( r - R_{2} )} \one_{r \leq R_{2}}$ as in \cite{Bu02_02}. This construction is valid since $22 \Psi^{\prime} \Psi^{\prime \prime} + 1 / r \geq 0$ for $r \in [R_{2} + 1 / \ln \kappa , R_{2} ]$ (see \cite[(4.24)]{Bu02_02}). Taking $\varphi$ as in \cite[(4.29)]{Bu02_02}, we obtain a function satisfying the required properties and $\varphi ( a ) - \varphi ( R_{1} + 2 ) \geq \kappa \kappa^{- 3 / 2} - M = \kappa^{- 1 / 2} - M$ for some constant $M > 0$. Eventually, we get \eqref{a19} by taking $\kappa$ small enough.

Let $\varphi$ satisfy \eqref{a19} with $C = C_{0} + 1$, $C_{0}$ being given by \eqref{a16}. We apply \eqref{a18} to the function $v = \psi ( P - z )^{- 1} \< x \>^{- s} u$ with $u \in L^{2} ( \R^{n} )$, $1 - \psi \in C^{\infty}_{0} ( \R^{n} )$ and $\supp \nabla \psi \subset \{ x ; \ \vert x \vert \in ] R_{1} ,R_{1} + 1 [ \}$. In particular,
\begin{equation*}
( P - z ) v = \psi \< x \>^{- s} u + [ P , \psi ] ( P - z )^{- 1} \< x \>^{- s} u ,
\end{equation*}
where the last term is localized in the support of $\nabla \psi$. Thus, the last two terms in \eqref{a18} are estimated by
\begin{align}
\big\Vert e^{(\varphi ( r ) - \varphi ( a ) ) / h} ( P & - z ) v \big\Vert^{2}_{L^{2} ( [ R_{1}  , a ] )} + \big\Vert \< x \>^{s} ( P - z ) v \big\Vert^{2}_{L^{2} ( [ a , + \infty [ )}   \nonumber \\
\lesssim{}& \big\Vert e^{(\varphi ( r ) - \varphi ( a ) ) / h} [ P , \psi ] ( P - z )^{- 1} \< x \>^{- s} u  \big\Vert^{2}_{L^{2} ( [ R_{1}  , R_{1} + 1 ] )} + \Vert u \Vert^{2}   \nonumber \\
\lesssim{}& e^{- 2 C / h} \big\Vert ( P + i )^{- 1} ( P + i ) [ P , \psi ] ( P - z )^{- 1} \< x \>^{- s} u  \big\Vert^{2} + \Vert u \Vert^{2}   \nonumber \\
\leq{}& e^{- 2 C / h} \big\Vert ( P + i )^{- 1} \big( ( z + i ) [ P , \psi ] + [ P , [ P , \psi ] ] \big ) ( P - z )^{- 1} \< x \>^{- s} u  \big\Vert^{2} \nonumber \\
&+ e^{- 2 C / h} \big\Vert ( P + i )^{- 1} [ P , \psi ] \< x \>^{- s} u  \big\Vert^{2} + \Vert u \Vert^{2}   \nonumber \\
\lesssim{}& e^{- 2 C / h} h \big\Vert \< x \>^{- s} ( P - z )^{- 1} \< x \>^{- s} u  \big\Vert^{2} + \Vert u \Vert^{2}  \nonumber \\
\lesssim{}& \Vert u \Vert^{2} ,  \label{a20}
\end{align}
thanks to \eqref{a16}, \eqref{a19} and $C = C_{0} + 1$. Using the Green formula and $v = w : = ( P - z )^{- 1} \< x \>^{- s} u$ near $S_{a}$, the first term in the right hand side of \eqref{a18} becomes
\begin{align}
- \im \< \partial_{r} v , v \>_{L^{2} ( S_{a} )} &= - \im \< \partial_{r} w , w \>_{L^{2} ( S_{a} )} \nonumber \\
&= - h^{- 2} \im \big\< ( P - z ) w , w \big\>_{L^{2} ( [ 0 , a ] )} - h^{- 2} \im z \Vert w \Vert^{2}_{L^{2} ( [ 0 , a ] )}  \nonumber \\
&\leq - h^{- 2} \im \big\< u , \< x \>^{- s} ( P - z )^{- 1} \< x \>^{- s} u \big\>_{L^{2} ( [ 0 , a ] )} \nonumber \\
&\leq h^{- 2} \big\Vert \< x \>^{- s} ( P - z )^{- 1} \< x \>^{- s} u \big\Vert \Vert u \Vert ,  \label{a21}
\end{align}
for $\im z > 0$. Combining \eqref{a18} with \eqref{a20} and \eqref{a21}, we get
\begin{align}
\big\Vert \< x \>^{- s} \one_{\vert x \vert > a} ( P - z )^{- 1} \< x \>^{- s} u \big\Vert^{2} &= \big\Vert \< x \>^{- s} v  \big\Vert_{L^{2} ( [ a , + \infty [ )}^{2} \nonumber \\
&\lesssim h^{- 1} \big\Vert \< x \>^{- s} ( P - z )^{- 1} \< x \>^{- s} u \big\Vert \Vert u \Vert + \Vert u \Vert^{2} \nonumber \\
&\lesssim h^{- 1} \big\Vert \< x \>^{- s} ( P - z )^{- 1} \< x \>^{- s} \big\Vert \Vert u \Vert^{2} , \label{a22}
\end{align}
for $\re z \in [ E_{1} , E_{2} ]$ and $0 < \im z < h^{2}$. For the last inequality, we have used the proof of Proposition 1.5 of \cite{BoPe13_01} which guaranties that $\Vert \< x \>^{- s} ( P - z )^{- 1} \< x \>^{- s} \Vert \gtrsim h^{- 1}$. Choosing $R_{0} = a$ and taking the limit $\im z \to 0$, \eqref{a22} implies Proposition \ref{a14} for $( P - \lambda - i 0 )^{- 1}$ and $0 < s - 1 / 2 \ll 1$. Using Proposition \ref{a14} and taking the complex conjugate, we obtain the same estimate for $( P - \lambda \pm i 0 )^{- 1}$ and all $s > 1 / 2$.
\end{proof}

\begin{theorem}[Comparison of resolvents]\sl \label{a6}
Assume \ref{h1}, \ref{h3}, $0 < E_{1} < E_{2}$, $s >  1 / 2$ and let $\chi \in C^{\infty}_{0} ( \R^{n} )$. For $R > 0$ large enough, we have
\begin{equation*}
\chi ( P - \lambda \pm i 0 )^{- 1} \chi = \chi ( Q  - \lambda \pm i 0 )^{- 1} \chi + \CO ( h^{\infty} ) \big\Vert \< x \>^{- s} ( Q - \lambda \pm i 0 )^{- 1} \< x \>^{- s} \big\Vert ,
\end{equation*}
uniformly for $\lambda \in [ E_{1} , E_{2} ]$ and $h$ small enough.
\end{theorem}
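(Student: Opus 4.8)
I would prove the estimate by a gluing argument, reducing the comparison of the full resolvents of $P$ and $Q$ to a comparison in the annulus where the two potentials cease to coincide, and then killing that contribution microlocally. By taking adjoints it suffices to treat one sign, say $-i0$. Choose $\chi_0^{\sharp},\tilde\chi_0,\chi_1^{\sharp},\tilde\chi_1\in C_0^{\infty}(\R^n)$ with $\tilde\chi_0+\tilde\chi_1=1$, with $\chi_0^{\sharp}$ (resp. $\chi_1^{\sharp}$) equal to $1$ near $\supp\tilde\chi_0$ (resp. $\supp\tilde\chi_1$), with $\tilde\chi_0=1$ near $\supp\chi$, $\supp\chi_0^{\sharp}\subset B(0,R/2)$, and $\supp\chi_1^{\sharp}\cup\supp\nabla\chi_0^{\sharp}\subset\{|x|>R_0\}$, $\supp\chi_1^{\sharp}\cap\supp\chi=\emptyset$; this is possible once $R$ is large compared to the radii $R_0$ of Theorem~\ref{a15} and Proposition~\ref{a14}, to $\supp\chi$, and to $\pi_x(K_p([E_1,E_2]))$. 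Since $W=V$ on $B(0,R(1+\delta_0))$ we have $P=Q$ near $\supp\chi_0^{\sharp}$, so applying $P-z$ to $F(z):=\chi_0^{\sharp}(Q-z)^{-1}\tilde\chi_0+\chi_1^{\sharp}(P-z)^{-1}\tilde\chi_1$ yields $(P-z)F(z)=1+[P,\chi_0^{\sharp}](Q-z)^{-1}\tilde\chi_0+[P,\chi_1^{\sharp}](P-z)^{-1}\tilde\chi_1$, and multiplying on both sides by $\chi$ (so that $\chi\chi_0^{\sharp}=\chi$, $\tilde\chi_0\chi=\chi$, $\chi\chi_1^{\sharp}=0=\tilde\chi_1\chi$) gives the identity $\chi(P-z)^{-1}\chi=\chi(Q-z)^{-1}\chi-\chi(P-z)^{-1}[P,\chi_0^{\sharp}](Q-z)^{-1}\chi$, $z=\lambda-i0$. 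Everything then reduces to bounding the last term by $\CO(h^{\infty})\Vert\langle x\rangle^{-s}(Q-\lambda-i0)^{-1}\langle x\rangle^{-s}\Vert$.

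\textbf{Comparison of the weighted resolvent norms.} First I would record that, writing $N_P:=\Vert\langle x\rangle^{-s}(P-\lambda-i0)^{-1}\langle x\rangle^{-s}\Vert$ and similarly $N_Q$, one has $N_P\lesssim N_Q$ uniformly for $\lambda\in[E_1,E_2]$ and $R$ large. This follows, in the spirit of Datchev--Vasy, by sandwiching $(P-z)^{-1}=F(z)-(P-z)^{-1}[P,\chi_0^{\sharp}](Q-z)^{-1}\tilde\chi_0-(P-z)^{-1}[P,\chi_1^{\sharp}](P-z)^{-1}\tilde\chi_1$ between weights $\langle x\rangle^{-s}$ and estimating each term with Theorem~\ref{a15} (for the pieces localised in $\{|x|>R_0\}$), Proposition~\ref{a14} (which supplies the gain $h^{-1/2}N_\bullet^{1/2}$ for a single far-away cut-off), and the a priori lower bound $N_\bullet\gtrsim h^{-1}$, the quadratic term in $N_P$ being absorbed into the left-hand side. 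It is precisely this comparison that forces the $Q$-quantity, and not the $P$-one, to appear on the right-hand side of the theorem.

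\textbf{The glued remainder.} The core of the proof is the bound on $\chi(P-\lambda-i0)^{-1}[P,\chi_0^{\sharp}](Q-\lambda-i0)^{-1}\chi$. Since $[P,\chi_0^{\sharp}]$ is $\CO(h)$ and supported in $\{R/2\le|x|\le 3R/4\}\subset\{|x|>R_0\}$, I would insert a microlocal angular partition $\Op(a_+)+\Op(a_-)=1$ near $|x|\sim R$ (the errors of this partition, and the high-frequency/off-energy-shell parts of $[P,\chi_0^{\sharp}]$, being supported away from the characteristic set and thus disposed of by the elliptic parametrix of $P-\lambda$, see below), with $a_+$ supported in $\{\cos(x,\xi)>-\tfrac12\}$ and $a_-$ in $\{\cos(x,\xi)<-\tfrac14\}$. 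For the $a_+$ piece one uses the Isozaki--Kitada outgoing parametrix, encapsulated in the standard propagation statement that $\Op(b)(P-\lambda-i0)^{-1}\Op(a)=\CO(h^{\infty})$ whenever $\supp b$ is disjoint from $\supp a$ and from the forward $H_p$-flow-out of $\supp a\cap p^{-1}(\lambda)$: the forward orbit of $\supp a_+$ stays in $\{|x|\gtrsim R/2\}$ because for $|x|>R_0$ the quantity $x\cdot\xi$ is strictly increasing along the flow at rate $\approx 2\lambda$ (the dynamical input, a variant of estimate~\eqref{b25}), so $\chi(P-\lambda-i0)^{-1}[P,\chi_0^{\sharp}]\Op(a_+)=\CO(h^{\infty})$ and, composed with $(Q-\lambda-i0)^{-1}\chi$ (bounded by $N_Q$ from $L^2$ to $\langle x\rangle^sL^2$), the $a_+$ contribution is $\CO(h^{\infty})N_Q$. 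For the $a_-$ piece the smallness is extracted on the other side: the same statement for $Q$ gives $\Op(a_-)(Q-\lambda-i0)^{-1}\chi=\CO(h^{\infty})$, since a forward orbit issued from $\supp\chi$ at energy $\lambda$ is either bounded — hence trapped in a fixed ball, never reaching $|x|\sim R$ — or escapes through $\{x\cdot\xi>0\}$, which is disjoint from $\supp a_-$; then $[P,\chi_0^{\sharp}]\Op(a_-)(Q-\lambda-i0)^{-1}\chi$ is $\CO(h^{\infty})$ with range in $\{|x|>R_0\}$, and composing with $\chi(P-\lambda-i0)^{-1}$ and invoking Proposition~\ref{a14} once more gives a bound $\CO(h^{\infty})N_P^{1/2}\lesssim\CO(h^{\infty})N_Q$. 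Finally the off-characteristic remainders take the form $\chi(P-\lambda-i0)^{-1}(P-\lambda)\Op(e)(Q-\lambda-i0)^{-1}\chi+\chi(P-\lambda-i0)^{-1}E\,(Q-\lambda-i0)^{-1}\chi$ with $\Op(e)$ and $E$ supported far out and $E=\CO(h^{\infty})$; the first equals $\chi\Op(e)(Q-\lambda-i0)^{-1}\chi=\CO(h^{\infty})N_Q$ because $\chi\Op(e)=\CO(h^{\infty})$ by disjointness of supports, and the second is $\CO(h^{\infty})(N_PN_Q)^{1/2}\lesssim\CO(h^{\infty})N_Q$ upon applying Proposition~\ref{a14} to both factors — this mechanism, a genuinely $\CO(h^{\infty})$ operator flanked by two far-away-localised resolvents each costing only $h^{-1/2}N^{1/2}$, is what keeps every error at the level $\CO(h^{\infty})N_Q$ rather than $\CO(h^{\infty})N_Q^2$. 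Summing the four contributions proves the estimate.

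\textbf{Main obstacle.} The delicate point is the pair of microlocal smallness inputs $\chi(P-\lambda-i0)^{-1}\Op(a_+)=\CO(h^{\infty})$ and $\Op(a_-)(Q-\lambda-i0)^{-1}\chi=\CO(h^{\infty})$: proving these \emph{genuinely} — not merely modulo the exponentially large factor $\Vert\langle x\rangle^{-s}(P-\lambda-i0)^{-1}\langle x\rangle^{-s}\Vert$ — requires the Isozaki--Kitada incoming/outgoing parametrices at infinity, where the potentials are small but not zero, together with the dynamical fact that nothing leaving $\supp\chi$ returns through the incoming region and that everything in the outgoing region escapes to infinity; this is the analogue of, and is proved exactly like, the trajectory estimate~\eqref{b25}. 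The remaining bookkeeping (energy and high-frequency cut-offs contributing $\CO(h^{\infty})$ errors by ellipticity, and the mapping properties of $(Q-\lambda-i0)^{-1}\chi$ between weighted $L^2$-spaces) is routine.
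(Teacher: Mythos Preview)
Your overall approach coincides with the paper's: both derive a commutator identity relating $\chi(P-z)^{-1}\chi$ and $\chi(Q-z)^{-1}\chi$ (your remainder has the roles of $P$ and $Q$ exchanged relative to the paper's \eqref{a23}--\eqref{a24}, which is immaterial), split the commutator into outgoing, incoming, and off-energy pieces, and kill the on-energy pieces by the Isozaki--Kitada parametrix. The only organizational difference is that you prove $N_P\lesssim N_Q$ first by a separate gluing, whereas the paper obtains it as a bootstrap from \eqref{a68} together with Proposition~\ref{a13}.

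There is, however, one real inaccuracy. You claim that $\chi(P-\lambda-i0)^{-1}[P,\chi_0^{\sharp}]\Op(a_+)=\CO(h^{\infty})$ and $\Op(a_-)(Q-\lambda-i0)^{-1}\chi=\CO(h^{\infty})$ \emph{genuinely}, and you single this out as the main obstacle. In the general trapping situation these bounds are \emph{not} available: the Isozaki--Kitada construction (cf.\ the proof of \eqref{a25}) produces an explicit FIO parametrix whose $\chi$-projection is indeed $\CO(h^{\infty})$, but the remainder terms $Q_2,Q_3$ in \eqref{a28} still contain the resolvent acting on far-supported data, so one only gets
\[
\big\Vert \chi(P-\lambda-i0)^{-1}[P,\chi_0^{\sharp}]\Op(a_+)\big\Vert \;=\; \CO(h^{\infty})\,\big\Vert \chi(P-\lambda-i0)^{-1}\one_{|x|>R_0}\langle x\rangle^{-s}\big\Vert,
\]
and since $N_P$ may be of order $e^{C_0/h}$ this is not $\CO(h^{\infty})$. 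Consequently your bookkeeping on the $a_+$ piece is off: pairing the corrected left factor $\CO(h^{\infty})h^{-1/2}N_P^{1/2}$ with the full $N_Q$ you invoke on the right would yield $\CO(h^{\infty})N_P^{1/2}N_Q$, which is too large. The remedy is precisely what the paper does between \eqref{a32} and \eqref{a68}: because the commutator is supported in $\{|x|>R_0\}$, \emph{both} flanking resolvents carry a far-away cut-off, and Proposition~\ref{a14} applied symmetrically gives $\CO(h^{\infty})\,h^{-1/2}N_P^{1/2}\cdot h^{-1/2}N_Q^{1/2}\lesssim \CO(h^{\infty})N_Q$ after your comparison $N_P\lesssim N_Q$. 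So the ``genuine $\CO(h^{\infty})$'' you flag as the crux is neither provable nor required; what is required is the symmetric use of Proposition~\ref{a14}, which you already invoke correctly in the $a_-$ and off-energy pieces.
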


\begin{proof}
We first assume that $\chi \in C^{\infty}_{0} ( \R^{n} )$ is like in Proposition \ref{a13}. Consider $\widetilde{\chi} \in C^{\infty}_{0} ( \R^{n} )$ with $\chi \prec \widetilde{\chi} \prec \one_{B ( 0 , R )}$. Since $P = Q$ near $\widetilde{\chi}$, we get
\begin{equation*}
( Q - z ) \widetilde{\chi} ( P - z )^{- 1} = ( P - z ) \widetilde{\chi} ( P - z )^{- 1} = \widetilde{\chi} + [ P , \widetilde{\chi} ] ( P - z )^{- 1} ,
\end{equation*}
and then
\begin{equation} \label{a23}
\chi ( P - z )^{- 1} \chi = \chi ( Q - z )^{- 1} \chi + \chi ( Q - z )^{- 1} [ P , \widetilde{\chi} ] ( P - z )^{- 1} \chi ,
\end{equation}
for $\im z > 0$. Let $1 - \psi \in C^{\infty}_{0} ( \R^{n} )$ be such that $\nabla \widetilde{\chi} \prec \psi$. In order to deal with $[ P , \widetilde{\chi} ]$ which is a differential operator of order $1$ whose coefficients are supported near $\nabla \widetilde{\chi}$, we write
\begin{align*}
[ P , \widetilde{\chi} ] &= \psi ( P + i )^{- 1} ( P + i ) [ P , \widetilde{\chi} ]   \\
&= \psi ( P + i )^{- 1} [ P , \widetilde{\chi} ] ( P - z ) + \psi \big( ( P + i )^{- 1} [ P , \widetilde{\chi} ] ( z + i ) + ( P + i )^{- 1} [ P , [ P , \widetilde{\chi} ] ] \big) \psi .
\end{align*}
Since $[ P , \widetilde{\chi} ]  \chi = 0$, \eqref{a23} becomes
\begin{equation} \label{a24}
\chi ( P - z )^{- 1} \chi = \chi ( Q - z )^{- 1} \chi + \chi ( Q - z )^{- 1} \psi M \psi ( P - z )^{- 1} \chi ,
\end{equation}
where $M = ( P + i )^{- 1} [ P , \widetilde{\chi} ] ( z + i ) + ( P + i )^{- 1} [ P , [ P , \widetilde{\chi} ] ]$ is a pseudodifferential operator whose symbol $m \in S ( h )$ is localized near $\supp \nabla \widetilde{\chi} \times \R^{n}$ modulo $S ( h^{\infty} )$.

\begin{figure}%[!h]
\begin{center}
\begin{picture}(0,0)%
\includegraphics{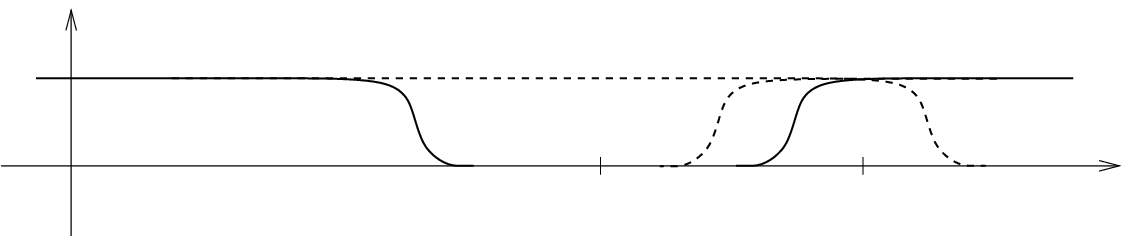}%
\end{picture}%
\setlength{\unitlength}{1105sp}%
\begingroup\makeatletter\ifx\SetFigFont\undefined%
\gdef\SetFigFont#1#2#3#4#5{%
  \reset@font\fontsize{#1}{#2pt}%
  \fontfamily{#3}\fontseries{#4}\fontshape{#5}%
  \selectfont}%
\fi\endgroup%
\begin{picture}(19244,3944)(-1221,-383)
\put(15226,1439){\makebox(0,0)[b]{\smash{{\SetFigFont{9}{10.8}{\rmdefault}{\mddefault}{\updefault}$\widetilde{\chi}$}}}}
\put(11851,1439){\makebox(0,0)[b]{\smash{{\SetFigFont{9}{10.8}{\rmdefault}{\mddefault}{\updefault}$\psi$}}}}
\put(10501,1439){\makebox(0,0)[b]{\smash{{\SetFigFont{9}{10.8}{\rmdefault}{\mddefault}{\updefault}$\widetilde{\psi}$}}}}
\put(6451,1439){\makebox(0,0)[b]{\smash{{\SetFigFont{9}{10.8}{\rmdefault}{\mddefault}{\updefault}$\chi$}}}}
\put(-599,239){\makebox(0,0)[b]{\smash{{\SetFigFont{9}{10.8}{\rmdefault}{\mddefault}{\updefault}$0$}}}}
\put(9076,164){\makebox(0,0)[b]{\smash{{\SetFigFont{9}{10.8}{\rmdefault}{\mddefault}{\updefault}$r / 3$}}}}
\put(13576,164){\makebox(0,0)[b]{\smash{{\SetFigFont{9}{10.8}{\rmdefault}{\mddefault}{\updefault}$r$}}}}
\end{picture}%
\end{center}
\caption{The various functions in the proof of Theorem \ref{a6}.} \label{f1}
\end{figure}

We now take the limit $z \to \lambda - i 0$ in \eqref{a24} and use some estimates of Robert and Tamura \cite{RoTa89_01} based on the constructions of Isozaki and Kitada \cite{IsKi85_01}. If $\widetilde{\chi} = 1$ near a sufficiently large neighborhood of $0$, we can write
\begin{equation} \label{a31}
M = \omega_{+} + \omega_{-} + M g ( P ) ( P - \lambda )^{- 1} ( P - \lambda ) + \Psi ( h^{\infty} \< x \>^{- \infty} ) ,
\end{equation}
where the symbols of $\omega_{\pm} \in \Psi ( h )$ are compactly supported in
\begin{equation*}
\Gamma_{\pm} = \big\{ ( x , \xi ) \in T^{*} \R^{n} ; \ \vert x \vert > r , \ E_{1} / 2 < p ( x , \xi ) < 2 E_{2} \text{ and } \pm \cos ( x , \xi ) > \mp 1 / 2 \big\} ,
\end{equation*}
$R > r \gg 1$ and $1 - g \in C^{\infty}_{0} ( \R )$ is equal to $1$ near $[ E_{1} , E_{2} ]$. The proof of Lemma 2.3 (iii) of Robert and Tamura \cite{RoTa89_01} shows that, if $r$ is large enough (that is if $\supp \nabla \widetilde{\chi}$ is sufficiently far away from $\supp \chi$), we have
\begin{equation} \label{a25}
\big\Vert \chi ( Q - \lambda - i 0  )^{- 1} \psi \omega_{+} \< x \>^{s} \big\Vert = \CO ( h^{\infty} ) \big\Vert \chi ( Q - \lambda - i 0 )^{- 1} \widetilde{\psi} \< x \>^{- s} \big\Vert ,
\end{equation}
with $1 - \widetilde{\psi} \in C^{\infty}_{0} ( \R^{n} )$ and $\one_{\{ \vert x \vert > r / 2 \}} \prec \widetilde{\psi} \prec \one_{\{ \vert x \vert > r / 3 \}}$. Their result was stated in the non-trapping regime and then without the norm in the right hand side of \eqref{a25}. But, adapted to the general setting, it gives \eqref{a25}. Indeed, we can write as in \cite[page 170]{RoTa89_01}
\begin{equation} \label{a26}
J_{+ c} ( h ) ( I_{h} ( e_{N} ( h ) ; \phi_{+} ) )^{*} = \psi \omega_{+} \< x \>^{s} - h^{N} \omega_{N} ,
\end{equation}
where $J_{+ c} ( h )$ was defined in \cite[pages 163-164]{RoTa89_01}. In particular, the image of the Fourier integral operator $J_{+ c} ( h )$ (and then also of $\omega_{N}$) is supported in $\{ \vert x \vert > r / 2 \}$ and $e_{N} \in S ( \< x \>^{- \infty} )$ since the symbols of $\omega_{+}$ is compactly supported. The constructions of Isozaki and Kitada used to obtain \eqref{a26} require no assumption on the trapping since they are made at infinity. Moreover, the phase function $\phi_{+}$, the symbols $c_{+} , e_{N}$ and the remainder term $\omega_{N}$ satisfy estimates uniform with respect to $R$ large enough. Next, we get as in \cite[page 171]{RoTa89_01}
\begin{equation} \label{a28}
( Q - \lambda - i 0 )^{- 1} \psi \omega_{+} \< x \>^{s} = \sum_{k = 1}^{3} Q_{k} ( \lambda , h ) ,
\end{equation}
where
\begin{align}
Q_{1} &= \frac{i}{h} \int_{0}^{\infty} e^{i t \lambda / h} U_{N} ( t ; h ) \, d t ,  \\
Q_{2} &= h^{N} ( Q - \lambda - i 0 )^{- 1} \omega_{N} , \\
Q_{3} &= \frac{i}{h} \int_{0}^{\infty} ( Q - \lambda - i 0 )^{- 1} e^{i s \lambda / h} R_{N} ( s ; h ) \, d s .
\end{align}
For $r$ large enough, no Hamiltonian trajectory starting from the support of the symbol of $\omega_{+}$ can touch the support of $\chi$ in positive time. Then, we deduce as in \cite{RoTa89_01}
\begin{equation*}
\chi Q_{1} = \CO ( h^{\infty} ) = \CO ( h^{\infty} ) \big\Vert \chi ( Q - \lambda - i 0 )^{- 1} \widetilde{\psi} \< x \>^{- s} \big\Vert .
\end{equation*}
The properties of the support of $\omega_{N} \in \Psi ( \< x \>^{- N} )$ imply directly
\begin{equation*}
\chi Q_{2} = \CO ( h^{N} ) \big\Vert \chi ( Q - \lambda - i 0 )^{- 1} \widetilde{\psi} \< x \>^{- s} \big\Vert .
\end{equation*}
Eventually, $R_{N} = \widetilde{\psi} R_{N}$ and $\Vert \< x \>^{s} R_{N} \Vert = \CO ( h^{N} \< t \>^{- 2} )$ from \cite[pages 163-164]{RoTa89_01}. Thus,
\begin{equation*}
\chi Q_{3} = \CO ( h^{N} ) \big\Vert \chi ( Q - \lambda - i 0 )^{- 1} \widetilde{\psi} \< x \>^{- s} \big\Vert .
\end{equation*}
Since $N$ can be taken arbitrarily large, the last inequalities together with \eqref{a28} prove \eqref{a25}. Instead of following Robert and Tamura \cite{RoTa89_01} to show \eqref{a25}, we could have used the arguments of the third author \cite[Section 3]{Mi04_01}. Coming back to the proof of Theorem \ref{a6},
\begin{align}
\chi ( Q - \lambda - i 0 )^{- 1} & \psi \omega_{+} \psi ( P - \lambda - i 0 )^{- 1} \chi \nonumber \\
&= \CO ( h^{\infty} ) \big\Vert \chi ( Q - \lambda - i 0 )^{- 1} \widetilde{\psi} \< x \>^{- s} \big\Vert \big\Vert  \< x \>^{- s} \widetilde{\psi} ( P - \lambda - i 0 )^{- 1} \chi \big\Vert ,  \label{a67}
\end{align}
uniformly for $R$ large enough. The same way, we have
\begin{align}
\chi ( Q - \lambda - i 0 )^{- 1} & \psi \omega_{-} \psi ( P - \lambda - i 0 )^{- 1} \chi \nonumber \\
&= \CO ( h^{\infty} ) \big\Vert \chi ( Q - \lambda - i 0 )^{- 1} \widetilde{\psi} \< x \>^{- s} \big\Vert \big\Vert  \< x \>^{- s} \widetilde{\psi} ( P - \lambda - i 0 )^{- 1} \chi \big\Vert .  \label{a29}
\end{align}
On the other hand, the semiclassical pseudodifferential calculus yields
\begin{align*}
M g ( P ) ( P - \lambda )^{- 1} & ( P - \lambda ) \psi ( P - \lambda - i 0 )^{- 1} \chi \\
&= M g ( P ) ( P - \lambda )^{- 1} \psi \chi + M g ( P ) ( P - \lambda )^{- 1} [ P , \psi ] ( P - \lambda - i 0 )^{- 1} \chi
\nonumber \\
&= \CO ( h^{\infty} ) \big\Vert  \< x \>^{- s} \widetilde{\psi} ( P - \lambda - i 0 )^{- 1} \chi \big\Vert .
\end{align*}
since $\psi \chi = 0$, $[ P , \psi ] = [ P , \psi ] \widetilde{\psi}$ and the support of the symbol of $M$ is away from the one of $[ P , \psi ] $. Thus,
\begin{align}
\chi ( Q - \lambda - i 0 )^{- 1} & \psi M g ( P ) ( P - \lambda )^{- 1} ( P - \lambda ) \psi ( P - \lambda - i 0 )^{- 1} \chi \nonumber \\
&= \CO ( h^{\infty} ) \big\Vert \chi ( Q - \lambda - i 0 )^{- 1} \widetilde{\psi} \< x \>^{- s} \big\Vert \big\Vert  \< x \>^{- s} \widetilde{\psi} ( P - \lambda - i 0 )^{- 1} \chi \big\Vert .  \label{a30}
\end{align}
Combining \eqref{a24} with \eqref{a31} and the estimates \eqref{a67}, \eqref{a29} and \eqref{a30}, we have shown
\begin{align}
\chi ( P - \lambda & - i 0 )^{- 1} \chi = \chi ( Q - \lambda - i 0 )^{- 1} \chi   \nonumber\\
&+ \CO ( h^{\infty} ) \big\Vert \chi ( Q - \lambda - i 0 )^{- 1} \one_{\vert x \vert > R_{0}} \< x \>^{- s} \big\Vert \big\Vert  \< x \>^{- s} \one_{\vert x \vert > R_{0}} ( P - \lambda - i 0 )^{- 1} \chi \big\Vert , \label{a32}
\end{align}
for $R$ large enough.

Applying Proposition \ref{a14}, the previous equation gives
\begin{align}
\chi ( P - \lambda - i 0 )^{- 1} \chi &= \chi ( Q - \lambda - i 0 )^{- 1} \chi + \CO ( h^{\infty} ) \big\Vert \< x \>^{- s} ( Q - \lambda - i 0 )^{- 1} \< x \>^{- s} \big\Vert  \nonumber\\
&\qquad \qquad \qquad \qquad \qquad \ + \CO ( h^{\infty} ) \big\Vert  \< x \>^{- s} ( P - \lambda - i 0 )^{- 1} \< x \>^{- s} \big\Vert . \label{a68}
\end{align}
Since we have assumed that $\chi \in C^{\infty}_{0} ( \R^{n} )$ is like in Proposition \ref{a13}, this yields
\begin{equation} \label{a33}
\left\{ \begin{aligned}
&\big\Vert \< x \>^{- s} ( P - \lambda \pm i 0  )^{- 1} \< x \>^{- s} \big\Vert \lesssim \big\Vert \< x \>^{- s} ( Q - \lambda \pm i 0  )^{- 1} \< x \>^{- s} \big\Vert , \\
&\big\Vert \< x \>^{- s} ( Q - \lambda \pm i 0  )^{- 1} \< x \>^{- s} \big\Vert \lesssim \big\Vert \< x \>^{- s} ( P - \lambda \pm i 0  )^{- 1} \< x \>^{- s} \big\Vert .
\end{aligned} \right.
\end{equation}
Finally, \eqref{a68} and \eqref{a33} imply Theorem \ref{a6} when $\chi = 1$ on a sufficiently large neighborhood of $0$. The general case follows directly.
\end{proof}

As a byproduct of the proof of Theorem \ref{a6} (see \eqref{a33}), we have the following result.

\begin{corollary}\sl \label{a80}
Assume \ref{h1}, \ref{h3}, $0 < E_{1} < E_{2}$ and $s >  1 / 2$. For $R > 0$ large enough,
\begin{equation*}
\big\Vert \< x \>^{- s} ( Q - \lambda \pm i 0  )^{- 1} \< x \>^{- s} \big\Vert \lesssim \big\Vert \< x \>^{- s} ( P - \lambda \pm i 0  )^{- 1} \< x \>^{- s} \big\Vert \lesssim \big\Vert \< x \>^{- s} ( Q - \lambda \pm i 0  )^{- 1} \< x \>^{- s} \big\Vert ,
\end{equation*}
uniformly for $\lambda \in [ E_{1} , E_{2} ]$ and $h$ small enough.
\end{corollary}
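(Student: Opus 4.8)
The plan is to observe that the asserted chain of inequalities is precisely \eqref{a33}, which was already established inside the proof of Theorem \ref{a6}; so the task is to package that observation. Recall the relevant thread of that proof: one fixes $\chi \in C^{\infty}_{0} ( \R^{n} )$ equal to $1$ on a neighborhood of $0$ large enough for Proposition \ref{a13} to apply, derives the resolvent comparison \eqref{a32}, and then invokes Proposition \ref{a14} to pass to \eqref{a68}, namely
\begin{equation*}
\chi ( P - \lambda - i 0 )^{- 1} \chi = \chi ( Q - \lambda - i 0 )^{- 1} \chi + \CO ( h^{\infty} ) \big\Vert \< x \>^{- s} ( Q - \lambda - i 0 )^{- 1} \< x \>^{- s} \big\Vert + \CO ( h^{\infty} ) \big\Vert \< x \>^{- s} ( P - \lambda - i 0 )^{- 1} \< x \>^{- s} \big\Vert .
\end{equation*}
First I would recall that, because $\chi$ is admissible in Proposition \ref{a13}, the cut-off resolvent norm $\Vert \chi ( T - \lambda - i 0 )^{- 1} \chi \Vert$ is comparable (from both sides, up to $h$-independent constants) to $\Vert \< x \>^{- s} ( T - \lambda - i 0 )^{- 1} \< x \>^{- s} \Vert$, for $T = P$ and $T = Q$ alike and for any $s > 1/2$. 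Substituting these equivalences into \eqref{a68} and, for $h$ small enough, absorbing the $\CO ( h^{\infty} )$ term carrying the $P$-norm into the dominant term on whichever side it appears, yields exactly \eqref{a33}: each weighted resolvent norm of $P$ is controlled by the corresponding one of $Q$ and vice versa, uniformly for $\lambda \in [ E_{1} , E_{2} ]$.

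It remains to address the two cosmetic points in which the corollary statement exceeds \eqref{a33} as literally displayed. For the sign $+ i 0$, one takes adjoints: since $( T - \lambda + i 0 )^{- 1} = \big( ( T - \lambda - i 0 )^{- 1} \big)^{*}$ and $\< x \>^{- s}$ is selfadjoint, the operator $\< x \>^{- s} ( T - \lambda + i 0 )^{- 1} \< x \>^{- s}$ has the same norm as $\< x \>^{- s} ( T - \lambda - i 0 )^{- 1} \< x \>^{- s}$, for $T = P$ and $T = Q$; hence the $+ i 0$ inequalities are identical to the $- i 0$ ones. For the range of $s$, one uses the remark following Proposition \ref{a13} that the weighted resolvent norm is essentially independent of $s > 1/2$: varying $s$ in $( 1/2 , + \infty )$ alters each norm only by $h$- and $\lambda$-uniform multiplicative factors, so the inequalities carry over to every $s > 1/2$ once they are known for one such value.

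I do not expect a genuine obstacle here, since the analytic content was produced entirely within the proof of Theorem \ref{a6}. The only point that deserves a line of care is the handling of the constant $R$ in \ref{h3}: the threshold above which \eqref{a32} holds, the threshold above which $Q$ satisfies Proposition \ref{a13} uniformly, and the threshold above which $Q$ satisfies Proposition \ref{a14} uniformly must all be taken into force at once. Each of these uniformities is noted in the text accompanying the respective statement, so it suffices to choose $R$ larger than the maximum of these finitely many thresholds, and the corollary follows.
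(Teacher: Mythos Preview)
Your proposal is correct and matches the paper's own treatment: the corollary is stated in the paper as an immediate byproduct of the proof of Theorem \ref{a6}, namely \eqref{a33}, with no further argument given. Your write-up simply unpacks how \eqref{a33} follows from \eqref{a68} and Proposition \ref{a13}, and makes explicit the passage to $+i0$ and to arbitrary $s>1/2$, all of which is faithful to the paper's reasoning.
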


If the cut-off function $\chi$ is supported sufficiently far away from the origin, Theorem \ref{a6} can be strengthened in the spirit of \eqref{a17}. More precisely,

\begin{proposition}\sl \label{b29}
Assume \ref{h1}, \ref{h3} and $0 < E_{1} < E_{2}$. There exists $R_{0} > 0$ such that, for all $\chi \in C^{\infty}_{0} ( \R^{n} )$ and then $R > 0$ large enough, we have 
\begin{equation*}
\chi \one_{\vert x \vert > R_{0}} ( P - \lambda \pm i 0 )^{- 1} \one_{\vert x \vert > R_{0}} \chi = \chi \one_{\vert x \vert > R_{0}} ( Q  - \lambda \pm i 0 )^{- 1} \one_{\vert x \vert > R_{0}} \chi + \CO ( h^{\infty} ) ,
\end{equation*}
uniformly for $\lambda \in [ E_{1} , E_{2} ]$ and $h$ small enough.
\end{proposition}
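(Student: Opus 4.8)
The plan is to reduce the statement to equation~\eqref{a32} obtained in the course of the proof of Theorem~\ref{a6}, and then to exploit the polynomial bound~\eqref{a17} on the resolvent localized away from the origin in order to turn the operator-norm remainder of Theorem~\ref{a6} into a genuine $\CO(h^{\infty})$.

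First I would go back into the proof of Theorem~\ref{a6} and observe that, although \eqref{a32} is stated there for a cut-off $\chi$ equal to $1$ near $0$, the argument leading from \eqref{a23} to \eqref{a32} uses only that $\chi\in C^{\infty}_{0}(\R^{n})$: one fits a function $\widetilde{\chi}$ with $\chi\prec\widetilde{\chi}\prec\one_{B(0,R)}$, equal to $1$ on a large neighbourhood of $0$ and with $\supp\nabla\widetilde{\chi}$ arbitrarily far from $\supp\chi$ (for $R$ large, depending on $\chi$), and the localization of $\chi$ near $0$ intervenes only afterwards, through Proposition~\ref{a14}. Hence, for every $\chi_{1}\in C^{\infty}_{0}(\R^{n})$ and $R$ large enough depending on $\chi_{1}$,
\begin{equation*}
\chi_{1}(P-\lambda-i0)^{-1}\chi_{1}=\chi_{1}(Q-\lambda-i0)^{-1}\chi_{1}+\CO(h^{\infty})\big\Vert\chi_{1}(Q-\lambda-i0)^{-1}\one_{\vert x\vert>R_{0}}\<x\>^{-s}\big\Vert\,\big\Vert\<x\>^{-s}\one_{\vert x\vert>R_{0}}(P-\lambda-i0)^{-1}\chi_{1}\big\Vert ,
\end{equation*}
uniformly for $\lambda\in[E_{1},E_{2}]$ and $R$ large, where $R_{0}$ is the constant of Theorem~\ref{a15} and Proposition~\ref{a14}; as in the proof of Theorem~\ref{a6}, this $R_{0}$ can be chosen to work simultaneously for $P$ and $Q$ uniformly for $R$ large, since the symbol seminorms of $W=g_{0}(\cdot/R)V$ are bounded uniformly in $R\geq1$.

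Next I would prove Proposition~\ref{b29} with $R_{0}$ replaced by $2R_{0}$. Given $\chi\in C^{\infty}_{0}(\R^{n})$, pick $\psi_{0}\in C^{\infty}(\R^{n};[0,1])$ with $\psi_{0}=1$ on $\{\vert x\vert\geq2R_{0}\}$ and $\psi_{0}=0$ on $\{\vert x\vert\leq R_{0}\}$, and set $\chi_{1}:=\psi_{0}\chi\in C^{\infty}_{0}(\R^{n})$, which is supported in $\{\vert x\vert>R_{0}\}$. Since multiplication operators commute and $\one_{\vert x\vert>2R_{0}}\psi_{0}=\one_{\vert x\vert>2R_{0}}$,
\begin{equation*}
\chi\,\one_{\vert x\vert>2R_{0}}(P-\lambda\pm i0)^{-1}\one_{\vert x\vert>2R_{0}}\,\chi=\one_{\vert x\vert>2R_{0}}\big(\chi_{1}(P-\lambda\pm i0)^{-1}\chi_{1}\big)\one_{\vert x\vert>2R_{0}},
\end{equation*}
and the same identity holds with $Q$ in place of $P$. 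Now $\chi_{1}=\chi_{1}\one_{\vert x\vert>R_{0}}$ and $\chi_{1}\<x\>^{\pm s}$ are bounded, so estimate~\eqref{a17} applied to $Q$ and to $P$ gives $\Vert\chi_{1}(Q-\lambda-i0)^{-1}\one_{\vert x\vert>R_{0}}\<x\>^{-s}\Vert\lesssim h^{-1}$ and $\Vert\<x\>^{-s}\one_{\vert x\vert>R_{0}}(P-\lambda-i0)^{-1}\chi_{1}\Vert\lesssim h^{-1}$. Inserting these into the identity of the previous paragraph, the remainder becomes $\CO(h^{\infty})\,h^{-2}=\CO(h^{\infty})$, so $\chi_{1}(P-\lambda-i0)^{-1}\chi_{1}=\chi_{1}(Q-\lambda-i0)^{-1}\chi_{1}+\CO(h^{\infty})$ uniformly for $\lambda\in[E_{1},E_{2}]$ and $R$ large. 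Multiplying this identity on the left and right by $\one_{\vert x\vert>2R_{0}}$ and using the two displayed factorizations (and $\Vert\one_{\vert x\vert>2R_{0}}\Vert\leq1$) yields the proposition for $(P-\lambda-i0)^{-1}$; the case $(P-\lambda+i0)^{-1}$ then follows by taking adjoints, $P$, $Q$ and the cut-offs being real and $\CO(h^{\infty})$ being stable under adjunction.

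I do not expect a substantial obstacle here, and in particular no new microlocal input is needed: the only point requiring care is the first one, namely checking that \eqref{a32} — written in the proof of Theorem~\ref{a6} for a $\chi$ localized near the origin — is actually established for an arbitrary compactly supported $\chi$. The entire content of Proposition~\ref{b29} is that this freedom, combined with the far-out polynomial bound~\eqref{a17} in place of the exponential bound~\eqref{a16} (or of Proposition~\ref{a14}), upgrades the operator-norm remainder of Theorem~\ref{a6} to a genuine $\CO(h^{\infty})$ once every cut-off sits outside $B(0,R_{0})$.
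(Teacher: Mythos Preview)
Your proposal is correct and follows essentially the same route as the paper: both reduce to the identity \eqref{a32} from the proof of Theorem~\ref{a6} and then invoke the polynomial bound \eqref{a17} to absorb the resolvent norms into the $\CO(h^{\infty})$. The only cosmetic difference is that the paper inserts the non-smooth cut-off $\chi\one_{\vert x\vert>R_{0}}$ directly into the argument of Theorem~\ref{a6}, whereas you pass through a smooth intermediate $\chi_{1}=\psi_{0}\chi$ and then sandwich by $\one_{\vert x\vert>2R_{0}}$; since the constant $R_{0}$ is only asserted to exist, the doubling is harmless.
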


\begin{proof}
We adapt the proof of Theorem \ref{a6}. As in \eqref{a24}, we have
\begin{align}
\chi \one_{\vert x \vert > R_{0}} ( P - z )^{- 1} \one_{\vert x \vert > R_{0}} \chi = \chi \one_{\vert x \vert > R_{0}} & ( Q - z )^{- 1} \one_{\vert x \vert > R_{0}} \chi  \nonumber \\
&+ \chi \one_{\vert x \vert > R_{0}} ( Q - z )^{- 1} \psi M \psi ( P - z )^{- 1} \one_{\vert x \vert > R_{0}} \chi , \label{b30}
\end{align}
with the same operator $M$. If $R_{0} , r$ are large enough, \eqref{a25} becomes
\begin{equation*}
\big\Vert \chi \one_{\vert x \vert > R_{0}} ( Q - \lambda - i 0  )^{- 1} \psi \omega_{+} \< x \>^{s} \big\Vert = \CO ( h^{\infty} ) \big\Vert \chi \one_{\vert x \vert > R_{0}} ( Q - \lambda - i 0 )^{- 1} \widetilde{\psi} \< x \>^{- s} \big\Vert = \CO ( h^{\infty} ) ,
\end{equation*}
thanks to \eqref{a17}. Thus, the remainder terms in \eqref{a67}, \eqref{a29} and \eqref{a30} can be replaced by $\CO ( h^{\infty} )$ when $\chi$ is replaced by $\chi \one_{\vert x \vert > R_{0}}$. Eventually, the proposition follows from \eqref{b30}.
\end{proof}

\section{Proof of the additional results of Sections \ref{s2} and \ref{s3}} \label{s6}

In this part, we prove some secondary results of Sections \ref{s2} and \ref{s3}. By comparison with the proof of Theorems \ref{a3} and \ref{a74}, we will only use here semiclassical microlocal analysis.

\begin{proof}[Proof of Proposition \ref{a2}]
We can assume $t \geq 0$. Consider $g \in C^{\infty}_{0} ( T^{*} \R^{n} )$ satisfying $\one_{F_{p} ( \chi , \varphi )} \prec g$. Since $p = q$ near the support of $g$, the pseudodifferential calculus gives
\begin{align*}
\partial_{s}\big( e^{- i s P / h} \Op ( g ) e^{i s Q / h} \big) &= - \frac{i}{h} e^{- i s P / h} \big( P \Op ( g ) - \Op ( g ) Q \big) e^{i s Q / h} \\
&= - \frac{i}{h} e^{- i s P / h} [ P , \Op ( g ) ] e^{i s Q / h} + \CO ( h^{\infty} ) ,
\end{align*}
uniformly for $s \in \R$. This gives the Duhamel formula
\begin{equation*}
e^{- i t P / h} \Op ( g ) = \Op ( g ) e^{- i t Q / h} - \frac{i}{h} \int_{0}^{t} e^{- i s P / h} [ P , \Op ( g ) ] e^{- i ( t - s ) Q / h} d s + \CO ( t h^{\infty} ) ,
\end{equation*}
and eventually
\begin{align}
\chi e^{- i t P / h} \varphi ( P ) \chi ={}& \chi e^{- i t P / h} \Op ( g ) \varphi ( P ) \chi + \CO ( h^{\infty} ) \nonumber \\
={}& \chi \Op ( g ) e^{- i t Q / h} \varphi ( P ) \chi   \nonumber \\
&- \frac{i}{h} \int_{0}^{t} \chi e^{- i s P / h} [ P , \Op ( g ) ] e^{- i ( t - s ) Q / h} \varphi ( P ) \chi d s + \CO ( t h^{\infty} ) \nonumber  \\
={}& \chi e^{- i t Q / h} \varphi ( Q ) \chi + \CR + \CO ( t h^{\infty} ) ,  \label{a34}
\end{align}
uniformly for $t \in \R$ where
\begin{equation*}
\CR = - \frac{i}{h} \int_{0}^{t} \chi e^{- i s P / h} [ P , \Op ( g ) ] e^{- i ( t - s ) Q / h} \varphi ( Q ) \chi d s .
\end{equation*}

Let us define the set
\begin{equation*}
G = \big\{ \rho \in \supp ( \nabla g ) ; \ \exp ( s H_{q} ) ( \rho ) \in \supp ( \chi \varphi ( q ) ) \text{ for some } s < 0 \big\} .
\end{equation*}
One can verify that $G$ is a compact subset of $T^{*} \R^{n}$. Moreover, if the support of $g$ is close enough to $\pi_{x} ( F_{p} ( \chi , \varphi ) )$, the assumption \ref{h2} implies that one can replace the condition $\exp ( s H_{q} ) ( \rho ) \in \supp ( \chi \varphi ( q ) )$ by $\exp ( s H_{p} ) ( \rho ) \in \supp ( \chi \varphi ( q ) )$ in the definition of $G$. Finally, since $\chi \varphi ( q ) = \chi \varphi ( p )$ and the support of $\nabla g$ is disjoint from $F_{p} ( \chi , \varphi )$, we have $\exp ( t H_{p} ) ( \rho ) \notin \supp \chi$ for all $\rho \in G$ and $t \geq 0$. In particular, it implies that $\exp ( t H_{p} ) ( \rho ) \to \infty$ as $t \to + \infty$ for all $\rho \in G$ because $K_{p} ( \supp \varphi ) \Subset \supp \chi$. Thus let $\psi \in C^{\infty}_{0} ( T^{*} \R^{n} )$ be such that $\one_{G} \prec \psi$ and, for all $\rho \in \supp \psi$, $\exp ( t H_{p} ) ( \rho )$ goes to infinity without touching the support of $\chi$ in positive time.

We first treat
\begin{equation} \label{a41}
J_{1} = \chi e^{- i s P / h} [ P , \Op ( g ) ] \Op ( \psi ) .
\end{equation}
Since no Hamiltonian trajectory of $p$ starting from the support of $\psi$ touches the support of $\chi$ in positive time, the microlocal analysis provides $J_{1} = \CO ( h^{\infty} )$ uniformly for $s$ on any compact of $[ 0 , + \infty [$. For the large values of $s$, we use the Isozaki--Kitada constructions. Let $\omega_{+} ( x , \xi )$ be a smooth function satisfying $\partial_{x}^{\alpha} \partial_{\xi}^{\beta} \omega_{+} ( x , \xi ) = \CO ( \< x \>^{- \vert \alpha \vert} \< \xi \>^{- \infty} )$ for all $\alpha , \beta \in \N^{n}$, supported in some outgoing region
\begin{equation} \label{a96}
\big\{ ( x , \xi ) \in T^{*} \R^{n} ; \ \vert x \vert > R , \ 1 / C < \vert \xi \vert < C \text{ and } \cos ( x , \xi ) > 1 / 2 \big\} ,
\end{equation}
for some $C > 0$ and $ R \gg 1$ such that $\omega_{+} = 1$ near $\exp ( S H_{p} ) ( \supp \psi )$ for (any) $S$ large enough. We can write
\begin{equation} \label{a42}
J_{1} = \chi e^{- i ( s - S ) P / h} \Op ( \omega_{+} ) e^{- i S P / h} [ P , \Op ( g ) ] \Op ( \psi ) + \CO ( h^{\infty} ) ,
\end{equation}
uniformly for $s \geq S$. In this setting, Robert and Tamura \cite[page 171]{RoTa89_01} (or \cite[Section 5]{RoTa87_01}) have constructed a parametrix $U_{N} ( t )$ for $e^{- i t P / h} \Op ( \omega_{+} )$, $t \geq 0$ up to $\CO ( h^{N} )$ for any $N \in \N$. Then, we get for any $N \in \N$
\begin{equation} \label{a36}
J_{1} = \chi U_{N} e^{- i s P / h} [ P , \Op ( g ) ] \Op ( \psi ) + \CO ( h^{N} ) = \CO ( h^{N} ) ,
\end{equation}
uniformly for $s \geq S$. The last equality is obtained as in \cite[Lemma 2.3 (iii)]{RoTa89_01} and use that no Hamiltonian trajectory of $p$ starting from the support of $\omega_{+}$ touches the support of $\chi$ in positive time. Summing up, we have proved
\begin{equation} \label{a35}
\chi e^{- i s P / h} [ P , \Op ( g ) ] \Op ( \psi ) e^{- i ( t - s ) Q / h} \varphi ( Q ) \chi = \CO ( h^{\infty} ) ,
\end{equation}
uniformly for $0 \leq s \leq t$.

We now consider
\begin{equation} \label{a44}
J_{2} = \chi \varphi ( Q ) e^{- i u Q / h} \Op ( 1 - \psi ) [ P , \Op ( g ) ] = \chi \varphi ( Q ) e^{- i u Q / h} \widetilde{\varphi} ( Q ) \Op ( 1 - \psi ) [ P , \Op ( g ) ] ,
\end{equation}
for $u < 0$ and $\varphi \prec \widetilde{\varphi} \in C^{\infty}_{0} ( ] 0 , + \infty [ )$. By the definition of $\psi$, there is no Hamiltonian trajectory of $q$ from the support of $( 1 - \psi ) \nabla g$ to the support of $\chi \varphi ( q )$ in negative time. Thus, we get $J_{2} = \CO ( h^{\infty} )$ uniformly for $u$ on any compact of $] - \infty , 0 ]$. Moreover, mimicking the proof of \eqref{a36}, one can show that, for all $N \in \N$, $J_{2} = \CO ( h^{N} )$ uniformly for $u \leq U \ll 0$. Summing up and taking the adjoint, we deduce
\begin{equation} \label{a37}
\chi e^{- i s P / h} [ P , \Op ( g ) ] \Op ( 1 - \psi ) e^{- i ( t - s ) Q / h} \varphi ( Q ) \chi = \CO ( h^{\infty} ) ,
\end{equation}
uniformly for $0 \leq s \leq t$.

Eventually, \eqref{a34} with \eqref{a35} and \eqref{a37} give $\CR = \CO ( t h^{\infty} )$ and
\begin{align}
\chi e^{- i t P / h} \varphi ( P ) \chi = \chi e^{- i t Q / h} \varphi ( Q ) \chi + \CO ( t h^{\infty} ) = \chi e^{- i t Q / h} \varphi ( Q ) \chi + \CO ( h^{\infty} ) ,
\end{align}
uniformly for $0 \leq t \leq h^{- C}$.
\end{proof}

\begin{proof}[Proof of Proposition \ref{a38}]
We mix the proofs of Proposition \ref{a2} and Theorem \ref{a3}. In particular, we will use the quantities $g , G , \psi , \omega_{+}$ constructed in the proof of Proposition \ref{a2}. Since $p = q$ near the support of $g \in C^{\infty}_{0} ( T^{*} \R^{n} )$, we get
\begin{align}
( P - z )^{- 1} \Op ( g ) ( Q - z ) &= ( P - z )^{- 1} \Op ( g ) ( P - z ) + ( P - z )^{- 1} \< x \>^{- 1} \CO ( h^{\infty} ) \< x \>^{- 1}  \nonumber \\
&= \Op ( g ) + ( P - z )^{- 1} [ \Op ( g ) , P ] + ( P - z )^{- 1} \< x \>^{- 1} \CO ( h^{\infty} ) \< x \>^{- 1}  , \label{a39}
\end{align}
for $\im z > 0$. Let $\varphi \prec \widetilde{\varphi} \in C^{\infty}_{0} ( ] 0 , +  \infty [ )$. If the support of $\widetilde{\varphi}$ is close enough to the support of $\varphi$, the pseudodifferential calculus gives
\begin{align*}
\widetilde{\varphi} ( P ) \chi &= \Op ( g ) \widetilde{\varphi} ( P ) \chi + \< x \>^{- 1} \CO ( h^{\infty} ) , \\
\chi \Op ( g ) \widetilde{\varphi} ( Q ) &= \chi \widetilde{\varphi} ( Q ) + \CO ( h^{\infty} ) \< x \>^{- 1} ,
\end{align*}
and \ref{h2} implies $\widetilde{\varphi} ( P ) \chi = \widetilde{\varphi} ( Q ) \chi + \< x \>^{- 1} \CO ( h^{\infty} )$. Moreover, since $V$ and $W$ coincide near the support of $\chi$, we have
\begin{equation*}
\chi ( P - z )^{- 1} ( 1 - \widetilde{\varphi} ) ( P ) \chi = \chi ( Q - z )^{- 1} ( 1 - \widetilde{\varphi} ) ( Q ) \chi +\CO ( h^{\infty} ) ,
\end{equation*}
uniformly for $\re z \in \supp \varphi$. Thus, combining \eqref{a39} with the previous equations and the polynomial bounds on the weighted resolvent of $P$ and $Q$, we deduce
\begin{align}
\chi ( P - z )^{- 1} \chi ={}& \chi ( P - z )^{- 1} \widetilde{\varphi} ( P ) \chi + \chi ( P - z )^{- 1} ( 1 - \widetilde{\varphi} ) ( P ) \chi    \nonumber \\
={}& \chi ( P - z )^{- 1} \Op ( g ) \widetilde{\varphi} ( P ) \chi + \chi ( Q - z )^{- 1} ( 1 - \widetilde{\varphi} ) ( Q ) \chi + \CO ( h^{\infty} ) \nonumber \\
={}& \chi \Op ( g ) ( Q - z )^{- 1} \widetilde{\varphi} ( P ) \chi + \chi ( Q - z )^{- 1} ( 1 - \widetilde{\varphi} ) ( Q ) \chi  \nonumber \\
&+ \chi ( P - z )^{- 1} [ \Op ( g ) , P ] ( Q - z )^{- 1} \widetilde{\varphi} ( P ) \chi + \CO ( h^{\infty} )   \nonumber \\
={}& \chi \Op ( g ) \widetilde{\varphi} ( Q ) ( Q - z )^{- 1} \chi + \chi ( 1 - \widetilde{\varphi} ) ( Q ) ( Q - z )^{- 1} \chi + \CR + \CO ( h^{\infty} )  \nonumber \\
={}& \chi ( Q - z )^{- 1} \chi + \CR + \CO ( h^{\infty} ) , \label{a45}
\end{align}
uniformly for $\re z \in \supp \varphi$, $\im z > 0$ with
\begin{equation*}
\CR = \chi ( P - z )^{- 1} [ \Op ( g ) , P ] ( Q - z )^{- 1} \widetilde{\varphi} ( Q ) \chi .
\end{equation*}

We now estimate the operator $\CR$. As in \eqref{a41}, we define
\begin{equation} \label{a49}
K_{1} = \chi ( P - z )^{- 1} [ \Op ( g ) , P ] \Op ( \psi ) = K_{1}^{\rm loc} + K_{1}^{\rm inf} ,
\end{equation}
where
\begin{align*}
K_{1}^{\rm loc} &= \frac{i}{h} \int_{0}^{S} \chi e^{- i s ( P - z ) / h} [ \Op ( g ) , P ] \Op ( \psi ) \, d s ,  \\
K_{1}^{\rm inf} &= \chi ( P - z )^{- 1} e^{- i S ( P - z ) / h} [ \Op ( g ) , P ] \Op ( \psi ) .
\end{align*}
From the discussion above \eqref{a41}, there is no Hamiltonian trajectory of $p$ starting from the support of $\psi$ and touching the support of $\chi$ in positive time. Therefore, $K_{1}^{\rm loc} = \CO ( h^{\infty} )$. On the other hand, we have
\begin{equation*}
K_{1}^{\rm inf} = \chi ( P - z )^{- 1} \Op ( \omega_{+} ) e^{- i S ( P - z ) / h} [ \Op ( g ) , P ] \Op ( \psi ) + \CO ( h^{\infty} ) ,
\end{equation*}
as in \eqref{a42}. Now, Lemma 2.3 (iii) of Robert and Tamura \cite{RoTa89_01} yields $\chi ( P - z )^{- 1} \Op ( \omega_{+} ) = \CO ( h^{\infty} )$. This result was originally proved in the non-trapping regime but extends directly to operators with polynomially bounded resolvent. Summing up, we have just shown
\begin{equation} \label{a43}
\chi ( P - z )^{- 1} [ \Op ( g ) , P ] \Op ( \psi ) ( Q - z )^{- 1} \widetilde{\varphi} ( Q ) \chi = \CO ( h^{\infty} ) .
\end{equation}
It remains to study
\begin{equation}
K_{2} = \chi \widetilde{\varphi} ( Q ) ( Q - \overline{z} )^{- 1} \Op ( 1 - \psi ) [ \Op ( g ) , P ] ,
\end{equation}
(see \eqref{a44}). Working as in \eqref{a37} and \eqref{a43}, we get
\begin{equation} \label{a52}
[ \Op ( g ) , P ] \Op ( 1 - \psi ) ( Q - z )^{- 1} \widetilde{\varphi} ( Q ) \chi = K_{2}^{*} = \CO ( h^{\infty} ) ,
\end{equation}
since there is no Hamiltonian trajectory of $q$ from the support of $\chi \widetilde{\varphi} ( q )$ to the support of $( 1 - \psi ) \nabla g$ in positive time. Thus,
\begin{equation*}
\chi ( P - z )^{- 1} [ \Op ( g ) , P ] \Op ( 1 - \psi ) ( Q - z )^{- 1} \widetilde{\varphi} ( Q ) \chi = \CO ( h^{\infty} ) ,
\end{equation*}
and eventually
\begin{equation} \label{a46}
\CR = \CO ( h^{\infty} ) ,
\end{equation}
uniformly for $\re z \in \supp \varphi$, $\im z > 0$.

Using \eqref{a45} with \eqref{a46} and taking the adjoint, we deduce
\begin{equation}
\chi ( P - \lambda \pm i 0 )^{- 1} \chi = \chi ( Q - \lambda \pm i 0 )^{- 1} \chi + \CO ( h^{\infty} ) ,
\end{equation}
uniformly for $\lambda \in \supp \varphi$. Then, Proposition \ref{a38} follows from this equation and the Stone formula \eqref{a7}.
\end{proof}

\begin{proof}[Proof of Remark \ref{a47}]
To show this result, we use some elements of the proof of Proposition \ref{a2} and Proposition \ref{a38}. In particular, we need the functions $\widetilde{\varphi} , g , \psi , \omega_{+}$ build there. Let us assume that $Q$ satisfies \ref{h4}, $s > 1 / 2$ and $\im z > 0$. We are looking for a parametrix of $( P - z )^{- 1}$. Consider $P_{0} = P - i h f ( x )$ where $f \in C^{\infty}_{0} ( \R^{n} ; [ 0 , 1 ] )$ satisfies $\pi_{x} ( K_{p} ( \supp \varphi ) ) \prec f \prec \chi$. For such dissipative operators, Royer \cite[Theorem 1.1]{Ro10_01} has proved that
\begin{equation} \label{a55}
\big\Vert \< x \>^{- s} ( P_{0} - \lambda \pm i 0  )^{- 1} \< x \>^{- s} \big\Vert \lesssim h^{- 1} ,
\end{equation}
uniformly for $\lambda \in \supp \widetilde{\varphi}$. Let us define
\begin{equation*}
A_{1} = ( 1 - \chi ) \widetilde{\varphi} ( P ) ( P_{0} - z )^{- 1} .
\end{equation*}
Since $P$ and $P_{0}$ coincide near the support of $1 - \chi$, we have
\begin{align}
( P - z ) A_{1} &= ( 1 - \chi ) \widetilde{\varphi} ( P ) ( P - z ) ( P_{0} - z )^{- 1} - [ P , \chi ] \widetilde{\varphi} ( P ) ( P_{0} - z )^{- 1} \nonumber\\
&= \widetilde{\varphi} ( P ) - B_{1} + \< x \>^{- s} \CO ( h^{\infty} ) \< x \>^{s} , \label{a48}
\end{align}
with $B_{1} = \chi \widetilde{\varphi} ( P ) + [ P , \chi ] \widetilde{\varphi} ( P ) ( P_{0} - z )^{- 1}$. We then choose
\begin{equation*}
A_{2} = \Op ( g ) ( Q - z )^{- 1} B_{1} .
\end{equation*}
Using that the symbols of $P$ and $Q$ coincide near the support of $g$ and since $g = 1$ near the support of $\chi \widetilde{\varphi} ( p )$, we get
\begin{align}
( P - z ) A_{2} ={}& ( Q  - z ) \Op ( g ) ( Q - z )^{- 1} B_{1} + \< x \>^{- s} \CO ( h^{\infty} ) \< x \>^{s}   \nonumber \\
={}& B_{1} + [ Q , \Op ( g ) ] \Op ( \psi ) ( Q - z )^{- 1} B_{1}  \nonumber \\
&+ [ Q , \Op ( g ) ] \Op ( 1 - \psi ) ( Q - z )^{- 1} B_{1} + \< x \>^{- s} \CO ( h^{\infty} ) \< x \>^{s}   \nonumber \\
={}& B_{1} - B_{2} + \< x \>^{- s} \CO ( h^{\infty} ) \< x \>^{s}  \label{a50}
\end{align}
with $B_{2} =  - [ Q , \Op ( g ) ] \Op ( \psi ) ( Q - z )^{- 1} B_{1}$. To eliminate the term containing $\Op ( 1 - \psi )$, we have used an estimate similar to \eqref{a52}. As a parametrix for $( P - z )^{- 1} B_{2}$, we choose
\begin{equation*}
A_{3} = \frac{i}{h} \int_{0}^{S} e^{- i s ( P - z ) / h} B_{2} \, d s + ( 1 - \widetilde{\chi} ) ( P_{0} - z )^{- 1} \Op ( \omega _{+} ) e^{- i S ( P - z ) / h} B_{2} .
\end{equation*}
following \eqref{a49}. Here $g \prec \widetilde{\chi} \in C^{\infty}_{0} ( \R^{n} )$ is supported near $\supp g$. In particular, the support of $\omega_{+}$ is far away from that of $\widetilde{\chi}$. As in \eqref{a42}, we have $( 1 - \widetilde{\chi} ) \Op ( \omega _{+} ) e^{- i S ( P - z ) / h} B_{2} = e^{- i S ( P - z ) / h} B_{2} + \< x \>^{- s} \CO ( h^{\infty} ) \< x \>^{s}$. Then,
\begin{align}
( P - z ) A_{3} &= \frac{i}{h} \int_{0}^{S} ( P - z ) e^{- i s ( P - z ) / h} B_{2} \, d s + ( 1 - \widetilde{\chi} ) \Op ( \omega _{+} ) e^{- i S ( P - z ) / h} B_{2} - B_{3}    \nonumber \\
&= \frac{i}{h} \int_{0}^{S} ( P - z ) e^{- i s ( P - z ) / h} B_{2} \, d s + e^{- i S ( P - z ) / h} B_{2} - B_{3} + \< x \>^{- s} \CO ( h^{\infty} ) \< x \>^{s}  \nonumber \\
&= B_{2} - B_{3} + \< x \>^{- s} \CO ( h^{\infty} ) \< x \>^{s} ,    \label{a51}
\end{align}
with $B_{3} = [ P , \widetilde{\chi} ] ( P_{0} - z )^{- 1} \Op ( \omega _{+} ) e^{- i S ( P - z ) / h} B_{2}$. Since there is no Hamiltonian trajectory from the outgoing region $\supp \omega_{+}$ to the support of $\nabla \widetilde{\chi}$ in positive time, Lemma 2.3 (iii) of Robert and Tamura \cite{RoTa89_01} gives as in \eqref{a43}
\begin{equation} \label{a53}
B_{3} = \< x \>^{- s} \CO ( h^{\infty} ) \< x \>^{s} .
\end{equation}
That $P_{0}$ is dissipative does not create any difficulty to prove this estimate but we have used here \eqref{a55}. From \eqref{a48}, \eqref{a50}, \eqref{a51} and \eqref{a53}, we get
\begin{equation*}
( P - z ) ( A_{1} + A_{2} + A_{3} ) = \widetilde{\varphi} ( P ) + \< x \>^{- s} \CO ( h^{\infty} ) \< x \>^{s} ,
\end{equation*}
which can be written
\begin{equation*}
\< x \>^{- s} ( P - z )^{- 1} \widetilde{\varphi} ( P ) \< x \>^{- s} = \< x \>^{- s} ( A_{1} + A_{2} + A_{3} ) \< x \>^{- s} + \< x \>^{- s} (P - z )^{- 1} \< x \>^{- s} \CO ( h^{\infty} ) .
\end{equation*}
Since the functional calculus gives
\begin{equation*}
\< x \>^{- s} ( P - z )^{- 1} ( 1 - \widetilde{\varphi} ) ( P ) \< x \>^{- s} = \CO ( 1 ) ,
\end{equation*}
uniformly for $\re z$ near the support of $\varphi$, the previous equation becomes
\begin{equation} \label{a54}
\< x \>^{- s} ( P - z )^{- 1} \< x \>^{- s} = \< x \>^{- s} ( A_{1} + A_{2} + A_{3} ) \< x \>^{- s} ( 1 + \CO ( h^{\infty} ) ) + \CO ( 1 ) .
\end{equation}
Using again \eqref{a55}, one can verify that
\begin{align*}
\big\Vert \< x \>^{- s} A_{1} \< x \>^{- s} \big\Vert &\lesssim h^{- 1} ,  \\
\big\Vert \< x \>^{- s} ( A_{2} + A_{3} ) \< x \>^{- s} \big\Vert &\lesssim \big\Vert \< x \>^{- s} ( Q - \lambda \pm i 0  )^{- 1} \< x \>^{- s} \big\Vert ,
\end{align*}
in the limit $z \to \lambda + i 0$. Since the norm of the weighted resolvent of $Q$ is at least like $h^{- 1}$, \eqref{a54} yields
\begin{equation}
\big\Vert \< x \>^{- s} ( P - \lambda - i 0  )^{- 1} \< x \>^{- s} \big\Vert \lesssim \big\Vert \< x \>^{- s} ( Q - \lambda - i 0  )^{- 1} \< x \>^{- s} \big\Vert ,
\end{equation}
uniformly for $\lambda$ near the support of $\varphi$, which implies Remark \ref{a47}.
\end{proof}

We end this section with the

\begin{proof}[Proof of Proposition \ref{a91}]
We first remark that \ref{h5} is an open condition in the sense that, if \ref{h5} holds true for some interval $I$, then it holds true near $I$. Then, it is enough to prove the proposition near $\lambda_{0} \in I$ assuming \ref{h4} and \ref{h5} near $\lambda_{0}$. Working as in \eqref{a81} and using that the weighted resolvents of $P$ and $Q$ are polynomially bounded thanks to \ref{h4} and Remark \ref{a47}, we can write 
\begin{equation} \label{a94}
s_{P , P_{0}}^{\prime} ( \lambda ) = s_{Q , P_{0}}^{\prime} ( \lambda ) + \tr \big( \chi \big( E^{\prime}_{P} ( \lambda ) - E^{\prime}_{Q} ( \lambda ) \big) \chi \big) + \widetilde{\sigma} ( \lambda ; h ) ,
\end{equation}
uniformly near $\lambda_{0}$ where $\widetilde{\sigma} ( \lambda ; h )$ has a complete asymptotic expansion in $h$ with $C^{\infty}$ coefficients.

Consider now $\one_{\{ \lambda_{0} \}} \prec \varphi \in C^{\infty}_{0} ( \R )$ and $\one_{K_{p} ( \lambda_{0} )} \prec f \in C^{\infty}_{0} ( T^{*} \R^{n} )$. In particular, $E^{\prime}_{P} ( \lambda ) = E^{\prime}_{P} ( \lambda ) \varphi ( P )$ for $\lambda$ near $\lambda_{0}$. If the support of $f$ is sufficiently close to $K_{p} ( \lambda_{0} )$, the pseudodifferential calculus gives
\begin{align*}
\chi \Op ( f ) = \Op ( f ) + \CO ( h^{\infty} ) \< x \>^{- 1} ,
\end{align*}
in trace norm. Then, combining with \ref{h4} and Remark \ref{a47}, we have
\begin{align}
\tr \big( \chi \big( E^{\prime}_{P} ( \lambda ) - E^{\prime}_{Q} ( \lambda ) \big) \chi \big) ={}& \tr \big( \chi \Op ( f ) \big( E^{\prime}_{P} ( \lambda ) - E^{\prime}_{Q} ( \lambda ) \big) \Op ( f ) \chi \big)   \nonumber \\
&+ \tr \big( \chi \Op ( f ) \big( E^{\prime}_{P} ( \lambda ) \varphi ( P ) - E^{\prime}_{Q} ( \lambda ) \varphi ( Q ) \big) \Op ( 1 - f ) \chi \big)   \nonumber  \\
&+ \tr \big( \chi \Op ( 1 - f ) \big( E^{\prime}_{P} ( \lambda ) \varphi ( P ) - E^{\prime}_{Q} ( \lambda ) \varphi ( Q ) \big) \Op ( f ) \chi \big)  \nonumber  \\
&+ \tr \big( \chi \Op ( 1 - f ) \big( E^{\prime}_{P} ( \lambda ) \varphi ( P ) - E^{\prime}_{Q} ( \lambda ) \varphi ( Q ) \big) \Op ( 1 - f ) \chi \big)   \nonumber  \\
={}& \tr \big( \Op ( f ) \big( E^{\prime}_{P} ( \lambda ) - E^{\prime}_{Q} ( \lambda ) \big) \Op ( f ) \big) \nonumber \\
&+ \sum_{\pm} \tr \big( \Op ( k_{\pm}^{p} ) ( P - \lambda \mp i 0 )^{-1} \big)  \label{a93} \\
&+ \sum_{\pm} \tr \big( \Op ( k_{\pm}^{q} ) ( Q - \lambda \mp i 0 )^{-1} \big) + \CO ( h^{\infty} ) , \nonumber
\end{align}
for some $k_{\pm}^{r} \in S ( 1 )$ supported in $\supp ( \chi ( x ) ( 1 - f ) ( x , \xi ) \varphi ( r ) ( x , \xi ) )$.

Using \eqref{a45} and \eqref{a46} with $\chi$ replaced by $\Op ( f )$, which guarantees that all the estimates hold in norm trace, we obtain
\begin{equation} \label{a95}
\tr \big( \Op ( f ) \big( E^{\prime}_{P} ( \lambda ) - E^{\prime}_{Q} ( \lambda ) \big) \Op ( f ) \big) = \CO ( h^{\infty} ) ,
\end{equation}
uniformly for $\lambda$ near $\lambda_{0}$. We now deal with $\tr ( \Op ( k_{+}^{p} ) ( P - \lambda - i 0 )^{-1} )$. Let $\rho \in \supp ( \chi ( 1 - f ) )$ with $p ( \rho ) = \lambda_{0}$. Since $\rho \notin K_{p} ( \lambda_{0} )$, the Hamiltonian trajectory $\exp ( t H_{p} ) ( \rho )$ goes to $\infty$ as $t \to + \infty$ or $t \to - \infty$. By continuity, this is also true in a neighborhood of $\rho$. Thus, using a compactness argument and assuming that the support of $\varphi$ is close enough to $\lambda_{0}$, there exist, for all $\delta > 0$, a finite number of compactly supported symbols $g_{\ell} \prec \widetilde{g}_{\ell} \in S ( 1 )$, $\ell = 1 , \ldots , L$, such that
\begin{equation*}
k_{+}^{p} = \sum_{1 \leq \ell \leq L} g_{\ell} ,
\end{equation*}
and, for all $\rho \in \supp \widetilde{g}_{\ell}$, the curve $\exp ( t H_{p} ) ( \rho )$ goes to $\infty$ as $t \to \pm \infty$ without coming back to the support of $\widetilde{g}_{\ell}$ for $\pm t > \delta$. In particular,
\begin{equation} \label{a99}
\tr \big( \Op ( k_{+}^{p} ) ( P - \lambda - i 0 )^{-1} \big) = \sum_{1 \leq \ell \leq L} \tr \big( \Op ( g_{\ell} ) ( P - \lambda - i 0 )^{-1} \Op ( \widetilde{g}_{\ell} ) \big) + \CO ( h^{\infty} ) ,
\end{equation}
for $\lambda$ near $\lambda_{0}$.

Consider $1 \leq \ell \leq L$ such that the Hamiltonian flow escapes to infinity from the support of $\widetilde{g}_{\ell}$ without coming back in positive time larger than $\delta$. As in \eqref{a49}, we write
\begin{equation} \label{a97}
\tr \big( \Op ( g_{\ell} ) ( P - \lambda - i 0 )^{-1} \Op ( \widetilde{g}_{\ell} ) \big) = G_{\rm small} + G_{\rm large} + G_{\rm infinity} ,
\end{equation}
with
\begin{align*}
G_{\rm small} &= \frac{i}{h} \int_{0}^{\delta} \tr \Big( \Op ( g_{\ell} ) e^{- i s ( P - \lambda ) / h} \Op ( \widetilde{g}_{\ell} ) \Big) \, d s ,  \\
G_{\rm large} &= \frac{i}{h} \int_{\delta}^{S} \tr \Big( \Op ( g_{\ell} ) e^{- i s ( P - \lambda ) / h} \Op ( \widetilde{g}_{\ell} ) \Big) \, d s ,  \\
G_{\rm infinity} &= \tr \Big( \Op ( g_{\ell} ) ( P - \lambda - i 0 )^{-1} e^{- i S ( P - \lambda ) / h} \Op ( \widetilde{g}_{\ell} ) \Big) ,
\end{align*}
where $S > 0$ will be fixed large enough. From Lemma 3.1 of Robert and Tamura \cite{RoTa88_01}, $G_{\rm small}$ has a complete asymptotic expansion in $h$ with $C^{\infty}$ coefficients for $\delta > 0$ small enough. Since no trajectory from $\supp \widetilde{g}_{\ell}$ comes back in time larger than $\delta$, the propagation of singularities gives $G_{\rm large} = \CO ( h^{\infty} )$. Finally, since the trajectories from $\supp \widetilde{g}_{\ell}$ escape to infinity in positive time, $e^{- i S ( P - \lambda ) / h} \Op ( \widetilde{g}_{\ell} )$ is microlocalized in an outgoing region (see \eqref{a96}) for $S$ large enough. Then, applying Lemma 2.3 (iii) of Robert and Tamura \cite{RoTa89_01} (see the discussion above \eqref{a43}), we get $G_{\rm infinity} = \CO ( h^{\infty} )$. Summing up, the left hand side of \eqref{a97} has a complete asymptotic expansion in $h$ with $C^{\infty}$ coefficients, say
\begin{equation} \label{a98}
\tr \big( \Op ( g_{\ell} ) ( P - \lambda - i 0 )^{-1} \Op ( \widetilde{g}_{\ell} ) \big) \asymp \sigma_{0}^{\ell} ( \lambda ) h^{- n} + \sigma_{1}^{\ell} ( \lambda ) h^{1 - n} + \cdots ,
\end{equation}
for $\lambda$ near $\lambda_{0}$. Consider now $1 \leq \ell \leq L$ such that the Hamiltonian flow escapes to infinity from the support of $\widetilde{g}_{\ell}$ without coming back in negative time smaller than $- \delta$. Taking the adjoint, we have
\begin{equation*}
\overline{\tr \big( \Op ( g_{\ell} ) ( P - \lambda - i 0 )^{-1} \Op ( \widetilde{g}_{\ell} ) \big)} = \tr \big( \Op ( \overline{\widetilde{g}_{\ell}} ) ( P - \lambda + i 0 )^{-1} \Op ( \overline{g_{\ell}} ) \big) .
\end{equation*}
Then, working as in \eqref{a97} but in negative time, we obtain also \eqref{a98} in that case.

From \eqref{a99} and \eqref{a98}, the function
\begin{equation} \label{b1}
\tr \big( \Op ( k_{+}^{p} ) ( P - \lambda - i 0 )^{-1} \big) ,
\end{equation}
has a complete asymptotic expansion in $h$ with $C^{\infty}$ coefficients near $\lambda_{0}$. The same holds true for $P$ replaced by $Q$ and $- i 0$ replaced by $+ i 0$. Combining \eqref{a93} with \eqref{a95} and \eqref{b1}, the function $\tr ( \chi ( E^{\prime}_{P} ( \lambda ) - E^{\prime}_{Q} ( \lambda ) ) \chi )$ has a complete asymptotic expansion in $h$ with $C^{\infty}$ coefficients near $\lambda_{0}$. Eventually, Proposition \ref{a91} follows from \eqref{a94}.
\end{proof}

\bibliographystyle{amsplain}
% \MRhref is called by the amsart/book/proc definition of \MR.
\providecommand{\MRhref}[2]{%
  \href{http://www.ams.org/mathscinet-getitem?mr=#1}{#2}
}
\providecommand{\href}[2]{#2}

%\bibliographystyle{amsplain}
%\bibliography{reso}

\end{document}